\newtheorem{theorem}{Theorem}[section]
\newtheorem{lemma}[theorem]{Lemma}
\newtheorem{corollary}[theorem]{Corollary}
\newtheorem{proposition}[theorem]{Proposition}
\newtheorem{claim}[theorem]{Claim}
\newtheorem*{conjecture*}{Conjecture}
\newtheorem*{claim*}{Claim}
\newtheorem*{theorem*}{Theorem}
\newtheorem*{modnivatconj}{Modified Nivat Conjecture}
\newtheorem*{modnivatconj2}{Modified Nivat Conjecture (second version)}
\theoremstyle{remark}
\newtheorem{remark}[theorem]{Remark}
\theoremstyle{definition}
\newtheorem{definition}[theorem]{Definition}
\newtheorem{notation}[theorem]{Notation}
\newcommand{\A}{\mathcal{A}}
\renewcommand{\S}{\mathcal{S}}
\newcommand{\T}{\mathcal{T}}
\newcommand{\ZZ}{\mathbb{Z}^2}
\newcommand{\W}{\mathcal{R}}
\newcommand{\R}{\mathbb{R}}
\newcommand{\Z}{\mathbb{Z}}
\newcommand{\Q}{\mathbb{Q}}
\newcommand{\N}{\mathbb{N}}
\newcommand{\rst}[1]{\ensuremath{{\mathbin\upharpoonright}%
\raise-.5ex\hbox{$#1$}}}
\newcommand{\conv}{{\rm conv}}
\newcommand{\suc}{{\rm succ}}
\newcommand{\pred}{{\rm pred}}
\newcommand{\diam}{{\rm diam}}
\newcommand{\Ext}{{\rm Ext}}
\title{Nonexpansive $\ZZ$-subdynamics and Nivat's conjecture}
\author{Van Cyr}
\address{Northwestern University, Evanston, IL 60208 USA}
\email{cyr@math.northwestern.edu}
\author{Bryna Kra}
\address{Northwestern University, Evanston, IL 60208 USA}
\email{kra@math.northwestern.edu}
\subjclass[2010]{37B50 (primary), 68R15, 37B10}
\keywords{Nivat's conjecture, $\ZZ$-subshift, nonexpansive subdynamics, block complexity, periodicity}
\thanks{The second author was partially supported by NSF grant $1200971$.}
\begin{document}

\begin{abstract}
For a finite alphabet $\A$ and $\eta\colon \Z\to\A$, the Morse-Hedlund Theorem
states that $\eta$ is periodic if and only if there exists $n\in\N$ such that
the block complexity function $P_\eta(n)$ satisfies $P_\eta(n)\leq n$, and this
statement is naturally studied by analyzing the dynamics of a $\Z$-action
associated with $\eta$.  In dimension two, we analyze the subdynamics of a 
$\ZZ$-action associated with $\eta\colon\ZZ\to\A$ and show that if there exist $n,k\in\N$ such that the $n\times k$ rectangular complexity $P_{\eta}(n,k)$ satisfies $P_{\eta}(n,k)\leq nk$, then the periodicity of $\eta$
is equivalent to a statement about the expansive subspaces of this action.  As a corollary, we show that if there exist $n,k\in\N$ such that $P_{\eta}(n,k)\leq \frac{nk}{2}$, then $\eta$ is periodic.  This proves a weak form of a conjecture of Nivat in the combinatorics of words.
\end{abstract}

\maketitle

\section{Introduction}

\subsection{Periodicity and complexity} 
Given a finite alphabet $\A$, if $\eta\in \A^\Z$ is an infinite word, 
the {\em block complexity function} $P_\eta(n)$ is defined to be the number of distinct words of length $n$ appearing in $\eta$.  The word
$\eta= (\eta_n)_{n\in\Z}$ is said to be {\em periodic} if there exists an integer $m\in\N$ 
such that $\eta_n = \eta_{n+m}$ for all $n\in\Z$.
The classical Morse-Hedlund Theorem gives the relationship 
between these two notions:
\begin{theorem}[Morse-Hedlund~\cite{MH}]
The infinite word $\eta\in\A^{\Z}$ is periodic if and only if there exists an integer $n\geq 1$ such that $P_{\eta}(n)\leq n$.
\end{theorem}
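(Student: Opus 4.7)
The plan is to prove the two implications separately: the forward direction is a direct counting argument, while the reverse requires a pigeonhole argument on the increments of $P_\eta$ followed by a dynamical argument exploiting the bi-infinite structure of $\eta$. For the forward direction, if $\eta$ is $m$-periodic then any length-$n$ factor is determined by the residue of its starting index modulo $m$, giving $P_\eta(n) \leq m$ for every $n$; taking $n = m$ yields the conclusion.

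For the reverse direction, I would first record the monotonicity $P_\eta(n+1) \geq P_\eta(n)$: the map sending a length-$(n+1)$ factor to its length-$n$ prefix is surjective, since each length-$n$ factor appears in $\eta$ with at least one letter to its right. Moreover $P_\eta(n+1) = P_\eta(n)$ exactly when every length-$n$ factor admits a unique right-extension in $\eta$. Now assume $P_\eta(n) \leq n$. If $P_\eta(1) = 1$, then $\eta$ is constant and we are done. Otherwise $P_\eta(1) \geq 2$, and the nonnegative integer increments $P_\eta(k+1) - P_\eta(k)$ for $1 \leq k < n$ cannot all be at least $1$ (that would force $P_\eta(n) \geq n+1$), so some $k < n$ satisfies $P_\eta(k+1) = P_\eta(k)$.

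From such a $k$, I would derive periodicity as follows. Each length-$k$ factor $w$ has a unique right-extending letter $f(w) \in \A$; by the symmetric counting applied to suffixes, which uses that every length-$k$ factor also has at least one \emph{left}-extension (and this is precisely where $\eta \in \A^{\Z}$ being bi-infinite enters), each length-$k$ factor also has a unique left-extending letter. The induced sliding-window map $T$ on the finite set of length-$k$ factors of $\eta$ is therefore a bijection. Since the window sequence $w^{(i)} := \eta_i \eta_{i+1} \cdots \eta_{i+k-1}$ satisfies $w^{(i+1)} = T(w^{(i)})$ for every $i \in \Z$, the orbit $(w^{(i)})_{i \in \Z}$ under $T$ is purely periodic, and hence $\eta$ itself is periodic. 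The main obstacle is exactly this last step: a deterministic forward rule alone yields only eventual periodicity, and it is the bi-infinite structure that promotes $T$ to a bijection on a finite set and thereby delivers genuine periodicity.
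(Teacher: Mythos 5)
Your proof is correct and follows essentially the same route as the paper's: both locate a plateau $P_\eta(k+1)=P_\eta(k)$ by pigeonholing on the increments of the complexity function, deduce unique right- and left-extensions of length-$k$ factors (the latter using bi-infiniteness), and conclude periodicity from the finiteness of the set of windows. The only cosmetic difference is that you phrase the final step as ``a bi-infinite orbit of a permutation of a finite set is purely periodic,'' whereas the paper pigeonholes directly on the finitely many shifts $T^m\eta$, each determined by a single window; the paper's heavier bookkeeping exists only because it simultaneously treats restrictions to finite intervals and to $\N$.
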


For $\eta\in\A^{\Z^d}$, the {\em $(n_1\times\ldots \times n_d)$-block 
complexity function} $P_\eta(n_1, \ldots, n_d)$ is the number of distinct $n_1\times \ldots \times 
n_d$ blocks occurring in $\eta$.  Periodicity also has a natural higher 
dimensional generalization, and we say that the infinite word 
$\eta = (\eta_{\vec n})_{\vec n\in{\Z^d}}$ 
is {\em periodic} if there exists a {\em period vector}, meaning a vector 
$\vec m\in\Z^d$ such that $\eta_{\vec n} = \eta_{\vec n+\vec m}$ 
for all $\vec n\in\Z^d$.

Nivat conjectured that there is a simple analog of the Morse-Hedlund Theorem in two dimensions: 
\begin{conjecture*}[Nivat~\cite{N}]
For $\eta\in\A^{\ZZ}$, if there exist integers $n,k\geq 1$ such that $P_{\eta}(n,k)\leq nk$, then $\eta$ is periodic.
\end{conjecture*}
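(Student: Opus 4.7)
The plan is to associate to $\eta$ the two-dimensional subshift $X_\eta := \overline{\{T^{\vec v}\eta : \vec v \in \ZZ\}} \subseteq \A^{\ZZ}$, where $T^{\vec v}$ denotes the shift action by $\vec v \in \ZZ$, and to study this $\ZZ$-action through the lens of \emph{nonexpansive subdynamics} in the sense of Boyle and Lind. The complexity hypothesis $P_\eta(n,k) \leq nk$ simultaneously bounds the number of $n \times k$ patterns appearing in every element of $X_\eta$, and this bound is the combinatorial leverage I would push throughout.

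The first technical step is to introduce the collection of \emph{nonexpansive lines}: a one-dimensional subspace $L \subseteq \R^2$ is expansive for the action if there exists $t > 0$ such that any two $x, y \in X_\eta$ that agree on the thickened strip $\{\vec v \in \ZZ : \mathrm{dist}(\vec v, L) \leq t\}$ must coincide. The Boyle--Lind theory guarantees that the set of nonexpansive lines is closed and, as long as the $\ZZ$-action on $X_\eta$ is nontrivial, nonempty. Assuming for contradiction that $\eta$ is aperiodic, my goal would be to show that this set must actually be empty.

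The argument I have in mind proceeds in two stages. In \textbf{Stage A} I would show that every nonexpansive line has rational slope. The input is that if $L$ is nonexpansive, then for each $t$ there exist distinct $x, y \in X_\eta$ agreeing on the $t$-thickening of $L$; a counting argument against the bound $P_\eta(n,k) \leq nk$ should rule out an irrational slope, because independent shifts of the agreeing strip would otherwise generate too many distinct $n \times k$ blocks relative to the area budget $nk$. In \textbf{Stage B} I would pass from a nonexpansive rational direction to an actual period. Writing $\vec m$ for the primitive lattice vector generating $L \cap \ZZ$, I would restrict $\eta$ to strips of varying width $w$ parallel to $\vec m$ and treat each such strip as a one-dimensional sequence over the alphabet of columns of height $w$. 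The two-dimensional complexity bound would convert into a one-dimensional complexity bound, to which a Morse--Hedlund-style argument applies and produces a period in the $\vec m$-direction.

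The principal obstacle I expect is \textbf{Stage B}. Nonexpansiveness of $L$ only asserts that \emph{some} pair of elements of $X_\eta$ coincides on arbitrarily wide strips around $L$, which is strictly weaker than $\eta$ itself being invariant under any shift, and the orbit-closure procedure is lossy in this sense. Closing this gap requires controlling how patterns along the strip propagate transversely, and extracting a genuine period rather than merely an almost-period from bi-infinite agreements. It is precisely the tightness of the hypothesis $P_\eta(n,k) \leq nk$ versus, say, $P_\eta(n,k) \leq \frac{nk}{2}$ that governs how much slack is available in this transverse propagation, and so I anticipate this stage to be where the full strength of Nivat's conjecture, as opposed to the weak form corollary announced in the abstract, would stand or fall.
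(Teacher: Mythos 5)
The statement you are asked to prove is Nivat's Conjecture itself, which this paper does not prove and which remains open; the paper only establishes a reduction (periodicity follows from $P_\eta(n,k)\leq nk$ \emph{together with} uniqueness of the nonexpansive $1$-dimensional subspace, Theorem~\ref{singleperiodic}) and the weak form with constant $\frac12$ (Theorem~\ref{mainthm}). Your outline correctly reproduces the paper's framework: your Stage~A is essentially Lemma~\ref{lemma:no-irrational} (irrational lines are expansive, proved via $\eta$-generating sets), and your Stage~B corresponds to the content of Theorem~\ref{singleperiodic}. But your own closing caveat is exactly where the proposal fails to be a proof. Nonexpansiveness of a rational direction $L$ does not, under the hypothesis $P_\eta(n,k)\leq nk$ alone, yield a period of $\eta$ parallel to $L$: the paper's trichotomy shows that the only unresolved case is the presence of two or more linearly independent one-sided nonexpansive directions, and in that case nothing like your Stage~B is known to go through. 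The entire second half of the paper (Sections~\ref{nk/2} and~\ref{maintheorem}) is devoted to excluding that case, and it succeeds only by invoking the stronger bound $P_\eta(n,k)\leq\frac{nk}{2}$ at several specific points (existence of strong generating sets, Lemma~\ref{stronggeneratingset}; balanced sets, Lemma~\ref{balancedlemma}; the period bounds of Proposition~\ref{prop:extendedambiguousperiod}). So the proposal reduces the conjecture to a statement that is equivalent to the open problem, rather than proving it.

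A second, more technical objection to Stage~B as written: the conversion of the two-dimensional bound into a one-dimensional Morse--Hedlund hypothesis does not work by naive recoding. Viewing a width-$w$ strip parallel to $\vec m$ as a sequence over the alphabet of height-$w$ columns, the Morse--Hedlund threshold requires at most $N$ distinct words of length $N$, whereas $P_\eta(N,w)\leq Nw$ only bounds the count by $Nw$, which is far too weak for $w>1$. This is why the paper replaces the direct recoding by the discrepancy function and $\eta$-generating sets (Section~\ref{sec:background}), which localize the statement ``one missing symbol is determined by the rest'' and then propagate colorings across extensions of convex regions (Lemma~\ref{extend}, Corollary~\ref{uniqueextension}); the one-dimensional Morse--Hedlund Theorem is applied only to the bounded family of boundary colorings of such regions (as in Lemma~\ref{ambiguousperiod}), not to the strip itself. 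Any serious attempt at Stage~B would need machinery of this kind, and even then the multi-direction case remains out of reach at the $nk$ threshold.
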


In a first step toward the conjecture, Sander and Tijdeman~\cite{ST} showed that if there is some $n$ such that $P_{\eta}(n,2)\leq 2n$ (or such that $P_\eta(2,n)\leq 2n$), then $\eta$ is periodic.  Soon after, Epifanio, Koskas and Mignosi~\cite{EKM} proved a weak 
version of the conjecture showing that if $P_{\eta}(n,k)\leq\frac{nk}{144}$ for some $n$ and $k$, then $\eta$ is periodic; Quas and Zamboni~\cite{QZ} improved the constant to $\frac{1}{16}$.

Conversely, Sander and Tijdeman~\cite[Example 5]{ST2} found counterexamples to the analog of Nivat's Conjecture in higher dimensions: if $d\geq3$ and $n\in\N$, there exists aperiodic $\eta\in\{0,1\}^{\Z^d}$, 
depending on $n$ and $d$, for which $P_{\eta}(n,\dots,n)=2n^{d-1}+1$.  This even rules out the possibility that an analog of Quas and Zamboni's Theorem holds for $d\geq3$.  The construction 
described by Sander and Tijdeman is a discretization of two skew lines in $\R^d$, 
and so does not provide a counterexample to Nivat's Conjecture in dimension two.

As with the Morse-Hedlund Theorem,  Nivat's conjectured relation between complexity 
and periodicity is sharp: the aperiodic coloring $\delta\in\{0,1\}^{\ZZ}$ with a  $1$ at $(0,0)$ and $0$ elsewhere satisfies $P_{\delta}(n,k)=nk+1$ for all integers $n,k\geq 1$.  
In contrast to the Morse-Hedlund Theorem, the relation is not an equivalence. Berth\'e and Vuillon~\cite{BV} and Cassaigne~\cite{C} gave examples of infinite $2$-dimensional 
words $\eta$ that are periodic, but whose block complexity satisfies 
$P_{\eta}(n,k)=2^{n+k-1}$ for all integers $n,k\geq 1$.

Further partial results connected to Nivat's Conjecture and its generalizations are given 
in~\cite{BV, Br, DR, ST2,  ST3, ST}, and we refer the reader to~\cite{B, C, F} for additional discussion.

Our main result is an improvement on Quas and Zamboni's Theorem:
\begin{theorem}\label{mainthm}
For $\eta\in\A^{\ZZ}$, if there exist integers $n,k\geq 1$ such that $P_{\eta}(n,k)\leq\frac{nk}{2}$, then $\eta$ is periodic.
\end{theorem}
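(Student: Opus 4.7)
The plan is to reduce Theorem~\ref{mainthm} to the structural equivalence announced in the abstract: under the weaker hypothesis $P_\eta(n,k)\leq nk$, periodicity of $\eta$ is equivalent to a condition on the nonexpansive subspaces of the associated $\ZZ$-shift action. Since $P_\eta(n,k)\leq nk/2$ is strictly stronger, the task is to verify the expansive-subspace condition directly, using the extra factor of two to squeeze out the needed rigidity.

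First I would form the $\ZZ$-subshift $X_\eta\subseteq\A^{\ZZ}$ as the orbit closure of $\eta$ under the shift, and recall the Boyle--Lind framework of nonexpansive subspaces: a one-dimensional subspace $L\subset\R^2$ is \emph{expansive} for the action on $X_\eta$ if there is $t>0$ such that two points of $X_\eta$ agreeing on the $t$-neighborhood of $L\cap\ZZ$ coincide everywhere. The set of nonexpansive lines is compact, and nonempty whenever $X_\eta$ is infinite. The natural form of the equivalence from the abstract is: under $P_\eta(n,k)\leq nk$, $\eta$ is periodic if and only if every nonexpansive line of $X_\eta$ is rational, that is, contains a nonzero integer vector.

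The heart of the argument is a counting step showing that $P_\eta(n,k)\leq nk/2$ forces every nonexpansive line of $X_\eta$ to be rational. I would argue the contrapositive: suppose $L$ is nonexpansive with irrational slope. A nonexpansiveness witness yields two points of $X_\eta$ that agree on arbitrarily wide neighborhoods of $L$ yet differ somewhere. Irrationality of the slope means that, as one slides an $n\times k$ window along directions transverse to $L$, successive translates pick up \emph{independent} mismatches; a packing estimate along a long $L$-oriented strip should then produce strictly more than $nk/2$ distinct $n\times k$ sub-blocks in $\eta$, contradicting the hypothesis. This echoes the strip counts of Sander--Tijdeman and Quas--Zamboni, but the constant $1/2$ is chosen precisely so that the packing just fills the available room; the earlier weaker constants $1/144$ and $1/16$ correspond to lossier versions of the same picture.

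Once every nonexpansive line is forced to be rational, the assumed equivalence yields periodicity of $\eta$. The main obstacle is the counting step above: turning a purely topological-dynamical nonexpansiveness witness in an irrational direction into a sharp combinatorial lower bound of the form $P_\eta(n,k)>nk/2$. Reaching the $1/2$ constant likely requires a one-dimensional Morse--Hedlund analysis along the densest rational sub-lattice approximating $L$, combined with a transversal independence estimate that assigns each mismatched pair its own contribution to the block count. The structural equivalence used here as a black box is presumably the deeper technical core of the paper, with the passage from $nk$ to $nk/2$ being the clean quantitative corollary.
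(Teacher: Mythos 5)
Your reduction targets the wrong structural condition, and this is a fatal gap. You propose that under $P_\eta(n,k)\leq nk$ periodicity of $\eta$ is equivalent to every nonexpansive line being rational, and that the role of the stronger bound $nk/2$ is to force rationality of nonexpansive lines via a packing/counting argument in an irrational direction. But rationality of nonexpansive lines is already automatic under the \emph{weaker} hypothesis $P_\eta(n,k)\leq nk$: the paper proves (Lemma~\ref{lemma:no-irrational}, via the existence of an $\eta$-generating set inside $R_{n,k}$) that any irrational line through the origin is expansive whenever $P_\eta(n,k)\leq nk$. So your proposed "heart of the argument" establishes something that costs nothing extra and, more importantly, does not imply periodicity. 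The correct trichotomy is on the \emph{number} of nonexpansive one-dimensional subspaces: zero forces double periodicity (Theorem~\ref{doubleperiodicity}), exactly one forces single periodicity (Theorem~\ref{singleperiodic}), and the entire content of the $nk/2$ hypothesis is to rule out the existence of \emph{two or more} linearly independent (one-sided) nonexpansive directions --- all of which are rational. Your argument says nothing about that case, and a word with two distinct rational nonexpansive directions is exactly the configuration that is compatible with aperiodicity and must be excluded.

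Ruling out two rational nonexpansive directions under $P_\eta(n,k)\leq nk/2$ is Theorem~\ref{twoormore} and occupies Sections~\ref{nk/2} and~\ref{maintheorem} of the paper: the factor of two is used to produce strong $\eta$-generating sets and $\ell$-balanced sets with sharpened bounds on the number of non-uniquely extending colorings (roughly $\lfloor|w\cap\S|/2\rfloor$ instead of $|w\cap\S|-1$), which via the Morse--Hedlund and Fine--Wilf theorems yields periodicity along strips; one then constructs an aperiodic element of $X_\eta$ that is doubly periodic on a large convex region and derives a contradiction by counting too many colorings of a subset $\S^*$ of a generating set along the boundary of that region. None of this is touched by a transversal independence estimate in an irrational direction, so the proposal as written cannot be completed along the lines you describe.
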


Our proof is dynamical in nature: we associate a $\ZZ$-dynamical system with $\eta$ and study its subdynamics to prove the periodicity of $\eta$.

\subsection{Expansive subdynamics and the conjecture}
Suppose $\A$ is a finite alphabet; throughout we  assume that $\left|\A\right|>1$. 
In a classical way, we endow $\A$ with the discrete topology, $\A^{\Z^d}$ with the product topology, 
and define a $\Z^d$-action on $X=\A^{\Z^d}$ by $(T^{\vec u}\eta)(\vec x):=\eta(\vec x+\vec u)$ for $\vec u \in\Z^d$.  With respect to this topology, 
the maps $T^{\vec u}\colon X\to X$ are continuous.  
In a slight abuse, we omit the transformations $T^{\vec u}$ from our notation, and 
let $\mathcal{O}(\eta):=\{T^{\vec u}\eta\colon\vec u\in\Z^d\}$ denote the 
$\Z^d$-orbit of $\eta\in\A^{\Z^d}$ and write $X_{\eta}:=\overline{\mathcal{O}(\eta)}$.  

In this dynamical setup, we can rephrase periodicity.  The statement that 
$\eta$ is periodic is equivalent to saying that $\Z^d$ 
does not act faithfully on $X_{\eta}$.  A word $\eta$ is doubly periodic if it 
has two non-commensurate period vectors, and for $d=2$, the statement $\eta$ is doubly periodic
becomes $X_{\eta}$ is finite.

Expansiveness is a classical notion: a $\Z^d$-action by continuous maps $(T^{\vec n}\colon \vec n\in\Z^d)$ on a compact metric space 
$X$ is {\em expansive} if there exists $\delta > 0$ such that for any distinct points $x,y\in X$, 
there exists $\vec n\in\Z^d$ such that $d(T^{\vec n}x, T^{\vec n} y)\geq \delta$.  
In a symbolic setting, every $\Z^d$ action is expansive.  In particular, the space $X_\eta$ endowed with the translations $T^{\vec u}$ for $\vec u \in\Z^d$ is expansive.  
To study $\Z^d$-dynamical systems, Boyle and Lind~\cite{BL} introduced a finer notion, that of expansiveness for subspaces of $\R^d$.

The condition of expansiveness for a given $\Z^d$-action is open in each of the Grassmannian manifolds of $\R^d$ and important dynamical quantities, such as measure-theoretic and topological directional entropy, vary in a controlled manner within each connected component of this set~\cite{BL}.  Boyle and Lind define a subspace $V\subseteq\R^d$ to be {\em expansive} if there exist an {\em expansiveness radius} $r > 0$ and an {\em expansiveness constant} $\delta > 0$ such that whenever $x,y\in X$ satisfy
$$
d(T^{\vec u}x,T^{\vec u}y)<\delta
$$
for all $\vec u$ with $d(\vec u,V)<r$, then $x=y$.  If $V = \R^d$, we recover the 
usual definition of expansiveness.  Any subspace that is not expansive is called a {\em nonexpansive} subspace.   They showed that $\Z^d$-dynamical systems with nonexpansive subspaces are common:

\begin{theorem}[Boyle and Lind~\cite{BL}]
\label{BoyleLind}
Let $X$ be an infinite compact metric space with a continuous $\Z^d$-action.  For each $0\leq k<d$, there exists a $k$-dimensional subspace of $\R^d$ that is nonexpansive.
\end{theorem}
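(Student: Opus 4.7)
My plan is to reduce the theorem to finding a single nonexpansive hyperplane (the case $k=d-1$) and then handle that case by a compactness argument on the Grassmannian. The reduction relies on the \emph{monotonicity} of nonexpansiveness: if $V\subseteq W$ are subspaces of $\R^d$, then the $r$-neighborhood of $V$ is contained in the $r$-neighborhood of $W$, so any distinct pair $(x,y)$ witnessing nonexpansiveness of $W$ at scale $(r,\delta)$ also witnesses nonexpansiveness of $V$ at the same scale. Hence, once a nonexpansive $(d-1)$-dimensional subspace $W$ is produced, each $k$-dimensional subspace of $W$ is automatically nonexpansive, providing examples for every $0\leq k<d$.

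As a warm-up that also handles the $k=0$ case directly, I would show that $\{0\}$ is nonexpansive whenever $X$ is infinite. If $\{0\}$ were expansive with constants $(r,\delta)$, then the finite family $\{T^{\vec u}: \vec u\in\Z^d,\ \|\vec u\|<r\}$ would separate distinct points of $X$ to distance at least $\delta$. The associated evaluation map $\Phi\colon X\to X^N$ (with $N$ the number of integer points in the open ball of radius $r$) would then be a continuous injection whose image is $\delta$-separated and compact, hence finite, contradicting the infinitude of $X$.

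For the hyperplane case I would argue by contradiction, assuming every $V\in\mathrm{Gr}_{d-1}(\R^d)$ is expansive. The first subclaim is that expansiveness is an open condition on the Grassmannian: a sufficiently small perturbation $V'$ of $V$ has $r$-neighborhood close to that of $V$ on every bounded ball, so the separating translates promised by expansiveness of $V$ still witness expansiveness of $V'$ with slightly degraded parameters. Compactness of $\mathrm{Gr}_{d-1}(\R^d)$ then yields a single pair $(r_0,\delta_0)$ of constants that work uniformly for every hyperplane.

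The main obstacle is the final step: converting uniform hyperplane expansiveness into a contradiction with $X$ being infinite. For distinct $x,y\in X$, I would examine the separation set $S_{x,y}=\{\vec u\in\Z^d : d(T^{\vec u}x,T^{\vec u}y)\geq \delta_0\}$. Uniform hyperplane expansiveness forces $S_{x,y}$ to meet the $r_0$-neighborhood of every hyperplane through the origin, and applying the same argument to the pairs $(T^{\vec w}x,T^{\vec w}y)$ as $\vec w$ ranges over $\Z^d$ upgrades this to: $S_{x,y}$ meets the $r_0$-neighborhood of every affine hyperplane. This should force $S_{x,y}$ to be $c$-dense in $\Z^d$ for a constant $c$ depending only on $r_0$ and $d$. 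A packing argument comparing $(\delta_0/2)$-separated local orbit configurations inside a box of radius $c$ should then bound the cardinality of $X$, yielding the desired contradiction.
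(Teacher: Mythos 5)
Your reduction to the hyperplane case via monotonicity is correct, the $k=0$ warm-up is fine, and invoking openness of expansiveness on the Grassmannian plus compactness to extract uniform constants $(r_0,\delta_0)$ is the right first half of the argument. (Though note that even the openness subclaim cannot be proved the way you sketch it: the $r$-neighborhoods of $V$ and of a perturbation $V'$ diverge at infinity, so the separating translates for $V$ do not literally witness expansiveness of $V'$; one needs a bounded coding lemma first.) The genuine gap is in the final step. From uniform hyperplane expansiveness you correctly deduce that for distinct $x,y$ the separation set $S_{x,y}$ meets the $r_0$-neighborhood of every affine hyperplane $V+\vec w$ with $\vec w\in\Z^d$. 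But this property does not force $S_{x,y}$ to be $c$-dense: hyperplanes are unbounded, so a set such as $\Z^d\setminus B(0,R)$ meets the $r_0$-neighborhood of every affine hyperplane while omitting an arbitrarily large ball around the origin. The packing argument therefore has nothing to bite on and no contradiction materializes.

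The missing idea is a coding and propagation argument rather than a density argument. Expansiveness of a hyperplane $V$ with constants $(r_0,\delta_0)$ yields, by compactness of $X$, a \emph{bounded} coding window: there is $t$ such that $\delta_0$-agreement of two orbits on $N_{r_0}(V)\cap B_t(\vec u)$ forces $\delta_0$-agreement at points just on the far side of $V$ near $\vec u$. Uniformity over the Grassmannian then lets you start from a sufficiently large finite box and repeatedly enlarge the set of coordinates it determines, sweeping hyperplanes outward in every direction until all of $\Z^d$ is determined; a single finite window then separates points of $X$ to distance $\delta_0$, and your own warm-up argument (continuous injection into a finite product with $\delta_0$-separated image) shows $X$ is finite. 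This is precisely the route the paper takes in its symbolic setting: in the proof of Theorem~\ref{doubleperiodicity}, Claim~\ref{claim:double} produces a finite set $F$ such that every $\eta$-coloring of $F$ extends uniquely to all of $\ZZ$, by growing a strongly $E(\S)$-enveloped set one edge-extension at a time using one-sided expansiveness (via Lemma~\ref{parameterperimeter} and Corollary~\ref{uniqueextension}), after which finiteness, i.e.\ double periodicity, follows by pigeonhole.
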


When restricting to $d=2$ and the context of 
$X=X_{\eta}$, a simple corollary 
is that $\eta$ is doubly periodic if and only if every subspace of $\R^2$ is expansive.
(As throughout the paper, we mean this with respect to the $\ZZ$-action on $X_{\eta}$ by translation.)  When there exist $n,k\in\N$ such that $P_{\eta}(n,k)\leq nk$, the connection between expansive subspaces of $\R^2$ and periodicity of $\eta$ goes deeper.  We show:

\begin{theorem}\label{singleperiodic}
Suppose $\eta\in\A^{\ZZ}$ and $X_{\eta}:=\overline{\mathcal{O}(\eta)}$.  If there exist $n,k\in\N$ such that $P_{\eta}(n,k)\leq nk$ and there is a unique nonexpansive $1$-dimensional subspace for the $\ZZ$-action (by translation) on $X_{\eta}$.  Then $\eta$ is periodic but not doubly periodic, the unique nonexpansive line $L$ is a rational line through the origin, and every period vector for $\eta$ is contained in $L$.
\end{theorem}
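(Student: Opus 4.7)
The plan is to establish three assertions in turn: (a) $\eta$ is not doubly periodic, (b) $\eta$ is periodic, and (c) $L$ is rational and every period vector for $\eta$ lies in $L$. Assertion (a) is essentially free: a doubly periodic $\eta$ would have finite $X_\eta$, but a finite discrete metric space has no nonexpansive subspaces (take $\delta$ smaller than the minimum distance between distinct points), contradicting the hypothesis that $L$ is nonexpansive. Hence $X_\eta$ is infinite.

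For (c), I would first prove the auxiliary lemma that every nonzero period vector $\vec m \in \ZZ$ of $\eta$ yields a nonexpansive line $\R\vec m$. All of $X_\eta$ inherits the period $\vec m$ by shift-invariance and continuity, so the strip $S_r = \{\vec u \in \ZZ : d(\vec u, \R\vec m) < r\}$ is $\vec m$-invariant with finite quotient modulo $\Z\vec m$; hence the restrictions of elements of $X_\eta$ to $S_r$ form a finite set of patterns. If $\R\vec m$ were expansive with radius $r$, this would force $X_\eta$ finite, contradicting (a). Granting (b), this lemma combined with the uniqueness of $L$ gives $\R\vec m = L$, so $\vec m \in L$; since $\vec m$ is a nonzero integer vector on $L$, the line $L$ is rational.

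The substantive step is (b). The strategy is to combine a Morse-Hedlund-style pigeonhole (provided by $P_\eta(n,k) \leq nk$) with the expansive subdynamics on 1-dimensional subspaces $L' \neq L$. Nonexpansiveness of $L$, together with compactness of $X_\eta$, produces two distinct elements $\eta_1, \eta_2 \in X_\eta$ that agree on arbitrarily large neighborhoods of $L$. The complexity bound forces a pigeonhole collision among $n \times k$ rectangles in $\eta$, producing a lattice vector $\vec m$ such that $\eta$ and $T^{\vec m}\eta$ agree on a large region. Since every subspace $L' \neq L$ is expansive, local agreement in a strip around $L'$ propagates to global agreement, and the uniqueness of $L$ is essential in ruling out other obstructions to this propagation. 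Combining the collision with the near-agreement between $\eta_1$ and $\eta_2$ should produce a genuine $\vec m \in \ZZ$ with $T^{\vec m}\eta = \eta$.

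I expect the main obstacle to be (b). A plain pigeonhole produces only local agreement, and promoting it to a true period requires the complexity bound and the expansive decomposition to interact in a delicate way. The propagation direction must be controlled using the uniqueness of $L$, and the candidate $\vec m$ must be shown to be an honest integer period vector rather than merely a rational direction of periodicity. Getting the pigeonhole collision and the near-agreement along $L$ to dovetail, without implicitly introducing a second nonexpansive direction, is where the most intricate combinatorial-geometric work will reside.
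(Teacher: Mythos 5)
Your parts (a) and (c) are essentially correct: (a) is the easy direction of Theorem~\ref{doubleperiodicity}, and your argument that a nonzero period vector $\vec m$ forces $\R\vec m$ to be nonexpansive (finite pattern set on a strip plus expansiveness would force $X_\eta$ finite) is a clean, legitimately different route to rationality and to ``all period vectors lie in $L$'' than the paper's (which rules out irrational nonexpansive lines directly via Lemma~\ref{lemma:no-irrational}). But both of these are peripheral; the theorem lives in part (b), and there your proposal has a genuine gap rather than a proof.

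The gap is the passage from the complexity hypothesis to agreement on a \emph{large} region. A pigeonhole collision among $n\times k$ rectangles yields only that two translates of $\eta$ agree on a single $n\times k$ rectangle; it does not produce agreement ``on a large region,'' and expansiveness of a line $L'\neq L$ cannot then be invoked, because expansiveness only propagates agreement that already holds on an entire infinite $r$-neighborhood of $L'$ (or, via compactness, on a long finite window --- but iterating finite-window coding loses length as it gains thickness, so it does not by itself fill out an infinite strip from finite data). The missing ingredient is the conversion of $P_\eta(n,k)\leq nk$ into a \emph{local extension rule}: the discrepancy function yields an $\eta$-generating set $\S\subseteq R_{n,k}$ (Corollary~\ref{generatingset}) whose extreme points are determined by the rest of $\S$, and this, combined with one-sided expansiveness of every direction other than $L$ and the bookkeeping of enveloped sets and edge-extensions (Lemmas~\ref{parameterperimeter} and~\ref{extend}, Corollary~\ref{uniqueextension}), is what shows that a fixed finite set $\T_0$ determines the coloring of an entire infinite strip parallel to $L$ with no loss. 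Only then does pigeonhole apply --- not to rectangles of $\eta$, but to the finitely many colorings of $\T_0$ translated along $L$ inside the strip --- giving each strip a period $\leq P_\eta(\T_0)$ parallel to $L$, hence a global period. Your plan correctly identifies where the difficulty sits but supplies neither the generating-set mechanism nor the finite-to-infinite propagation, and as written the step ``collision $\Rightarrow$ large region of agreement $\Rightarrow$ propagate by expansiveness'' does not go through.
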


Thus Nivat's Conjecture reduces to: 
\begin{modnivatconj}
If $\eta\in\A^{\ZZ}$, $X_{\eta}:=\overline{\mathcal{O}(\eta)}$, and there exist $n,k\in\N$ such that $P_{\eta}(n,k)\leq nk$, there is at most one nonexpansive $1$-dimensional subspace for the $\ZZ$-action (by translation) on $X_{\eta}$.
\end{modnivatconj}

Under a stronger hypothesis, on the complexity, we show that this holds:
\begin{theorem}\label{twoormore}
If $\eta\in\A^{\ZZ}$ and there exist $n,k\in\N$ such that $P_{\eta}(n,k)\leq\frac{nk}{2}$, then there is at most one nonexpansive $1$-dimensional subspace for the $\ZZ$-action (by translation) on $X_{\eta}$.
\end{theorem}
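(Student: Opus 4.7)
The plan is to prove the contrapositive: assuming $X_\eta$ admits two distinct nonexpansive $1$-dimensional subspaces $L_1, L_2 \subseteq \R^2$ for the $\ZZ$-action, I will derive $P_\eta(n,k) > nk/2$ for every $n, k \in \N$.

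First I would extract concrete symbolic witnesses of nonexpansiveness. For each $i \in \{1,2\}$ and each $r > 0$, the definition of nonexpansiveness (together with the discreteness of the alphabet) yields distinct configurations in $X_\eta$ that agree on the strip $\{\vec v \in \ZZ : d(\vec v, L_i) \leq r\}$. Translating each pair so that a disagreement closest to $L_i$ lies near the origin, and passing to a subsequential limit in the compact product $X_\eta \times X_\eta$, yields a distinguished pair $(x_i, y_i)$ with $x_i \neq y_i$ that agree on an entire open half-plane bordered by a line parallel to $L_i$. These pairs convert the topological-dynamical hypothesis into a combinatorial handle on $X_\eta$ that one can manipulate directly.

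Next I would translate these witnesses into many distinct $n \times k$ subpatterns. Each $L_i$ produces a family of $n \times k$ patterns in $\eta$ that admit two distinct completions on opposite sides of the relevant half-plane. By translation invariance of $X_\eta$ along $L_i$ and a direct count, the number of such \emph{ambiguous} patterns is proportional to $nk$ in a suitably rescaled window, and each such ambiguity contributes a fresh $n \times k$ subblock to $P_\eta(n,k)$.

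The main obstacle is showing that the contributions from $L_1$ and $L_2$ combine to exceed $nk/2$, despite the possibility that a single block is ambiguous in both directions simultaneously. I would address this by selecting a parallelogram $\Pi$ with sides parallel to $L_1$ and $L_2$ of area comparable to $nk$, tiling a large region of $\eta$ by translates of $\Pi$, and invoking a discrete, Minkowski-style counting argument to split the directional ambiguities into two essentially disjoint families each contributing at least $nk/4$ distinct $n \times k$ blocks. A rationality dichotomy (distinguishing whether each $L_i$ has rational or irrational slope) would likely be needed to handle the lattice geometry uniformly. Reconciling these two independent sources of ambiguity into a single lower bound on $P_\eta(n,k)$ is the heart of the argument and the place where the constant $\tfrac{1}{2}$ in the hypothesis is actually used.
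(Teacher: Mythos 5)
There is a genuine gap, and it sits exactly where you locate ``the heart of the argument.'' Your plan is to show that each nonexpansive direction produces on the order of $nk$ ambiguous $n\times k$ patterns, each contributing a ``fresh'' block, and then to combine the two directions by a counting argument. But the quantitative structure runs the other way: under the hypothesis $P_\eta(n,k)\leq nk/2$, the number of colorings of $\S\setminus w$ (for a generating set $\S$ and an edge $w$) that extend non-uniquely to colorings of $\S$ is bounded \emph{above} by $P_\eta(\S)-P_\eta(\S\setminus w)$, which the discrepancy hypothesis forces to be small (at most $\lfloor\left|w\cap\S\right|/2\rfloor$ for a strong generating set). So ambiguity is scarce, not abundant; nonexpansiveness guarantees that at least one ambiguous half-plane coloring exists, while the complexity bound caps how many distinct ambiguous patterns can occur, and it is precisely this tension (fed into the Morse--Hedlund theorem) that forces periodicity along strips parallel to the nonexpansive direction. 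Your proposed mechanism, in which ambiguities directly generate $\geq nk/4$ distinct blocks per direction, has no supporting argument and is in conflict with this upper bound; nothing in the proposal explains why distinct ambiguous positions would yield distinct $n\times k$ blocks.

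For comparison, the paper's proof also argues by contradiction from two one-sided nonexpansive directions, but the route is entirely different: it uses the two directions to build an aperiodic $\alpha\in X_\eta$ that agrees with a singly periodic configuration on a large convex region $K$ and is doubly periodic on $K$; it then bounds the $w_1$- and $w_3$-periods of $\alpha$ on $K$ using strong generating sets and the Fine--Wilf theorem; and finally it counts colorings of a convex subset $\S^*\subset\S$ arising along the two semi-infinite boundary edges of $K$, producing strictly more than $P_\eta(\S)-P_\eta(\S^*)$ colorings of $\S^*$ that extend non-uniquely to $\S$ --- an impossibility. Your half-plane witness extraction in the first step is sound and matches the paper's Lemma~\ref{ambiguousinorbit}, and your instinct about a rationality dichotomy is addressed by Lemma~\ref{lemma:no-irrational} (irrational lines are always expansive here), but the remainder of the proposal would need to be replaced by an argument that exploits, rather than contradicts, the scarcity of ambiguous extensions.
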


Theorem~\ref{mainthm} follows immediately by combining Theorems~\ref{BoyleLind},~\ref{singleperiodic}, and~\ref{twoormore}.

Summarizing, if $\eta\in\A^{\ZZ}$ and there exist $n,k\in\N$ for which $P_{\eta}(n,k)\leq nk$, we 
prove that there is a trichotomy for the $\ZZ$-action by translation on $X_{\eta}$:
\begin{enumerate}
\item{\bf No nonexpansive $1$-dimensional subspaces.}  In this case, 
Theorem~\ref{BoyleLind} implies that $\eta$ is doubly periodic.
\item{\bf A unique nonexpansive $1$-dimensional subspace.}  In this case, 
Theorem~\ref{singleperiodic} implies that $\eta$ is periodic, but not doubly periodic.
\item{\bf At least two nonexpansive $1$-dimensional subspaces.}  If one could show that this case can not hold, Nivat's Conjecture follows.  In Theorem~\ref{twoormore},  
we show that this case is impossible if we strengthen the hypothesis on $\eta$ to the existence 
of $n,k\in\N$ such that $P_{\eta}(n,k)\leq\frac{nk}{2}$.
\end{enumerate}

To fully take advantage of the complexity assumption, we require a finer
notion than expansiveness studied by Boyle and Lind. Namely, we define a
one-sided version of expansiveness (see Section~\ref{expansivesection} for the precise
definition) which corresponds to a subspace determining a pattern in
only one direction. Similar to the use of expansiveness by Boyle and Lind, the more relevant notion is
that of one-sided nonexpansiveness. While expansive implies
one-sided expansive and one-sided nonexpansive implies nonexpansive,
the converse statements do not necessarily hold. A similar notion was
studied in~\cite{BlM} and~\cite{BoM}.

\subsection{Another reformulation of the conjecture} 
The proofs of Theorems~\ref{singleperiodic} and~\ref{twoormore} ultimately rely on the fact that if there exist $n,k\in\N$ with $P_{\eta}(n,k)\leq nk$, then the value of $\eta_{\vec n}$ can be deduced from information about the value of $\eta_{\vec m}$ for $\vec m\in\ZZ$ 
that are nearby in an appropriate sense.  
Following Sander and Tijdeman~\cite{ST2}, we make the following definition:
\begin{definition}
For $\S\subseteq\ZZ$, let $\mathcal{W}(\S,\eta):=\{(T^{\vec u}\eta)\rst{\S}\colon\vec u\in\ZZ\}$ be the set of distinct {\em $\eta$-colorings of $\S$} (or {\em $\S$-words}) and define the {\em $\eta$-complexity function} to be
$$
P_{\eta}(\S):=\left|\mathcal{W}(\S,\eta)\right|.
$$
\end{definition}

Note that this generalizes the definition of the complexity function $P_\eta(n,k)$ for rectangles.  
It is immediate that if $\T\subset\S$, then $P_{\eta}(\T)\leq P_{\eta}(\S)$.  We also note that if $\S\subset\ZZ$ is a fixed, finite set and $\eta\in\A^{\ZZ}$, then $P_{\eta}(\S)=P_{T^{\vec u}\eta}(\S)$ for any $\vec u\in\ZZ$.  Moreover if $\alpha\in X_{\eta} = \overline{\mathcal{O}(\eta)}$, then $P_{\alpha}(\S)\leq P_{\eta}(\S)$.  This is particularly useful in our constructions, since any complexity bound on $\eta$ implies a (possibly stronger) complexity bound on any element of $X_{\eta}$.

If $\T\subset\S$ and $\alpha\in\mathcal{W}(\T,\eta)$, we say that $\beta\in\mathcal{W}(\S,\eta)$ is an {\em extension} of $\alpha$ if $\beta\rst{\T}=\alpha$.  Moreover, we say that $\alpha$ {\em extends uniquely to an $\eta$-coloring of $\S$} if there exists a unique $\beta\in\mathcal{W}(\S,\eta)$ that is an extension of $\alpha$.

We define a {\em discrepancy function} that measures the difference between the complexity of a set and its size:
\begin{definition}
For $\eta\colon\ZZ\to\A$, the {\em $\eta$-discrepancy function} $D_\eta(\S)$, or just the {\em discrepancy function} when $\eta$ is clear from context, is defined on the set of all nonempty, finite subsets of $\ZZ$ by
$$
D_{\eta}(\S):=P_{\eta}(\S)-\left|\S\right|.
$$
\end{definition}
The discrepancy function has the useful property (Lemma~\ref{generated}) that if $\S\subset\ZZ$ contains at least two elements and $x\in\S$, then either $D_{\eta}(\S\setminus\{x\})\leq D_{\eta}(\S)$ or every $\eta$-coloring of $\S\setminus\{x\}$ extends uniquely to an $\eta$-coloring of $\S$ .
The discrepancy of any one element subset of $\ZZ$ is $\left|\A\right|-1>0$ and the hypothesis of Nivat's conjecture is that there exists a rectangular subset of $\ZZ$ whose discrepancy is non-positive.  This implies (Corollary~\ref{generatingset}) the existence of a set $\S\subset\ZZ$ with the property that for many $x\in\S$,  every  $\eta$-coloring of $\S\setminus\{x\}$ extends uniquely to an $\eta$-coloring of $\S$.

In this terminology, the Modified Nivat Conjecture becomes:
\begin{modnivatconj2}
If $\eta\in\A^{\ZZ}$, $X_{\eta}:=\overline{\mathcal{O}(\eta)}$, and there exists an 
$n$ by $k$ rectangular subset $R$ of $\ZZ$ satisfying $D_\eta(R) \leq 0$, 
then there is at most one nonexpansive $1$-dimensional subspace for the $\ZZ$-action (by translation) on $X_{\eta}$.
\end{modnivatconj2}

Theorem~\ref{singleperiodic} uses the set $\S$ to show that for all $\vec u\in\ZZ$, the value of $T^{\vec u}\eta$ along a strip depends only on its restriction to a particular finite set.  Theorem~\ref{twoormore} is more subtle.  The stronger hypothesis on the complexity $P_{\eta}(n,k)$ allows us to show that one-sided nonexpansiveness 
gives rise to periodicity along strips (a more precise statement is contained in 
Proposition~\ref{prop:extendedambiguousperiod}).  When there are multiple one-sided nonexpansive subspaces, this forces $\eta$ to be doubly periodic on large, finite subsets of $\ZZ$.  We then 
complete the argument with an elaborate proof by contradiction by analyzing $\eta$ on the boundary of these subsets.

\subsection{A guide to the paper}

Sections~\ref{sec:background} and~\ref{sec:periodicitytheorems} 
develop tools for analyzing the nonexpansive subspaces of $X_{\eta}$.
In Section~\ref{sec:background}, we define {\em $\eta$-generating sets}, 
which allow us to extend colorings to large regions,
and prove a number of elementary lemmas establishing their existence and properties.
In Section~\ref{sec:periodicitytheorems}, we use this machinery to prove Theorem~\ref{singleperiodic}.  Along the way, we provide a new proof of Boyle and Lind's Theorem 
(Theorem~\ref{BoyleLind}) adapted to our setting.
The remainder of the paper is devoted to the proof of 
Theorem~\ref{twoormore}.  
In Section~\ref{nk/2}, we show how the stronger complexity bound assumed 
in Theorem~\ref{twoormore} can be used to obtain additional control over the set of one-sided nonexpansive directions for $X_{\eta}$ 
and in Section~\ref{maintheorem}, 
we use the machinery from Section~\ref{nk/2} to complete the 
proof of  Theorem~\ref{twoormore}.

\subsection*{Acknowledgment}  
We thank Mike Boyle, Alejandro Maass, and Anthony Quas for helpful conversations and we particularly thank the referee for numerous helpful suggestions and corrections.

\section{Periodicity and generating sets}
\label{sec:background}

\subsection{One dimension: the Morse-Hedlund Theorem}

For studying periodicity, we ultimately rely on the Morse-Hedlund Theorem.  Although 
this result is classical, for completeness we include the statement and proof for the various versions we use.  We start by defining the complexities for each of the possible settings: the integers, the natural numbers and a finite interval.  

\begin{definition}
If $f\colon\Z\to\A$, define $Tf\colon\Z\to\A$ by $(Tf)(n):=f(n+1)$.  Define $P_f(n)$ to be the number of distinct functions of the form $(T^mf)\rst{\{0,1,\dots,n-1\}}$, where $m$ ranges over $\Z$.  

Similarly, if $f\colon\N\to\A$, define $Tf\colon\N\to\A$ and $P_f(n)$ to be the analogous quantities, but with $m$ ranging over $\N$.  

If $a\in\Z$ and $f\colon\{a,a+1,a+2,\dots,a+i-1\}\to\A$, define $Tf\colon\{a-1,a,\dots,a+i-2\}\to\A$ by $(Tf)(n):=f(n+1)$ and define $P_f(n)$ to be the number of distinct functions of the form $(T^mf)\rst{\{a,a+1,\dots,a+n-1\}}$, where $0\leq m\leq i-n$ and $0\leq n\leq i$.
\end{definition}

\begin{theorem}[Morse-Hedlund~\cite{MH}]\label{MorseHedlundTheorem}
Suppose $f\colon U\to\A$, where $U\subseteq\Z$ is one of $\Z$, $\N$, or an interval of the form $\{a,a+1,\dots,a+i-1\}$ for some $a\in\Z$, and suppose there exists $n_0\in\N$ such that $P_f(n_0)\leq n_0$.
\begin{enumerate}
\item If $U=\{a,a+1,\dots,a+i-1\}$ and $i>3n_0$, then the restriction of $f$ to the set $\{a+n_0,a+n_0+1,\dots,a+i-n_0\}$ is periodic of period at most $n_0$;
\item If $U=\N$, then $f\rst{\{x>n_0\}}$ is periodic of period at most $n_0$;
\item If $U=\Z$, then $f$ is periodic of period at most $n_0$.
\end{enumerate}
\end{theorem}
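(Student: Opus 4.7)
The plan is to use the standard complexity-increment argument, based on the fact that in cases (ii) and (iii) the function $P_f$ is nondecreasing and $P_f(n+1)=P_f(n)$ holds precisely when every length-$n$ word occurring in $f$ admits a unique one-step right extension inside $f$. If $P_f(1)=1$ then $f$ is constant and all three conclusions are trivial, so I may assume $P_f(1)\geq 2$. The hypothesis $P_f(n_0)\leq n_0$, combined with the dichotomy that $P_f$ either strictly increases or stabilizes, then forces some $m<n_0$ with $P_f(m)=P_f(m+1)$, so every length-$m$ factor of $f$ has a unique right extension.

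With such an $m$ in hand, I would apply pigeonhole to the at most $P_f(m)\leq n_0$ distinct length-$m$ words: among any $n_0+1$ consecutive starting positions two windows must coincide, producing $p<q$ with $q-p\leq n_0$ and $f(p+s)=f(q+s)$ for $0\leq s<m$. Iterating unique right-extension then yields $f(p+j)=f(q+j)$ for every $j\geq 0$ with both $p+j$ and $q+j$ in $U$. For case (ii) this gives eventual periodicity of period $q-p\leq n_0$; discarding the initial pigeonhole window of length at most $2n_0$ shows that restriction to $\{x>n_0\}$ is periodic. For case (iii), I would additionally establish dual unique-left-extension: on $\Z$ every length-$m$ word has at least one left extension, and the equality $P_f(m)=P_f(m+1)$ forces the number of left extensions of each word to be exactly one as well. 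Iterating in both directions then yields global period at most $n_0$ on $\Z$.

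For case (i), the monotonicity of $P_f$ can fail near the right endpoint, because a length-$n$ factor occurring only at the extreme right has no in-domain right extension. I would handle this by applying the right-extension argument inside the interior: a length-$m$ repetition found among the first $n_0+1$ starting positions propagates rightward and yields a period $p_1\leq n_0$ valid on $\{a+n_0,\ldots,a+i-1\}$, and by symmetry (applying the same argument to the reversal of the restriction, whose complexity function coincides with $P_f$) a period $p_2\leq n_0$ valid on $\{a,\ldots,a+i-n_0\}$. The hypothesis $i>3n_0$ guarantees that the overlap $\{a+n_0,\ldots,a+i-n_0\}$ has length strictly greater than $n_0\geq p_1+p_2-\gcd(p_1,p_2)$, so the Fine--Wilf theorem forces the two periods to descend to a single common period bounded by $n_0$ on the middle. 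The main obstacle is precisely this reconciliation step together with the careful bookkeeping of the buffer size $n_0$ on each side; the infinite cases (ii) and (iii) are essentially routine once the stabilization value $m$ has been located.
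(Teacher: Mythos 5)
Your treatment of the two infinite cases follows essentially the same route as the paper: locate $m<n_0$ with $P_f(m)=P_f(m+1)$, deduce unique right (and, on $\Z$, left) extension, apply pigeonhole to find a repetition within the first $n_0+1$ positions, and propagate. Those parts are sound.

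The finite-interval case, however, has a genuine gap. You correctly identify that monotonicity of $P_f$ can fail at the right endpoint, but then the dichotomy ``strictly increases or stabilizes'' that produced $m$ is no longer available, and ``applying the right-extension argument inside the interior'' does not by itself recover it: you still must exhibit some length $m<n_0$ at which every relevant factor extends uniquely one step to the right, and the unrestricted count $P_f(n)$ cannot give this (it may drop by one at each step, so no stabilization is forced, and you cannot even justify $P_f(m)\leq n_0$ for the pigeonhole). The paper's device is the two-parameter count $P_f(n_0,n)$, the number of distinct length-$n$ windows starting only at positions $a,\dots,a+i-n_0$; for $n\leq n_0$ every such window does extend one step to the right inside the domain, so this count is nondecreasing, satisfies $P_f(n_0,n_0)=P_f(n_0)\leq n_0$, and its stabilization at the minimal $\alpha$ with $P_f(n_0,\alpha)\leq\alpha$ gives exactly the unique-extension property you need. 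The paper explicitly remarks that the analogous statement for $P_f(n)$ ``does not suffice,'' and your proposal is missing this device or an equivalent of it.

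Separately, your Fine--Wilf reconciliation rests on the inequality $p_1+p_2-\gcd(p_1,p_2)\leq n_0$, which is false in general: with $p_1=n_0$ and $p_2=n_0-1$ and $n_0\geq 3$ one gets $p_1+p_2-\gcd(p_1,p_2)=2n_0-2>n_0$, exceeding the guaranteed overlap length. Fortunately that entire step is unnecessary: the statement only asks for \emph{some} period at most $n_0$ on the middle interval, and the rightward propagation of a repetition found among the first $n_0+1$ starting positions already yields $f(x)=f(x+(q-p))$ for all $x$ with $a+p\leq x$ and $x+(q-p)+m-1\leq a+i-1$, which covers $\{a+n_0,\dots,a+i-n_0\}$ because $p<n_0$ and $m\leq n_0$. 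So you should drop the reversal and Fine--Wilf argument and instead supply the missing stabilization step.
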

\begin{proof}
%Without loss of generality, suppose $n_0$ is chosen to be minimal with the property that $P_f(n_0)\leq n_0$.  Since $P_f(1)=|\A|>1$, we have that $n_0\geq2$.  Since $P_f(n_0-1)\leq P_f(n_0)$, we have that$P_f(n_0-1)=P_f(n_0)=n_0$.
%
Let $n_0\in\N$ be such that $P_f(n_0)\leq n_0$.

First suppose $U$ is a finite interval.  Let $a\in\Z$, $i\in\N$, $U=\{a,a+1,\dots,a+i-1\}$, and $i>3n_0$.  For $n\leq n_0$, 
 let $P_f(n_0,n)$ denote the number of distinct functions of the form $(T^mf)\rst{\{a,a+1,\dots,a+n-1\}}$, where $0\leq m\leq i-n_0$.  Note that $P_f(n_0,n_0)=P_f(n_0)$.  For $0\leq i_1<i_2\leq n_0$, we have that $P_f(n_0,i_1)\leq P_f(n_0,i_2)$.  If $P_f(n_0,1)=1$, then the restriction of $f$ to the set $\{a,a+1,\dots,a+i-n_0\}$ is constant, and hence periodic on the claimed interval, with period $1$.  Otherwise $P_f(n_0,1)>1$ and so there exists minimal $\alpha\leq n_0$ such that $P_f(n_0,\alpha)\leq\alpha$.  
It follows that $P_f(n_0,\alpha-1)=P_f(n_0,\alpha)$.  Then for any $0\leq m\leq i-n_0$, the restriction of $T^mf$ to the set $\{a,a+1,\dots,a+\alpha-2\}$ uniquely determines its restriction to the set $\{a,a+1,\dots,a+\alpha-1\}$ (note that this relies on the definition of $P_f(n_0, n)$ and the analogous statement for $P_f(n)$ does not suffice).  
%(note that this property would {\em not} follow if we defined $\alpha$ to be minimal such that $P_f(\alpha)\leq\alpha$, since not every pattern counted in $P_f(m-1)$ occurs as the ``left subword'' of a pattern counted in $P_f(m)$).  
By the Pigeonhole Principle, there exist $0\leq m_1<m_2\leq\alpha$ such that $(T^{m_1}f)\rst{\{a,a+1,\dots,a+\alpha-2\}}=(T^{m_2}f)\rst{\{a,a+1,\dots,a+\alpha-2\}}$.  It follows by induction that $T^{m_1}f$ and $T^{m_2}f$ agree on the set $\{a,a+1,\dots,a+i-n_0+\alpha-i_2\}$.  Therefore, $f$ is periodic with period $i_2-i_1\leq n_0$ on the set $\{a+i_1,a+i_1-1,\dots,a+i-n_0+\alpha\}$ and, in particular, on the claimed interval.

For $U=\N$, by taking $i=\infty$, the result follows using the same argument.  For $U=\Z$, for any $a,m\in\Z$, the restriction of $T^mf$ to the set $\{a,a+1,\dots,a+\alpha-2\}$ extends uniquely to the sets $\{a-1,a,a+1,\dots,a+\alpha-2\}$ and $\{a,a+1,\dots,a+\alpha-1\}$.  Hence there are at most $P_f(n_0,\alpha)\leq P_f(n_0)\leq n_0$ many functions of the form $T^mf$ and the result 
follows as before.  
\end{proof}

\subsection{Geometric notation and terminology}

If $R\subset\R^2$, we denote the convex hull of $R$ by $\conv(R)$.  A subset $\S\subseteq\ZZ$ is called {\em convex} if $\conv(\S)$ is closed and $\S=\conv(\S)\cap\ZZ$.
We view the boundary of a convex subset of $\ZZ$ as a (possibly infinite) convex polygon.  
The elements of $\S$ are  the integer points contained in and enclosed by this polygon.  
 \footnote{  
The assumption that the convex sets are closed avoids pathological behavior on the boundary, 
as for example  the convex hull of $\{(x,y)\in\ZZ\colon x<0\}\cup\{(0,0)\}$ is the set $\{(x,y)\in\R^2\colon x<0\}\cup\{(0,0)\}$.  Such behavior is avoided when we assume that the convex hull of a convex set is closed.}

We let $\partial\S$ denote the boundary of $\conv(\S)$.
An {\em extreme point} of a convex set $\S\subseteq\ZZ$ is a point in $\partial\S\cap\ZZ$ which is a vertex of the convex polygon $\partial\S$, 
and a {\em boundary edge} of $\S$ an edge of $\partial\S$.  We use $V(\S)$ to 
denote the set of
extreme points of $\S$ and $E(\S)$ to denote the set of boundary edges of $\S$. 

If $\S\subset\ZZ$ is convex and $\conv(\S)$ has positive area, our standard convention is that the boundary of $\S$ is positively oriented.  When $\left|\S\right|<\infty$, this orientation endows each $w\in E(\S)$ with a well-defined {\em successor} edge, denoted $\suc(w)\in E(\S)$ and a {\em predecessor} edge, 
denoted $\pred(w)\in E(\S)$.  In the case that $\left|\S\right|=\infty$, the definitions of successor and predecessor extend in the natural way, noting that there may be two edges without predecessors and two without successors (for example, for a strip).   
We extend the functions $\suc(\cdot)$ and $\pred(\cdot)$ to infinite convex sets in the natural way (leaving $\pred(w_{\alpha})$ and $\suc(w_{\omega})$ undefined).  With these conventions, each $w\in E(\S)$ inherits an orientation from the boundary of $\S$, and so we make a slight abuse of the notation by viewing $w\in E(\S)$ as both a set and an oriented line segment.  Thus we can refer to an oriented line in $\R^2$ as being {\em parallel} or {\em antiparallel} (or neither) to a given element of $E(\S)$.

We make use of two notions of size:
\begin{enumerate}
\item If $\S\subseteq\ZZ$, then $\left|\S\right|$ denotes the cardinality of $\S$.
\item If $w\subset\R^2$ is a line segment, then $\|w\|$ denotes the length of $w$.
\end{enumerate}
In particular, if $\S\subset\ZZ$ is a finite convex set and $w\in E(\S)$, then $\|w\|$ is the length of $w$, while $\left|w\cap\S\right|$ is the number of integer points on it.

We denote the $n$ by $k$ rectangle based at the origin by 
$$
R_{n,k}:=\left\{(x,y)\in\ZZ:0\leq x<n\text{, }0\leq y<k\right\}.
$$

\subsection{The discrepancy function and $\eta$-generating sets} 

We use the discrepancy function to derive a number of useful properties of functions $\eta\in\A^{\ZZ}$ satisfying $P_{\eta}(R_{n,k})\leq nk$ for some $n,k\in\N$.

\begin{lemma}\label{generated}
Suppose $\S\subset\ZZ$ is finite and $\left|\S\right|\geq2$.  If $x\in\S$, then either every $\eta$-coloring of $\S\setminus\{x\}$ extends uniquely to an $\eta$-coloring of $\S$ or $D_{\eta}(\S\setminus\{x\})\leq D_{\eta}(\S)$.
\end{lemma}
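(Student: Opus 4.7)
The plan is to analyze the natural restriction map from $\eta$-colorings of $\S$ to $\eta$-colorings of $\S\setminus\{x\}$ by counting the sizes of its fibers. Concretely, I would define $\rho\colon \mathcal{W}(\S,\eta)\to\mathcal{W}(\S\setminus\{x\},\eta)$ by $\rho(\beta):=\beta\rst{\S\setminus\{x\}}$, and observe first that $\rho$ is surjective: any element of $\mathcal{W}(\S\setminus\{x\},\eta)$ has the form $(T^{\vec u}\eta)\rst{\S\setminus\{x\}}$ for some $\vec u\in\ZZ$, and then $(T^{\vec u}\eta)\rst{\S}\in\mathcal{W}(\S,\eta)$ restricts to it. The fiber $\rho^{-1}(\alpha)$ is precisely the set of $\eta$-colorings of $\S$ that extend $\alpha$.

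Next, I would split into two cases according to the fiber sizes. If every fiber of $\rho$ is a singleton, then every $\eta$-coloring of $\S\setminus\{x\}$ has a unique extension to an $\eta$-coloring of $\S$, which is the first alternative of the lemma. Otherwise, some fiber has cardinality at least two while all fibers have cardinality at least one (by surjectivity), so
\[
P_\eta(\S) \;=\; \sum_{\alpha\in\mathcal{W}(\S\setminus\{x\},\eta)} |\rho^{-1}(\alpha)| \;\geq\; P_\eta(\S\setminus\{x\}) + 1.
\]

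Finally, I would translate this inequality into a statement about the discrepancy. Using $|\S\setminus\{x\}|=|\S|-1$, the previous display gives
\[
D_\eta(\S) \;=\; P_\eta(\S)-|\S| \;\geq\; P_\eta(\S\setminus\{x\})+1-|\S| \;=\; P_\eta(\S\setminus\{x\})-|\S\setminus\{x\}| \;=\; D_\eta(\S\setminus\{x\}),
\]
which is the second alternative. There is no serious obstacle here; the lemma is essentially a bookkeeping statement about a surjective restriction map, and the only subtlety is keeping the indexing by $\vec u\in\ZZ$ consistent so that $\rho$ really is surjective onto $\mathcal{W}(\S\setminus\{x\},\eta)$.
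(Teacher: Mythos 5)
Your proposal is correct and is essentially the paper's own argument: the paper likewise observes that a non-uniquely extending coloring of $\S\setminus\{x\}$ forces $P_{\eta}(\S\setminus\{x\})<P_{\eta}(\S)$ and then subtracts cardinalities, which is exactly your fiber count made slightly less explicit. No differences worth noting.
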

\begin{proof}
If there is some $\eta$-coloring of $\S\setminus\{x\}$ that extends non-uniquely to an $\eta$-coloring of $\S$, then $P_{\eta}(\S\setminus\{x\})<P_{\eta}(\S)$.  Thus
\begin{equation*}
D_{\eta}(\S\setminus\{x\})=P_{\eta}(\S\setminus\{x\})-\left|\S\setminus\{x\}\right|\leq(P_{\eta}(\S)-1)-(\left|\S\right|-1)=D_{\eta}(\S). \qedhere
\end{equation*}
\end{proof}

Motivated by Lemma~\ref{generated}, we make the following definition:

\begin{definition}
If $\S\subset\ZZ$ is a finite set and $x\in\S$, we  say that $x$ is {\em $\eta$-generated} by $\S$ if every $\eta$-coloring of $\S\setminus\{x\}$ extends uniquely to an $\eta$-coloring of $\S$.    A finite, nonempty, convex subset of $\ZZ$ for which every
extreme point is generated is called a {\em weak $\eta$-generating set}.
\end{definition}

\begin{lemma}\label{generated2}
Suppose $\S\subset\ZZ$ is a finite, convex set and $D_{\eta}(\S)\leq0$.  Let $\T$ be a minimal set (with respect to partial ordering by inclusion) among all nonempty convex subsets of $\S$ with discrepancy at most $D_{\eta}(\S)$.  Then  $\T$ is a weak $\eta$-generating set, and if $y\in V(\T)$, $D_{\eta}(\T\setminus\{y\})=D_{\eta}(\T)+1$.
\end{lemma}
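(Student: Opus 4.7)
The plan is to combine Lemma~\ref{generated} with the minimality of $\T$. The crucial observation is that for an extreme point $y\in V(\T)$, the set $\T\setminus\{y\}$ is itself a nonempty convex subset of $\S$, and hence a valid ``competitor'' against which $\T$'s minimality can be tested.

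First I would dispense with the trivial case by noting that $\left|\T\right|\geq 2$: any singleton $\{z\}\subseteq\ZZ$ has discrepancy $\left|\A\right|-1>0\geq D_\eta(\S)$, so no singleton lies in the class of convex subsets of $\S$ over which $\T$ is minimal. In particular for any $y\in V(\T)$ the set $\T\setminus\{y\}$ is nonempty. Next I would verify that $\T\setminus\{y\}$ is convex in the sense of the paper. Since $y$ is an extreme point, $y$ is not a convex combination of other points of $\conv(\T)$, so $y\notin\conv(\T\setminus\{y\})$; combined with $\conv(\T\setminus\{y\})\cap\ZZ\subseteq\conv(\T)\cap\ZZ=\T$, this forces $\conv(\T\setminus\{y\})\cap\ZZ=\T\setminus\{y\}$, and $\conv(\T\setminus\{y\})$ is closed as the convex hull of a finite set.

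With $\T\setminus\{y\}$ now a nonempty convex proper subset of $\S$, the minimality of $\T$ among convex subsets of $\S$ with discrepancy at most $D_\eta(\S)$ forces
\[
D_\eta(\T\setminus\{y\})>D_\eta(\S)\geq D_\eta(\T),
\]
so (discrepancies being integers) $D_\eta(\T\setminus\{y\})\geq D_\eta(\T)+1$. Applying Lemma~\ref{generated} to $\T$ and $y$, the alternative $D_\eta(\T\setminus\{y\})\leq D_\eta(\T)$ is ruled out, leaving the conclusion that every $\eta$-coloring of $\T\setminus\{y\}$ extends uniquely to an $\eta$-coloring of $\T$. Thus every extreme point of $\T$ is $\eta$-generated, so $\T$ is a weak $\eta$-generating set. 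Finally, unique extension gives $P_\eta(\T\setminus\{y\})=P_\eta(\T)$, so
\[
D_\eta(\T\setminus\{y\})=P_\eta(\T)-(\left|\T\right|-1)=D_\eta(\T)+1,
\]
as required.

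I expect the only non-routine point to be the convexity verification for $\T\setminus\{y\}$: the author's definition of ``convex subset of $\ZZ$'' involves both the closedness and the integer-point equalities in $\conv(\cdot)$, and one has to use the extreme-point property of $y$ precisely to avoid $y$ reappearing in $\conv(\T\setminus\{y\})\cap\ZZ$. Once that is in hand, the rest is a two-line argument from Lemma~\ref{generated} and minimality.
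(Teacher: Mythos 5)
Your proof is correct and follows essentially the same route as the paper's: both arguments rest on the dichotomy of Lemma~\ref{generated}, the fact that singletons have positive discrepancy, and the minimality of $\T$ to force unique extension at each extreme point, then read off $D_\eta(\T\setminus\{y\})=D_\eta(\T)+1$ from $P_\eta(\T\setminus\{y\})=P_\eta(\T)$. The only cosmetic differences are that you argue directly where the paper argues by contradiction, and that you spell out the convexity of $\T\setminus\{y\}$, which the paper simply asserts.
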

\begin{proof}
We proceed by contradiction.  Let $x\in\T$ be a
extreme point that is not generated.  Since any one 
element set has discrepancy $\left|\mathcal{A}\right|-1>0$, $\T$ must contain at least two elements; in particular $\T\setminus\{x\}$ is nonempty.  
Furthermore, $\T\setminus\{x\}$ is convex and 
by Lemma~\ref{generated},  $D_{\eta}(\T\setminus\{x\})\leq D_{\eta}(\T)$,  
a contradiction of the minimality of $\T$.

For the change in discrepancy, note that any
extreme point $y\in V(\T)$ is generated and so $P_{\eta}(\T)=P_{\eta}(\T\setminus\{x\})$.  The statement follows since $\left|\T\right|=\left|\T\setminus\{x\}\right|+1$.
\end{proof}

\begin{corollary}\label{generatingset}
If $\S\subset\ZZ$ is a finite, convex set with $\eta$-discrepancy $d\leq0$, then $\S$ contains a (strictly decreasing) nested family of weak $\eta$-generating subsets
$$
\S_1\supset\ldots\supset\S_{\left|d\right|+1}.
$$
\end{corollary}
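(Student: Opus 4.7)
The plan is to iterate Lemma~\ref{generated2}, which will serve simultaneously as the source of weak $\eta$-generating subsets and as the mechanism for descending to the next set in the chain. I would induct on $i$ to build $\S_1\supsetneq\S_2\supsetneq\cdots\supsetneq\S_{|d|+1}$ with the invariant that $\S_i$ is a weak $\eta$-generating set and $D_\eta(\S_i)\leq d+(i-1)\leq 0$. The length $|d|+1$ is forced by this invariant: after $|d|$ descent steps, the discrepancy bound reaches $0$ and I can no longer guarantee the hypothesis of Lemma~\ref{generated2}.

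For $\S_1$, apply Lemma~\ref{generated2} directly to $\S$: since $D_\eta(\S)\leq d\leq 0$, there is a minimal convex subset $\S_1\subseteq\S$ with $D_\eta(\S_1)\leq d$, and Lemma~\ref{generated2} declares it to be a weak $\eta$-generating set.

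For the inductive step, suppose $\S_i$ has been constructed with $i<|d|+1$, so that $D_\eta(\S_i)\leq d+(i-1)<0$; in particular $|\S_i|\geq 2$, as any singleton has discrepancy $|\A|-1>0$. Pick any extreme point $y\in V(\S_i)$; because $y$ is extreme, $y\notin\conv(\S_i\setminus\{y\})$, so $\S_i\setminus\{y\}$ is itself a finite convex subset of $\ZZ$. Since $\S_i$ is a weak $\eta$-generating set, $y$ is $\eta$-generated by $\S_i$, which means every coloring of $\S_i\setminus\{y\}$ extends uniquely, and hence $P_\eta(\S_i\setminus\{y\})=P_\eta(\S_i)$, giving $D_\eta(\S_i\setminus\{y\})=D_\eta(\S_i)+1\leq d+i\leq 0$. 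Now apply Lemma~\ref{generated2} once more, this time to $\S_i\setminus\{y\}$, to obtain a minimal convex subset $\S_{i+1}\subseteq\S_i\setminus\{y\}\subsetneq\S_i$ of discrepancy at most $d+i$; by Lemma~\ref{generated2} it is a weak $\eta$-generating set, completing the induction.

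No step presents a real obstacle; the argument is essentially bookkeeping on top of Lemma~\ref{generated2}. The only items requiring care are the geometric verification that removing an extreme point from a finite convex subset of $\ZZ$ yields another finite convex subset (immediate from the definitions of convex set and extreme point), and the arithmetic check that the discrepancy bound $d+(i-1)$ remains non-positive for $i\leq|d|+1$, which is precisely the reason the chain terminates at length $|d|+1$.
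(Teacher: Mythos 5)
Your proof is correct and follows essentially the same route as the paper: both arguments obtain $\S_1$ as a minimal convex subset of discrepancy at most $d$ via Lemma~\ref{generated2}, then iterate by deleting an extreme point (which raises the discrepancy by exactly $1$ since that point is generated) and passing again to a minimal convex subset, for as long as the discrepancy bound stays nonpositive. Your bookkeeping with the inequality $D_\eta(\S_i)\leq d+(i-1)$ is, if anything, slightly more careful than the paper's stated invariant, so nothing further is needed.
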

\begin{proof}
Let $\S_1$ be a nonempty, convex subset of $\S$ which is minimal (with respect to inclusion) among all convex subsets having discrepancy at most $d$; such a set must exist because a one element subset of $\S$ has positive discrepancy.  By Lemma~\ref{generated2},  $\S_1$ is weak $\eta$-generating  and contains at least two elements (because it has nonpositive $\eta$-discrepancy).

Suppose that for some $i<\left|d\right|+1$, we have constructed weak $\eta$-generating sets $\S_1\supset\S_2\supset\ldots\supset\S_i$ such that $D_{\eta}(\S_j)=D_{\eta}(\S)+j-1$ for all $1\leq j\leq i$.  Let $x_i\in V(\S_i)$ and set $\tilde{\S}_i:=\S_i\setminus\{x_i\}$.  Then 
by Lemma~\ref{generated2}, $D_{\eta}(\tilde{S}_i)=D_{\eta}(\S_i)+1=D_{\eta}(\S)+i\leq0$.  We can then pass to a subset $\S_{i+1}$ of $\tilde{S}_i$ which is minimal among all convex subsets of $\tilde{\S}_i$ that have discrepancy at most $D_{\eta}(\tilde{S}_i)$, 
and note that  by Lemma~\ref{generated2}, 
$\S_{i+1}$ is  weak $\eta$-generating.  This process continues for at least $\left|d\right|+1$ steps.
\end{proof}

\begin{corollary}\label{generated3}
Suppose that $\S\subset\ZZ$ is a finite, convex set with $\eta$-discrepancy $d\leq0$.  For any $i\in\N$ and any $x_1,\dots,x_i\in\S$ such that $\S\setminus\{x_1,\dots,x_i\}$ is convex and nonempty, we have that $D_{\eta}(\S\setminus\{x_1,\dots,x_i\})\leq D_{\eta}(\S)+i$.
\end{corollary}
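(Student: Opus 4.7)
The plan is a direct induction on $i$, removing one point at a time and invoking Lemma~\ref{generated} at each step. Neither the convexity assumptions nor the sign condition $d\leq 0$ is actually needed for the inductive engine — only that the cardinalities remain at least $2$ long enough to apply the lemma. The first step is to extract from Lemma~\ref{generated} the clean one-step bound $D_\eta(\S'\setminus\{x\}) \leq D_\eta(\S') + 1$ for any finite $\S'\subset\ZZ$ with $|\S'|\geq 2$ and any $x\in \S'$: in the $\eta$-generated branch, $P_\eta$ is unchanged while $|\S'|$ drops by one, so the discrepancy rises by exactly $1$; in the other branch, the discrepancy is non-increasing, so the bound holds a fortiori.

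The base case $i=0$ is trivial. For the inductive step, I would set $\T := \S\setminus\{x_1,\dots,x_{i-1}\}$. Since $\T\setminus\{x_i\} = \S\setminus\{x_1,\dots,x_i\}$ is nonempty by hypothesis, $|\T|\geq 2$, and the one-step bound applied to $\T$ and $x_i$ gives
$$
D_\eta(\S\setminus\{x_1,\dots,x_i\}) \;=\; D_\eta(\T\setminus\{x_i\}) \;\leq\; D_\eta(\T) + 1.
$$
Combining this with the inductive hypothesis $D_\eta(\T) \leq D_\eta(\S) + (i-1)$ closes the induction and delivers the claim.

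The one subtlety I foresee is that the inductive hypothesis, as literally written in the statement of the corollary, would require $\T$ itself to be convex — and this need not hold for an arbitrary ordering of the removed points $x_1,\dots,x_{i-1}$. This is a cosmetic obstacle rather than a real one, since Lemma~\ref{generated} makes no convexity assumption whatsoever. The cleanest remedy is to prove the more general set-theoretic statement (with ``$\S$ finite'' and $|\S|\geq i+1$ replacing the convexity hypotheses) and then specialize to the convex setting of the corollary; alternatively, one can reorder the $x_j$ so that each $\S\setminus\{x_1,\dots,x_j\}$ is itself convex, using that $\conv(\S\setminus\{x_1,\dots,x_i\})\subsetneq \conv(\S)$ forces an extreme point of $\S$ to lie in $\{x_1,\dots,x_i\}$. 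Either route bypasses the issue, so I do not anticipate any serious difficulty.
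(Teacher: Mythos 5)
Your proof is correct, but it takes a more roundabout route than the paper does. The paper's proof is a direct two-line computation: since $P_{\eta}$ is monotone under inclusion (as noted just after the definition of $P_\eta(\S)$, if $\T\subset\S$ then $P_{\eta}(\T)\leq P_{\eta}(\S)$), one simply writes $D_{\eta}(\S\setminus\{x_1,\dots,x_i\})=P_{\eta}(\S\setminus\{x_1,\dots,x_i\})-\left(|\S|-i\right)\leq P_{\eta}(\S)-|\S|+i=D_{\eta}(\S)+i$. No induction, no appeal to Lemma~\ref{generated}, and hence no need to worry about whether the intermediate sets $\S\setminus\{x_1,\dots,x_j\}$ are convex --- the issue you correctly identify and correctly dispatch simply never arises because no intermediate sets are formed. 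Your inductive argument via the one-step bound $D_\eta(\S'\setminus\{x\})\leq D_\eta(\S')+1$ is valid (Lemma~\ref{generated} indeed assumes only finiteness and $|\S'|\geq 2$, and your nonemptiness hypothesis guarantees every intermediate set has at least two elements), and it does expose that neither convexity nor $d\leq 0$ is needed; but the same observation falls out of the paper's one-liner, which uses only monotonicity of the complexity function. What your stepwise version buys, if anything, is a per-removal accounting of exactly when the discrepancy increases, which is close in spirit to the remark the paper makes immediately after the corollary about when the inequality is strict.
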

\begin{proof}
Since $P_{\eta}(\S\setminus\{x_1,\dots,x_i\})\leq P_{\eta}(\S)$, we have that 
\begin{align*}
D_{\eta}(\S\setminus\{x_1,\dots,x_i\})&=P_{\eta}(\S\setminus\{x_1,\dots,x_i\})-|\S\setminus\{x_1,\dots,x_i\}| \\
&\leq P_{\eta}(\S)-|\S|+i =D_{\eta}(\S)+i.\qedhere
\end{align*}
\end{proof}

We remark that the inequality in the corollary is strict unless every $\eta$-coloring of $\S\setminus\{x_1,\dots,x_i\}$ extends uniquely to an $\eta$-coloring of $\S$.

This corollary becomes relevant in our constructions: if we know that $D_{\eta}(\S)<0$, we are free to remove any $\left|D_{\eta}(\S)\right|$ elements from $\S$ and are guaranteed that the resulting set contains a weak $\eta$-generating subset (provided it is convex).

A key fact about the discrepancy function which is crucial in Sections~\ref{nk/2} and~\ref{maintheorem} is the following:

\begin{lemma}\label{nonuniqueextension1}
Suppose that $\S\subset\ZZ$ is a convex weak $\eta$-generating set for which every nonempty proper convex subset has strictly larger $\eta$-discrepancy and let $w\in E(\S)$.  If $\S\setminus w\neq\emptyset$, then there are at most $\left|w\cap\S\right|-1$ $\eta$-colorings of $\S\setminus w$ that extend non-uniquely to $\eta$-colorings of $\S$.
\end{lemma}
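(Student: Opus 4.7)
The proof is a short double-counting argument combined with a single application of the minimality hypothesis on $\S$. The target quantity is the number $N$ of $\eta$-colorings of $\S\setminus w$ admitting two or more extensions to $\eta$-colorings of $\S$.

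First I would check that $\S\setminus w$ is a nonempty convex subset of $\ZZ$ in the paper's sense, so that the minimality hypothesis applies to it. Since $w\in E(\S)$, the line $\ell$ containing $w$ is a supporting line of $\conv(\S)$, and every point of $\S\setminus w$ lies strictly on the open side $H^+$ of $\ell$ meeting $\S$. Thus $\conv(\S\setminus w)\subseteq H^+\cap\conv(\S)$, which combined with $\S=\conv(\S)\cap\ZZ$ forces $\conv(\S\setminus w)\cap\ZZ=\S\setminus w$. Since $\S\setminus w$ is a nonempty proper convex subset of $\S$, the hypothesis on $\S$ yields $D_\eta(\S\setminus w)\geq D_\eta(\S)+1$.

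Next I would execute the double-count. Every $\eta$-coloring of $\S\setminus w$ arises as the restriction of some $\eta$-coloring of $\S$, so each has at least one extension. Partitioning the $P_\eta(\S\setminus w)$ colorings into those with a unique extension and the $N$ with two or more, and summing the extensions, gives $P_\eta(\S)\geq P_\eta(\S\setminus w)+N$. Rewriting each complexity as $P_\eta(\T)=D_\eta(\T)+|\T|$ and using $|\S|-|\S\setminus w|=|w\cap\S|$ converts this to $N\leq |w\cap\S|+D_\eta(\S)-D_\eta(\S\setminus w)$, which together with the discrepancy bound from the previous step produces $N\leq|w\cap\S|-1$.

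The only step requiring genuine thought is the convexity check on $\S\setminus w$, since this is what lets the minimality hypothesis generate the crucial $+1$ gain in discrepancy; once that is in place, the counting step and the ensuing arithmetic are routine.
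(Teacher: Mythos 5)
Your proof is correct and follows essentially the same route as the paper: the paper likewise combines the hypothesis $D_\eta(\S\setminus w)>D_\eta(\S)$ with $|\S\setminus w|=|\S|-|w\cap\S|$ to get $P_\eta(\S)-P_\eta(\S\setminus w)\leq|w\cap\S|-1$ and observes that this difference bounds the number of non-uniquely extending colorings. Your explicit check that $\S\setminus w$ is convex (so the minimality hypothesis applies) is a detail the paper leaves implicit, but it is the same argument.
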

\begin{proof}
We have $D_{\eta}(\S\setminus w)>D_{\eta}(\S)$ (by assumption) and $\left|\S\setminus w\right|=\left|\S\right|-\left|w\cap\S\right|$.  So, $P_{\eta}(\S\setminus w)>P_{\eta}(\S)-\left|w\cap\S\right|$.  On the other hand, there are no more than $P_{\eta}(\S)-P_{\eta}(\S\setminus w)$ distinct $\eta$-colorings of $\S\setminus w$ that extend non-uniquely to an $\eta$-coloring of $\S$.
\end{proof}

This lemma leads to the key definition: 
\begin{definition}
A weak $\eta$-generating set $\S\subset\Z^2$ is an {\em $\eta$-generating set} if 
every nonempty proper convex subset has strictly large $\eta$-discrepancy. 
\end{definition}

If $\S\subset\Z^2$ is weak $\eta$-generating with $D_\eta(\S)\leq 0$, then if 
$\T$ is minimal among convex subsets of $\S$ with $D_\eta(\T)\leq D_\eta(\S)$, 
we have that $\T$ is $\eta$-generating.  In particular, if $D_\eta(\S)\leq 0$, then $\S$ 
contains an $\eta$-generating set.

\begin{remark}\label{rem:standard}
Many of our constructions assume that we have fixed a function $\eta\colon\ZZ\to\A$, an $\eta$-generating set $\S$, and an edge $w\in E(\S)$.  It is often convenient to assume that the oriented line segment $w$ points vertically downward. Such 
an assumption is not restrictive: if $A\in SL_2(\Z)$, then $A^{-1}(\S)$ is convex and
$$
D_{\eta\circ A}(A^{-1}(\S))=D_{\eta}(\S).
$$
Therefore $\S$ is an $\eta$-generating set if and only if $A^{-1}(\S)$ is an $(\eta\circ A)$-generating set, and we have no change in the discrepancy.  Since $SL_2(\Z)$ takes positively oriented polygons to positively oriented polygons and acts transitively on directed rational lines through the origin in $\R^2$, in constructions that only rely on $\S$ being $\eta$-generating, we can always make a change of coordinates such that a given edge $w$ is vertical with downward orientation.  
This is useful to simplify the notation and the pictures used to explain various definitions and arguments.
\end{remark}

\begin{lemma}\label{heightlemma}
Suppose that $\S\subset\ZZ$ is a finite convex set and there are two edges $w_1, w_2\in E(\S)$ that are antiparallel.  Then any line parallel to $w_1$ that has nonempty intersection with $\S$ contains at least $\min_{i=1,2}\{\left|w_i\cap\S\right|-1\}$ integer points.
\end{lemma}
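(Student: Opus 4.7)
The plan is to normalize $w_1$ to point vertically downward using the $SL_2(\Z)$-invariance from Remark~\ref{rem:standard}, exploit the concavity of vertical chord lengths of $\conv(\S)$, and convert that lower bound on real chord length into a lower bound on the number of integer points.

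First I would apply a suitable element of $SL_2(\Z)$ to reduce to the case where $w_1$ points vertically downward. Since $w_2$ is antiparallel to $w_1$, it then points vertically upward, and the positive orientation convention places $w_1$ at the right extreme $x$-coordinate $x_R$ of $\conv(\S)$ and $w_2$ at the left extreme $x_L$, with $x_L < x_R$. This transformation preserves cardinalities of intersections with $\ZZ$, so the claimed bound is unchanged by it. In the normalized setup, each $w_i$ is a vertical segment with lattice endpoints, so $|w_i\cap\S|-1 = \|w_i\|$.

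Next I would consider the upper and lower boundary functions $U(x)$ and $L(x)$ of $\conv(\S)$ on $[x_L, x_R]$. By convexity of $\conv(\S)$, the function $U$ is concave and $L$ is convex, so the chord-length function $g(x) := U(x) - L(x)$ is concave on $[x_L, x_R]$. Its boundary values are $g(x_L) = \|w_2\|$ and $g(x_R) = \|w_1\|$; since a concave function on a compact interval is bounded below by the minimum of its endpoint values, I obtain $g(x_0) \geq \min(\|w_1\|, \|w_2\|)$ for every $x_0 \in [x_L, x_R]$.

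Finally, any line $\ell$ parallel to $w_1$ with $\ell\cap\S\neq\emptyset$ must be of the form $\{x_0\}\times\R$ for some $x_0\in\Z\cap[x_L, x_R]$, since otherwise $\ell$ contains no lattice points. The elements of $\ell\cap\S$ are precisely the integers in $[L(x_0), U(x_0)]$, a real interval of length $g(x_0)\geq\min(\|w_1\|,\|w_2\|)$. The main technical point is the elementary fact that a closed real interval of length at least some integer $m\geq 0$ contains at least $m$ integers (from $\lfloor b\rfloor \geq \lfloor a\rfloor + m$ and $\lceil a\rceil \leq \lfloor a\rfloor + 1$); applying this with $m=\min(\|w_1\|,\|w_2\|)$ yields the desired bound $\min_{i=1,2}\{|w_i\cap\S|-1\}$. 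I do not anticipate a substantial obstacle, since the argument is essentially a routine combination of convexity with lattice-point counting; the only bookkeeping subtlety is making sure no integer is lost in passing from real chord length to integer count, which the counting fact above handles cleanly.
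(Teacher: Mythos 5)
Your proof is correct and follows essentially the same route as the paper's: convexity forces every chord of $\conv(\S)$ parallel to $w_1$ to have length at least $\min(\|w_1\|,\|w_2\|)$, and that real length is then converted into a count of lattice points on the chord. The only slip is that with the positive orientation a downward-pointing edge lies on the \emph{left} side of $\conv(\S)$ rather than the right, but your argument is symmetric in $x_L$ and $x_R$ so this is immaterial; the paper simply skips the $SL_2(\Z)$ normalization and instead notes that the lattice spacing is the same on every rational line parallel to $w_1$ that meets $\ZZ$, which your reduction to unit vertical spacing replaces.
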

\begin{proof}
Without loss of generality, suppose that $\left|w_1\cap\S\right|\leq\left|w_2\cap\S\right|$.  If $\ell$ is a line parallel to $w_1$ that has nonempty intersection with $\S$, then by convexity $\|\ell\cap \conv(\S)\|\geq\|w_1\|$ (recall that $\|\cdot\|$ denotes the length of a line segment in $\R^2$).  The distance between any two consecutive integer points on $\ell$ is the same as the distance between two consecutive integer points on the line determined by $w_1$, since the two sets differ only by a translation taking the integer points on one to the integer points on the other.  Since the distance between $\left|w_1\cap\S\right|$ integer points on a line parallel to $\ell$ is exactly $\|w_1\|$, any interval in $\ell$ of length at least $\|w_1\|$ must contain at least $\left|w_1\cap\S\right|-1$ integer points.  In particular,  $\ell\cap \conv(\S)$ does.
\end{proof}

We finish this subsection with two quick applications of generating sets.
The first is a relation that we use in the sequel to eliminate irrational nonexpansive directions.  We define:

\begin{definition}
A convex set $H\subset\ZZ$ is called a {\em half plane} if $conv(H)$ has positive area and $E(H)$ contains only a single edge.  In this case, the unique boundary edge is a line in 
$\R^2$.  Given $\vec v\in\R^2\setminus\{\vec0\}$, a $\vec v$-half plane is a half plane whose (positively oriented) boundary edge is parallel to $\vec v$.  

If $\S\subset\ZZ$ is convex and $\vec v\in\R^2\setminus\{\vec0\}$, then the intersection of all $\vec v$-half planes containing $\S$ is a $\vec v$-half plane whose boundary $\ell(\vec v,\S)$ has nonempty intersection with $\partial\S$.  We call $\ell(\vec v,\S)$ the {\em support line} of $\S$ determined by $\vec v$.
\end{definition}

By definition, a half plane is closed and contains the integer points on its boundary edge.
  Note that $\ell(\vec v,\S)\cap\conv(\S)$ is either a boundary edge or 
an extreme point of $\S$.  When the intersection is
an extreme point of $\S$, $\ell(\vec v,\S)\cap\S\neq\emptyset$.  
If $\S$ is finite and $\ell(\vec v,\S)\cap\conv(\S)$ is a boundary edge of $\S$, then $\ell(\vec v,\S)\cap\S\neq\emptyset$.

\begin{lemma}
\label{lemma:no-irrational}
If $\eta\colon\ZZ\to\A$ and there exist $n,k\in\N$  such that $P_\eta(n,k)\leq nk$ and $L\subset\R^2$ 
is an irrational line through the origin, then $L$ is expansive on $X_\eta$.  
\end{lemma}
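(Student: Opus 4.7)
The plan is to extract an $\eta$-generating set $\S\subseteq R_{n,k}$ from the complexity hypothesis and then use the irrationality of $L$ to iterate the generating property outward from a strip around $L$. Since $P_\eta(n,k)\leq nk$ gives $D_\eta(R_{n,k})\leq 0$, Corollary~\ref{generatingset} furnishes a weak $\eta$-generating set $\S\subseteq R_{n,k}$. For every extreme point $v\in V(\S)$ and every translate $\vec u\in\ZZ$, the generating property, combined with the translation invariance of $\mathcal{W}(\cdot,\eta)$, yields the following \emph{propagation rule}: if $x,y\in X_\eta$ agree on $\vec u+(\S\setminus\{v\})$, then they also agree at $\vec u+v$.

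The irrationality of $L$ enters through a choice of distinguished extreme points. Let $\vec n$ be a unit normal to $L$. Because $L$ contains no nonzero integer point, $\vec n$ is not orthogonal to any nonzero integer vector, so the functional $q\mapsto\langle q,\vec n\rangle$ is injective on $\ZZ$. Consequently its maximum and minimum on the finite set $\S$ are attained at unique extreme points $v_+,v_-\in V(\S)$, and the gaps
\[
c_+:=\langle v_+,\vec n\rangle-\max_{w\in\S\setminus\{v_+\}}\langle w,\vec n\rangle,\qquad c_-:=\min_{w\in\S\setminus\{v_-\}}\langle w,\vec n\rangle-\langle v_-,\vec n\rangle
\]
are both strictly positive.

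Setting $D_{\vec n}:=\langle v_+-v_-,\vec n\rangle$ and taking the expansiveness radius $r:=D_{\vec n}$, I claim that whenever $x,y\in X_\eta$ agree on $B_r\cap\ZZ=\{q\in\ZZ:|\langle q,\vec n\rangle|<r\}$, in fact $x=y$ on all of $\ZZ$, which suffices to conclude that $L$ is expansive. The argument is a two-phase induction on the level $\langle\cdot,\vec n\rangle$. In the upward phase, if $x=y$ on every $q\in\ZZ$ with $\langle q,\vec n\rangle\in[s_1,s_2]$ and $s_2-s_1\geq D_{\vec n}$, then for each $p\in\ZZ$ with $\langle p,\vec n\rangle\in(s_2,s_2+c_+]$, the translate $\vec u+(\S\setminus\{v_+\})$ with $\vec u=p-v_+$ has levels in $[\langle p,\vec n\rangle-D_{\vec n},\langle p,\vec n\rangle-c_+]\subseteq[s_1,s_2]$, so the propagation rule at $v_+$ forces $x(p)=y(p)$. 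This extends the known slab upward by $c_+$; iterating covers the half-plane $\{q\in\ZZ:\langle q,\vec n\rangle\geq s_1\}$, after which the symmetric argument using $v_-$ extends the agreement downward and exhausts $\ZZ$. The base case is immediate because $B_r\cap\ZZ$ already contains the slab $[-D_{\vec n}/2,D_{\vec n}/2]$ of width $D_{\vec n}$.

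The main point to watch is the role of irrationality in guaranteeing that the extreme points $v_\pm$ are unique and separated from the rest of $\S$ by a positive gap. If $L$ were rational, $\vec n$ could be orthogonal to an entire edge of $\S$, collapsing $c_\pm$ to zero and breaking the induction; this is exactly the obstruction the hypothesis rules out. Once the positive gap is in hand, the remainder is a straightforward finite-step propagation that uses only one generating set.
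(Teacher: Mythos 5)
Your proof is correct and follows essentially the same route as the paper's: both extract a generating set from the complexity bound, use irrationality to guarantee that the support line of $\S$ in the direction of $L$ meets $\S$ in a single extreme point separated from the rest of $\S$ by a positive gap, and then propagate determinacy outward from a strip around $L$ using the generated extreme point. The only cosmetic difference is that you run a direct two-sided induction on the level $\langle\cdot,\vec n\rangle$ via $v_+$ and $v_-$, whereas the paper argues by contradiction with an infimum and handles the second side by reversing the orientation of $L$.
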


\begin{proof}
Let $\S$ be an $\eta$-generating set.  Choose an expansiveness radius $r> 0$ such 
that $\S$ is contained in the set 
$$
U := \{\vec u\colon d(\vec u, L) < r\}.
$$
Choose an orientation on $L$ and set $\ell:=\ell(L,\S)$, and $\vec w = \S\cap \ell\in V(\S)$.  
Define 
$$
c := \inf_{\vec y\in \S\setminus\{\vec w\}}d(\vec y, \ell).
$$
Since $\S$ is finite, $c> 0$. 

We claim that $\eta\rst U$ determines all of $\eta$.   If not, set
$$d := \inf\{ d(\vec y, L)\colon \eta\rst U \text{ does not determine } \eta(\vec y)\}.
$$
Then $d$ is finite (or we are already finished) and $d \geq r$. 
Defining $U_R = \{\vec u \colon d(\vec u, L ) < R\}$, then 
$\eta\rst U$ determines $\eta\rst{U_{d'}}$, where $d'$ is either $d-c/2$, when this 
is positive, or $d/2$, otherwise.  Choose $\vec y\in \ZZ$ such that $d(\vec y, L)\leq d+c/4$ and 
such that $\eta(\vec y)$ is not determined by $\eta\rst U$.  
Translating $\S$, we can assume that $\vec w = \vec y$.  
Then there are two possibilities.  
The first is that  $\S\setminus\{\vec y\} \subset U_{d'}$, and  then since $\S$ is $\eta$-generating we 
have a contradiction.  Otherwise, $\S\setminus\{\vec w\}\cap U_{d'}=\emptyset$, and 
then replacing $L$ in the proof by its opposite orientation, the same argument leads 
to a contradiction.  
\end{proof}

The second application relates to entropy: 
\begin{definition}
Suppose $\eta\colon\ZZ\to\A$ and $\S\subset\ZZ$ is finite.  Define
$$
X_{\S}(\eta):=\left\{f\colon\ZZ\to\A\text{ such that } \mathcal{W}(\S,f)\subseteq\mathcal{W}(\S,\eta)\right\}
$$
to be the {\em $\ZZ$-subshift of finite type generated by the $\S$-words of $\eta$}.  

An {\em $(\S,\eta)$-coloring} of a set $\T\subseteq\ZZ$ is any function of the form $\{f\rst{\T}\colon f\in X_{\S}(\eta)\}$.
\end{definition}

(In more common terminology, if $F_{\S}:=\A^{\S}\setminus\mathcal{W}(\S,\eta)$ is the set of all $\S$ words {\em not} occurring in $\eta$, then $X_{\S}(\eta)$ is the $\ZZ$-subshift of finite type whose set of forbidden words is $F_{\S}$.)

\begin{lemma}\label{entropy}
If $\eta\colon\ZZ\to\A$ and there is an $\eta$-generating set $\S\subseteq\ZZ$, then for any finite $\S^{\prime}\supseteq\S$ the $\ZZ$-dynamical system $(X_{\S^{\prime}}(\eta),\{T^{\vec u}\}_{\vec u\in\ZZ})$ has topological entropy zero.
\end{lemma}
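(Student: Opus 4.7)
The plan is to show that for any $f\in X_{\S'}(\eta)$, the restriction $f\rst{R_{N,N}}$ is completely determined by its restriction to a boundary strip of $R_{N,N}$ whose width depends only on $\diam(\S)$. Such a strip contains only $O(N)$ points, so the number of distinct $R_{N,N}$-patterns appearing in $X_{\S'}(\eta)$ is bounded by $|\A|^{O(N)}$, which forces the topological entropy to vanish.

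To set up the propagation, I fix any extreme point $v_0\in V(\S)$. Since $v_0$ is a vertex of the convex polygon $\conv(\S)$, a standard separation argument produces $\vec\nu\in\R^2$ with
$$
\langle \vec\nu, v_0\rangle \;>\; \langle \vec\nu, x\rangle \qquad \text{for every } x\in\S\setminus\{v_0\}.
$$
Let $d:=\diam(\S)$ and define the boundary strip
$$
B_N := \bigl\{\vec y \in R_{N,N} \colon (\S - v_0) + \vec y \not\subseteq R_{N,N}\bigr\},
$$
for which $|B_N|\leq CN$ with a constant $C = C(d)$.

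Given $f\in X_{\S'}(\eta)$, I enumerate the points of $R_{N,N}\setminus B_N$ in strictly increasing order of $\langle\vec\nu,\cdot\rangle$ (breaking ties arbitrarily) and show by induction that $f(\vec y)$ is determined by $f\rst{B_N}$ together with the values of $f$ previously fixed by the enumeration. For $\vec y$ at a given stage, set $\vec u := \vec y - v_0$, so that $v_0+\vec u = \vec y$ and $\S+\vec u\subseteq R_{N,N}$ by the definition of $B_N$. Every other point of $\S+\vec u$ has the form $(x-v_0)+\vec y$ for some $x\in\S\setminus\{v_0\}$ and satisfies
$$
\langle \vec\nu, (x-v_0)+\vec y \rangle \;<\; \langle \vec\nu, \vec y \rangle
$$
by choice of $\vec\nu$, so its value under $f$ is already known. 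Because $\S\subseteq\S'$, every $\S$-patch $(T^{\vec u}f)\rst{\S}$ belongs to $\mathcal{W}(\S,\eta)$, so the $\eta$-generating property at $v_0\in V(\S)$ uniquely prescribes $f(\vec y)$.

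Consequently, the number of distinct restrictions $f\rst{R_{N,N}}$ over $f\in X_{\S'}(\eta)$ is at most $|\A|^{|B_N|}\leq |\A|^{CN}$, and the topological entropy satisfies
$$
h(X_{\S'}(\eta)) \;\leq\; \lim_{N\to\infty}\frac{CN\log|\A|}{N^2} \;=\; 0.
$$
The only subtle point in the argument is the transfer of the generating property from $\eta$ to an arbitrary $f\in X_{\S'}(\eta)$, and this is precisely where the hypothesis $\S\subseteq\S'$ is used: without this containment, an $\S$-patch of $f$ need not be an $\eta$-coloring of $\S$ and the generating property could not be invoked. Everything else is a clean propagation argument along the direction $\vec\nu$.
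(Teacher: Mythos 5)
Your proof is correct and follows essentially the same strategy as the paper's: both show that an $O(N)$-point boundary region of $R_{N,N}$ determines the entire pattern by propagating the generating property at an extreme point of $\S$, giving at most $|\A|^{O(N)}$ patterns and hence zero entropy. The only difference is bookkeeping --- the paper sweeps a rectangular annulus inward from a corner while you order the interior by a supporting linear functional at an exposed vertex --- and your version is if anything slightly more careful, since you run the argument explicitly for an arbitrary $f\in X_{\S'}(\eta)$ (using $\S\subseteq\S'$ to transfer the generating property) rather than phrasing the count in terms of $P_\eta$ alone.
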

\begin{proof}
Choose $n,k\in\N$ such that $\S^{\prime}\subseteq R_{n,k}$.  We claim that for any $n^{\prime}>2n$ and $k^{\prime}>2k$, the function $\eta\rst{R_{n^{\prime},k^{\prime}}}$ is determined by is restriction to the (square annular) set
$$
[0,n^{\prime}-1]\times[0,k^{\prime}-1]\setminus[n,n^{\prime}-n]\times[k,k^{\prime}-k].
$$
Since any translation of an $\eta$-generating set is $\eta$-generating, we can assume without loss that $\S\subseteq R_{n,k}$ and $\S\cap(\{n-1\}\times[0,k-1])\neq\emptyset$.  Then $\S+(1,0)$ lies entirely inside the square annular region.  It follows that there exists $j\in\N$ and a vertex $v\in V(\S)$ such that $V\setminus\{v\}+(1,j)$ lies inside the square annular region, and $v+(1,j)$ is the point $(n,k)$.  Since $\S+(1,j)$ is $\eta$-generating, it follows that any $\eta$-coloring of the square annular region extends uniquely to the point $(n,k)$.  By induction it follows that, for all $0\leq j\leq k^{\prime}-k$, any $\eta$-coloring of the square annular region extends uniquely to an $\eta$-coloring of $\{n\}\times[k,k+j]$.  A similar induction shows that it extends uniquely to an $\eta$-coloring of $[n,n+i]\times[k,k+j]$ for any $0\leq j\leq k^{\prime}-k$ and any $0\leq i\leq n^{\prime}-n$.  The claim follows.

The claim gives $P_{\eta}(R_{n^{\prime},k^{\prime}})\leq\left|\A\right|^{2nk^{\prime}+2kn^{\prime}-4nk}$, and so
\[
\lim_{n^{\prime}\to\infty}\frac{1}{(n^{\prime})^2}\log P_{\eta}(R_{n^{\prime},n^{\prime}})=0.\hfill
 \qedhere
\]
\end{proof}

\begin{remark}
In dimension one, the analog of Lemma~\ref{entropy} holds and leads to a proof of the Morse-Hedlund Theorem: a one-dimensional subshift of finite type either has positive entropy or every element is periodic.  In dimension two, there are zero entropy $\ZZ$-subshifts of finite type that do not contain any periodic elements.  Thus Lemma~\ref{entropy} serves as an indication that generating sets are dynamically interesting, but does not seem to provide an approach to Nivat's conjecture.
\end{remark}

\subsection{Ambiguous half planes and periodicity}
In this section, we develop a relationship between the notions of nonexpansivity and periodicity.  The main results are Lemma~\ref{ambiguousperiod} and Corollary~\ref{ambiguouscorollary}.  Unfortunately, to state and prove them we need a significant amount of terminology.  This is introduced in the following four definitions (with Figures~\ref{figure5} and~\ref{borders} accompanying Definitions~\ref{envelopeddef2} and~\ref{border-def}).  Their complicated nature is necessitated by the need to bound the periods appearing in Lemma~\ref{ambiguousperiod}.

\begin{definition}\label{ambiguousdef}
If $\S\subset\ZZ$, $\T_1\subset\T_2\subseteq\ZZ$, then a coloring $f\in X_{\S}(\eta)$ is {\em $(\S,\T_1,\T_2,\eta)$-ambiguous} if there exist $g_1, g_2\in X_{\S}(\eta)$ such that $g_1\rst{\T_1}=g_2\rst{\T_1}=f$ but $g_1\rst{\T_2}\neq g_2\rst{\T_2}$.
\end{definition}

Note that $g_1, g_2$ do not necessarily lie in $X_{\eta}$, but only in $X_{\S}(\eta)$, the set of $\ZZ$-colorings whose $\S$-words coincide with the $\S$-words of $\eta$.  
Ambiguity becomes especially interesting when $\T_2\supset\T_1$ is produced in some 
way by $\T_1$, and this is captured in Definition~\ref{envelopeddef2}. 

\begin{definition}[Enveloping set]\label{envelopeddef}
If $\vec v_1,\dots,\vec v_n\in\ZZ\setminus\{\vec 0\}$ is a collection of vectors, we say that a convex set $\T\subseteq\ZZ$ is {\em $\{\vec v_1,\dots,\vec v_n\}$-enveloped} if for every $w\in E(\T)$, there exists $i\in\{1, \ldots, n\}$ such that $w$ is parallel to $\vec v_i$.  An {\em enveloping set} for $\T$ is a set of vectors that envelops it.  A {\em minimal enveloping set} for $\T$ is a collection of vectors that envelops $\T$ and such that no proper subset suffices.
\end{definition}

Given a convex region, we define an extension over an edge of the region (it may help to refer to Figure~\ref{figure5} while reading this definition).  The definition splits into 
several cases depending on the type of edge (recall that a rational line in $\R^2$ is a line of rational slope that contains a rational point):
\begin{definition}[See Figure~\ref{figure5}]\label{envelopeddef2}
Suppose $\T\subseteq\ZZ$ is convex, $\conv(\T)$ has positive area, and each $w\in E(\T)$ determines a rational line in $\R^2$. 
\begin{enumerate}
\item Suppose $w$ points vertically downward.  Without loss of generality, assume it 
is a subset of the $y$-axis.
	\begin{enumerate}
	\item If $w$ has both a successor edge and a predecessor edge in $E(\T)$, choose $a,b,c,d\in\Q$ such that $\pred(w)\subseteq\{(x,y)\in\R^2\colon y=ax+b\}$ and $\suc(w)\subseteq\{(x,y)\in\R^2\colon y=cx+d\}$.  If there is an integer $\Delta<0$ such that $c\Delta+d\leq a\Delta+b$ and $c\Delta+d, a\Delta+b\in\Z$, then for maximal such $\Delta$ (i.e., with minimal absolute value), we define the {\em $w$-extension $\Ext_w(\T)$ of $\T$} to be the set
	$$
	\Ext_w(\T):=\T\cup\left\{(x,y)\in\ZZ\colon cx+d\leq y\leq ax+b\text{, }\Delta\leq x<0\right\}.
	$$
	If no such $\Delta$ exists, we define $\Ext_w(\T):=\T$.
	\item If $w$ has a successor edge but does not have a predecessor edge, choose $c,d\in\Q$ such that $\suc(w)\subseteq\{(x,y)\in\R^2\colon y=cx+d\}$.  Choose maximal $\Delta<0$ such that $c\Delta+d\in\Z$.  Then we define the {\em $w$-extension $\Ext_w(\T)$ of $\T$} to be
	$$
	\Ext_w(\T):=\T\cup\left\{(x,y)\in\ZZ\colon cx+d\leq y\text{, }\Delta\leq x<0\right\}.
	$$
	\item If $w$ has a predecessor edge but does not have a successor edge, choose $a,b\in\Q$ such that $\pred(w)\subseteq\{(x,y)\in\R^2\colon y=ax+b\}$.  Choose maximal $\Delta<0$ such that $a\Delta+b\in\Z$.  Then we define the {\em $w$-extension $\Ext_w(\T)$ of $\T$} to be the set
	$$
	\Ext_w(\T):=\T\cup\left\{(x,y)\in\ZZ\colon y\leq ax+b\text{, }\Delta\leq x<0\right\}.
	$$
	\item If $w$ has neither a predecessor edge nor a successor edge (i.e. if $\T$ is a half-plane whose boundary has rational slope), then $\Ext_w(\T)$ is the smallest half plane in $\ZZ$ that strictly contains $\T$.  It is immediate that the boundary of $\Ext_w(\T)$ is parallel to $w$.
	\end{enumerate}
\item If $w$ does not point vertically downward, let $A\in SL_2(\Z)$ be such that $Aw$ points vertically downward.  We define the $w$-extension of $\T$ to be $A^{-1}(\Ext_{Aw}(A\T))$. (Note that this set does not depend on the choice of $A$.)
\end{enumerate}

\begin{figure}[ht]
     \centering
  \def\svgwidth{\columnwidth}
        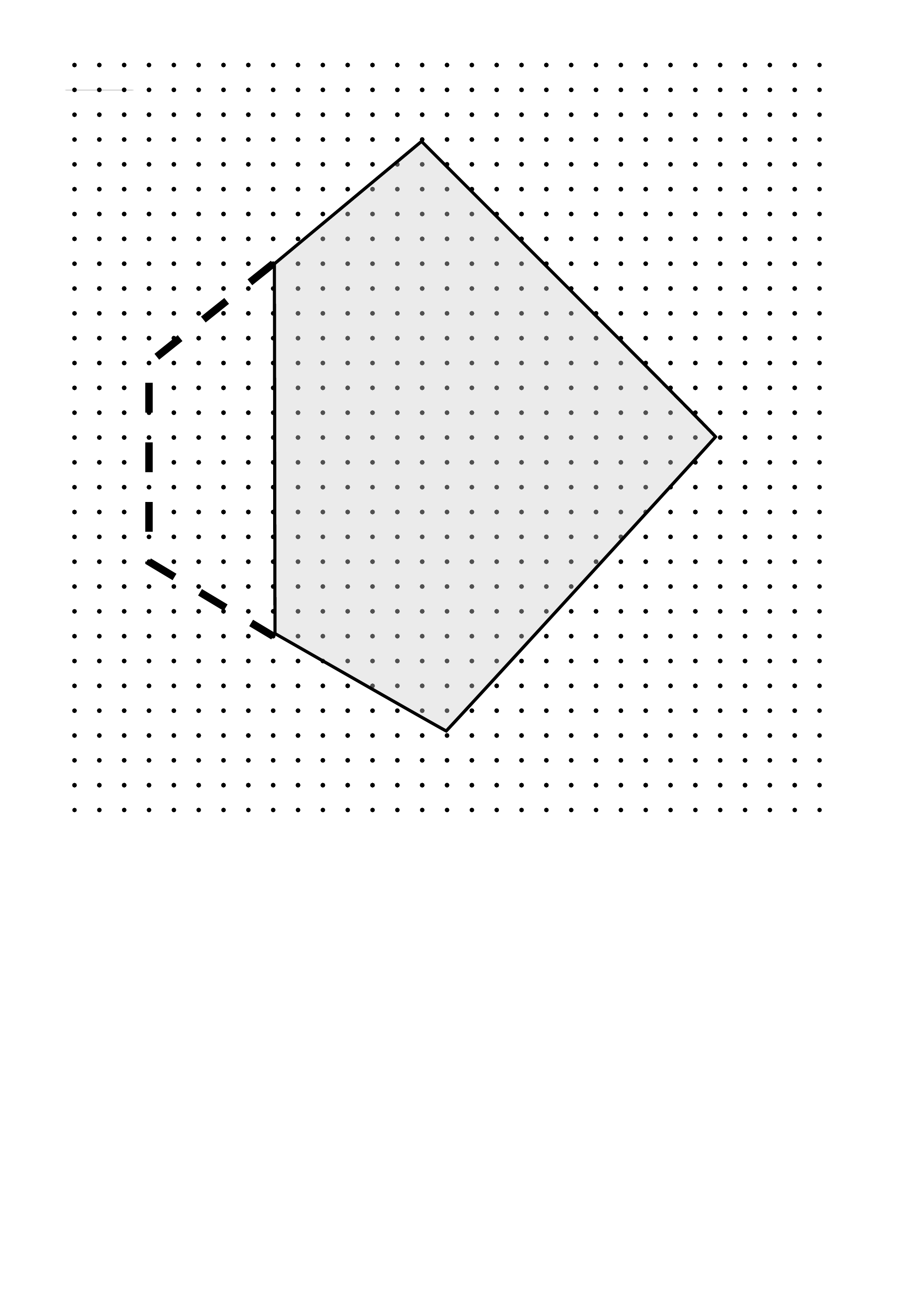
        \setlength{\abovecaptionskip}{-65mm}
     \caption{$\T$ is the set of integer points enclosed by the solid 
lines.  If $w$ points vertically downward, then $\Ext_w(\T)$ is enclosed 
by the dashed lines and the nonvertical solid lines.  The set 
$\Ext_w(\T)\setminus\T$ decomposes into five vertically aligned sets 
which determine the subextensions.  The depth of the extension is five.}
     \label{figure5}
\end{figure}

It follows that $\Ext_w(\T)$ is a convex, $E(\T)$-enveloped set containing $\T$, and may be $\T$ itself.  If $\Ext_w(\T)$ strictly contains $\T$, then there is a finite collection of lines $\ell_1,\dots,\ell_m$ such that
\begin{itemize}
\item $\ell_i$ is parallel to $w$ for all $i$;
\item $\ell_i\cap(\Ext_w(\T)\setminus\T)$ is nonempty and convex for all $i$;
\item we can decompose $\Ext_w(\T)\setminus\T$ into the disjoint union:
$$
\Ext_w(\T)\setminus\T=\bigsqcup_{i=1}^m\left(\ell_i\cap \Ext_w(\T)\right).
$$
\end{itemize}
In this case, $m$ is called the {\em depth} of the extension.  For $1\leq j\leq m$,
$$
\T\cup\bigcup_{i=1}^j\left(\ell_i\cap \Ext_w(\T)\right)
$$
is the {\em $(w,j)$-subextension} of $\T$ (note that the $(w,j)$-subextension may not be $E(\T)$-enveloped).  The $(w,0)$-subextension of $\T$ is defined to be $\T$ itself.
\end{definition}

\begin{remark}
The assumption in Definition~\ref{envelopeddef2} that the edges of $\T$ determine rational lines may seem restrictive.  However, observe that whenever $\T$ is finite its convex hull only has edges of this type.  When $\T$ is infinite, the convex sets that arise in our constructions also turn out to be bounded by edges that determine rational lines.
\end{remark}

\begin{definition}\label{border-def}
Suppose $\S,\T\subset\ZZ$ are convex and $w\in E(\T)$.  If $\Ext_w(\T)\neq\T$ and if $f\in X_{\S}(\eta)$, we say that $f\rst{\T}$ is {\em $(\S,w,\eta)$-ambiguous} if it is $(\S,\T,\Ext_w(\T),\eta)$-ambiguous.

Suppose $H$ is a half plane and $w\in E(H)$ is its unique boundary edge.  If $w$ is a rational line and $f\in X_{\S}(\eta)$, we say that $f\rst{H}$ is {\em $(\S,\eta)$-ambiguous} if it is $(\S,w,\eta)$-ambiguous.
\end{definition}

\begin{remark}
We remark that ambiguity is a property of a specific coloring of $\ZZ$, rather than a dynamical property of $X_{\mathcal{\S}}(\eta)$ like expansivity, nonexpansivity or the one-sided versions that are introduced in Section~\ref{expansivesection}.  An $(S,\eta)$-ambiguous coloring of a half plane can be thought of as a ``witness'' to the nonexpansiveness of the direction that borders the half plane.  An $(\S,\T_1,\T_2,\eta)$-ambiguous coloring of a finite set $\T_1$ similarly witnesses a finite analog of nonexpansiveness. 
\end{remark}

\begin{figure}[ht]
     \centering
  \def\svgwidth{\columnwidth}
        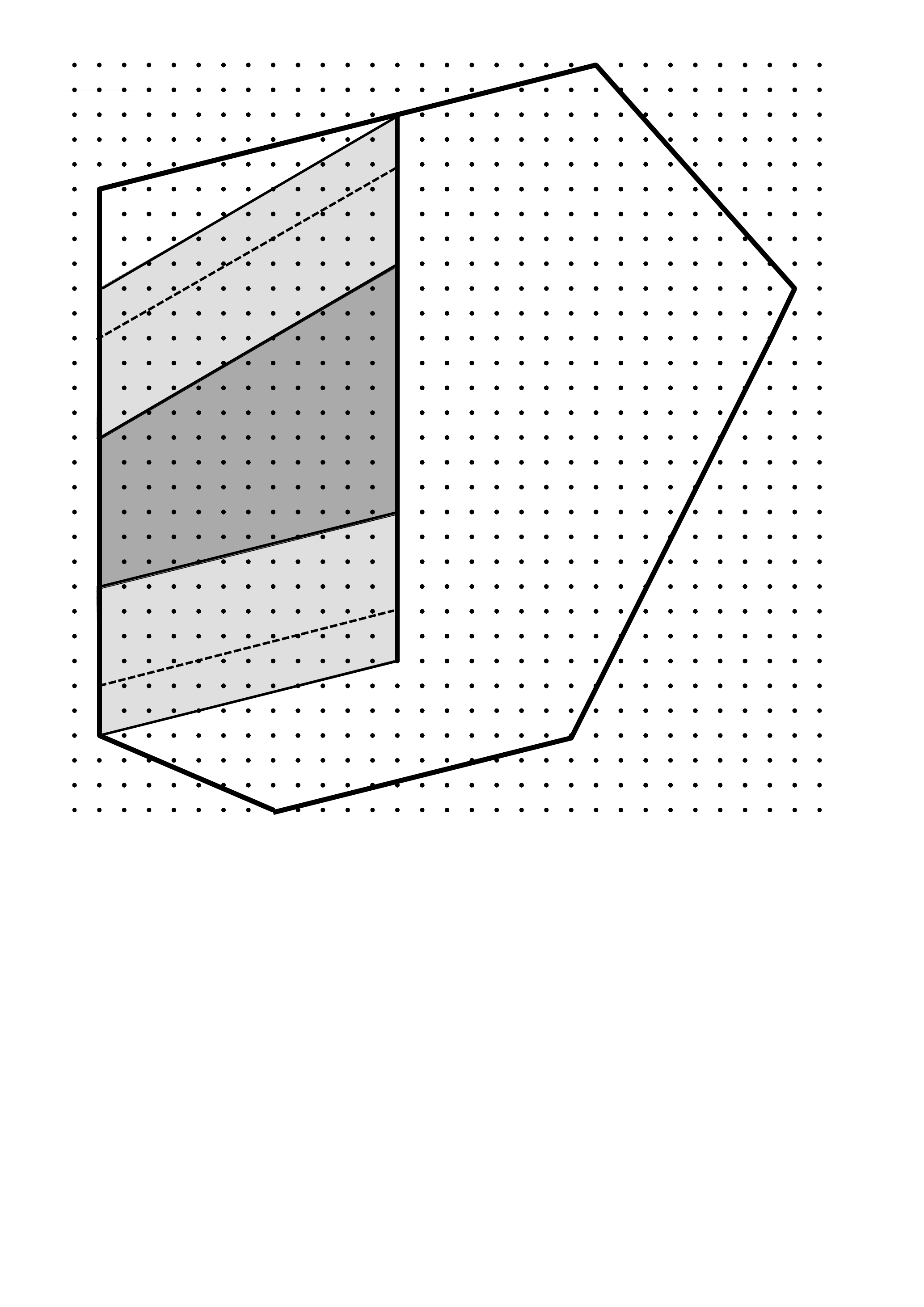
                \setlength{\abovecaptionskip}{-65mm}
	\caption{Suppose $\T$ is the set of integer points contained in the largest convex region, $\S$ is the set of integer points in the dark grey region, $w\in E(\S)$ is the downward oriented vertical line segment on the left side of $\S$, and $\hat{w}\in E(\T)$ is the downward oriented vertical line segment on the left side of $\T$.  Then the $(\S,w)$-border of $\T$ is the set of integer points in the union of the light grey regions and the dark grey region.  The $2$-interior of the $(S,w)$-border of $\T$ is the set of integer points in the region enclosed between the dashed lines and the vertical line segments connecting them.}
\label{borders}\end{figure}

\begin{definition}[See Figure~\ref{borders}]
If $\T$ is a convex set, $\S\subset\T$ is convex, and $w\in E(\S)$ is parallel to an edge $w^*\in E(\T)$, let $V_{\S,\T,w}$ be the (possibly empty) set of translations that take $\S$ to a subset of $\T$ such that the translated edge of $\S$ is contained in the edge of $\T$ parallel\footnote{We note that since the boundaries of $\S$ and $\T$ are endowed with orientations,  the edge of $\T$ parallel to $w$ uniquely identifies a single edge.} to $w$:
$$
V_{\S,\T,w}=\left\{\vec v\in\ZZ\colon(\S+\vec v)\subseteq\T\text{, }(w+\vec v)\subseteq w^*\right\}.
$$
When $V_{\S,\T,w}\neq\emptyset$, there exist vectors $\vec a_{\S,\T,w}, \vec b_{\S,\T,w}\in\ZZ$ such that 
$$
V_{\S,\T,w}=\{\vec a_{\S,\T,w}+\lambda\vec b_{\S,\T,w}\colon\lambda\in I\},
$$
where
$$
I=\begin{cases}
\{0,1,\dots,\left|V_{\S,\T,w}\right|-1\} & \text{ if } \| w^*\|<\infty; \\
\N\cup\{0\} & \text { if } w^* \text{ is a semi-infinite line}; \\
\Z & \text{ if } w^* \text{ is a line}.
\end{cases}
$$
Let $I_{\min}, I_{\max}$ be the minimum and maximum elements of $I$, respectively (allowing $I_{\min}=-\infty$ and $I_{\max}=+\infty$ if necessary).  Then the {\em $(\S,w)$-border} of $\T$ is the set
$$
\bigcup_{\vec v\in V_{\S,\T,w}}(\S+\vec v).
$$

For integers $0\leq g<\frac{1}{2}(I_{\max}-I_{\min})$, the {\em $g$-interior} of the $(\S,w)$-border is the set
$$
\bigcup_{\lambda=I_{\min}+g}^{I_{\max}-g-1}(\S+\vec a_{\S,\T,w}+\lambda\vec b_{\S,\T,w}).
$$
\end{definition}

We now show that ambiguity is a source of periodicity:
\begin{lemma}
\label{ambiguousperiod}
Suppose $\eta\colon\ZZ\to\A$, $\S\subset\ZZ$ is an $\eta$-generating set 
and there exist antiparallel $w_1, w_2\in E(\S)$.    Suppose $\left|w_1\right|\leq\left|w_2\right|$, $H$ is a $w_1$-half plane, and the restriction of $f\in X_{\S}(\eta)$ to $H$ is $(\S,\eta)$-ambiguous.  Then the $(\S\setminus w_1,w_1)$-border of $H$ is periodic with period vector parallel to $w_1$.  Its period is at most $\left|w_1\cap\S\right|-1$.
\end{lemma}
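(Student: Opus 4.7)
The plan is to exploit the ambiguity of $f\rst{H}$ to force many ambiguous colorings of $\S\setminus w_1$, apply Lemma~\ref{nonuniqueextension1} and pigeonhole to extract a vertical period on a single translate of $\S\setminus w_1$, and then propagate this period to the entire border using the $\eta$-generating property of $\S$.

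First, using Remark~\ref{rem:standard}, I normalize coordinates so that $w_1=\{0\}\times\{-(n-1),\dots,0\}$ is vertical and oriented downward (here $n:=|w_1\cap\S|$); then $\S\subseteq\{x\geq0\}$, and translating $H$ I may take $H=\{x\geq0\}$ with $\Ext_{w_H}(H)=H\cup(\{-1\}\times\Z)$, so the $(\S\setminus w_1,w_1)$-border of $H$ is the strip $B:=\bigcup_{t\in\Z}R_t$, where $R_t:=(\S\setminus w_1)+(-1,t)\subseteq H$. The ambiguity hypothesis yields $g_1,g_2\in X_\S(\eta)$ with $g_i\rst{H}=f\rst{H}$ and $g_1\neq g_2$ somewhere on the extension column; let $D:=\{y\in\Z:g_1(-1,y)\neq g_2(-1,y)\}$, which is nonempty. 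For each $t$ set $c(t):=f\rst{R_t}\in\mathcal{W}(\S\setminus w_1,\eta)$; since $R_t\subseteq H$ we have $g_1\rst{R_t}=g_2\rst{R_t}=c(t)$, and because $w_1+(-1,t)=\{-1\}\times[t-n+1,t]$, whenever $D\cap[t-n+1,t]\neq\emptyset$ the coloring $c(t)$ admits at least two extensions in $\mathcal{W}(\S,\eta)$, i.e.\ $c(t)$ is non-uniquely-extending.

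Next I show that $D$ is $(n-1)$-syndetic in $\Z$. Fix $y_0\in D$ and translate $\S$ by $(-1,y_0)$, placing the top endpoint $(0,0)$ of $w_1\cap\S$ at $(-1,y_0)$. Since $(0,0)\in V(\S)$ is $\eta$-generated and $g_1,g_2$ disagree at that vertex, they must already disagree somewhere in $(\S+(-1,y_0))\setminus\{(-1,y_0)\}$. But $g_1=g_2$ on $H$, and in particular on $(\S\setminus w_1)+(-1,y_0)\subset H$, so the disagreement lies in $(w_1+(-1,y_0))\setminus\{(-1,y_0)\}=\{-1\}\times[y_0-(n-1),y_0-1]$, yielding $D\cap[y_0-(n-1),y_0-1]\neq\emptyset$. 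The analogous argument at the bottom endpoint $(0,-(n-1))$ gives $D\cap[y_0+1,y_0+n-1]\neq\emptyset$; iterating, $D$ meets every length-$n$ interval in $\Z$, so $c(t)$ is non-uniquely-extending for every $t\in\Z$. By Lemma~\ref{nonuniqueextension1}, the set of non-uniquely-extending $(\S\setminus w_1)$-colorings of $\eta$ has at most $n-1$ elements, so pigeonholing $c(y_0),c(y_0+1),\dots,c(y_0+n-1)$ into this set produces $t_1<t_2$ in $[y_0,y_0+n-1]$ with $c(t_1)=c(t_2)$; setting $p:=t_2-t_1\in[1,n-1]$ we obtain $f(\vec u)=f(\vec u+(0,p))$ for every $\vec u\in R_{t_1}$.

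The main obstacle is propagating this period from the single translate $R_{t_1}$ to all of $B$, for which I plan a minimal-counterexample argument at the level of $g_1$ and its vertical shift $g_1':=T^{(0,p)}g_1\in X_\S(\eta)$. On $R_{t_1}$ the period of $f$ forces $g_1=g_1'$, and I aim to show $g_1=g_1'$ on all of $\ZZ$, which implies $f=T^{(0,p)}f$ on $H\supseteq B$. If not, I pick $\vec u^*$ minimizing a suitable notion of distance to $R_{t_1}$ subject to $g_1(\vec u^*)\neq g_1'(\vec u^*)$; choosing a vertex $v^*\in V(\S)$ extreme in the direction from $R_{t_1}$ to $\vec u^*$ (the antiparallel edges $w_1,w_2$ and Lemma~\ref{heightlemma} ensure such vertices exist transverse to $w_1$) and translating so $v^*$ lands at $\vec u^*$ yields a translate $\S+\vec a$ with $\vec u^*\in V(\S+\vec a)$ and $(\S+\vec a)\setminus\{\vec u^*\}$ consisting of points strictly closer to $R_{t_1}$; minimality gives $g_1=g_1'$ on this set, and $\eta$-generation at the vertex $v^*$ forces $g_1(\vec u^*)=g_1'(\vec u^*)$, a contradiction. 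The delicate point is verifying the geometric claim that $(\S+\vec a)\setminus\{\vec u^*\}$ really is strictly closer to $R_{t_1}$ than $\vec u^*$—this requires a careful choice of vertex and distance function and is the crux of the propagation step.
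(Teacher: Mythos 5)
Your set-up, the syndeticity argument showing that every translate $c(t)$ of $\S\setminus w_1$ along the border carries a non-uniquely-extending coloring, and the pigeonhole producing $t_1<t_2$ with $c(t_1)=c(t_2)$ and $p:=t_2-t_1\leq n-1$ are all correct, and this portion tracks the first half of the paper's proof (your syndeticity of $D$ is just the contrapositive, localized at the two endpoints of $w_1$, of the paper's induction showing that a unique extension anywhere would determine the whole extension column). The gap is the propagation step, and it is not merely ``delicate'': the argument you sketch cannot be repaired in that form. Your minimal-counterexample scheme uses nothing about $g_1'$ being a vertical shift of $g_1$ --- if valid, it would show that \emph{any} two elements of $X_{\S}(\eta)$ agreeing on the single finite set $R_{t_1}$ agree on all of $\ZZ$. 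Applied to the pair $g_1,g_2$ (which agree on $H\supseteq R_{t_1}$ but differ on the column $x=-1$), this contradicts the ambiguity hypothesis of the lemma itself. Concretely, the geometric claim fails already for target points $\vec u^*$ on the column $x=-1$: the support set of $\S$ in the direction $(-1,0)$ is the whole edge $w_1$ rather than a single vertex, so any translate of $\S$ placing one point on that column with the rest in $\{x\geq-1\}$ places all $n$ points of $w_1$ on that column, and $(\S+\vec a)\setminus\{\vec u^*\}$ is then not contained in the region where agreement is already known. The target is also too strong on its face: Proposition~\ref{prop:extendedambiguousperiod} shows that the vertical period of $f$ can genuinely grow on strips deeper inside $H$, so $g_1=T^{(0,p)}g_1$ on all of $\ZZ$ with $p\leq n-1$ is false in general.

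The paper closes this gap without any propagation. Having shown that every $f\rst{\tilde{\S}+\vec u+(0,\lambda)}$ is ambiguous, so that at most $h=n-1$ distinct colorings of $\tilde{\S}$ occur along the border (the same count you get from Lemma~\ref{nonuniqueextension1}), it encodes the border as a one-dimensional sequence $g\colon\Z\to\tilde{\A}$ whose symbols are the colorings of the bottom row of $\tilde{\S}$, observes that an $h$-block of $g$ is exactly a coloring of the bottom $h$ rows $\tilde{\W}$ --- which sit inside $\tilde{\S}$ by Lemma~\ref{heightlemma}, using the antiparallel edges $w_1,w_2$ --- so that $P_g(h)\leq h$, and then applies the Morse--Hedlund Theorem to conclude that the \emph{entire} sequence $g$, hence the entire border, is periodic with period at most $h$. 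Replacing your propagation step with this coding-plus-Morse--Hedlund argument (your single pigeonhole repetition is then not needed) completes the proof.
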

\begin{proof}
Without loss of generality (see Remark~\ref{rem:standard}), we assume that $w_1$ and $w_2$ are vertical, and $w_1$ points downward.  Let $h:=\left|w_1\cap\S\right|-1$ and for each vertical line $\ell$ with nonempty intersection with $\S$, let $\vec x_{\ell}$ denote the bottom-most element of $\ell\cap\S$.  By Lemma~\ref{heightlemma},
$$
\W:=\bigcup_{i=0}^{h-1}\left\{\vec x_{\ell}+(0,i)\in\ZZ\colon\ell\text{ vertical, }\ell\cap\S\neq\emptyset\right\}\subseteq\S.
$$
Define $\tilde{S}:=\S\setminus w_1$, $\tilde{\W}:=\W\setminus w_1$, and fix a vector $\vec u\in\ZZ$ such that $\tilde{\W}+\vec u$ is contained in the $(\tilde{\S},w_1)$-border of $H$.

We claim that for any $\lambda\in\Z$, the $\eta$-coloring of $\tilde{S}$ given by $f\rst{\tilde{\S}+\vec u+(0,\lambda)}$ has at least two extensions to an $\eta$-coloring of $\S$.  Instead, suppose not.  Then the coloring $f\rst{H}$ uniquely determines the $\eta$-coloring of $H\cup\{\S+\vec u+(0,\lambda)\}$, which in particular determines the $\eta$-coloring of all but one of the elements of the set $\S+\vec u+(0,\lambda+1)$.  Since $\S$ is $\eta$-generating, this uniquely determines the $\eta$-coloring of $H\cup(\S+\vec u+(0,\lambda+1))$.  Now for $i\geq0$, suppose the $\eta$-coloring of 
$$
H\cup\{\S+\vec u+(0,\lambda+j)\colon 0\leq j\leq i\}
$$
has been determined.  Then the $\eta$-coloring of all but one of the elements of the set $\S+\vec u+(0,\lambda+i+1)$ is determined.  Since $\S$ is $\eta$-generating, this determines the $\eta$-coloring of $H\cup\{\S+\vec u+(0,\lambda+i+1)\}$.  By induction, this holds for all $i\geq0$.  Similarly for all $i\leq0$.  But this contradicts the ambiguity of the $\eta$-coloring of $H$.

Recall that since $\S$ is $\eta$-generating, $D_{\eta}(\tilde{S})>D_{\eta}(\S)$ and so $P_{\eta}(\tilde{S})>P_{\eta}(S)-\left|w_1\cap\S\right|$.  Therefore, the number of $\eta$-colorings of $\tilde{S}$  that do not  uniquely extend to $\eta$-colorings of $\S$ is at most $h=|w_1\cap S|-1$.  In particular, there are at most $h$ $\eta$-colorings of $\tilde{\W}$ that arise as the restriction of $f$ to a set of the form $\tilde{\W}+\vec u+(0,i)$, where $i\in\Z$.

Define a color set $\tilde{\A}$ whose colors are $\eta$-colorings of $\{\vec x_{\ell}\colon\ell\text{ is vertical, }\ell\cap\tilde{\S}\neq\emptyset\}$ occurring as the restriction of $f$ to a set of the form
$$
B_i:=\{\vec x_{\ell}\colon\ell\text{ is vertical, }\ell\cap\tilde{\S}\neq\emptyset\}+\vec u+(0,i),
$$
where $i\in\Z$.  Define $g\colon \Z\to\tilde{\A}$ by $g(i)=f\rst{B_i}$.  Then the (one-dimensional) block complexity $P_g(h)$ is the number of $\eta$-colorings of $\tilde{\W}$ occurring as the restriction of $f$ to a set of the form $\tilde{\W}+\vec u+(0,i)$.  But we have shown that this is at most $h$, so in particular $P_g(h)\leq h$.  By the Morse-Hedlund Theorem, $g$ is periodic with period at most $h$.  The result now follows from the definition of $g$.  
\end{proof}

The proof of Lemma~\ref{ambiguousperiod} holds in a slightly more general setting, and we make use of this in Section~\ref{nk/2}.

\begin{corollary}\label{ambiguouscorollary}
Suppose $\eta\colon\ZZ\to\A$ and there exists a finite, convex set $\S\subset\ZZ$ such that
\begin{enumerate}
\item There exists $w\in E(\S)$ such that for any line $\ell$ parallel to $w$ that has nonempty intersection with $\S$, we have $\left|\ell\cap\S\right|\geq\left|w\cap\S\right|-1$;
\item The two endpoints of $w$ are $\eta$-generated by $\S$;
\item $D_{\eta}(\S\setminus w)>D_{\eta}(\S)$.
\end{enumerate}
Suppose further that $\T\subset\ZZ$ is a convex set and there is an edge $w^*\in E(\T)$ parallel to $w$ and such that $Ext_{w^*}(\T)\neq\T$.  Finally suppose that the $(\S\setminus w,w)$-border of $Ext_{w^*}(\T)$ has nonempty $(\left|w\cap\S\right|-1)$-interior.

If $f\in X_{\S}(\eta)$ and $f\rst{\T}$ is $(\T,w^*,\eta)$-ambiguous, then there is some $j$ between $0$ and the depth of the $w^*$-extension of $\T$ such that the restriction of $f$ to the $(\left|w\cap\S\right|-1)$-interior of the $(\S\setminus w,w)$-border of the $(w,j)$-subextension of $\T$ is periodic with period vector parallel to $w$.  Moreover, the period is at most $\left|w\cap\S\right|-1$.
\end{corollary}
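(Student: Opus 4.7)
The plan is to adapt the proof of Lemma~\ref{ambiguousperiod}: conditions (1)--(3) are precisely what is needed to run that argument without the full strength of $\S$ being $\eta$-generating. Condition (1) substitutes for Lemma~\ref{heightlemma} (which in Lemma~\ref{ambiguousperiod} was provided by the antiparallel edges of an $\eta$-generating set); condition (3) plays the role of Lemma~\ref{nonuniqueextension1}, bounding by $h := |w \cap \S| - 1$ the number of $\eta$-colorings of $\tilde{\S} := \S \setminus w$ that fail to extend uniquely to $\S$; and condition (2) supplies exactly the generation required by the vertical cascade in that proof, which previously used that all of $\S$ is $\eta$-generating but in fact only needs generation of the endpoints of $w$.

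By the $SL_2(\Z)$ change of coordinates of Remark~\ref{rem:standard}, assume $w$ and $w^*$ point vertically downward. Unpacking the ambiguity of $f\rst{\T}$, fix $g_1, g_2 \in X_\S(\eta)$ agreeing with $f$ (hence with each other) on $\T$ but differing on $\text{Ext}_{w^*}(\T)$. Let $\ell_1, \ldots, \ell_m$ be the vertical decomposition lines of $\text{Ext}_{w^*}(\T) \setminus \T$, ordered by depth, so that the $(w, j)$-subextension is $\T_j = \T \cup \bigcup_{i=1}^{j}(\ell_i \cap \text{Ext}_{w^*}(\T))$, and let $j \in \{0, 1, \ldots, m-1\}$ be the largest index for which $g_1\rst{\T_j} = g_2\rst{\T_j}$. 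By maximality, $g_1$ and $g_2$ disagree at some point of $\ell_{j+1} \cap \text{Ext}_{w^*}(\T)$.

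Now mirror the body of Lemma~\ref{ambiguousperiod}: define $\tilde{\W}$ using condition (1) in place of Lemma~\ref{heightlemma}, and for any translate $\vec{v}$ landing in the $(|w \cap \S| - 1)$-interior of the $(\tilde{\S}, w)$-border of $\T_j$, I claim $f\rst{\tilde{\S} + \vec{v}}$ cannot uniquely extend to an $\eta$-coloring of $\S + \vec{v}$. Otherwise the forced extension would fix $g_1$ and $g_2$ on $w + \vec{v} \subseteq \ell_{j+1}$; iterating along vertical translates, and using condition (2) at each step to determine the single undetermined endpoint of the translated $w$, one propagates the agreement of $g_1$ and $g_2$ to every point of $\ell_{j+1} \cap \text{Ext}_{w^*}(\T)$ reachable from within the interior, contradicting the choice of $j$. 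The $(|w \cap \S| - 1)$-interior condition supplies exactly the vertical room needed for the iteration to cover all relevant points.

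By condition (3) and the count of Lemma~\ref{nonuniqueextension1}, at most $h$ distinct $\eta$-colorings of $\tilde{\S}$ fail to uniquely extend to $\S$, so at most $h$ distinct colorings of $\tilde{\W}$ arise among the admissible translates. Encoding these as a one-dimensional word $g$ as in the proof of Lemma~\ref{ambiguousperiod}, the complexity bound $P_g(h) \leq h$ combined with the Morse-Hedlund Theorem yields the claimed period vector parallel to $w$ with period at most $h = |w \cap \S| - 1$. The main obstacle is the iteration step in the third paragraph: in Lemma~\ref{ambiguousperiod} the cascade runs along the infinite boundary of a half-plane, whereas here it must remain confined to the finite segment $\ell_{j+1} \cap \text{Ext}_{w^*}(\T)$, and one must verify that generating only the endpoints of $w$ (as opposed to the full $\eta$-generating property of $\S$) suffices to close the induction at each vertical step.
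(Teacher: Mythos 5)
Your overall strategy is the paper's: reduce to the argument of Lemma~\ref{ambiguousperiod}, with hypotheses (i)--(iii) standing in, respectively, for Lemma~\ref{heightlemma}, for the endpoint-generation used in the vertical cascade, and for the counting bound of Lemma~\ref{nonuniqueextension1}, and with the finite-interval case of Theorem~\ref{MorseHedlundTheorem} replacing the $\Z$ case. Your isolation of exactly which features of an $\eta$-generating set that argument actually uses is correct.

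There is, however, a genuine gap in your choice of $j$. Writing $\T_j$ for the $(w,j)$-subextension, you take $j$ to be the largest index for which the particular witnesses $g_1,g_2$ agree on $\T_j$. But $g_1$ and $g_2$ are only required to agree with $f$ on $\T$ itself, not on $\T_j\setminus\T$, and for $j\geq1$ the $(\S\setminus w,w)$-border of $\T_j$ meets $\T_j\setminus\T$. It can happen that $f\rst{\T}$ already fails to extend uniquely to $\T_1$ while $g_1$ and $g_2$ nonetheless agree on $\T_1$ (both following a branch different from the one $f$ follows); in that situation your key step --- ``if $f\rst{(\S\setminus w)+\vec v}$ extended uniquely, the forced extension would fix $g_1$ and $g_2$ on $w+\vec v$'' --- fails, because $f$ and $g_1$ restrict to different colorings of $(\S\setminus w)+\vec v$, and what your argument then yields is periodicity of $g_1$, not of $f$, on the border of $\T_j$, whereas the corollary asserts periodicity of $f$. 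The repair is the paper's choice: let $j$ be the largest integer such that $f\rst{\T}$ extends uniquely to $\T_j$. Then $f$, $g_1$, $g_2$ all agree on $\T_j$ (each restricts to an extension of $f\rst{\T}$, and that extension is unique), $f\rst{\T_j}$ is $(\S,\T_j,\T_{j+1},\eta)$-ambiguous, and the rest of your argument applies to $f$ itself. A secondary imprecision: the non-unique-extendability should be established for every translate in the border, not only for those in the $(\left|w\cap\S\right|-1)$-interior, since the complexity count feeding the Morse-Hedlund Theorem runs over all windows of consecutive border positions; the interior enters only in the conclusion, as the trimmed interval on which the finite-interval version of Theorem~\ref{MorseHedlundTheorem} guarantees periodicity.
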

\begin{proof}
Choose $j$ to be the largest integer such that $f\rst{\T}$ extends uniquely to the $(w,j)$-subextension of $\T$.  Thereafter, the proof is identical to that of Lemma~\ref{ambiguousperiod}, except that the application of the Morse-Hedlund Theorem is for a finite (or semi-infinite) interval in $\Z$, rather than to $\Z$. 
\end{proof}

\section{One-sided expansiveness and periodicity}
\label{sec:periodicitytheorems}

\subsection{Nonexpansive directions}
\label{expansivesection}

Suppose that $\eta\colon\ZZ\to\A$ satisfies $D_{\eta}(R_{n,k})\leq0$ for some $n, k\in\N$.  By Corollary~\ref{generatingset}, there is an $\eta$-generating subset $\S\subseteq R_{n,k}$.  A half plane cannot have an $(\S,\eta)$-ambiguous coloring unless the unique boundary edge is parallel to an edge of $\S$, as otherwise we can use $\S$ to extend the coloring uniquely to a larger half-plane.  However the generating set $\S$ may not be uniquely determined (and by Corollary~\ref{generatingset} it is not unique if the discrepancy is strictly negative) and so a $\vec v$-half plane cannot have an $(\S,\eta)$-ambiguous coloring unless {\em every} $\eta$-generating subset of $R_{n,k}$ has an edge parallel to $\vec v$.  This motivates the following definition:
\begin{definition}
Suppose that  $\eta\colon\ZZ\to\A$, $\ell\subset\R^2$ is an oriented rational line through the origin and $A\in SL_2(\Z)$ maps the (downward oriented) $y$-axis to $\ell$.  For $a,b\in\N$, we say that $\ell$ is {\em one-sided $\eta$-expansive with parameters $(a,b,A)$} if every $(\eta\circ A)$-coloring of $[0,a]\times[-b,b]$ extends uniquely to an $(\eta\circ A)$-coloring of $[0,a]\times[-b,b]\cup\{(-1,0)\}$.  An oriented rational line is {\em one-sided $\eta$-expansive} if there exist $a,b\in\N$ and $A\in SL_2(\Z)$ such that it is one-sided $\eta$-expansive with parameters $(a,b,A)$, or just {\em one-sided expansive} when $\eta$ is clear from the context.

An oriented rational line through the origin which is not one-sided expansive is called {\em one-sided $\eta$-nonexpansive}, or just {\em one-sided nonexpansive} when $\eta$ is clear from the context.

We can also refer to a vector $\vec v\in\ZZ\setminus\{\vec0\}$ as being {\em one-sided expansive} or {\em one-sided nonexpansive}, meaning that the span of $\vec v$ (with the orientation inherited from $\vec v$) is one-sided expansive or one-sided nonexpansive, respectively. 
\end{definition}

Although the $y$-axis seems to play a distinguished role in the definitions of 
one-sided expansiveness and one-sided nonexpansiveness, the choice of this direction is arbitrary (see Remark~\ref{rem:standard}).  The insistence on dealing with {\em directed} rational lines instead of just rational lines is to allow the possibility that a line can be one-sided expansive with one orientation and one-sided nonexpansive with the other.  These definitions are one-sided generalizations of the definitions of expansive and nonexpansive subspaces used in Boyle and Lind~\cite{BL}.

In a similar vein, the parameters $(a,b,A)$ used in the definition of one-sided expansiveness are merely the choice of a convenient coordinate system.  With respect to a different choice of $A$, the line would still be one-sided expansive but with different choice of $a$ and $b$.  

\begin{lemma}\label{ambiguousinorbit}
Suppose $\eta\colon\ZZ\to\A$ and $D_{\eta}(R_{n,k})\leq0$ for some $n, k\in\N$.  If $\S\subseteq R_{n,k}$ is an $\eta$-generating set, then a directed rational line  $\ell$ through the origin is one-sided $\eta$-nonexpansive if and only if there exists an $(\S,\eta)$-ambiguous $\ell$-half plane $P$.  

Moreover, given a one-sided $\eta$-nonexpansive line $\ell$,
there exist $f,g\in X_{\eta}$ such that the restrictions of $f$ and $g$ to the half plane $P$ coincide, but they differ on its $\ell$-extension.
\end{lemma}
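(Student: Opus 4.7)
The plan is as follows. By Remark~\ref{rem:standard}, one reduces without loss of generality to the case where $\ell$ is the downward-oriented $y$-axis, so that the unique positively oriented $\ell$-half plane is $P = \{(x, y) \in \ZZ : x \geq 0\}$ with boundary edge $w$ along the $y$-axis; the definition of $w$-extension for a half plane with rational boundary (Definition~\ref{envelopeddef2}) gives $\Ext_w(P) = \{(x, y) \in \ZZ : x \geq -1\}$. Thus the disagreement region $\Ext_w(P) \setminus P$ for any $(\S, \eta)$-ambiguous coloring of $P$ is precisely the column $\{x = -1\}$.

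For the forward direction along with the moreover strengthening, the main tool is compactness of $X_\eta$. The failure of one-sided $\eta$-expansiveness at parameters $(n, n, I)$ for each $n \in \N$ produces shift vectors $\vec u_{1, n}, \vec u_{2, n} \in \ZZ$ such that the translates $T^{\vec u_{1, n}}\eta$ and $T^{\vec u_{2, n}}\eta$ coincide on $[0, n] \times [-n, n]$ but disagree at $(-1, 0)$. By compactness of $X_\eta$ together with the finiteness of $\A$, one passes to a subsequence along which both sequences of translates converge (to $f, g \in X_\eta$ respectively) and the distinct values at $(-1, 0)$ stabilize. In the limit, $f|_P = g|_P$ since the agreement propagates through increasing boxes exhausting $P$, while $f(-1, 0) \neq g(-1, 0)$. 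Because $X_\eta \subseteq X_\S(\eta)$, the pair $(f, g)$ simultaneously exhibits an $(\S, \eta)$-ambiguous coloring $f|_P$ of $P$ and the moreover claim that such witnesses can be found in $X_\eta$.

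For the backward direction, one argues by contrapositive. Assuming $\ell$ is one-sided $\eta$-expansive with parameters $(a, b, I)$, the goal is to rule out any $(\S, \eta)$-ambiguous $\ell$-half plane $P$. Given hypothetical witnesses $g_1, g_2 \in X_\S(\eta)$ agreeing on $P$ and, after a vertical translation, disagreeing at $(-1, 0)$, the plan is to use the $\eta$-generating property of $\S$ together with the fill-in argument from the proof of Lemma~\ref{entropy} to show that the common restriction $g_1|_{[0, a] \times [-b, b]}$ is actually an $\eta$-subword of the box, and that the two distinct values $g_1(-1, 0), g_2(-1, 0)$ yield two distinct $\eta$-compatible extensions on $[0, a] \times [-b, b] \cup \{(-1, 0)\}$, contradicting one-sided expansiveness. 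The principal obstacle is this promotion step: since $X_\S(\eta)$ may properly contain $X_\eta$, an $\S$-compatible patch is not a priori an $\eta$-subword, so a careful analysis of the geometry of $\S$ (in particular the location of its extreme points relative to the column $\{x = -1\}$) is required to extract $\eta$-subword witnesses from the $X_\S(\eta)$-ambiguity data.
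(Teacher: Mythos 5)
Your forward direction, together with the ``moreover'' clause, is exactly the paper's argument: for each $n$ the failure of one-sided expansiveness yields two translates of $\eta$ agreeing on $[0,n]\times[-n,n]$ and disagreeing at $(-1,0)$; compactness gives accumulation points $f,g\in X_\eta$ agreeing on the half plane and disagreeing on its extension; and since $X_\eta\subseteq X_{\S}(\eta)$ these also witness $(\S,\eta)$-ambiguity. That half is correct and needs no further comment.

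The backward direction is where your proposal has a genuine gap. You rightly note that the ambiguity witnesses $g_1,g_2$ are only required to lie in $X_{\S}(\eta)$, so their restrictions to $[0,a]\times[-b,b]$ are not a priori $\eta$-colorings, and you announce a ``promotion step'' to repair this --- but you never carry it out, and the tool you point to cannot do the job. The argument in Lemma~\ref{entropy} shows that, \emph{within} the subshift of finite type $X_{\S'}(\eta)$, a coloring of a square annulus determines the coloring of its interior; it does not show that an $X_{\S}(\eta)$-legal patch occurs in $\eta$. Membership in $X_{\S}(\eta)$ is a purely local condition, and in general the language of $X_{\S}(\eta)$ on large shapes strictly contains that of $\eta$, so no finite analysis of the geometry of $\S$ can certify that such a patch is an $\eta$-subword; the step you defer is not a routine verification but, as set up, a dead end. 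For comparison, the paper dispatches this direction in one sentence, simply asserting that one-sided expansiveness contradicts ambiguity of the half plane, and does not attempt any promotion; note also that wherever the implication ``ambiguous $\Rightarrow$ nonexpansive'' is later invoked, the ambiguous coloring is built from elements of $X_\eta$ (this is precisely what the ``moreover'' clause supplies), in which case restrictions to finite boxes really are $\eta$-colorings and the contradiction with the unique-extension property is immediate. To close your argument you must either restrict the witnesses in Definition~\ref{ambiguousdef} to $X_\eta$ or find an argument that avoids passing through $\eta$-subwords of large boxes; the route you sketch does neither.
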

\begin{proof}
If there is an $(\S,\eta)$-ambiguous $\ell$-half plane, then $\ell$ must be one-sided nonexpansive; otherwise one-sided $\ell$-expansiveness contradicts ambiguity of the half plane.  Conversely, if $\ell$ is one-sided nonexpansive and $A\in SL_2(\Z)$ maps the (downward oriented) $y$-axis to $\ell$, then for every $a,b\in\N$ there is an $(\eta\circ A)$-coloring of $[0,a]\times[-b,b]$ that has two extensions to an $(\eta\circ A)$-coloring of $[0,a]\times[-b,b]\cup\{(-1,0)\}$.  Let $f_a, g_a\in\overline{\mathcal{O}(\eta\circ A)}$ be two such extensions of $[0,a]\times[-a,a]$.  By compactness, there exist accumulation points $f,g\in\overline{\mathcal{O}(\eta\circ A)}$ for the sequences
$(f_a)_{a\in\N}$ and $(g_a)_{a\in\N}$, respectively.
Then $f\rst{H_0}=g\rst{H_0}$, but $f\rst{H_{-1}}\neq g\rst{H_{-1}}$, where $H_i:=\{(x,y)\in\ZZ\colon x\geq i\}$.  Applying $A^{-1}$ to this half plane gives the result.
\end{proof}

The following simple lemma is used to limit possible directions of periodicity:
\begin{lemma}\label{possibledirections}
Suppose $\eta\colon\ZZ\to\A$ and there exist $n, k\in\N$ such that $P_{\eta}(R_{n, k})\leq nk$.  If $\S$ is an $\eta$-generating set, then for any one-sided $\eta$-nonexpansive direction $\ell$, there is a boundary edge $w_{\ell}\in E(\S)$ parallel to $\ell$.

In particular, $\ell$ can be translated such that it intersects $R_{n,k}$ in at least two places.
\end{lemma}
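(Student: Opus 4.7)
The plan is to combine Lemma~\ref{ambiguousinorbit} (which converts one-sided nonexpansiveness into an ambiguous half plane) with a direct geometric/generation argument: if $\S$ had no edge parallel to $\ell$, one could translate $\S$ so as to straddle the boundary of the half plane in a way that forces unique extension of any coloring, contradicting ambiguity.

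In more detail, I would argue by contradiction. Suppose $\ell$ is one-sided nonexpansive. By Lemma~\ref{ambiguousinorbit}, there is an $(\S,\eta)$-ambiguous $\ell$-half plane $P$; that is, there exist $f,g\in X_\eta$ with $f\rst{P}=g\rst{P}$ but $f\rst{\mathrm{Ext}_\ell(P)}\neq g\rst{\mathrm{Ext}_\ell(P)}$. Using Remark~\ref{rem:standard}, normalize by an element of $SL_2(\Z)$ so that $\ell$ is the (downward-oriented) $y$-axis and $P=\{(x,y)\in\ZZ:x\geq 0\}$; by Definition~\ref{envelopeddef2}(1)(d), $\mathrm{Ext}_\ell(P)\setminus P$ is the strip $\{-1\}\times\Z$.

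Now suppose, for contradiction, that no edge of $\S$ is parallel to $\ell$, i.e.\ no $w\in E(\S)$ is vertical. Then $\S$ has a unique extreme point $v=(v_1,v_2)\in V(\S)$ whose $x$-coordinate is minimal; every other element of $\S$ has $x$-coordinate strictly greater than $v_1$. Fix an arbitrary point $p=(-1,t)\in\mathrm{Ext}_\ell(P)\setminus P$ and consider the translate $\S+\vec u$ with $\vec u=(-1-v_1,\,t-v_2)$, which sends $v$ to $p$. Every other point of $\S$ is sent to a point with $x$-coordinate $\geq -v_1+(-1-v_1)+1=0$, i.e.\ into $P$. Since $\S$ is $\eta$-generating and $p$ is the image of the extreme point $v$, the $\eta$-coloring of $(\S+\vec u)\setminus\{p\}\subseteq P$ extends uniquely to an $\eta$-coloring of $\S+\vec u$. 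As $f$ and $g$ agree on $P$, this forces $f(p)=g(p)$. Since $t$ was arbitrary, $f$ and $g$ agree on all of $\mathrm{Ext}_\ell(P)\setminus P$, contradicting ambiguity. Hence some $w_\ell\in E(\S)$ is parallel to $\ell$.

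For the ``in particular'' statement, the edge $w_\ell$ has endpoints in $V(\S)\subset\ZZ$, so $|w_\ell\cap\ZZ|\geq 2$, and since $\S\subseteq R_{n,k}$ we have $w_\ell\subset R_{n,k}$. Translating $\ell$ so that it contains $w_\ell$ yields the required line through at least two integer points of $R_{n,k}$. The only real subtlety is step (3): one has to be sure that the extension of a half plane with rational boundary really is a single adjacent lattice strip so that every offending point is hit by a translate of the form $v\mapsto p$ with the remainder of $\S$ landing in $P$; this is exactly what case (1)(d) of Definition~\ref{envelopeddef2} guarantees, so no further work is required beyond the uniqueness of the minimal-$x$ vertex of $\S$.
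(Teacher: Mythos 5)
Your argument is correct and is essentially the paper's own proof: both normalize so that $\ell$ is the downward vertical direction, observe that the absence of a boundary edge parallel to $\ell$ forces the leftmost point of $\S$ to be a single $\eta$-generated vertex, and slide $\S$ so that this vertex sweeps out the column $\{-1\}\times\Z$ while the rest of $\S$ stays in the half plane, contradicting the ambiguity supplied by Lemma~\ref{ambiguousinorbit}. (Two cosmetic points: the bound $-v_1+(-1-v_1)+1$ should read $(v_1+1)+(-1-v_1)=0$, and ``no edge parallel to $\ell$'' rules out only a \emph{downward-oriented} vertical edge rather than all vertical edges --- but that is all your deduction of a unique minimal-$x$ vertex actually uses, so the proof stands.)
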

\begin{proof}
Suppose $\S\subseteq R_{n,k}$ is an $\eta$-generating set but any translation of $\ell$ intersects $\S$ in at most one place.  Choose a translation of $\ell$ which intersects $\S$ at a vertex, and without loss of generality assume this translation of $\ell$ intersects $\S$ at the origin.  Let $A\in SL_2(\Z)$ be a map taking the $y$-axis to $\ell$.  Choose an $(A^{-1}(\S),\eta\circ A)$-ambiguous coloring of $H_0:=\{(x,y)\in\ZZ\colon x\geq0\}$.  Notice that 
$$A^{-1}(\S)\cap\{(0,y)\colon y\in\Z\}=(0,0).$$  
Since $(0,0)\in A^{-1}(\S)$ is $\eta\circ A$-generated, there is a unique extension of any $\eta$-coloring of $H_0$ to an $\eta$-coloring of $\{(x,y)\in\ZZ\colon x\geq-1\}$.  This contradicts the $(A^{-1}(\S),\eta\circ A)$-ambiguity of the coloring of $H_0$.
\end{proof}

Combining this lemma with Lemma~\ref{lemma:no-irrational}, we have: 
\begin{corollary}
\label{cor:no-irrational}
Suppose $\eta\colon\ZZ\to\A$ and there exist $n, k\in\N$ such that $P_{\eta}(R_{n, k})\leq nk$.
If $\ell$ is a nonexpansive line for $X_\eta$, then there exists a translation of $\ell$ 
that intersects $R_{n,k}\cap\ZZ$ in at least two points.  
\end{corollary}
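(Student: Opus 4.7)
The plan is to combine Lemma~\ref{lemma:no-irrational} with Lemma~\ref{possibledirections}. Since Lemma~\ref{lemma:no-irrational} shows that every irrational line through the origin is expansive, the nonexpansive line $\ell$ is forced to be rational. The ``in particular'' conclusion of Lemma~\ref{possibledirections} then finishes the proof, provided I know that $\ell$ is one-sided nonexpansive in at least one of its two orientations. So the main task is to bridge from two-sided nonexpansiveness to one-sided nonexpansiveness in some orientation.

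I prove the contrapositive of this bridge: if $\ell$ is one-sided expansive in \emph{both} orientations, then $\ell$ is expansive in the Boyle--Lind sense. Changing coordinates (Remark~\ref{rem:standard}) so that $\ell$ becomes the $y$-axis with downward orientation, let $(a_+,b_+,A_+)$ and $(a_-,b_-,A_-)$ witness the two one-sided expansiveness properties. Translating the defining rectangle $[0,a_+]\times[-b_+,b_+]$ along the $y$-axis shows that any $x,y\in X_\eta$ agreeing on $[0,a_+]\times\Z$ must agree on $\{-1\}\times\Z$; shifting the window one column to the left and repeating, they must agree on $\{-2\}\times\Z$, and so on, so they agree on the entire left half-plane. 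Symmetrically, agreement on $[-a_-,0]\times\Z$ forces agreement on the right half-plane. Hence agreement on the bilateral strip $[-a_-,a_+]\times\Z$ forces $x=y$, witnessing Boyle--Lind expansiveness of $\ell$ with expansiveness radius $\max(a_+,a_-)$---contradicting nonexpansiveness.

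With the bridge in hand, apply Lemma~\ref{possibledirections} to the orientation of $\ell$ that is one-sided nonexpansive. This produces an $\eta$-generating set $\S\subseteq R_{n,k}$ and an edge $w_\ell\in E(\S)$ parallel to $\ell$. The edge $w_\ell$ has positive length and joins two vertices of $\partial\S$, both of which are lattice points of $\S\subseteq R_{n,k}$; therefore the line carrying $w_\ell$ is a translation of $\ell$ meeting $R_{n,k}\cap\ZZ$ in at least those two points.

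The only delicate piece is the bridge argument---specifically, the iteration from one-step one-sided expansiveness to determination of an entire half-plane. This amounts to translating and shifting the one-sided window, but it is the one place where a careful geometric (rather than purely formal) argument is required; the rest of the proof is a direct appeal to the two lemmas already established.
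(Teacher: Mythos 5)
Your proof is correct and follows the same route the paper takes, which simply derives the corollary by combining Lemma~\ref{lemma:no-irrational} (to rule out irrational nonexpansive lines) with the ``in particular'' clause of Lemma~\ref{possibledirections}. The one step you elaborate --- that a rational line which is one-sided expansive in both orientations is expansive in the Boyle--Lind sense, so that a nonexpansive line must be one-sided nonexpansive in at least one orientation --- is exactly the bridge the paper leaves implicit, and your column-by-column iteration argument for it is sound.
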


\subsection{A characterization of double periodicity}

\begin{lemma}\label{perimeter}
Suppose $\vec v_1,\dots,\vec v_m\in\ZZ\setminus\{\vec0\}$.  Given $n\in\N$, there exists $A=A({n,\vec v_1,\dots,\vec v_m})\in\N$ such that any finite, convex $\S\subset\ZZ$ containing at least $A$ integer points and such that $\partial\S$ is $(\vec v_1,\dots,\vec v_m)$-enveloped, has a boundary edge that contains at least $n$ integer points.
\end{lemma}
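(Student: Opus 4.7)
The plan is to prove the contrapositive: if every $w\in E(\S)$ contains at most $n-1$ integer points, then $\left|\S\right|$ is bounded above by a function depending only on $n$ and the $\vec v_i$. The argument splits into three parts: bounding the number of edges, bounding the length of each edge, and converting a perimeter bound into a cardinality bound.

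First, I would bound $\left|E(\S)\right|$. Since $\partial\S$ is a convex polygon, the outward normals to its edges are pairwise distinct, so at most two edges can be parallel to any given unoriented line direction (namely those whose outward normals point in the two opposite directions perpendicular to that line). Because $\partial\S$ is $\{\vec v_1,\dots,\vec v_m\}$-enveloped, this gives $\left|E(\S)\right|\leq 2m$. Next, for each $i$ let $\vec w_i\in\ZZ$ be the primitive integer vector parallel to $\vec v_i$, and set $M:=\max_i\|\vec w_i\|$. Since extreme points of $\S$ lie in $\ZZ$, an edge parallel to $\vec v_i$ with exactly $k$ integer points is a segment from some lattice point $a$ to $a+(k-1)\vec w_i$, so has Euclidean length $(k-1)\|\vec w_i\|$. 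Under our assumption $k\leq n-1$, each edge has length at most $(n-2)M$, and hence
\[
\text{perimeter of }\partial\S\ \leq\ 2m(n-2)M.
\]

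Now I would convert this into a diameter bound. For any two points $p,q\in\partial\S$, the Jordan curve $\partial\S$ splits into two arcs connecting $p$ to $q$, and each such arc has length at least $\|p-q\|$ (any rectifiable curve is at least as long as its chord). Applying this to a diametric pair gives $\mathrm{perimeter}\geq 2\diam(\S)$, so $\diam(\S)\leq m(n-2)M$. Consequently $\S$ is contained in an axis-aligned square of side length $m(n-2)M$, which contains at most $(m(n-2)M+1)^2$ lattice points. Setting
\[
A\ :=\ (m(n-2)M+1)^2+1
\]
(and handling the trivial cases $n\leq 2$, where any edge of $\partial\S$ already contains its two integer extreme points) forces at least one edge to have $\geq n$ integer points, completing the proof.

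The main obstacle is purely bookkeeping, not conceptual: one must be careful that ``parallel to $\vec v_i$'' refers to the unoriented direction (hence the factor $2$ in the edge count) and that the quantity $M$ is defined using the primitive integer vectors $\vec w_i$, which exist because each $\vec v_i\in\ZZ\setminus\{\vec 0\}$. Both of these are straightforward once stated, so no genuine difficulty arises beyond careful application of elementary planar convexity.
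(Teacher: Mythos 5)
Your proof is correct, but it takes a genuinely different and more elementary route than the paper's. The paper argues in the forward direction: writing the area of $\conv(\S)$ via Pick's Theorem, a convex lattice set with at least $A$ points must have either at least $mn$ lattice points on its boundary (and then pigeonhole over the at most $m$ edges finishes) or area at least $\frac{1}{4\pi}(L_1+\cdots+L_m)^2$; in the latter case the isoperimetric inequality forces the perimeter to be at least $L_1+\cdots+L_m$, so some edge parallel to some $\vec v_i$ has length at least $L_i$ and hence carries $n$ lattice points. You instead prove the contrapositive using only elementary planar convexity: a bounded edge count times a bounded edge length gives a bounded perimeter, the chord--arc inequality converts this into a bounded diameter, and a bounding box converts that into a bounded lattice-point count. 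Your route avoids both Pick's Theorem and the isoperimetric inequality and produces a clean explicit value of $A$, at the cost of reading the implication backwards from how it is invoked later in the paper. The only caveats are harmless: with the paper's convention that edges are oriented, ``enveloped'' already gives $\left|E(\S)\right|\leq m$ rather than your $2m$ (your bound is a safe over-count), and the degenerate case where $\conv(\S)$ has empty interior needs a separate word in either argument (Pick's Theorem also fails there), but this does not affect correctness.
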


\begin{proof}
For each $i=1,2,\dots,m$ choose a length $L_i\in\R$ such that any rational line parallel to $\vec v_i$ of length at least $L_i$ contains at least $n$ integer points.  Define
$$
A:=\left\lceil\frac{(L_1+\cdots+L_m)^2}{4\pi}\right\rceil+mn.
$$
By Pick's Theorem, the area of $\conv(\S)$ is given by
$$
\text{(\# of integer points inside $\conv(\S)$)}+\frac{\text{(\# of integer points on $\partial\S$)}}{2}-1.
$$
Since $\S$ contains at least $A$ integer points, either the number of integer points on $\partial\S$ is at least $mn$ or the area of $\conv(\S)\geq\frac{1}{4\pi}(L_1+\cdots+L_m)^2$.  In the former case, at least one of the edges of $\partial\S$ contains $n$ integer points.  In the latter case, the isoperimetric inequality implies that the length of $\partial\S$ is at least $L_1+\cdots+L_m$, and so at least one of the edges contains $n$ integer points.
\end{proof}

We strengthen the notion of an enveloping set, further assuming that the boundary
consists of of a finite collection of sufficiently long edges taken in order from the enveloping set: 

\begin{definition}
Suppose $\S\subseteq\ZZ$ is finite, convex set whose boundary edges are enumerated as $w_1,\dots,w_n$ where $w_{i+1}=\suc(w_i)$ and indices are taken modulo $n$. 
Let $\T\subseteq\ZZ$ be a convex superset of $\S$ that is $E(\S)$-enveloped.  
Enumerate the edges of $\T$ as $v_1,\dots,v_k$, where $v_{i+1}=\suc(v_i)$ and indices are taken modulo $k$ only when $v_k$ has a successor edge.  We say that $\T$ is {\em strongly $E(\S)$-enveloped} if there exists $j\in\{1,\dots,n-k\}$ such that for all $j^{\prime}\in\{0,1,\dots,k-1\}$, $v_{j+j^{\prime}}$ is parallel to $w_{j^{\prime}}$ and $\|v_{j+j^{\prime}}\|\geq\|w_{j^{\prime}}\|$.
\end{definition}

\begin{lemma}\label{parameterperimeter}
Let $\eta\colon\ZZ\to\A$ and suppose there exist $n,k\in\N$ such that $P_{\eta}(R_{n,k})\leq nk$.  Let $\S\subseteq R_{n,k}$ be an $\eta$-generating set and let $M\in\N$.

There exists $C=C(\S,M)\in\N$ such that for any strongly $E(\S)$-enveloped set $\T$ that contains at least $C$ integer points, there exists $v\in E(\T)$ such that $\Ext_v(\T)\neq\T$.  Furthermore, $\Ext_v(\T)$ is  strongly $E(\S)$-enveloped and the edge of $\Ext_v(\T)$ parallel to $v$ contains at least $M$ integer points.  

Moreover, for fixed $\S$ and $w\in E(\S)$, there exists $C^{\prime}=C^{\prime}(\S,M,w)\in\N$ such that for any strongly $E(\S)$-enveloped set $\T$ that has nonempty intersection with at least $C^{\prime}$ distinct lines parallel to $w$, there exist $v^{\prime}, v^{\prime\prime}\in E(\T)$ which are neither parallel nor antiparallel to $w$ such that $\Ext_{v^{\prime}}(\T)\neq\T$, $\Ext_{v^{\prime\prime}}(\T)\neq\T$, $\Ext_{v^{\prime\prime}}(\Ext_{v^{\prime}}(\T))$ is also strongly $E(\S)$-enveloped, and the edges of $\Ext_{v^{\prime\prime}}(\Ext_{v^{\prime}}(\T))$ parallel to $v^{\prime}$ and $v^{\prime\prime}$ both contain at least $M$ integer points.  Furthermore, with respect to the local ordering on directed lines induced by the positive orientation on the unit circle, 
 $v^{\prime}$ and $v^{\prime\prime}$ can be chosen such that $v^{\prime}$ is between $w$ and the direction antiparallel to $w$ 
 and $v^{\prime\prime}$ is between the direction antiparallel to $w$ and $w$.
\end{lemma}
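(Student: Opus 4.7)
The plan is to combine Lemma~\ref{perimeter} with a uniform length-change estimate for the extension operation in Definition~\ref{envelopeddef2}. I would first establish a constant $K=K(\S)$ such that for any strongly $E(\S)$-enveloped $\T$ and any $v\in E(\T)$ with $\Ext_v(\T)\neq\T$, the new edge of $\Ext_v(\T)$ parallel to $v$ has length in $[\|v\|-K,\|v\|+K]$. This follows from the case analysis in Definition~\ref{envelopeddef2}: the depth $|\Delta|$ is bounded by the least common multiple of the denominators of the rational slopes of $\pred(v)$ and $\suc(v)$, and both these slopes and the resulting length change of the new edge are uniformly bounded in terms of the finite direction set $E(\S)$. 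Furthermore, whenever $\|v\|$ exceeds a further constant depending only on $\S$, the inequality constraint in Definition~\ref{envelopeddef2}(a) admits a valid $\Delta$, so $\Ext_v(\T)\neq\T$; extendability is immediate in cases (b)--(d).

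For the first claim, choose $L=L(\S,M)$ so that any segment parallel to an edge of $\S$ of length at least $L$ contains at least $M$ integer points, and let $N=N(\S,M)$ be a constant exceeding $L+K$, $\max_{w\in E(\S)}\|w\|+K$, and the extendability threshold from the preliminary bound. Pick $n'\in\N$ such that any edge in a direction of $E(\S)$ containing $n'$ integer points has length at least $N$. Then $C:=A(n',\vec v_1,\dots,\vec v_m)$ from Lemma~\ref{perimeter} suffices: if $|\T|\geq C$, some $v\in E(\T)$ contains at least $n'$ integer points, so $\|v\|\geq N$; hence $\Ext_v(\T)\neq\T$, its new parallel edge has length $\geq N-K\geq L$ (and thus at least $M$ integer points) and is at least as long as the corresponding $w\in E(\S)$. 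The remaining edges of $\Ext_v(\T)$ retain their direction and do not shrink, so $\Ext_v(\T)$ is strongly $E(\S)$-enveloped.

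For the second claim, change coordinates via Remark~\ref{rem:standard} so that $w$ points east. Traversing $\partial\T$ counterclockwise from the edge parallel to $w$, the edges split into a right side whose directions lie in the counterclockwise arc from $w$ to its antiparallel, and a left side in the complementary arc. If $\T$ meets $C'$ distinct horizontal lines, the vertical extent of $\T$ is at least $C'-1$, which equals the total vertical rise along the right side; since there are at most $m$ right-side edges, some $v'$ has vertical component at least $(C'-1)/m$, hence length at least $\kappa(C'-1)/m$ for a constant $\kappa=\kappa(\S)>0$ controlling the minimum nonzero ratio of vertical component to length among directions in $E(\S)$. A symmetric argument produces $v''$ on the left. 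Choosing $C'$ large enough that $\kappa(C'-1)/m\geq N$, the argument of the first claim applies to each of $v'$ and $v''$; since they lie on opposite sides of $\T$ (separated by the directions parallel and antiparallel to $w$), their extensions modify disjoint regions of $\ZZ$, and $\Ext_{v''}(\Ext_{v'}(\T))$ is strongly $E(\S)$-enveloped with the required edge lengths.

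The main obstacle is the length-change constant $K$ and extendability threshold: one must verify, case-by-case in Definition~\ref{envelopeddef2}, that the depth $|\Delta|$ and the inequality constraints depend only on $\S$ through the finite set of rational slopes in $E(\S)$, not on $\T$ itself. This is a finicky but elementary arithmetic analysis of integer points on rational-slope lines; once it is in hand, the remainder of the proof is a clean application of Lemma~\ref{perimeter} and a pigeonhole on the edges of the two sides.
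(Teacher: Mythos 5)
Your proposal is correct and follows essentially the same strategy as the paper: first show that the extension operation of Definition~\ref{envelopeddef2} changes the length of the extended edge by an amount bounded in terms of the slopes in $E(\S)$ alone (and is nontrivial once the edge is long enough), then invoke the Pick's-theorem/isoperimetric counting of Lemma~\ref{perimeter} to produce a sufficiently long edge, and for the second claim pigeonhole the rise perpendicular to $w$ over the at most $|E(\S)|$ edges on each side of $\T$. If anything, your write-up is more explicit than the paper's on two points it leaves implicit --- that the extended edge may shrink (by a bounded amount) rather than always grow, and that the two-sided pigeonhole yields the two edges $v'$, $v''$ on opposite sides --- so no changes are needed.
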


\begin{proof}
Recall that the depth of an extension (see Definition~\ref{envelopeddef2}) of a convex set depends only on three slopes: that of the edge over which the extension occurs and of its successor and predecessor edges.   
Consider an edge $w\in E(\S)$.  If the edge of $\T$ that is parallel to $w$ is sufficiently long 
such that $\Ext_w(\T)\neq\T$, then the depth of the $w$-extension is some positive integer (otherwise it is $0$).  Then the length of the edge of $\Ext_w(\T)$ that is parallel to $w$ is 
greater than the length of the edge of $\T$ that is parallel to $w$.  
%For each $w\in E(\S)$, there exist $d(w), e(w)\in\N$ such that for any strongly $E(\S)$-enveloped set $\T$, the depth of the $w$-extension of $\T$ is either zero or $d(w)$ (depending on whether the edge of $\T$ parallel to $w$ is sufficiently long that $Ext_w(\T)\neq\T$), and it is $d(w)$ whenever the edge of $\T$ parallel to $w$ is at least $e(w)$.  When the length of the edge of $\T$ parallel to $w$ is at least $e(w)$, there exists $f(w)\in\R$ such that the length of the edge of $Ext_w(\T)$ parallel to $w$ is exactly the length of the edge of $\T$ parallel to $w$, minus $f(w)$.  
Thus there exists $N(w,M)\in\N$ such that if $\T$ is strongly $E(\S)$-enveloped and the edge of $\T$ parallel to $w$ is at least $N(w,M)$, then $\Ext_w(\T)$ is also strongly $E(\S)$-enveloped and the edge of $\Ext_w(\T)$ parallel to $w$ contains at least $M$ integer points.

The remainder of the proof is similar to that of Lemma~\ref{perimeter}.  For the first claim in the lemma, we find $C\in\N$ such that any strongly $E(\S)$-enveloped set $\T$ that contains at least $C$ integer points has a boundary edge whose length is at least $N(w_1,M)+N(w_2,M)+\cdots+N(w_{|E(\S)|},M)$.  This boundary edge has the desired property.  The proof of the second claim follows similarly.  We fix $w\in E(\S)$ and find $C^{\prime}\in\N$ such that any strongly $E(\S)$-enveloped set $\T$ for which there are at least $C^{\prime}$ distinct lines parallel to $w$ that have nonempty intersection with $\T$ has a boundary edge that is neither parallel nor antiparallel to $w$ and contains at least $N(w_1,M)+\cdots+N(w_{|E(\S)|},M)$ integer points.
\end{proof}

\begin{lemma}\label{extend}
Suppose $\eta\colon\ZZ\to\A$ and $\S\subset\ZZ$ is a finite, convex set whose boundary edges are labeled $w_1,\dots,w_n$ where $w_{i+1}=\suc(w_i)$ for all $i=1,\dots,n$ (indices are taken modulo $n$).  Assume that there exist $a,b,c,d\in\mathbb{Q}$ such that $b\geq d$ and
\begin{itemize}
\item $w_1$ points vertically downward and $-a+b+c-d\geq\left|w_1\cap\S\right|-1$;
\item $w_n$ is parallel to the line $y=ax+b$;
\item $w_2$ is parallel to the line $y=cx+d$;
\item there exists $\vec v\in\ZZ$ such that 
$$
(\S\setminus w_1)+\vec v\subseteq\{(x,y)\in\ZZ\colon cx+d\leq y\leq ax+b\text{, }0\leq x\leq L-1\},
$$
where $L\in\N$ is the number of distinct vertical lines that have nonempty intersection with $\S\setminus w_1$.
\end{itemize}
If the two endpoints of $w_1$ are $\eta$-generated by $\S$, then any $(\S, \eta)$-coloring of the region
$$
\mathcal{R}:=\{(x,y)\in\ZZ\colon cx+d\leq y\leq ax+b\text{, }0\leq x\leq L-1\}
$$
and of any $\left|w_1\cap\S\right|-1$ consecutive integer points of the line segment $\{(-1,y)\in\ZZ\colon d-c\leq y\leq b-a\}$ extends uniquely to an $(\S,\eta)$-coloring of
$$
\{(x,y)\in\ZZ\colon cx+d\leq y\leq ax+b\text{, }-1\leq x\leq L-1\}.
$$
\end{lemma}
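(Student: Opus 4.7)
The strategy is to fill in the coloring along the column $x=-1$ one integer point at a time, using translates of $\S$ whose edge $w_1$ is placed onto that column.

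First, I would carry out the geometric bookkeeping. Write $h := |w_1 \cap \S|$, and for an integer $s$ consider $\vec{v}' = \vec{v} + (0, s)$. Since the upper-left boundary of $\S \setminus w_1$ follows $w_n$ (slope $a$) and the lower-left boundary follows $w_2$ (slope $c$), the constraint $(\S \setminus w_1) + \vec{v}' \subseteq \mathcal{R}$ reduces to two linear inequalities on $s$. A direct computation shows that the admissible values of $s$ form an integer interval of length $(-a + b + c - d) - (h - 1)$, which is nonnegative by the hypothesis. For each admissible $s$, the shifted edge $w_1 + \vec{v}'$ occupies $h$ consecutive integer heights on the column $x = -1$, and as $s$ ranges over the admissible interval, these $h$ heights sweep out exactly the integer points $(-1, y)$ with $d - c \leq y \leq b - a$.

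The induction proceeds as follows. Suppose for some $k \geq h - 1$ that the coloring has been uniquely determined at $k$ consecutive integer heights $y_0, y_0 + 1, \ldots, y_0 + k - 1$ on the column $x = -1$, and that $y_0 + k \leq b - a$. Choose $s$ so that $w_1 + \vec{v}'$ occupies heights $y_0 + k - h + 1, \ldots, y_0 + k$; the geometric analysis shows that this choice is admissible, so $(\S \setminus w_1) + \vec{v}' \subseteq \mathcal{R}$, where the coloring is known. The $h - 1$ lowest points of $w_1 + \vec{v}'$ coincide with heights already determined on $x = -1$, so the coloring of $(\S + \vec{v}') \setminus \{(-1, y_0 + k)\}$ is fully determined. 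Since the top endpoint of $w_1$ is $\eta$-generated by $\S$, its image under $\vec{v}'$, namely $(-1, y_0 + k)$, is $\eta$-generated by $\S + \vec{v}'$. Because every $\S$-pattern appearing in any element of $X_{\S}(\eta)$ lies in $\mathcal{W}(\S, \eta)$, this uniqueness statement transfers from $\eta$-colorings to $(\S, \eta)$-colorings, so the value at $(-1, y_0 + k)$ is uniquely forced. A symmetric argument, using the bottom endpoint of $w_1$, extends the coloring downward whenever the new height remains at least $d - c$. Iterating in both directions extends the $(\S, \eta)$-coloring uniquely to every integer point of $\{(-1, y) \in \ZZ : d - c \leq y \leq b - a\}$, which, together with $\mathcal{R}$, yields the required $(\S, \eta)$-coloring of the full region.

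The principal technical obstacle is the geometric step: confirming that the admissible interval of shifts $s$ has length $(-a + b + c - d) - (h - 1)$ and that the image of $w_1$ under these shifts covers every integer height of the target segment. This is precisely where the hypothesis $-a + b + c - d \geq h - 1$ enters; it also guarantees that each inductive step corresponds to an admissible shift, which is what allows the generating property at the translated endpoint of $w_1$ to be invoked.
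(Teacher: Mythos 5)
Your proposal is correct and follows essentially the same route as the paper: slide translates of $\S$ so that $w_1$ lands on the column $x=-1$ with its top (resp.\ bottom) endpoint at the new point, observe that the remaining points of the translate lie in the already-colored region, and invoke the generating property of the endpoints of $w_1$ to force the new value, iterating up and then down. Your additional bookkeeping on the admissible shift interval and the remark that the unique-extension property transfers to all of $X_{\S}(\eta)$ are points the paper leaves implicit, but they do not change the argument.
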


It is important to note we make no assumption that the lines $y=ax+b$ or $y=cx+d$ intersect the line $x=-1$ at an integer point.

\begin{proof}
Let $h:=\left|w_1\cap\S\right|-1$.  Suppose we know the $\eta$-coloring of the points $(-1,y-h+2),\dots,(-1,y)$ for some $d-c+h-2\leq y\leq b-a-1$.  Let $\vec v\in\ZZ$ be the translation that takes the topmost element of $w_1$ to the point $(-1,y+1)$.  Since $y+1\leq b-a$, the line through $(-1,y+1)$ parallel to $w_n$ is in the region $\{(\alpha, \beta)\in\R^2\colon\beta\leq a\alpha+b\}$.  Since $\S$ is convex and $w_n$ is parallel to $y=ax+b$, we have that $\S+\vec v$ lies in the region $\mathcal{R}\cup\{(-1,y-h+2),\dots,(-1,y+1)\}$ (see Figure~\ref{figurea}).

\begin{figure}[ht]
     \centering
  \def\svgwidth{\columnwidth}
        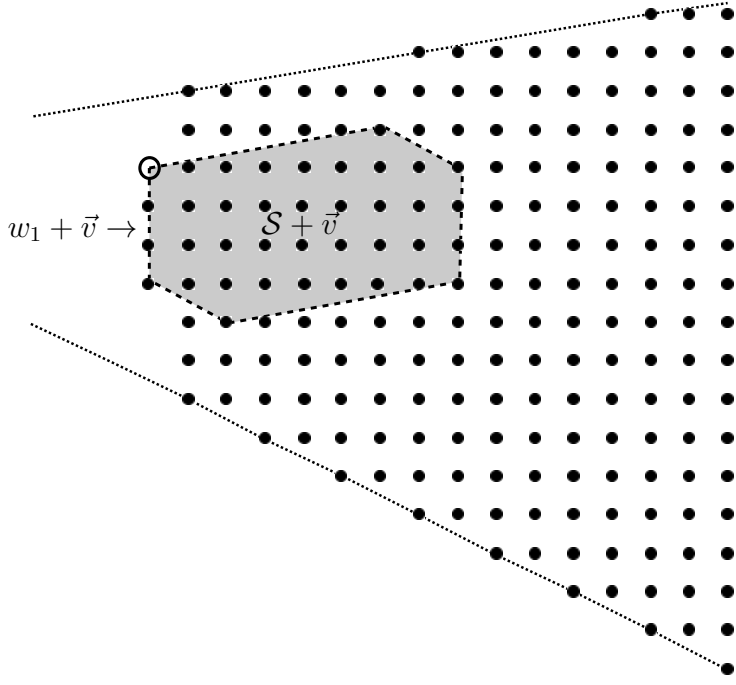
                \setlength{\abovecaptionskip}{-65mm}
	\caption{The dotted points in $\ZZ$ denote the region on which the coloring is known.  The
	color of the topmost element of $w_1+\vec v$ (denoted by the open circle) can be deduced from the coloring of
	the rest of $\S+\vec v$.}
	\label{figurea}
\end{figure}

Since the endpoints of $w_1$ are $\eta$-generated by $\S$, the color at $\eta(-1,y+1)$ can be determined by the restriction of $\eta$ to the rest of the elements of $\S+\vec v$.  Continuing inductively, the $(\S,\eta)$-coloring of the region $\mathcal{R}\cup\{(-1,y-h+2),\dots,(-1,y+1)\}$ extends uniquely to an $(\S,\eta)$-coloring of $\mathcal{R}\cup\{(-1,y-h+2),\dots,(-1,\lfloor b-a\rfloor)\}$.

A similar argument shows we can uniquely extend the $(\S,\eta)$-coloring of the region $\mathcal{R}\cup\{(-1,y-h+2),\dots,(-1,\lfloor b-a\rfloor)\}$ to an $(\S,\eta)$-coloring of the region $\mathcal{R}\cup\{(-1,\lceil d-c\rceil),\dots,(-1,\lfloor b-a\rfloor)\}$.
\end{proof}

\begin{corollary}\label{uniqueextension}
Assume that $\eta\colon\ZZ\to\A$ and $\S\subset\ZZ$ is a finite, convex set whose boundary edges are labeled $w_1,\dots,w_n$ where $w_{i+1}=\suc(w_i)$ for $i=1,\dots,n$ (indices are taken modulo $n$) and suppose the two endpoints of $w_1$ are $\eta$-generated by $\S$.  Let $\T$ be a convex, strongly $E(\S)$-enveloped set that has an edge parallel to $w_1$ which is sufficiently long so that its $w_1$-extension is also strongly $E(\S)$-enveloped.  Then any $(\eta,\S)$-coloring of $\T$ and of any $\left|w_1\cap\S\right|-1$ consecutive integer points in $\Ext_{w_1}(\T)$ on each line parallel to $w_1$ that has non-empty intersection with $\Ext_{w_1}(\T)\setminus\T$ extends uniquely to an $\eta$-coloring of $\Ext_{w_1}(\T)$.
\end{corollary}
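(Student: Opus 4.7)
My approach is to iterate Lemma~\ref{extend} across the vertical lines of the extension region, with the strong $E(\S)$-envelopedness hypotheses supplying the geometric conditions needed at each step.

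First, by Remark~\ref{rem:standard}, I apply a change of coordinates in $SL_2(\Z)$ so that $w_1$ points vertically downward; this preserves the $\eta$-generating property of $\S$, the strong envelopedness hypotheses on $\T$ and $\Ext_{w_1}(\T)$, and the combinatorial data $|w_1 \cap \S|$ and the generating status of the endpoints of $w_1$. Write $w_n = \pred(w_1) \subseteq \{y = ax+b\}$ and $w_2 = \suc(w_1) \subseteq \{y = cx+d\}$ with $a,b,c,d \in \Q$ as in Lemma~\ref{extend}, and let $h := |w_1 \cap \S| - 1$. Since both $\T$ and $\Ext_{w_1}(\T)$ are strongly $E(\S)$-enveloped, the edge $w_1^*$ of $\T$ parallel to $w_1$ has $\pred(w_1^*)$ parallel to $w_n$ and $\suc(w_1^*)$ parallel to $w_2$, with $\|w_1^*\|\geq \|w_1\|$ and sufficiently long that its extension is also strongly enveloped. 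After a further translation, I may assume $w_1^*$ lies on the $y$-axis, so that, exactly as in Definition~\ref{envelopeddef2},
$$\Ext_{w_1}(\T)\setminus \T \;=\; \bigsqcup_{j=1}^m \bigl(\ell_j \cap \Ext_{w_1}(\T)\bigr),$$
where $\ell_j = \{x = -j\}$ and $m$ is the depth of the extension.

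I then induct on $j$ with $\T_0 = \T$ and $\T_j := \T_{j-1} \cup (\ell_j \cap \Ext_{w_1}(\T))$, proving that the coloring on $\T$ together with the specified $h$ consecutive integer points on each $\ell_i$ for $i\leq j$ uniquely determines an $(\S,\eta)$-coloring of $\T_j$. The inductive step is a direct application of Lemma~\ref{extend}: inside $\T_{j-1}$, immediately to the right of $\ell_j$, I locate a trapezoidal sub-region $\mathcal{R}_j$ of the form $\{cx + d_j \le y \le ax + b_j,\ 0 \le x \le L-1\}$, where $L$ is the number of distinct vertical lines meeting $\S\setminus w_1$, and where the upper and lower bounding lines are obtained by shifting the lines through $\pred(w_1^*)$ and $\suc(w_1^*)$ vertically so that the resulting trapezoid sits inside $\T_{j-1}$ near $\ell_j$. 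The hypotheses of Lemma~\ref{extend} are satisfied: the endpoints of $w_1$ are $\eta$-generated by $\S$, the slopes match, and by choice of $L$ the translate $(\S\setminus w_1) + \vec v$ fits inside $\mathcal{R}_j$ for some $\vec v$ placing $w_1 + \vec v$ on $\ell_j$. Hence the coloring on $\mathcal{R}_j$ (known from $\T_{j-1}$) together with any $h$ consecutive integer points on $\ell_j$ uniquely extends to the whole of $\ell_j \cap \Ext_{w_1}(\T)$, by the upward and downward induction inside Lemma~\ref{extend}.

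The main obstacle is the geometric bookkeeping needed to verify that the trapezoid $\mathcal{R}_j$ actually fits inside $\T_{j-1}$ at every stage $j = 1,\ldots,m$. This is precisely what the two strong $E(\S)$-envelopedness hypotheses buy: the envelopedness of $\T$ forces the boundary of $\T_{j-1}$ near $\ell_j$ to have the correct $\pred(w_1)$ and $\suc(w_1)$ slopes, while the envelopedness of $\Ext_{w_1}(\T)$ controls the depth $m$ and guarantees that the edge $w_1^*$ is long enough that a width-$L$ trapezoid still fits in $\T_{j-1}$ when $j$ reaches $m$. Once these geometric fits are checked, the induction closes and the coloring is uniquely determined on $\T_m = \Ext_{w_1}(\T)$, completing the argument.
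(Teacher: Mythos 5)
Your proposal is correct and takes essentially the same route as the paper: the paper's proof is exactly ``after a linear change of coordinates mapping $w_1$ to the vertical direction, this follows by repeated applications of Lemma~\ref{extend}.'' Your write-up simply makes explicit the induction over the lines $\ell_1,\dots,\ell_m$ of the extension and the geometric bookkeeping that the strong $E(\S)$-envelopedness hypotheses are there to supply.
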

\begin{proof}
After a linear change of coordinates mapping $w_1$ to the vertical direction, 
this follows by repeated applications of Lemma~\ref{extend}. 
\end{proof}

This leads us to necessary and sufficient conditions for double periodicity, a result that 
can be derived from Boyle and Lind's Theorem (Theorem~\ref{BoyleLind}).  
We include a complete proof, as we need further information that can be 
derived from the finer notion of one-sided expansiveness, as opposed to expansiveness.  In particular, 
techniques of the proof are also used to understand the case of a unique direction of 
expansivity (Theorem~\ref{singleperiodic}). 

\begin{theorem}\label{doubleperiodicity}
The coloring $\eta\colon\ZZ\to\A$ is doubly-periodic if and only if there exist $n,k\in\N$ such that $P_{\eta}(R_{n,k})\leq nk$ and $\eta$ has no nonexpansive one-dimensional subspaces.
\end{theorem}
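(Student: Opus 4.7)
The forward direction is elementary. If $\eta$ is doubly periodic, then $X_\eta$ is finite, say of cardinality $N$; then $P_\eta(n,k) \leq N$ for all $n,k$, so $P_\eta(n,k) \leq nk$ for any $n,k$ with $nk \geq N$. Moreover, any $\ZZ$-action on a finite metric space is expansive (with expansiveness constant any $\delta$ less than the minimum distance between distinct points), so every $1$-dimensional subspace of $\R^2$ is expansive.

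For the reverse direction, one could appeal directly to Theorem~\ref{BoyleLind}, which forces any infinite $X_\eta$ to admit a nonexpansive $1$-dimensional subspace. However, the plan is to give a proof using the one-sided expansiveness machinery, since the same techniques are required in the proof of Theorem~\ref{singleperiodic}. The strategy is to show that for every $\alpha\in X_\eta$, the restriction of $\alpha$ to a fixed $\eta$-generating set $\S\subseteq R_{n,k}$ (obtained via Corollary~\ref{generatingset}) determines $\alpha$ on all of $\ZZ$. This gives $|X_\eta|\leq |\mathcal{W}(\S,\eta)|<\infty$, and finiteness of $X_\eta$ is equivalent to double periodicity of $\eta$.

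To implement the strategy, first observe that one-sided nonexpansiveness implies nonexpansiveness, so the hypothesis that no $1$-dimensional subspace is nonexpansive forces every directed rational line through the origin to be one-sided $\eta$-expansive, and by Lemma~\ref{ambiguousinorbit} no half-plane admits an $(\S,\eta)$-ambiguous coloring. We then inductively construct a strictly increasing sequence of strongly $E(\S)$-enveloped convex sets $\S=\T_0\subset\T_1\subset\T_2\subset\cdots$ such that $\alpha\rst{\T_i}$ is determined by $\alpha\rst{\T_0}$. At step $i$, Lemma~\ref{parameterperimeter} produces an edge $w_i\in E(\T_i)$ long enough that $\Ext_{w_i}(\T_i)$ is again strongly $E(\S)$-enveloped; iterating one-sided $\eta$-expansiveness in the direction of $w_i$ forces the $|w_i\cap\S|-1$ consecutive integer points per relevant line needed to invoke Corollary~\ref{uniqueextension}, which in turn forces the entire coloring on $\Ext_{w_i}(\T_i)$. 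The main obstacle is ensuring that $\bigcup_i\T_i=\ZZ$ rather than a thin strip; we handle this by alternating the direction of extension, using the second clause of Lemma~\ref{parameterperimeter} to find edges in both hemispheres relative to a fixed direction so that the polygonal region grows in every direction. Once $\bigcup_i\T_i=\ZZ$, the $\eta$-coloring of $\ZZ$ is determined by its restriction to $\S$, and finiteness of $X_\eta$ follows.
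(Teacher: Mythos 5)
Your overall strategy is the same as the paper's: reduce to showing that the coloring of a single finite set determines the coloring of $\ZZ$, and establish this by inductively extending a strongly $E(\S)$-enveloped region using Lemma~\ref{parameterperimeter}, one-sided expansiveness, and Corollary~\ref{uniqueextension}. (Your forward direction, via finiteness of $X_\eta$, is slightly more elementary than the paper's, which instead exhibits two transverse generating sets inside $R_{1,nk}$ and $R_{nk,1}$; both work.) However, there are two concrete gaps in the reverse direction as you have written it. First, you cannot take $\T_0=\S$. The inductive step requires $\T_i$ to contain at least $C'(\S,M,w)$ points (the hypothesis of Lemma~\ref{parameterperimeter}) and, more importantly, to contain suitably placed translates of the rectangles $A_i([0,a_i]\times[-b_i,b_i])$ coming from the one-sided expansiveness parameters of each edge direction of $\S$; the generating set $\S\subseteq R_{n,k}$ is in general far too small for either condition, so the induction never starts. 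The correct statement is that the coloring of a sufficiently large strongly $E(\S)$-enveloped seed $F=\T_0$ determines $\eta$, giving $|X_\eta|\leq|\mathcal{W}(F,\eta)|<\infty$; the claimed bound $|X_\eta|\leq|\mathcal{W}(\S,\eta)|$ is not justified by your argument.

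Second, the mechanism you describe for ensuring that $\bigcup_i\T_i=\ZZ$ does not do what you claim. The second clause of Lemma~\ref{parameterperimeter} only produces extension edges that are neither parallel nor antiparallel to the fixed reference direction $w_1$, so every $\T_i$ remains inside the thinnest strip $B$ with boundary parallel and antiparallel to $w_1$ containing $\T_0$; alternating between the two hemispheres makes $\bigcup_i\T_i$ equal to $B\cap\ZZ$, not to $\ZZ$. You therefore need one further step that your proposal omits: once the coloring is determined on the infinite strip $B\cap\ZZ$, apply one-sided expansiveness of $w_1$ itself (in both orientations, which is available since no subspace is nonexpansive) to extend uniquely from the strip to all of $\ZZ$. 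With the larger seed $\T_0$ and this final strip-to-plane extension added, your argument becomes the paper's proof.
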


\begin{remark}\label{helpfulremark}
We begin by observing that if there are no nonexpansive subspaces for $X_\eta$, then every subspace (and any orientation on it) is one-sided expansive.
\end{remark}

\begin{proof}
Assume that  $\eta$ is doubly periodic and assume that it has vertical period $n$ and horizontal period $k$.  Then for every $a,b\in\N$, $P_{\eta}(R_{a,b})\leq nk$.  In particular, $D_{\eta}(R_{1,nk})\leq0$ and $D_{\eta}(R_{nk,1})\leq0$.  By Corollary~\ref{generatingset},
$R_{1,nk}$ contains an $\eta$-generating set $\S$ and $R_{nk,1}$ contains 
an $\eta$-generating set $\T$.  If $\ell$ is any rational line, then at least one of $\S$ and $\T$ is not parallel to $\ell$.  Since $\S$ and $\T$ are generating, $\ell$ is one-sided expansive.

Conversely, we show:
\begin{claim}
\label{claim:double}
If $D_{\eta}(R_{n,k})\leq0$ and $\eta$ has no one-sided nonexpansive directions, then there exists a finite set $F\subseteq\ZZ$ such that every $\eta$-coloring of $F$ extends uniquely to an $\eta$-coloring of $\ZZ$. 
\end{claim}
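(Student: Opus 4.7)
The plan is to fix an $\eta$-generating set $\S\subseteq R_{n,k}$ (furnished by Corollary~\ref{generatingset}) and to take $F$ to be a sufficiently large strongly $E(\S)$-enveloped convex set $\T_0$. I will then show that $\eta\rst{F}$ determines $\eta\rst{\ZZ}$ by constructing a nested sequence $\T_0\subset\T_1\subset\cdots$ of strongly $E(\S)$-enveloped sets whose union is $\ZZ$ and in which $\eta\rst{\T_{i+1}}$ is uniquely determined by $\eta\rst{\T_i}$ at every step.

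Since every rational direction through the origin is assumed to be one-sided $\eta$-expansive, for each edge direction $w\in E(\S)$ (and each of its two orientations) I fix parameters $(a_w,b_w,A_w)$ witnessing one-sided expansiveness and take $M$ to be a uniform bound on them all. I will choose $\T_0$ strongly $E(\S)$-enveloped with every edge much longer than $M$. For the iteration step, Lemma~\ref{parameterperimeter} produces an edge $v_i\in E(\T_i)$ such that $\T_{i+1}:=\Ext_{v_i}(\T_i)$ is again strongly $E(\S)$-enveloped and its new edge parallel to $v_i$ has length far exceeding $M$. By Corollary~\ref{uniqueextension}, to extend the coloring uniquely from $\T_i$ to $\T_{i+1}$ it suffices to know, on each line parallel to $v_i$ inside $\T_{i+1}\setminus\T_i$, a seed of $|v_i\cap\S|-1$ consecutive integer points. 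I will produce these seeds by iterated one-sided $\eta$-expansiveness of $v_i$ in the orientation pointing from $\T_i$ into the extension region: a single application of the one-sided rule turns a rectangular window of size at most $M$ inside the currently known region into one color value just outside it, so by processing the lines of $\T_{i+1}\setminus\T_i$ in order of increasing distance from $\T_i$---and filling each line completely out to the boundary of $\T_{i+1}$ via the generating property of $\S$ as soon as its seed is known---I produce all the seeds needed to invoke Corollary~\ref{uniqueextension} and complete $\T_{i+1}$.

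Cycling $v_i$ through all edge directions of $\S$, which span $\R^2$ since $\S$ is two-dimensional, causes the $\T_i$ to grow without bound in every direction, so they exhaust $\ZZ$ and hence $\eta\rst{F}$ determines $\eta\rst{\ZZ}$. The hard part will be the bookkeeping needed to verify that at every iteration the rectangular windows of size at most $M$ used by one-sided expansiveness fit inside $\T_i$ with enough margin to produce every successive seed in the extension; this is manageable because $\T_i\supseteq \T_0$ so the initial margin only grows, $M$ is uniform across finitely many edge directions, and the depth of each extension $\Ext_{v_i}$ is bounded in terms of the angles of $E(\S)$ alone.
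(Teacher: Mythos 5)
Your proposal is correct and follows essentially the same route as the paper: fix an $\eta$-generating set $\S$, take $F=\T_0$ to be a large strongly $E(\S)$-enveloped set, and iterate edge-extensions in which one-sided expansiveness of each edge direction supplies the seeds of $\left|v\cap\S\right|-1$ consecutive points that Corollary~\ref{uniqueextension} turns into unique extensions (this is exactly the role of the sets $B_i^r$ and the constant $N$ in the paper, which is the ``bookkeeping'' you defer). The only organizational difference is that the paper extends only over edges neither parallel nor antiparallel to a fixed edge $w_1$, so that $\bigcup_i\T_i$ is a bi-infinite $w_1$-strip, and then passes from the strip to all of $\ZZ$ by one final application of one-sided expansiveness of $w_1$; your variant of cycling through all edge directions also exhausts $\ZZ$, since an extension never shortens the remaining edges and each supporting line is pushed outward infinitely often.
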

Given the claim, it follows that $\eta$ is doubly periodic: since there are only finitely many $\eta$-colorings of $F$, there exist vectors $\vec a, \vec b, \vec c\in\ZZ$ such that $\vec c-\vec a$ and $\vec c-\vec b$ are not collinear and such that $(T^{\vec a}\eta)\rst{F}=(T^{\vec b}\eta)\rst{F}=(T^{\vec c}\eta)\rst{F}$.  Since each coloring of $F$ extends uniquely to an $\eta$-coloring of $\ZZ$, it follows that $T^{\vec a}\eta=T^{\vec b}\eta=T^{\vec c}\eta$.  Thus $\vec c-\vec a$ and $\vec c-\vec b$ are two linearly independent period vectors for $\eta$, proving the statement.  The remainder of the proof is devoted to establishing the claim.

Fix an $\eta$-generating set $\S\subseteq R_{n,k}$ and enumerate the edges of $\partial\S$ as $w_1,\dots,w_m$ where $w_{i+1}=\suc(w_i)$ for all $i$ (indices are taken modulo $m$).  None of the lines determined by $w_1,\dots, w_m$ are one-sided nonexpansive by Remark~\ref{helpfulremark}.  Thus we can choose parameters $a_1,\dots,a_m,b_1,\dots,b_m\in\N$ and $A_1,\dots,A_m\in SL_2(\Z)$ such that the line determined by $w_i$ is one-sided $(\S,\eta)$-expansive with parameters $(a_i,b_i,A_i)$.   By definition of one-sided expansiveness, for fixed $r\in\N$, any $(\eta\circ A_i)$-coloring of the rectangular set $[0,a_i]\times[-b_i,b_i+r]$ extends uniquely to an $(\eta\circ A_i$)-coloring of
$$
([0,a_i]\times[-b_i,b_i+r])\cup\{(-1,0),(-1,1),(-1,2),\dots,(-1,r-1)\}.
$$
Equivalently, any $\eta$-coloring of the set
$$
B_i^r:=A_i([0,a_i]\times[-b_i,b_i+r])
$$
extends uniquely to an $\eta$-coloring of the set
\begin{equation}\label{subextensionestimate}
\widetilde{B_i^r}:=A_i\left(([0,a_i]\times[-b_i,b_i+r])\cup\{(-1,0),(-1,1),\dots,(-1,r-1)\}\right).
\end{equation}

For $i,j$ such that $w_i$ and $w_j$ are neither parallel nor antiparallel, let $\theta_{i,j}\in\left(-\pi,\pi\right)\setminus\{0\}$ be the angle between $w_i$ and $w_j$ and let $c_i$ be the length of the orthogonal projection of $A_i(1,0)$ onto the direction determined by $w_i$.  Let
$$
N=\sum_{i=1}^m(2b_i+a_ic_i)+\max_{i,j}\left\{\frac{a_1+\cdots+a_m+m}{\left|\tan\theta_{i,j}\right|}\right\}+\sum_{i=1}^m\left|w_i\cap\S\right|.
$$

Let $C^{\prime}(\S,N+1,w_i)$ be the parameter appearing in Lemma~\ref{parameterperimeter} and let 
$$
c:=\max_{w\in E(\S)}C^{\prime}(\S,N+1,w).
$$
If $\T\subseteq\ZZ$ is a strongly $E(\S)$-enveloped set satisfying
\begin{enumerate}
\item $\T$ has edges parallel to each of the edge of $\S$;\label{conditionA}
\item for each $w\in E(\T)$ there are at least $c$ distinct lines parallel to $w$ that have nonempty intersection with $\T$,\label{conditionB}
\end{enumerate}
then there exist integers $1\leq i_1<i_2\leq m$ 
 such that the edges of $\T$ parallel to $w_{i_1}$ and $w_{i_2}$ both contain at least $N+1$ integer points, where $w_{i_1}$ is between $w_1$ and the direction antiparallel to $w_1$ and $w_{i_2}$ is between the direction antiparallel to $w_1$ and $w_1$ (with respect to the local ordering by the positive orientation).   It also follows that there exist vectors $\vec u_1, \vec u_2\in\ZZ$ such that $(B_{i_1}^{|w_{i_1}\cap\S|-1}+\vec u)\subseteq\T$, but $\widetilde{B_{i_1}}^{|w_{i_1}\cap\S|-1}$ is not.  The analogous 
 statement holds for $i_2$.  Moreover, by Corollary~\ref{uniqueextension}, the edge of $\Ext_{w_{i_1}}(\T)$ parallel to $w_{i_1}$ contains at least $N+1$ integer points and the edge of $\Ext_{w_{i_2}}(\Ext_{w_{i_1}}(\T))$ parallel to $w_{i_2}$ contains at least $N+1$ integer points.  Recall that there exists an integer $d>0$, the depth of the extension, and lines $l_1,\dots,l_d$ parallel to $w_i$, such that
$$
\Ext_{w_{i_1}}(\T)\setminus\T=\bigsqcup_{j=1}^d(l_j\cap \Ext_{w_{i_1}}(\T)).
$$
Also recall that for $r=1,\dots,d$, the set
$$
\T\cup\bigcup_{j=1}^r(l_j\cap \Ext_{w_{i_1}}(\T))
$$
is the $(w_{i_1},r)$-subextension of $\Ext_{w_{i_1}}(\T)$.  By convexity, 
since the edges of $\T$ and $\Ext_{w_{i_1}}(\T)$ parallel to $w_{i_1}$ both contain at least $N+1$ integer points, 
each of the sets $l_j\cap \Ext_{w_{i_1}}(\T)$ contains at least $N$ integer points.  Although the $(w_{i_1},r)$-subextension of $\T$ may not be strongly $E(\S)$-enveloped (recall that $d$ is the smallest integer for which the extension is enveloped by the same set as $\T$), we still have 
that for each $r=1,\dots,d$ there exists $\vec u_r\in\ZZ$ such that $B_{i_1}^{|w_{i_1}\cap\S|-1}+\vec u_r$ is contained in the $(w_{i_1},r)$-subextension of $\Ext_{w_{i_1}}(\T)$, but $\widetilde{B_{i_1}}^{|w_{i_1}\cap\S|-1}$ is not.  By~\eqref{subextensionestimate} it follows that any $\eta$-coloring of the $(w_{i_1},r)$-subextension of $\T$ uniquely determines the color of at least $|w_{i_1}\cap\S|-1$ consecutive integer points on $l_{r+1}\cap \Ext_{w_{i_1}}(\T)$.  By Corollary~\ref{uniqueextension} it follows that any $\eta$-coloring of the $(w_{i_1},r)$-subextension of $\T$ extends uniquely to an $\eta$-coloring of the $(w_{i_1},r+1)$-subextension.  Inductively it follows that any $\eta$-coloring of $\T$ extends uniquely to an $\eta$-coloring of $\Ext_{w_{i_1}}(\T)$.  Similarly any $\eta$-coloring of $\Ext_{w_{i_1}}(\T)$ extends uniquely to an $\eta$-coloring of $\Ext_{w_{i_2}}(\Ext_{w_{i_1}}(\T))$.

We construct a nested sequence of finite sets 
\begin{equation}\label{finitesetconstruction}
\T_0\subset\T_1\subset\T_2\subset\cdots\subset\ZZ
\end{equation}
such that, for each $i$, any $\eta$-coloring of $\T_i$ extends uniquely to an $\eta$-coloring of $\T_{i+1}$.  We show that these sets have the property that any $\eta$-coloring of $\bigcup_i\T_i$ extends uniquely to an $\eta$-coloring of $\ZZ$.  Hence any $\eta$-coloring of $\T_0$  extends uniquely to an $\eta$-coloring of $\ZZ$.  Taking $F:=\T_0$ establishes Claim~\ref{claim:double}. 

 Let $\T_0$ be a strongly $E(\S)$-enveloped set that satisfies conditions~\eqref{conditionA} and~\eqref{conditionB}.  Let $B\subset\R^2$ be the thinnest strip with edges parallel and antiparallel to $w_1$ that contains $\T_0$.  By the above argument, find $w_{i_1}, w_{i_2}\in E(\T_0)$ that are neither parallel nor antiparallel to $w_1$ and such that any $\eta$-coloring of $\T_0$ extends uniquely to an $\eta$-coloring of $\Ext_{w_{i_2}}(\Ext_{w_{i_1}}(\T_0))$ and such that this extension satisfies conditions~\eqref{conditionA} and~\eqref{conditionB}.  Moreover $w_{i_1}$ and $w_{i_2}$ can be chosen such that $w_{i_1}$ is between $w_1$ and the direction antiparallel to $w_1$ and $w_{i_2}$ is between the direction antiparallel to $w_1$ and $w_1$.  Let $\T_1:=\Ext_w(\T_0)$.  Inductively, suppose we have constructed a sequence of strongly $E(\S)$-enveloped finite sets $\T_0\subset\T_1\subset\cdots\subset\T_i$ which all have edges parallel to each of the edges of $\S$ and are such that for any $0\leq j\leq i$ and any $w\in E(\T_j)$ there are at least $c$ lines parallel to $w$ that have nonempty intersection with $\T_j$.  Furthermore suppose that any $\eta$-coloring of $\T_0$ extends uniquely to an $\eta$-coloring of $\T_i$.  By the previous argument, there exist $w_{i_1}, w_{i_2}\in E(\T_i)$ which are neither parallel nor antiparallel to $w_1$ and such that $\Ext_{w_{i_2}}(\Ext_{w_{i_1}}(\T_i))$ satisfies the same conditions as $\T_i$ and every $\eta$-coloring of $\T_i$ extends uniquely to an $\eta$-coloring of  it.  Define $\T_{i+1}:=\Ext_{w_{i_2}}(\Ext_{w_{i_1}}(\T_i))$.  By induction, any $\eta$-coloring of $\T_0$ extends uniquely to an $\eta$-coloring of $\bigcup_i\T_i$.

Since $\T_{i+1}$ is an extension of $\T_i$ over edges that are neither parallel nor antiparallel to $w_1\in E(\S)$, it follows that $\bigcup_i\T_i$ is an infinite, $E(\S)$-enveloped subset of $B\cap\ZZ$.   Since $\T_{i+1}$ is obtained by first extending $\T_i$ over an edge whose direction is between $w_1$ and the direction antiparallel to $w_1$ and then extending the extension over an edge that is between the direction antiparallel to $w_1$ and $w_1$, $\bigcup_i\T_i=B\cap\ZZ$.  Since $w_1$ does not determine an expansive direction for $\eta$ and $\T_0$ contains the set $A_1([0,a_1]\times[-b_1,b_1])$, there is a unique extension of any $\eta$-coloring of $\bigcup_i\T_i$ to an $\eta$-coloring of $\ZZ$.  In this case,  set $F:=\T_0$, completing the proof of Claim~\ref{claim:double} and the theorem.  
\end{proof}

\subsection{Single periodicity}
We have now developed the tools to prove Theorem~\ref{singleperiodic}.  We recall 
the statement for convenience: 
\begin{theorem*}[Theorem~\ref{singleperiodic}]
Suppose $\eta\in\A^{\ZZ}$ and $X_{\eta}:=\overline{\mathcal{O}(\eta)}$.  If there exist $n,k\in\N$ such that $P_{\eta}(n,k)\leq nk$ and there is a unique nonexpansive $1$-dimensional subspace for the $\ZZ$-action (by translation) on $X_{\eta}$.  Then $\eta$ is periodic, but not doubly periodic, the unique nonexpansive line $L$ is a rational line through the origin, and every period vector for $\eta$ is contained in $L$.
\end{theorem*}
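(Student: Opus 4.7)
The first two assertions are quick reductions. Since $P_\eta(n,k)\leq nk$, Lemma~\ref{lemma:no-irrational} forces every irrational line through the origin to be expansive, so $L$ must be rational. If $\eta$ were doubly periodic, Theorem~\ref{doubleperiodicity} would imply that no $1$-dimensional subspace is nonexpansive, contradicting the existence of $L$. Finally, if some period vector $\vec q$ of $\eta$ lay outside $L$, it would combine with the period $\vec P\in L$ produced below to force double periodicity, a contradiction. Thus the heart of the proof is constructing a period vector for $\eta$ lying in $L$.

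To produce one, I adapt the extension scheme in the proof of Theorem~\ref{doubleperiodicity}. Choose coordinates so that $L$ is the downward-oriented $y$-axis, pick an $\eta$-generating set $\S\subseteq R_{n,k}$ using Corollary~\ref{generatingset}, and invoke Lemma~\ref{possibledirections} to obtain an edge $w_1\in E(\S)$ parallel to $L$. Because $L$ is the unique nonexpansive $1$-dimensional subspace, every oriented rational line other than the (possibly two) orientations of $L$ is one-sided expansive, so every edge of $\S$ other than $w_1$ and its antiparallel determines a one-sided expansive direction. Following verbatim the first half of the construction in the proof of Theorem~\ref{doubleperiodicity}, build a strongly $E(\S)$-enveloped seed $\T_0$ satisfying conditions (i) and (ii) of that proof, and iterate extensions over edges that are neither parallel nor antiparallel to $w_1$, producing $\T_0\subset\T_1\subset\cdots$ with $\bigcup_i\T_i = B\cap\ZZ$, where $B$ is the thinnest strip parallel to $L$ containing $\T_0$. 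In contrast to Theorem~\ref{doubleperiodicity}, I \emph{stop here}, since $w_1$-extensions are no longer forced. The extensions used in the construction are unique, so any $\eta$-coloring of $\T_0$ uniquely determines the $\eta$-coloring of $B\cap\ZZ$; since the same applies to $T^{\vec u}\eta\in X_\eta$ for any $\vec u\in\ZZ$, the restriction of $T^{\vec u}\eta$ to $\T_0$ determines its restriction to $B$.

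The final step is a pigeonhole argument along $L$. Let $\vec l$ denote the primitive positive vector in $L\cap\ZZ$ and fix $\vec u\in\ZZ$. Since $\T_0$ is finite, among the $\T_0$-colorings of $\{T^{n\vec l+\vec u}\eta:n\in\Z\}$ there are at most $|\A|^{|\T_0|}$ distinct values, so by pigeonhole there exist $0\leq n_1<n_2\leq |\A|^{|\T_0|}$ with $(T^{n_1\vec l+\vec u}\eta)\rst{\T_0}=(T^{n_2\vec l+\vec u}\eta)\rst{\T_0}$. By the previous paragraph this gives $(T^{n_1\vec l+\vec u}\eta)\rst{B}=(T^{n_2\vec l+\vec u}\eta)\rst{B}$, and because $\vec l\in L$ while $B$ is invariant under translation by $L$, this says $\eta$ is $\vec p_{\vec u}$-periodic on $B+\vec u$ with $\vec p_{\vec u}:=(n_2-n_1)\vec l$. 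The positive integer $n_2-n_1$ lies in $\{1,2,\dots,|\A|^{|\T_0|}\}$, so the least common multiple $N$ of these integers as $\vec u$ ranges over $\ZZ$ is a well-defined positive integer. Setting $\vec P:=N\vec l\in L$, each $\eta\rst{B+\vec u}$ is $\vec P$-periodic; since $\bigcup_{\vec u\in\ZZ}(B+\vec u)=\ZZ$, the coloring $\eta$ is globally $\vec P$-periodic.

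The hard part will be verifying that the extension machinery from Theorem~\ref{doubleperiodicity} fills the entire strip $B\cap\ZZ$ when the edge direction $w_1$ is forbidden. The construction there relies on repeatedly choosing extending edges $w_{i_1},w_{i_2}$ lying between $w_1$ and its antiparallel (and vice versa), and one must check that forbidding $w_1$ itself still leaves enough flexibility to keep the $\T_i$ strongly $E(\S)$-enveloped and to force $\bigcup_i\T_i$ to exhaust the full strip. Fortunately, this is exactly the portion of the Theorem~\ref{doubleperiodicity} argument that runs \emph{before} expansivity of $w_1$ is invoked, so no new machinery is required; the verification amounts to tracing through that part of the construction and confirming that it never actually uses one-sided expansivity of $w_1$.
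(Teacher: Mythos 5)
Your proposal is correct and follows essentially the same route as the paper: rationality of $L$ and non-double-periodicity are dispatched exactly as you do (via Lemma~\ref{lemma:no-irrational} and Theorem~\ref{doubleperiodicity}), and the heart of the paper's argument is likewise to run the strip-filling construction from the proof of Theorem~\ref{doubleperiodicity} using only the one-sided expansiveness of directions other than $L$, stopping at the full strip $B\cap\ZZ$ and concluding vertical periodicity by pigeonhole on the finitely many $\eta$-colorings of the seed set. The worry you flag at the end is precisely the observation the paper relies on --- the exhaustion of the strip in that construction never invokes expansivity of $w_1$ --- so there is no gap.
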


We make the following definition.
\begin{definition}\label{diameterdefinition}
If $\S\subset\ZZ$ and $(a,b)\in\ZZ$, the {\em $(a,b)$-diameter of $\S$} is the number of distinct rational lines parallel to $(a,b)$ that have nonempty intersection with $\S$.  We denote this by $\diam_{(a,b)}(\S)$.
\end{definition}

\begin{proof}[Proof of Theorem~\ref{singleperiodic}]
We adopt the same notation for $\S$, $w_1,\dots,w_n\in E(\S)$, and $a,N\in\N$ used in the proof of Theorem~\ref{doubleperiodicity}.

By Theorem~\ref{doubleperiodicity}, $\eta$ is not doubly periodic and  
by Corollary~\ref{cor:no-irrational}, the unique nonexpansive line $\ell$ is 
a rational line through the origin.  Without loss of generality, we can assume that
$\ell$ is vertical.  

We claim that any $\eta$-coloring of $[1,a]\times[1,a]$ extends uniquely to an $\eta$-coloring of $[1,a]\times\Z$.  Without loss assume that $w_1$ is the nonexpansive direction and let $\hat{w}_1\in E(\S)$ be edge of $\S$ antiparallel to $w_1$.  By Lemma~\ref{parameterperimeter}, as in the proof of Claim~\ref{claim:double}, we can construct a sequence of finite sets
$$
\T_0\subset\T_1\subset\T_2\subset\cdots\subset\ZZ
$$
such that, for each $i$, any $\eta$-coloring of $\T_i$ extends uniquely to an $\eta$-coloring of $\T_{i+1}$ and $\bigcup_i\T_i$ is a strip with edges parallel and antiparallel to $w_1$.  Choosing $\T_0$ sufficiently large to contain $[1,a]\times[1,a]$ gives the result.

By the claim, the restriction of $\eta$ to any vertical strip of width $a$ is vertically periodic of period at most $P_{\eta}(\T)$, where $\T$ is the smallest $\{w_1,\dots,w_n\}$-enveloped set containing $[1,a]\times[1,a]$, thereby
completing the proof.
\end{proof}

\begin{remark}
We contrast this with a recent result of Hochman~\cite{H}, which shows that there are $\ZZ$-systems that have a unique nonexpansive $1$-dimensional subspace but are not periodic.  Theorem~\ref{singleperiodic} only applies to the special case of those $\ZZ$-subshifts that arise as the orbit closure of a function satisfying the hypothesis of Nivat's Conjecture.
\end{remark}

\begin{remark}
With the assumptions of Theorem~\ref{singleperiodic}, if follows that if there is a unique nonexpansive $1$-dimensional subspace of $\R^2$ for the translation action on $X_{\eta}$, then {\em both} orientations of the subspace are one-sided nonexpansive (since $\eta$ is singly periodic with period vectors contained in this subspace, if either orientation were one-sided expansive it would follow that $\eta$ was doubly periodic).  In general this is not the case (e.g. only one of the two orientations on the vertical direction is one-sided nonexpansive in Ledrappier's example~\cite{L}).
\end{remark}

\section{A stronger bound on complexity}\label{nk/2}
 
In light of Theorems~\ref{doubleperiodicity} and~\ref{singleperiodic}, one strategy for proving Nivat's Conjecture is to show that if $\eta\colon \ZZ\to\A$ satisfies $P_{\eta}(R_{n,k})\leq nk$ for some $n,k\in\N$, then $\eta$ does not have two linearly independent one-sided nonexpansive directions.
Under the strengthened hypothesis that $P_{\eta}(R_{n,k})\leq\frac{nk}{2}$, 
this is the content of Theorem~\ref{mainthm}.
The additional control over $\eta$ obtained from the assumption $P_{\eta}(R_{n,k})\leq\frac{nk}{2}$ comes in three guises:
we obtain a special sort of $\eta$-generating set (Lemma~\ref{stronggeneratingset}), 
we prove the existence of sets that contain many points in a given
rational direction (Lemma~\ref{balancedlemma} and Proposition~\ref{prop:extendedambiguousperiod}), and 
we obtain control on the periods in Section~\ref{sec:thin-generating}.

\subsection{Strong generating sets}

\begin{lemma}\label{stronggeneratingset}
Suppose $\eta\colon\ZZ\to\A$ is aperiodic and there exist $n,k\in\N$ such that $P_{\eta}(R_{n,k})\leq\frac{nk}{2}$.  Then there exists an $\eta$-generating set $\S\subset R_{n,k}$ such that 
\begin{enumerate}
\item
\label{cond:1}
$D_{\eta}(\S)\leq-\frac{\left|\S\right|}{2}$;
\item 
\label{cond:2}
For any $w\in E(\S)$, the discrepancy function satisfies
$$
D_{\eta}(\S\setminus w)\geq D_{\eta}(\S)+\left\lceil\frac{\left|w\cap\S\right|}{2}\right\rceil.
$$
\item
\label{cond:3}
If $\T\subset\S$ is convex and nonempty, then
$$
D_{\eta}(\T)>D_{\eta}(\S).
$$
\end{enumerate}
\end{lemma}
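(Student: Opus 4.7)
The plan is to take $\S$ to be a minimum-cardinality element of the family
$$
\mathcal{F} := \{\T\subseteq R_{n,k} : \T\text{ nonempty, convex, }P_\eta(\T)\leq|\T|/2\}.
$$
The hypothesis puts $R_{n,k}\in\mathcal{F}$, so a minimizer exists; note $|\S|\geq 2$ since any single point has discrepancy $|\A|-1>0$. Condition (\ref{cond:1}) is just membership in $\mathcal{F}$. For condition (\ref{cond:3}), any nonempty convex $\T\subsetneq\S$ fails to lie in $\mathcal{F}$ by minimality, so $P_\eta(\T)>|\T|/2$; combined with $|\T|<|\S|$ and $P_\eta(\S)\leq|\S|/2$ this chains as
$$
D_\eta(\T)>-|\T|/2>-|\S|/2\geq D_\eta(\S).
$$
Once (\ref{cond:3}) is known, Lemma~\ref{generated} applied to each extreme point of $\S$ forces every such point to be $\eta$-generated, so $\S$ is weak $\eta$-generating and hence an $\eta$-generating set.

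Before addressing (\ref{cond:2}), I must know $\S$ is two-dimensional, so that $E(\S)$ is nondegenerate and $\S\setminus w$ is always a nonempty proper convex subset. Suppose for contradiction that $\S$ lies on a rational line $L$ with primitive integer direction vector $\vec v$. Translation invariance of the complexity function gives $P_\eta(\S')=P_\eta(\S)\leq|\S|/2$ for every translate $\S'$ of $\S$, so for each line $L'$ parallel to $L$ the one-dimensional block complexity of $\eta|_{L'}$ satisfies $P_{\eta|_{L'}}(|\S|)\leq|\S|/2\leq|\S|$. The Morse-Hedlund Theorem then forces $\eta|_{L'}$ to be periodic with period at most $|\S|$; setting $P:=\operatorname{lcm}(1,\dots,|\S|)$, the vector $P\vec v$ is a period of $\eta|_{L'}$ on every such line, hence a period vector of $\eta$, contradicting aperiodicity. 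Thus $\S$ is two-dimensional.

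For (\ref{cond:2}), fix $w\in E(\S)$. The set $\S\setminus w$ is nonempty, convex, and strictly contained in $\S$, so by minimality $P_\eta(\S\setminus w)>(|\S|-|w\cap\S|)/2$; combined with $P_\eta(\S)\leq|\S|/2$ this produces the strict inequality $P_\eta(\S)-P_\eta(\S\setminus w)<|w\cap\S|/2$. Since the left side is an integer, it is in fact at most $\lfloor|w\cap\S|/2\rfloor$, and rearranging in terms of the discrepancy yields exactly $D_\eta(\S\setminus w)\geq D_\eta(\S)+\lceil|w\cap\S|/2\rceil$. The main obstacle in this plan is the two-dimensionality step: conditions (\ref{cond:1}), (\ref{cond:2}), and (\ref{cond:3}) fall out of the minimality of $|\S|$ by essentially arithmetic manipulations of the definition of discrepancy, but ruling out the $1$-dimensional case genuinely requires the aperiodicity hypothesis to combine with Morse-Hedlund and propagate periodicity across every parallel line.
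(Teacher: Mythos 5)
Your argument is correct, and it reorganizes the proof in a genuinely different way from the paper's. The paper proceeds by a terminating descent: it takes a set minimal with respect to inclusion among convex subsets of $R_{n,k}$ with discrepancy at most $D_{\eta}(R_{n,k})$, and whenever some edge $w$ violates condition (ii) it deletes that edge and re-minimizes, checking at each stage that the invariant $D_{\eta}(\S_i)\leq-\left|\S_i\right|/2$ survives and invoking aperiodicity plus Morse--Hedlund to rule out sets lying on a single line; finiteness of $R_{n,k}$ forces termination. You instead make one extremal choice --- the minimum-cardinality member of $\mathcal{F}$ --- and read all three conditions off the single fact that no strictly smaller convex subset of $R_{n,k}$ lies in $\mathcal{F}$. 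The arithmetic is identical: your derivation of (ii) is exactly the paper's inductive-step computation used contrapositively, since a violation of (ii) would hand you a strictly smaller convex set still satisfying $P_{\eta}\leq\left|\cdot\right|/2$. The extremal packaging removes the induction and the nested-sequence bookkeeping, and it yields condition (iii) immediately rather than as a by-product of the final minimization; your treatment of the degenerate one-dimensional case is in fact slightly more careful than the paper's, since you take the least common multiple of the line-by-line periods before concluding that $\eta$ has a global period vector. The only steps you leave implicit --- that $\left|w\cap\S\right|\geq2$ (the endpoints of a boundary edge are extreme points, hence lie in $\S$) and that $\S\setminus w$ is convex --- are standard and used without comment in the paper as well, and your two-dimensionality argument correctly supplies the remaining requirement that $\S\setminus w\neq\emptyset$.
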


We give a name to a set satisfying the conclusion of this lemma:
\begin{definition}\label{stronggenerator}
If $\eta\colon\ZZ\to\A$ is aperiodic and satisfies $P_{\eta}(R_{n,k})\leq\frac{nk}{2}$ for some $n,k\in\N$, then an $\eta$-generating set $\S\subseteq R_{n,k}$ is a {\em strong $\eta$-generating set} if it satisfies conditions~\eqref{cond:1},~\eqref{cond:2}, and~\eqref{cond:3} of Lemma~\ref{stronggeneratingset}.  
\end{definition}

We note that the existence of such an $\eta$-generating is the first use of the stronger hypothesis 
on the complexity.  
\begin{proof}[Proof of Lemma~\ref{stronggeneratingset}]
We construct the set $\S$  by an iterative process.  
By assumption we have $D_{\eta}(R_{n,k})\leq-\frac{\left|R_{n,k}\right|}{2} = -\frac{nk}{2}$.  Let $\S_1\subseteq R_{n,k}$ be a convex set which is minimal (with respect to inclusion) among all convex subsets of $R_{n,k}$ that have discrepancy at most $D_{\eta}(R_{n,k})$.  
Minimality of $\S_1$ implies that $\S_1$ is $\eta$-generating.
By construction, $\S_1$ satisfies $D_{\eta}(\S_1)\leq D_{\eta}(R_{n,k})\leq-\frac{\left|R_{n,k}\right|}{2}\leq-\frac{\left|\S_1\right|}{2}$. 
If for every $w\in E(\S_1)$, the 
discrepancy satisfies $D_{\eta}(\S_1\setminus w)\geq D_{\eta}(\S_1)+\left\lceil\frac{\left|w\cap\S_1\right|}{2}\right\rceil$, 
then the set $\S:=\S_1$ satisfies the conclusions and we are finished. 

Otherwise, suppose that we have inductively constructed a nested sequence of sets
$$
R_{n,k}\supseteq\S_1\supset\cdots\supset\S_m
$$
such that for $i=1,\dots,m$:
\begin{enumerate}
\item
\label{item:1}
$\S_i$ is convex and nonempty;
\item 
\label{item:2}
$\S_i$ is $\eta$-generating;
\item
\label{item:3} 
$\S_i$ satisfies $D_{\eta}(\S_i)\leq-\frac{\left|\S_i\right|}{2}$;
\item 
\label{item:4}
$\S_i$ is not the intersection of a line segment with $\ZZ$;
\item 
\label{item:5}
There exists $w_i\in E(\S_i)$ such that $D_{\eta}(\S_i\setminus w_i)<D_{\eta}(\S_i)+\left\lceil\frac{\left|w_i\cap\S_i\right|}{2}\right\rceil$.
\end{enumerate}
Choose $w_m\in E(\S_m)$ such that $D_{\eta}(\S_m\setminus w_m)<D_{\eta}(\S_m)+\left\lceil\frac{\left|w_m\cap\S_m\right|}{2}\right\rceil$.  Since the left hand side of the inequality is an integer, 
$$
D_{\eta}(\S_m\setminus w_m)<D_{\eta}(\S_m)+\frac{\left|w_m\cap\S_m\right|}{2}.
$$
Let $\S_{m+1}\subset\S_m\setminus w_m$ be a convex set which is minimal (with respect to inclusion) among all convex subsets of $\S_m\setminus w_m$ of discrepancy at most $D_{\eta}(\S_m\setminus w_m)$.  Then $\left|\S_{m+1}\right|\leq\left|\S_m\right|-\left|w_m\cap\S_m\right|$, and so
\begin{align*}
D_{\eta}(\S_{m+1})\leq D_{\eta}(\S_m\setminus w_m) & <  D_{\eta}(\S_m)+\frac{\left|w_m\cap\S_m\right|}{2}\\
& \leq- \frac{\left|\S_m\right|}{2}+\frac{\left|w_m\cap\S_m\right|}{2}\leq-\frac{\left|\S_{m+1}\right|}{2}.
\end{align*}
By minimality, $\S_{m+1}$ is $\eta$-generating, and contains at least two elements (since its $\eta$-discrepancy is negative).  Thus we have satisfied conditions~\eqref{item:1},~\eqref{item:2}, and~\eqref{item:3}.
If $\S_{m+1}$ is the intersection of a line segment with $\ZZ$, then the Morse-Hedlund Theorem implies that the restriction of $\eta$ to any line parallel to $\S_{m+1}$ is periodic with period at most $\left|\S_{m+1}\right|$.  But then $\eta$ is periodic, a contradiction, 
and so condition~\eqref{item:4} is satisfied.  If for every $w\in E(\S_{m+1})$, 
$$D_{\eta}(\S_{m+1}\setminus w)\geq D_{\eta}(\S_{m+1})+\left\lceil\frac{\left|w\cap\S\right|}{2}\right\rceil,$$
 then the set $\S:=\S_{m+1}$ satisfies the conclusions of the lemma.  Otherwise $\S_{m+1}$ satisfies all of the induction hypotheses and the construction continues.  
 In both cases, condition~\eqref{item:5} is satisfied.  

Each $\S_i$ is contained in $R_{n,k}$, so the construction terminates after 
finitely many steps.
\end{proof}

\begin{lemma}\label{nonuniqueextension2}
Suppose $\eta\colon\ZZ\to\A$ is aperiodic and there exist $n,k\in\N$ such that $P_{\eta}(R_{n,k})\leq\frac{nk}{2}$.  Let $\S$ be a strong $\eta$-generating set.  If $w\in E(\S)$, then there are at most $\left\lfloor\frac{\left|w\cap\S\right|}{2}\right\rfloor$ distinct $\eta$-colorings of $\S\setminus w$ that extend non-uniquely to $\eta$-colorings of $\S$.
\end{lemma}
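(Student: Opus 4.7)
The plan is to follow the template of Lemma~\ref{nonuniqueextension1}, replacing the strict-inequality input $D_{\eta}(\S\setminus w)>D_{\eta}(\S)$ with the sharper quantitative bound
$$
D_{\eta}(\S\setminus w)\geq D_{\eta}(\S)+\left\lceil\frac{|w\cap\S|}{2}\right\rceil,
$$
which is guaranteed by condition~\eqref{cond:2} of Definition~\ref{stronggenerator} for a strong $\eta$-generating set. This is the one place where the strengthened hypothesis $P_{\eta}(R_{n,k})\leq\tfrac{nk}{2}$ (through the existence of a strong $\eta$-generating set, Lemma~\ref{stronggeneratingset}) is used.

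First I would unpack the definition of discrepancy on both sides. Writing $D_{\eta}(\T)=P_{\eta}(\T)-|\T|$ and using $|\S\setminus w|=|\S|-|w\cap\S|$, the displayed inequality above rearranges to
$$
P_{\eta}(\S\setminus w)\geq P_{\eta}(\S)-|w\cap\S|+\left\lceil\frac{|w\cap\S|}{2}\right\rceil = P_{\eta}(\S)-\left\lfloor\frac{|w\cap\S|}{2}\right\rfloor.
$$
Equivalently,
$$
P_{\eta}(\S)-P_{\eta}(\S\setminus w)\leq\left\lfloor\frac{|w\cap\S|}{2}\right\rfloor.
$$

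Next I would use the elementary counting observation already employed in the proof of Lemma~\ref{nonuniqueextension1}: every $\eta$-coloring of $\S$ restricts to an $\eta$-coloring of $\S\setminus w$, and an $\eta$-coloring of $\S\setminus w$ contributes strictly more than one preimage in $\mathcal{W}(\S,\eta)$ exactly when it extends non-uniquely. Therefore the number $N$ of $\eta$-colorings of $\S\setminus w$ that extend non-uniquely to $\eta$-colorings of $\S$ satisfies
$$
N \leq P_{\eta}(\S)-P_{\eta}(\S\setminus w).
$$
Combining with the previous display gives $N\leq\lfloor |w\cap\S|/2\rfloor$, which is the desired bound.

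This proof is essentially a direct computation once the correct hypothesis is in hand, so I do not expect any real obstacle: the content of the lemma is entirely driven by condition~\eqref{cond:2} of the strong $\eta$-generating property, and the bookkeeping mirrors the proof of Lemma~\ref{nonuniqueextension1}. The only subtlety worth flagging is the ceiling/floor identity $|w\cap\S|-\lceil |w\cap\S|/2\rceil=\lfloor |w\cap\S|/2\rfloor$, which handles both parities of $|w\cap\S|$ uniformly.
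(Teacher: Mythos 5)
Your proposal is correct and follows essentially the same route as the paper: the paper's proof of Lemma~\ref{nonuniqueextension2} simply cites the argument of Lemma~\ref{nonuniqueextension1} with the stronger bound on $P_{\eta}(\S)-P_{\eta}(\S\setminus w)$ coming from condition~\eqref{cond:2} of the strong generating property, which is exactly the computation you carry out. Your explicit unpacking of the discrepancies and the floor/ceiling bookkeeping is just a more detailed writeup of the same argument.
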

\begin{proof}
The proof is identical to that of Lemma~\ref{nonuniqueextension1} with the stronger bound on $P_{\eta}(\S)-P_{\eta}(\S\setminus w)$ implied by assumption that $\S$ is 
strong generating.
\end{proof}

\begin{lemma}\label{ambiguousperiod2}
Suppose $\eta\colon\ZZ\to\A$, $\S\subset\ZZ$ is a strong $\eta$-generating set and there are antiparallel $w_1, w_2\in E(\S)$.    Suppose $\left|w_1\right|\leq\left|w_2\right|$, $H$ is a $w_1$-half plane, and the restriction of $f\in X_{\S}(\eta)$ to $H$ is $(\S,\eta)$-ambiguous.  Then the $(\S\setminus w_1,w_1)$-border of $H$ is periodic with period vector parallel to $w_1$.  Its period is at most
$\left\lfloor\frac{\left|w_1\cap\S\right|}{2}\right\rfloor$.
\end{lemma}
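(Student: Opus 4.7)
The plan is to follow the proof of Lemma~\ref{ambiguousperiod} almost verbatim, substituting the sharper count on non-uniquely extending colorings that comes with the strong generating hypothesis. The only change is to replace $h = |w_1 \cap \S| - 1$ by $h' := \lfloor |w_1 \cap \S|/2 \rfloor$ and use Lemma~\ref{nonuniqueextension2} in place of Lemma~\ref{nonuniqueextension1}.

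Concretely, first I would apply Remark~\ref{rem:standard} to reduce to the case in which $w_1$ and $w_2$ are vertical with $w_1$ pointing downward. As in the proof of Lemma~\ref{ambiguousperiod}, for each vertical line $\ell$ meeting $\S$ let $\vec x_\ell$ be its bottommost element of $\ell\cap\S$, and define
$$
\W := \bigcup_{i=0}^{h'-1}\bigl\{\vec x_\ell + (0,i) : \ell \text{ vertical, } \ell\cap\S\neq\emptyset\bigr\}.
$$
Since $|w_1\cap\S|\le|w_2\cap\S|$, Lemma~\ref{heightlemma} guarantees that every vertical line meeting $\S$ contains at least $|w_1\cap\S|-1$ integer points, and since $h'\le |w_1\cap\S|-1$ (the case $|w_1\cap\S|\le 1$ is trivial), we get $\W\subseteq\S$. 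Put $\tilde\S := \S\setminus w_1$, $\tilde\W := \W\setminus w_1$, and fix $\vec u\in\ZZ$ so that $\tilde\W + \vec u$ lies in the $(\tilde\S,w_1)$-border of $H$.

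Next I would reproduce the ambiguity-implies-nonunique-extension step from Lemma~\ref{ambiguousperiod}: if some $f\rst{\tilde\S + \vec u + (0,\lambda)}$ extended uniquely to an $\eta$-coloring of $\S + \vec u + (0,\lambda)$, then the $\eta$-generating property of $\S$ would allow the coloring $f\rst H$ to be uniquely extended one column at a time in both vertical directions, contradicting the $(\S,\eta)$-ambiguity of $f\rst H$. Hence every horizontal translate $f\rst{\tilde\S + \vec u + (0,\lambda)}$ is among the non-uniquely extending colorings of $\tilde\S$.

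Now comes the only point where the strengthened hypothesis enters. Because $\S$ is a strong $\eta$-generating set, Lemma~\ref{nonuniqueextension2} guarantees that there are at most $h'=\lfloor|w_1\cap\S|/2\rfloor$ colorings of $\tilde\S$ that fail to extend uniquely to $\S$. Following the original argument, define $\tilde\A$ to be the set of restrictions $f\rst{B_i}$, where $B_i$ is the bottom row of $\tilde\S$ translated by $\vec u + (0,i)$, and let $g\colon\Z\to\tilde\A$ be given by $g(i) := f\rst{B_i}$. Each $h'$-block of $g$ encodes the restriction of $f$ to a translate of $\tilde\W$, so the previous paragraph yields $P_g(h')\le h'$. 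By the Morse-Hedlund Theorem (Theorem~\ref{MorseHedlundTheorem}), $g$ is periodic with period at most $h'$, and unpacking the definition of $g$ gives that the $(\tilde\S,w_1)$-border of $H$ is periodic with period vector parallel to $w_1$ and period at most $\lfloor|w_1\cap\S|/2\rfloor$. No serious obstacle is anticipated; the main subtlety is merely checking that $\W\subseteq\S$ still fits, which is why the $|w_1\cap\S|\le|w_2\cap\S|$ hypothesis and Lemma~\ref{heightlemma} remain essential.
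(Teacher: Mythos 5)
Your proposal is correct and matches the paper's proof, which is stated there in one line as being identical to the proof of Lemma~\ref{ambiguousperiod} with the stronger bound on $P_{\eta}(\S)-P_{\eta}(\S\setminus w_1)$ supplied by Lemma~\ref{nonuniqueextension2}; you have simply written out the details of that substitution, including the (correct) check that $h'=\lfloor|w_1\cap\S|/2\rfloor\leq|w_1\cap\S|-1$ so that $\W\subseteq\S$ still holds.
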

\begin{proof}
Again, the proof is identical to that of Lemma~\ref{ambiguousperiod} with the stronger bound on $P_{\eta}(\S)-P_{\eta}(\S\setminus w_1)$ implied by the assumption on $\S$ and its use in Lemma~\ref{nonuniqueextension2}.
\end{proof}

\subsection{Balanced sets}
We now give a definition motivated by the technical conditions appearing in Corollary~\ref{ambiguouscorollary}.  Intuitively, an $\ell$-balanced set is useful for the same reason as a generating set (i.e. if a coloring is known on some region, a balanced set often allows us to deduce the coloring on larger regions) except that any line parallel to $\ell$ that has nonempty intersection with it, intersects it in ``many'' of places.  This intersection property comes 
at the expense that not all of the vertices of the set are $\eta$-generated by the set.

\begin{definition}\label{balanced}
Suppose that $\eta\colon\ZZ\to\A$ and $\S\subset\ZZ$ is finite and convex.  Suppose $\ell$ is an oriented rational line and let $\ell(\S)\subseteq E(\S)\cup V(\S)$ be the intersection of $\conv(\S)$ with the support line to $\S$ parallel to $\ell$.  We say that $\S$ is {\em $\ell$-balanced for $\eta$} (or simply {\em $\ell$-balanced}) if all of the following conditions hold:
\begin{enumerate}
\item Every rational line parallel to $\ell$ that has nonempty intersection with $\S$ contains at least $\left|\ell(\S)\cap\S\right|-1$ integer points;
\item The endpoints of $\ell(\S)\cap\S$ are $\eta$-generated by $\S$;
\item $D_{\eta}(\S\setminus \ell(\S))>D_{\eta}(\S)$.
\end{enumerate}
\end{definition}

\begin{definition}
For $\vec v\in\ZZ\setminus\{\vec0\}$, a {\em $\vec v$-strip} is a convex subset of $\ZZ$ whose boundary contains precisely two edges, one of which is parallel to $\vec v$ and the other is antiparallel to $\vec v$ (we also include the degenerate case, calling the intersection of $\ZZ$ with a line parallel to $\vec v$, a $\vec v$-strip).  The {\em $\vec v$-width} of a $\vec v$-strip is the number of distinct lines parallel to $\vec v$ that have nonempty intersection with it (in the degenerate case, the width is $1$).
\end{definition}

Showing the existence of an $\ell$-balanced set for $\eta$ is the second use of the stronger hypothesis on complexity.  It is used in the proof of Theorem~\ref{twoormore} in Section~\ref{constructionofK}.
\begin{lemma}\label{balancedlemma}
If $\eta\colon\ZZ\to\A$ and $P_{\eta}(R_{n,k})\leq\frac{nk}{2}$ for some $n,k\in\N$, then for any rational line $\ell$, there exists an $\ell$-balanced set for $\eta$.
\end{lemma}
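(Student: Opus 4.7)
The plan is to reduce the construction of an $\ell$-balanced set to arranging a single geometric condition on an $\eta$-generating set, and then to produce such a set by refining a strong $\eta$-generating set, using the stronger complexity bound $P_\eta(R_{n,k})\leq nk/2$ to absorb the discrepancy loss that arises along the way.

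First I would use Remark~\ref{rem:standard} to reduce to the case where $\ell$ is vertical pointing downward, since $SL_2(\Z)$ preserves both discrepancies and the $\eta$-generating property. Next I would observe that for any two-dimensional $\eta$-generating set $\S$ with $\S\setminus\ell(\S)$ nonempty, conditions~(2) and~(3) of Definition~\ref{balanced} hold automatically: the endpoints of $\ell(\S)\cap\S$ are extreme points of $\S$ and hence $\eta$-generated by the weak $\eta$-generating property, while $\S\setminus\ell(\S)$ is a proper convex subset of $\S$ and hence has strictly larger $\eta$-discrepancy. So the problem reduces to finding an $\eta$-generating set $\S$ for which every vertical line meeting $\S$ contains at least $h-1$ integer points, where $h=|\ell(\S)\cap\S|$.

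I would then start with a strong $\eta$-generating set $\S_0\subseteq R_{n,k}$ supplied by Lemma~\ref{stronggeneratingset}, so that $D_\eta(\S_0)\leq -|\S_0|/2$. If $\ell(\S_0)\cap\S_0$ is a single vertex, then condition~(1) is vacuous and $\S_0$ itself is $\ell$-balanced. Otherwise the support is a leftmost edge $w$ with $h\geq 2$ integer points; by Lemma~\ref{heightlemma} it would suffice to produce a convex set with $w$ as its leftmost edge and an antiparallel rightmost edge of length at least $h$. To arrange this, I would truncate $\S_0$ on the right by intersecting with a vertical strip, keeping only those columns whose height is at least $h$; by concavity of column-heights of a convex subset of $\ZZ$ these columns form a convex set $\S_0'$ whose left edge is still $w$ and whose right edge has length at least $h$. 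Combining Corollary~\ref{generated3} with the strong bound $D_\eta(\S_0)\leq -|\S_0|/2$ then gives $D_\eta(\S_0')\leq 0$ (provided the truncation does not remove too much mass, which is ensured by the column-height estimates), and Corollary~\ref{generatingset} applied inside $\S_0'$ extracts an $\eta$-generating subset $\S\subseteq\S_0'$. By choosing this subset to contain $w$ and to be minimal among convex subsets with these properties, $\S$ should inherit the column-height profile and hence satisfy condition~(1).

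The hardest step will be the final extraction: verifying that the $\eta$-generating refinement $\S\subseteq\S_0'$ does not reintroduce a column of height less than $h-1$, which would break condition~(1). This requires a joint minimization over convex subsets of $\S_0'$ constrained to contain $w$, and is precisely where the stronger complexity hypothesis $P_\eta(R_{n,k})\leq nk/2$ is essential: the surplus it provides in discrepancy gives the flexibility needed to carry out the constrained extraction while simultaneously preserving all three balance conditions. Making the column-height control compatible with the discrepancy-based extraction argument of Lemma~\ref{stronggeneratingset} is the main technical obstacle I anticipate.
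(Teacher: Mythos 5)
Your reduction is sound as far as it goes: for an $\eta$-generating set $\S$ (in the paper's sense), conditions (2) and (3) of Definition~\ref{balanced} do come for free, since the endpoints of $\ell(\S)\cap\S$ are extreme points and $\S\setminus\ell(\S)$ is a nonempty proper convex subset. The entire difficulty is condition (1), and this is where your construction has a genuine gap, in two places. First, the claim that discarding the columns of height less than $h$ removes at most half the mass of $\S_0$ is unjustified and false in general: a strong $\eta$-generating set can perfectly well consist of a short left edge of height $h$ followed by a long shallow tail of columns of height less than $h$, in which case your truncation deletes almost everything and $D_\eta(\S_0')\leq 0$ fails. The paper sidesteps this by never truncating a generating set at all; it cuts the \emph{rectangle} $R_{n,k}$ by a translate of $\ell$ through a corner, and uses the central symmetry of the rectangle to show the retained half-space contains at least $\tfrac12|R_{n,k}|$ points, which is exactly what makes Corollary~\ref{generated3} applicable. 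The shape control you need is a property of the rectangle, not of an arbitrary strong generating set.

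Second, your constrained minimization fixes the wrong edge. Requiring the extracted generating subset to contain $w$ (the support edge \emph{parallel} to $\ell$) keeps $|\ell(\S)\cap\S|=h$ but does nothing to prevent the minimization from collapsing the opposite side of the set, leaving columns with fewer than $h-1$ points; Lemma~\ref{heightlemma} needs a long edge \emph{antiparallel} to $\ell$, and nothing in your extraction preserves one. The paper's minimization is constrained to contain the two extremal lattice points $a,b$ of the cutting line $\ell+\vec u$, i.e.\ it pins down the antiparallel edge, and then gets condition (1) from Lemma~\ref{heightlemma} together with the observation (their equation for the corner cut) that this antiparallel edge carries the \emph{maximum} number of lattice points among all translates of $\ell$ meeting $R_{n,k}$, so the parallel support edge is automatically no longer than it. You correctly identify the compatibility of column-height control with the extraction as the main obstacle, but the proposal does not contain the idea that resolves it, and the step as written would fail.
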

\begin{proof}
If $\ell$ is a horizontal line (without loss of generality, assume it points west),   
choose the minimal $k^{\prime}\leq k$ such that $P_{\eta}(R_{n,k^{\prime}})\leq\frac{nk^{\prime}}{2}$.  Let $w\in E(R_{n,k^{\prime}})$ be the edge parallel to $\ell$.  By minimality, $D_{\eta}(R_{n,k^{\prime}}\setminus\ell)>D_{\eta}(R_{n,k^{\prime}})$.  Choose 
a minimal convex $\S$ satisfying
$$
R_{n,k^{\prime}}\setminus\ell\subset\S\subseteq R_{n,k^{\prime}}
$$
for which $D_{\eta}(\S)=D_{\eta}(R_{n,k^{\prime}})$.  By minimality of $\S$, the endpoints of the support line of $\S$ parallel to $\ell$ are generated.  
Therefore $\S$ satisfies the definition of an $\ell$-balanced set.  The case that $\ell$ is vertical is similar.

If $\ell$ is neither vertical nor horizontal, assume that $n\geq k$ (the other case is similar).  Let $\vec v=(v_1, v_2)\in\ZZ$ be the shortest integer vector parallel to $\ell$.  We assume that $v_1, v_2<0$ (the other cases are similar).  If every translation of $\ell$ intersects $R_{n,k}$ in at most one integer point, then any $\eta$-generating set contained in $R_{n,k}$ is automatically $\ell$-balanced (here the first condition in the definition of an $\ell$-balanced set is trivial).  Thus we assume that $(v_1, v_2)$ connects two integer points in $R_{n,k}$.  Choose $\vec u\in\ZZ$ such that $\ell+\vec u$ passes through the
\begin{itemize}
\item northeast corner of $R_{n,k}$  if $v_2/v_1>k/n$;
\item southwest corner of $R_{n,k}$  if $v_2/v_1\leq k/n$.
\end{itemize}
Assume that $v_2/v_1>k/n$ (the other case is similar).  By choice of $\vec u$, $(\ell+\vec u)$ intersects both the top and bottom of the rectangle $R_{n,k}$, so
$$
\|\conv(R_{n,k})\cap(\ell+\vec u)\|=\max_{\vec v\in\ZZ}\|\conv(R_{n,k})\cap(\ell+\vec v)\|.
$$
Moreover, one of the endpoints of the line segment $\conv(R_{n,k})\cap(\ell+\vec u)$ is an integer point and so
\begin{equation}\label{calc1}
\left|R_{n,k}\cap(\ell+\vec u)\right|=\max_{\vec v\in\ZZ}\left|R_{n,k}\cap(\ell+\vec v)\right|.
\end{equation}
There is some $i\in\R$ such that $\ell+\vec u-(i,0)$ passes through the southwest corner of $R_{n,k}$ and, by symmetry, the number of integer points in $R_{n,k}$ to the left of $\ell+\vec u-(i,0)$ is the same as the number of integer points in $R_{n,k}$ to the right of $\ell+\vec u$ (see Figure~\ref{figureb}).

\begin{figure}[ht]
     \centering
  \def\svgwidth{\columnwidth}
        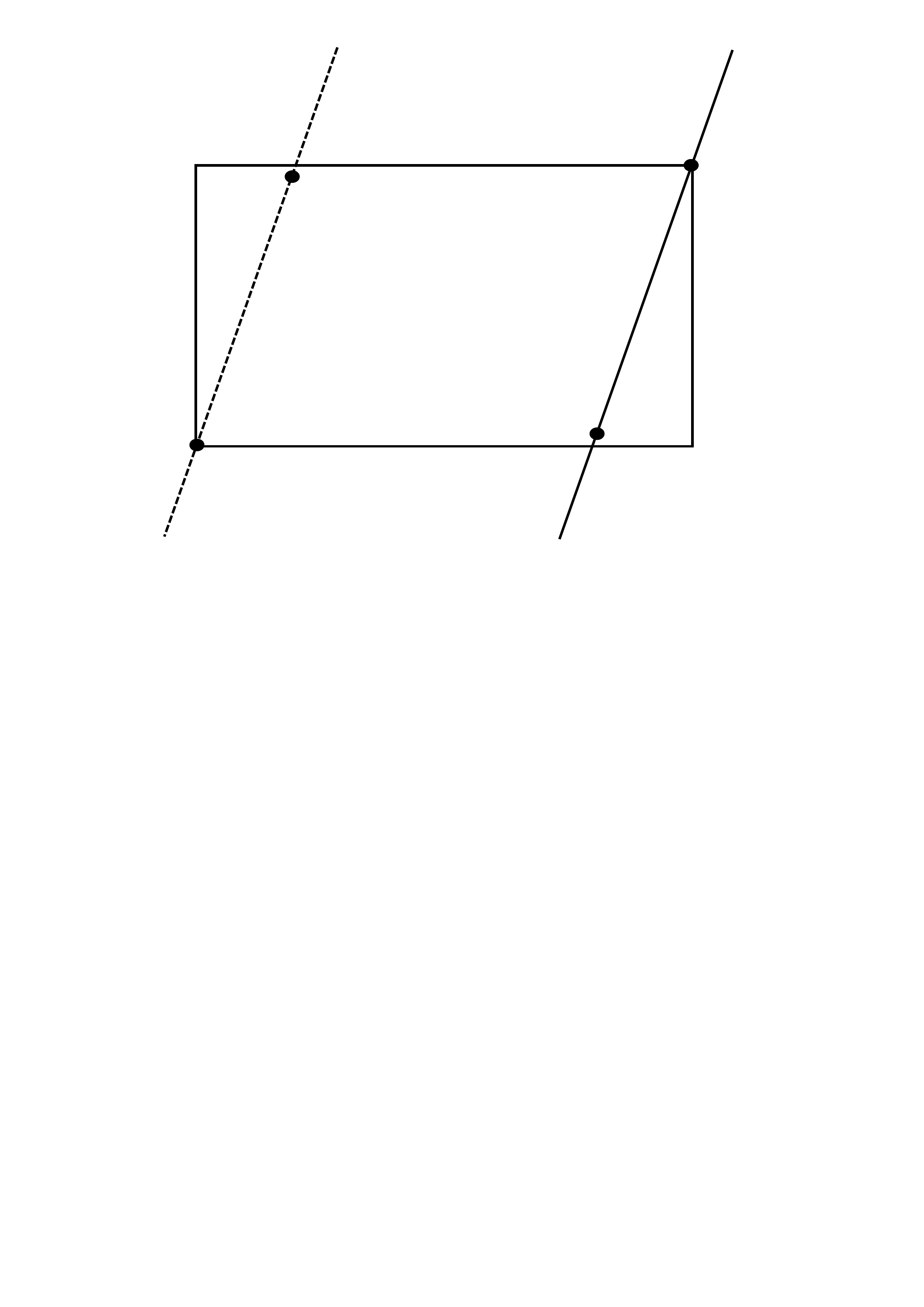
                \setlength{\abovecaptionskip}{-100mm}
	\caption{$R_{n,k}$ with $\ell+\vec u$ (solid line) and $\ell+\vec u-(i,0)$ (dashed line) shown.  The integer points in $R_{n,k}$ are preserved under the rotation by $\pi$ about the center of $R_{n,k}$.  The two points marked on $\ell+\vec u$ are the topmost and bottom most integer points of $(\ell+\vec u)\cap R_{n,k}$.}
	\label{figureb}
\end{figure}

Let $\S_1\subseteq R_{n,k}$ be the (convex) set of all $\vec x\in R_{n,k}$ that are either on $\ell+\vec u$ or to the left of it.  Then $\left|R_{n,k}\setminus\S_1\right|\leq\frac{1}{2}\left|R_{n,k}\right|$ and so by Corollary~\ref{generated3}, $D_{\eta}(\S_1)\leq0$.  Let $a,b\in\ZZ$ be the two extremal elements of $\S_1\cap(\ell+\vec u)$ (the dotted points in Figure~\ref{figureb}).  Let $\S_2\subseteq\S_1$ be minimal (with respect to inclusion) among all convex subsets of $\S_1$ that contain $a$ and $b$ and have $\eta$-discrepancy no larger than $D_{\eta}(\S_1)$.  Then either $\S_2=\S_1\cap(\ell+\vec u)$ or $\S_2$ contains $\S_1\cap(\ell+\vec u)$ and $\conv(\S_2)$ has positive area.  The case that $\conv(\S_2)$ has positive area is illustrated in Figure~\ref{figurec}.

\begin{figure}[ht]
     \centering
  \def\svgwidth{\columnwidth}
        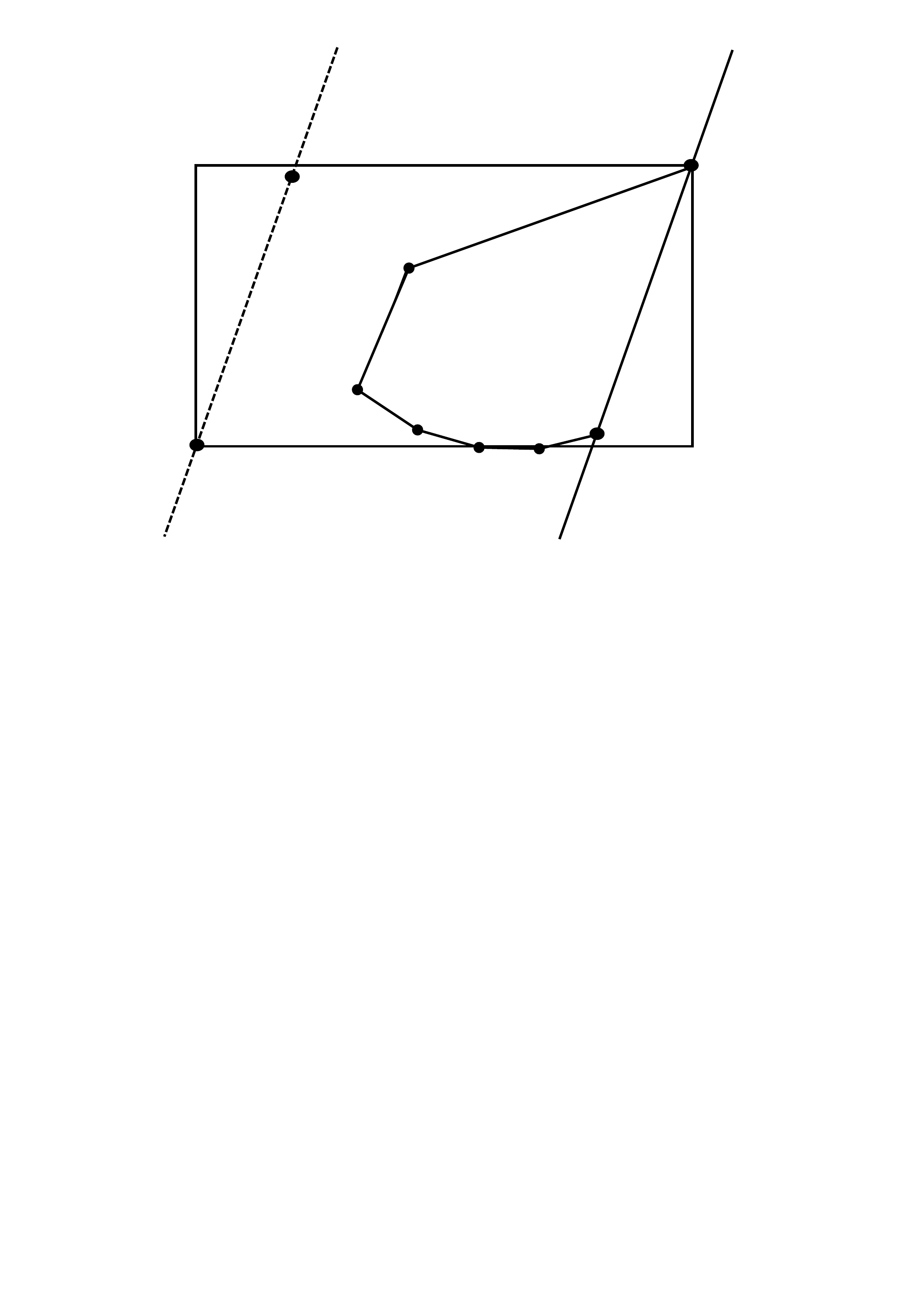
                \setlength{\abovecaptionskip}{-100mm}
	\caption{All extremal points in $\S_2$ except the endpoints of $(\ell+\vec u)\cap R_{n,k}$ are $\eta$-generated, where $\ell$ points southwest and $\S_2$ is $\ell$-balanced.  In this case, $\ell(\S_2)$ is the edge on the left side of $\S_2$, parallel to $\ell$.}
	\label{figurec}
\end{figure}

If the area of $\conv(\S_2)$ is zero, let $\S_3\subseteq\S_2$ be minimal among all convex subsets of $\S_2$ with $\eta$-discrepancy at most $D_{\eta}(\S_2)$.  Then $\S_3$ is an $\eta$-generating set contained entirely in $\ell+\vec u$ and so $\S_3$ is $\ell$-balanced.

In the second case, by minimality of $\S_2$ and Lemma~\ref{generated}, any extremal point of $\S_2$ other than $a$ and $b$ must be $\eta$-generated by $\S_2$.  If $\ell(\S_2)\subseteq V(\S_2)$, then $\ell(\S_2)$ is $\eta$-generated by $\S_2$  and so $\S_2$ is $\ell$-balanced.  Otherwise $\ell(\S_2)\in E(\S_2)$ and both of the extremal elements of $\ell(\S_2)$ are $\eta$-generated by $\S_2$.  Then $E(\S_2)$ has edges parallel and antiparallel to $\ell$ (the edge antiparallel to $\ell$ is the line segment $(\ell+\vec u)\cap\conv(R_{n,k})$).  By~\eqref{calc1}, the number of integer points on the edge parallel to $\ell$ is no larger than the number of integer points on the edge antiparallel to $\ell$.  By Lemma~\ref{heightlemma}, $\S_2$ is $\ell$-balanced.
\end{proof}

We now use balanced sets to show that ambiguity gives rise to periodicity.  %In the following 
%proposition, the assumption of the existence of an $\ell$-balanced set $\S$ is 
%redundant (but convenient for the statement), as by applying Lemma~\ref{balancedlemma}, its existence follows 
%directly from the assumption on complexity:

\begin{proposition}\label{prop:extendedambiguousperiod}
Let $\eta\colon\ZZ\to\A$ and suppose there exist $n,k\in\N$ such that $P_{\eta}(R_{n,k})\leq\frac{nk}{2}$.  Let $\ell$ be a one-sided nonexpansive direction for $\eta$ and 
and let $H$ be an $\ell$-half plane.  Then there exists an $\ell$-balanced set $\S$ such that: 
\begin{enumerate}
\item Any $f\in X_{\S}(\eta)$ whose restriction to $H$ is $(\S,\eta)$-ambiguous is periodic with period vector parallel to $\ell$.
\item If $w\in E(\S)$ is parallel to $\ell$ and $\tilde{\S}=\S\setminus w$, then the restriction of any $(\S,\eta)$-ambiguous $f$ to the $(\tilde{\S},w)$-border of $H$ has period at most $\left|w\cap\S\right|-1$ and the restriction of $f$ to any $\ell$-strip of width $\diam_{w}(\tilde{S})$ has period at most $2\left|w\cap\S\right|-2$.
\end{enumerate}
\end{proposition}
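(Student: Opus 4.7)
The plan is to combine two ingredients developed in this section: the existence of $\ell$-balanced sets under the stronger complexity hypothesis (Lemma~\ref{balancedlemma}) and the ambiguity-to-periodicity machinery of Lemma~\ref{ambiguousperiod}. First I would apply Lemma~\ref{balancedlemma} to obtain an $\ell$-balanced set $\S \subseteq R_{n,k}$. A close reading of that construction shows $\S$ can simultaneously be arranged to be $\eta$-generating: the minimality built into the construction, together with Lemma~\ref{generated}, forces every extreme point other than the endpoints of $\ell(\S) \cap \S$ to be $\eta$-generated, and the balanced conditions handle the two remaining extremes. I restrict attention to the case $\ell(\S) = w \in E(\S)$; when $\ell(\S)$ is a single vertex, the edge $w$ appearing in claim~(2) does not exist and there is nothing to prove.

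For the border-periodicity statement in~(2), I would redo the argument of Lemma~\ref{ambiguousperiod} with this $\S$ in place of the original generating set. The only use of the antiparallel-edge hypothesis in that lemma was to apply Lemma~\ref{heightlemma} and deduce that every line parallel to $w$ meeting $\S$ contains at least $|w \cap \S|-1$ integer points; condition~(1) of the $\ell$-balanced definition delivers this directly. The discrepancy bound $D_\eta(\S\setminus w) > D_\eta(\S)$ (condition~(3)) limits the number of non-uniquely extending $\tilde{\S}$-colorings to $|w\cap\S|-1$, and the inductive propagation step requires only that the two endpoints of $w$ be $\eta$-generated (condition~(2)). A one-dimensional Morse--Hedlund argument on the translates of $\tilde{\S}$ sliding parallel to $\ell$ then yields the claimed period bound on the $(\tilde{\S}, w)$-border.

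To promote this to the global periodicity in claim~(1), I would use that $(\S,\eta)$-ambiguity of $f|_H$ automatically implies $(\S,\eta)$-ambiguity of $f|_{H'}$ for every $w$-half-plane $H' \subseteq H$ whose boundary lies strictly inside $H$, since the original pair of witnesses continues to disagree outside $H' \subseteq H$. Running the border argument along a nested family of such half-planes exhausting $\ZZ$ produces $\ell$-strip periodicities throughout the plane, and the $\eta$-generating property of $\S$ propagates compatibility of adjacent strip periods, yielding a global period vector parallel to $\ell$ for $f$. For the strip-width bound in the second half of~(2), I would position an arbitrary $\ell$-strip $K$ of width $\diam_w(\tilde{\S})$ so that it simultaneously contains two copies of a $(\tilde{\S}, w)$-border (associated to two appropriately related $w$-half-planes), each with its own period at most $|w\cap\S|-1$; combining the two periodicities via the generating property gives a common period of $f|_K$ bounded by the sum $2|w\cap\S|-2$.

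The main obstacle I anticipate is justifying why the two border periodicities combine additively rather than through a potentially much larger least common multiple: this is where the full strength of the balanced hypothesis enters and where the argument departs from the simpler generating-set setting of Lemma~\ref{ambiguousperiod}. A secondary technical point is that $f$ lies in $X_\S(\eta)$ rather than in the orbit closure $X_\eta$, so all propagation must proceed finitarily through the $\eta$-generating property rather than through compactness.
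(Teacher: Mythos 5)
Your first step (obtaining an $\ell$-balanced set and rerunning the argument of Lemma~\ref{ambiguousperiod} to get the period bound $\left|w\cap\S\right|-1$ on the $(\tilde{\S},w)$-border of $H$ itself) matches the paper and is fine. But the mechanism you propose for promoting this to global periodicity does not work. You assert that $(\S,\eta)$-ambiguity of $f\rst{H}$ automatically gives $(\S,\eta)$-ambiguity of $f\rst{H'}$ for a nested family of half planes, with the original witnesses $g_1,g_2$ still serving. In one direction ($H'\subseteq H$, which is what you wrote) the witnesses agree on all of $H\supseteq \Ext_w(H')$, so they witness nothing; in the other direction ($H'\supseteq H$, which is what you would need to exhaust $\ZZ$) the witnesses are only known to agree with $f$ on $H$, not on $H'\setminus H$, so again they do not witness ambiguity of $f\rst{H'}$. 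There is no family of ambiguous half planes to run the border argument along. The paper instead works strip by strip: with $B_K$ the $\ell$-strip of width $\diam_w(\tilde\S)$ at position $K$, it proves a dichotomy for each $K$ --- either $f\rst{B_K}$ extends non-uniquely to $B_K\cup B_{K-1}$ (in which case a counting argument shows $f\rst{B_K}$ has period at most $h:=\left|w\cap\S\right|-1$ and $f\rst{B_{K-1}}$ has period at most $2h$), or it extends uniquely (in which case a surgery argument, splicing $f$ with its vertical translate by the period to produce a second element of $X_{\S}(\eta)$, forces the period to propagate) --- and then inducts. Moreover, the strips lying inside $H$ (moving away from the nonexpansive boundary) require a separate argument using a second set balanced in the direction \emph{antiparallel} to $\ell$, plus a further contradiction argument to recover the $2h$ bound there; your sketch does not address this side at all.

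The obstacle you yourself flag --- why the two periods combine additively to $2\left|w\cap\S\right|-2$ rather than through an lcm --- is real, and your proposed resolution (superimposing two border periodicities inside one strip) is not how it is resolved. The actual source of the bound $2h$ is a count: since $D_\eta(\S\setminus w)>D_\eta(\S)$, at most $h$ colorings of $\tilde\S$ extend non-uniquely to $\S$, hence at most $P_\eta(\S)-\bigl(P_\eta(\tilde\S)-h\bigr)\leq 2h$ colorings of $\S$ restrict to ambiguous colorings of $\tilde\S$; the Pigeonhole Principle then produces $0\leq i<j\leq 2h$ with $f\rst{\S(K,i)}=f\rst{\S(K,j)}$, and the balanced condition (every line parallel to $w$ meets $\S$ in at least $h$ points) together with the already-established period $\leq h$ of $f\rst{B_K}$ forces $j-i$ to be an honest period of the strip. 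Finally, a smaller point: the paper explicitly notes that balanced sets are generally \emph{not} $\eta$-generating (that is the price paid for the intersection property), so your claim that $\S$ can simultaneously be arranged to be $\eta$-generating is both unsupported and unnecessary --- the proof only ever uses that the two endpoints of $w$ are generated and that $D_\eta(\S\setminus w)>D_\eta(\S)$.
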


\begin{proof}
By Lemma~\ref{balancedlemma}, there exists an $\ell$-balanced set $\S$ and an $\hat{\ell}$-balanced set $\S_1$, where $\hat{\ell}$ is the direction antiparallel to $\ell$.  Without loss of generality, we can assume (see Remark~\ref{rem:standard}) that $w$ points vertically downward and $H=\{(x,y)\in\ZZ\colon x\geq0\}$.  
Define $\tilde{\S}:=\S\setminus w$ and for all $K\in\Z$,
set
$$
B_K:=\left\{(x,y)\in\ZZ\colon K\leq x<K+\diam_w(\tilde{\S})\right\}.
$$
By translating if necessary, we can assume that $\tilde{\S}\subset B_0$.  Let $h:=\left|w\cap\S\right|-1$.

We claim that $f\rst{B_K}$ is periodic of period at most $2h$ for all $K\in\Z$, which establishes the 
proposition.  We prove this using induction in several steps.  We start by setting up 
the base case of the induction via two cases, depending on $f\rst{B_K}$ 
extending uniquely or not.  
\subsubsection{Assuming $f\rst{B_K}$ does not extend uniquely}
\label{subsec:base1}
If $f\rst{B_K}$ does not extend uniquely to an $(\S,\eta)$-coloring of $B_K\cup B_{K-1}$, 
we claim that $f\rst{B_K}$ is periodic with period at most $h$ and $f\rst{B_{K-1}}$ is periodic of period at most $2h$.

To prove this, suppose $K\in\Z$ and the coloring of $B_K$ given by $f\rst{B_K}$ does not extend uniquely to an $(\S,\eta)$-coloring of $B_K\cup B_{K-1}$.  By 
Corollary~\ref{ambiguouscorollary}, $f\rst{B_K}$ is vertically periodic of period at most $h$.
For the set $\S$, write $\S(i,j)$ for the translation $\S+(i,j)$, and we use the analogous 
notation for $\tilde{\S}$.  
The two endpoints of $w$ are $\eta$-generated by $\S$ (since $\S$ is $\ell$-balanced), so the coloring of $\tilde{\S}$ given by $f\rst{\tilde{\S}(K,i)}$ does not extend uniquely to an $(\S,\eta)$-coloring of $\S$ for any $i\in\Z$ (otherwise we could use this information to deduce the coloring of $B_{K-1}$).  This means that
\begin{eqnarray*}
\#\left\{f\rst{\tilde{\S}(K,j)}\colon j\in\Z\right\}&\leq&\#\left\{\begin{tabular}{l}$\eta$-colorings of $\tilde{\S}$ that do not extend \\ uniquely to $\eta$-colorings of $\S$\end{tabular}\right\} \\
&\leq&P_{\eta}(\S)-P_{\eta}(\tilde{\S}) \\
&\leq&h\text{ (by Definition~\ref{balanced}(iii) and $\S$ being $\ell$-balanced)}
\end{eqnarray*}
Furthermore, we claim the number of $\eta$-colorings of $\S$ whose restriction to $\tilde{\S}$ is $(\tilde{\S},\S,\eta)$-ambiguous is at most $2h$.  The number of such colorings is $P_{\eta}(\S)$ minus the number of $\eta$-colorings of $\S$ whose restriction is not $(\tilde{\S},\S,\eta)$-ambiguous.  This is the same as $P_{\eta}(\S)$ minus the number of $\eta$-colorings of $\tilde{\S}$ that are not $(\tilde{\S},\S,\eta)$-ambiguous.  The number of $\eta$-colorings of $\tilde{\S}$ that are not $(\tilde{\S},\S,\eta)$-ambiguous is $P_{\eta}(\tilde{\S})$ minus the number of of $\eta$-colorings of $\tilde{\S}$ that are $(\tilde{\S},\S,\eta)$-ambiguous.  The number of $\eta$-colorings of $\tilde{\S}$ that are $(\tilde{\S},\S,\eta)$-ambiguous is at most $h$ (from above).  Putting everything together we get
$$
\#\left\{\begin{tabular}{l}$\eta$-colorings of $\S$ whose restriction \\ to $\tilde{\S}$ is $(\tilde{\S},\S,\eta)$-ambiguous\end{tabular}\right\}\leq P_{\eta}(\S)-\left(P_{\eta}(\tilde{\S})-h\right)\leq2h.
$$
%\begin{eqnarray*}
%\#\left\{\begin{tabular}{l}$\eta$-colorings of $\S$ whose restriction \\ to $\tilde{\S}$ is $(\tilde{\S},\S,\eta)$-ambiguous\end{tabular}\right\} \\
%&&\hspace{-1.2 in}=P_{\eta}(\S)-\#\left\{\begin{tabular}{l}$\eta$-colorings of $\S$ whose restriction \\ to $\tilde{\S}$ is not $(\tilde{\S},\S,\eta)$-ambiguous\end{tabular}\right\} \\
%&&\hspace{-1.2 in}=P_{\eta}(\S)-\#\left\{\begin{tabular}{l}$\eta$-colorings of $\tilde{\S}$ that are \\ not $(\tilde{\S},\S,\eta)$-ambiguous\end{tabular}\right\} \\
%&&\hspace{-1.2 in}=P_{\eta}(\S)-\left(P_{\eta}(\tilde{\S})-\#\left\{\begin{tabular}{l}$(\tilde{\S},\S,\eta)$-ambiguous \\ colorings of $\tilde{\S}$\end{tabular}\right\}\right) \\
%&&\hspace{-1.2 in}\leq P_{\eta}(\S)-P_{\eta}(\tilde{\S})+h \\
%&&\hspace{-1.2 in}\leq2h.
%\end{eqnarray*}

By the Pigeonhole Principle, there exist $0\leq i<j\leq2h$ such that the $\eta$-colorings of $\S$ given by $f\rst{\S(K,i)}$ and $f\rst{\S(K,j)}$ coincide.  Recall that, since $\S$ is $\ell$-balanced, any vertical line $\tilde{\ell}$ that has nonempty intersection with $\S$ satisfies $\left|\tilde{\ell}\cap\S\right|\geq h$ (Definition~\ref{balanced}).  Moreover 
$$
B_K=\ZZ\cap\bigcup_{x=K}^{K+\diam_w(\tilde{\S})}\{(x,y)\colon y\in\R\}
$$
is the intersection of $\ZZ$ with the union of $\diam_w(\tilde{\S})$ many vertical lines, so in particular each of them intersects both $\tilde{\S}(K,i)$ and $\tilde{\S}(K,j)$ in at least $h$ places.  But the minimal vertical period of $f\rst{B_K}$ is at most $h$, so the only way that $f\rst{\tilde{\S}(K,j)}$ could coincide with $f\rst{\tilde{\S}(K,i)}$ is if $j-i$ is a period for $f\rst{B_K}$.  Since the endpoints of $w$ are $\eta$-generated by $\S$, an easy induction argument shows that $f\rst{\S(K,i+k)}$ and $f\rst{\S(K,j+k)}$ coincide for all $k$.  Thus $j-i\leq 2h$ is a period for $f\rst{B_K\cup B_{K-1}}$ and the claim is proven. 

\subsubsection{Assuming $f\rst{B_K}$ extends uniquely}
\label{subsec:base2}
If $f\rst{B_K}$ is periodic and extends uniquely to an $(\S,\eta)$-coloring of $B_K\cup B_{K-1}$, we claim that $f\rst{B_{K-1}}$ is periodic with period dividing that of $f\rst{B_K}$.

To see this, suppose $K\in\Z$.  We already know that
\begin{enumerate}
\item $f\rst{B_K}$ is vertically periodic with period $p$;
\item The coloring of $B_K$ given by $f\rst{B_K}$ extends uniquely to an $\eta$-coloring of $B_K\cup\{(K-1,y)\colon y\in\Z\}$.
\end{enumerate}
We claim that $f\rst{B_{K-1}}$ is vertically periodic of period dividing $p$.  If not, 
define $g\colon \ZZ\to\A$ by
$$
g(x,y)=\left\{\begin{tabular}{ll}
$f(x,y)$ & if $x\geq K$; \\
$f(x,y+p)$ & if $x<K$.
\end{tabular}\right.
$$
Since $f\in X_{\S}(\eta)$, $f\rst{B_K}$ is periodic, and $\diam_w(B_K)=\diam_w(\tilde{\S})$, we are guaranteed that $g\in X_{\S}(\eta)$.  Since the restriction of $f$ to $\{(K-1,y)\colon y\in\Z\}$ is not periodic of period dividing $p$, $f\rst{B_K}=g\rst{B_K}$ but their restrictions to $B_K\cup\{(K-1,y)\colon y\in\Z\}$ do not agree, contradicting the fact that $f\rst{B_K}$ had only one extension to an $(\S,\eta)$-coloring of $B_{K-1}$.  This completes the proof of the 
claim.  

\subsubsection{Periodicity of $f\rst{B_K}$ for $K<0$}
\label{subsec:vertical}
We now start the main induction, carried out in three steps.  
Starting with the ambiguity of $f\rst H$, we
show that the proposition holds for the restriction of $f$ to $\ZZ\setminus H$.
We claim that $f\rst{B_K}$ is vertically periodic of period at most $2h$ for all $K\leq0$.  
Using induction to prove this claim, by Lemma~\ref{ambiguousperiod}, $f\rst{B_0}$ is vertically periodic of period at most $h$.  Suppose that $K<0$ and for $i=0,-1,\dots,K$, $f\rst{B_i}$ is vertically periodic of period at most $2h$.  One of the hypotheses of the two claims 
(in~\ref{subsec:base1} or~\ref{subsec:base2}) applies to $f\rst{B_K}$, and so $f\rst{B_{K-1}}$ is periodic of period at most $2h$.  By induction, this holds for all $K<0$.

\subsubsection{Periodicity of $f\rst{B_K}$ for $K>0$}
\label{subsec:vertical2}
To extend the result for $K>0$, 
recall that $\S_1$ is a set balanced in the direction antiparallel to $\ell$.  Suppose 
that $\hat{w}\in E(\S_1)$ is antiparallel to $\ell$ and let $\tilde{\S}_1:=\S_1\setminus\{\hat{w}\cap\S_1\}$.  Define
$$
\hat{B}_K:=\left\{(x,y)\in\ZZ\colon K-\diam_{\hat{w}}(\S_1)+1\leq x\leq K\right\}
$$
and assume, without loss of generality, that $\hat{w}\subset\{(0,y)\colon y\in\Z\}$.  Then, by the result of~\ref{subsec:vertical}, $f\rst{\ZZ\setminus H}$ is periodic and 
so $f\rst{\hat{B}_0}$ is vertically periodic.  By an induction argument analogous to that given in Sections~\ref{subsec:base1} and~\ref{subsec:base2} (except now using $\S_1$ in place of $\S$), $f\rst{\hat{B}_K}$ is periodic for all $K>0$, and its period is at most the maximum of $(2\left|w\cap\S\right|-2)!$ (an upper bound for the period of $f\rst{\hat{B}_0}$) and $2\left|\hat{w}\cap\S_1\right|-2$.  This implies that there is some constant $C>0$ such that for all $K\in\Z$, $f\rst{B_K}$ is vertically periodic of period at most $C$.  This establishes the first conclusion of the proposition.

\subsubsection{Bounds on the period} 
To establish the second part of the proposition, we need an improvement on the bound of the vertical period of $f\rst{B_K}$ when $K>0$.  For any $K_0\in\Z$ such that $f\rst{B_{K_0}}$ is vertically periodic of period at most $2h$, the induction argument from~\ref{subsec:vertical}, but with the base case changed from $B_0$ to $B_{K_0}$), shows that $f\rst{B_K}$ is vertically periodic of period at most $2h$, for all $K\leq K_0$.  Therefore, it suffices to find a sequence $0<i_1<i_2<\cdots$ such that $f\rst{B_{i_j}}$ is vertically periodic with period at most $2h$ for all $j\in\N$.
 
 Assume instead that no such sequence exists.  Then there exists $I\in\N$ such that for all $i>I$, the coloring of $B_0$ given by $(T^{(i,0)}f)\rst{B_0}$ is either vertically aperiodic or periodic of period larger than $2h$.  Since $f\rst{B_0}$ is periodic of period at most $2h$, we have $I\geq0$ and $(T^{(i,0)}f)\rst{B_0}$ is vertically periodic of period at most $2h$.  We can further assume that $I$ is minimal with this property.  
For $i>I$, the fact that $f\rst{B_i}$ does not satisfy the conclusion of Corollary~\ref{ambiguouscorollary} (specifically the bounds on its period) implies that $(T^{(i,0)}f)\rst{B_0}$ extends uniquely to an $(\S,\eta)$-coloring of $B_0\cup B_{-1}$.  Since $f$ is vertically periodic, there are only finitely many colorings of $B_0$ that occur as $(T^{(K,0)}f)\rst{B_0}$, for $K\in\Z$.  Thus there exists a smallest integer $J$ such that $J\geq I$ and 
there is $j>J$ satisfying $(T^{(J,0)}f)\rst{B_0}=(T^{(j,0)}f)\rst{B_0}$.
Since $(T^{(J,0)}f)\rst{B_0}$ extends uniquely to an $\eta$-coloring of $B_0\cup B_{-1}$, and since the functions $(T^{(J,0)}f)\rst{B_0\cup B_{-1}}$ and $(T^{(j,0)}f)\rst{B_0\cup B_{-1}}$ are two such colorings, they must coincide.  Then $(T^{(J-1,0)}f)\rst{B_0}$ coincides with $(T^{(j-1,0)}f)\rst{B_0}$ and so $J=I$ (by minimality of $J$).  Then $f\rst{B_I}=f\rst{B_J}=f\rst{B_j}$ is periodic of period at most $2h$.  But $f\rst{B_j}$ is either aperiodic or periodic of  period greater than $2h$, contradicting the definition of $I$.
\end{proof}

\begin{corollary}\label{periodicstripextension}
Suppose $\eta\colon\ZZ\to\A$ and there exist $n,k\in\N$ such that $P_{\eta}(R_{n,k})\leq\frac{nk}{2}$.  Suppose $\ell$ is a rational line, $\S$ is an $\ell$-balanced set, $w\in E(\S)$ is parallel to $\ell$, and $B$ is an $\ell$-strip of width $\diam_w(\S)-1$.  If $f\in X_{\S}(\eta)$ and $f\rst{B}$ is periodic (with period vector parallel to $\ell$), then $f$ is periodic with period vector parallel to $\ell$.
\end{corollary}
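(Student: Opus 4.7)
The plan is to reduce to Proposition~\ref{prop:extendedambiguousperiod} by exhibiting a suitable ambiguous half plane whenever $f$ fails to inherit a period from $f\rst{B}$. After a change of coordinates via a matrix in $SL_2(\Z)$, I will assume $\ell$ points vertically downward and $B=\{(x,y)\in\ZZ\colon 0\leq x\leq \diam_w(\S)-2\}$. Let $p$ be a period of $f\rst{B}$ parallel to $\ell$, and set $g:=T^{(0,p)}f$. Shift-invariance of $X_{\S}(\eta)$ gives $g\in X_{\S}(\eta)$, and the periodicity hypothesis gives $g\rst{B}=f\rst{B}$; if $f=g$, then $f$ is itself periodic with period $(0,p)$ and the proof is complete.

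Otherwise $f\neq g$, and I will locate the maximal vertical strip containing $B$ on which $f$ and $g$ agree. Define $A_p:=\{x\in\Z\colon f(x,\cdot)=g(x,\cdot)\}$, which contains the columns of $B$ by the $p$-periodicity of $f\rst{B}$, and let $[m',M']$ be the maximal integer interval contained in $A_p$ that contains those columns. Since $f\neq g$, at least one of $m',M'$ is finite. In the case $M'<\infty$, the half plane $H:=\{(x,y)\in\ZZ\colon x\leq M'\}$ has a boundary edge $w^*$ parallel to $\ell$, and $\Ext_{w^*}(H)=\{x\leq M'+1\}$ contains the column where $f$ and $g$ first disagree. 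Hence $f\rst{H}=g\rst{H}$, but $f$ and $g$ give two distinct extensions in $X_{\S}(\eta)$ to $\Ext_{w^*}(H)$, so $f\rst{H}$ is $(\S,\eta)$-ambiguous. The opposite case $M'=\infty$, $m'>-\infty$ will be handled analogously, using the half plane $\{x\geq m'\}$ together with an $\hat{\ell}$-balanced set supplied by Lemma~\ref{balancedlemma} in place of $\S$.

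Once the ambiguity is established, Lemma~\ref{ambiguousinorbit} shows that the direction of the boundary of $H$ is one-sided $\eta$-nonexpansive, so Proposition~\ref{prop:extendedambiguousperiod} applies and forces $f$ to be periodic with period vector parallel to $\ell$. The main obstacle I expect is the bookkeeping around this last application: Proposition~\ref{prop:extendedambiguousperiod} is phrased as producing a particular $\ell$-balanced set for which the implication holds, whereas the input ambiguity here is with respect to our arbitrary $\ell$-balanced $\S$. I will need to verify that the proof of the proposition in fact uses only the three defining properties of an $\ell$-balanced set—the height bound, the $\eta$-generatedness of the endpoints of $w$, and the strict discrepancy inequality $D_\eta(\S\setminus w)>D_\eta(\S)$—so that the implication is valid for any $\ell$-balanced $\S$, including the one we are given.
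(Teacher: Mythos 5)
Your reduction breaks down at the case analysis on the maximal interval $[m',M']$ of columns on which $f$ and $g=T^{(0,p)}f$ agree. You treat only the two cases ``$M'<\infty$'' and ``$M'=\infty$, $m'>-\infty$'', tacitly assuming that at least one endpoint is infinite; but a priori \emph{both} can be finite, and that is exactly the hard case. If $m'>-\infty$ and $M'<\infty$, then $f$ and $g$ disagree on column $m'-1$, which lies in $H=\{x\leq M'\}$, so $f\rst{H}\neq g\rst{H}$ (and likewise for $\{x\geq m'\}$), and your construction produces no ambiguous half plane. You cannot exclude this case by appealing to periodicity of $f$ outside the strip, since that is precisely the conclusion being proved, and compactness does not help because you have a single pair $(f,g)$ with one fixed, bounded window of agreement rather than a sequence of pairs with growing windows. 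Handling this case forces you back to a column-by-column argument: the coloring of the width-$(\diam_w(\S)-1)$ strip ending at column $M'$ extends non-uniquely to column $M'+1$, and one must invoke the counting and Morse--Hedlund machinery (as in Corollary~\ref{ambiguouscorollary} and subsections~\ref{subsec:base1}--\ref{subsec:base2}) to propagate periodicity one column at a time. This is in fact what the paper does: its proof of the corollary simply reruns the induction of Proposition~\ref{prop:extendedambiguousperiod}, with the hypothesis that $f\rst{B}$ is periodic replacing the base case that the proposition obtains from the ambiguous half plane; no half plane, and no nonexpansive direction, is needed at all.

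Two further points would need attention even in the cases your argument does cover. First, the issue you flag at the end is real and not mere bookkeeping: Proposition~\ref{prop:extendedambiguousperiod} produces its \emph{own} $\ell$-balanced set, and $X_{\S}(\eta)$ and $X_{\S'}(\eta)$ are not nested for unrelated $\S,\S'$, so you must either rerun the proposition's proof with your given $\S$ (at which point you are doing the induction directly anyway) or otherwise reconcile the two subshifts of finite type. Second, Lemma~\ref{ambiguousinorbit} is stated for $\eta$-\emph{generating} sets contained in $R_{n,k}$, whereas your $\S$ is only assumed $\ell$-balanced (only the endpoints of the edge parallel to $\ell$ need be $\eta$-generated), so the passage from your ambiguous half plane to one-sided nonexpansiveness of the boundary direction is not an immediate citation.
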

\begin{proof}
We proceed as in the proof of the first part of Proposition~\ref{prop:extendedambiguousperiod}.
The assumption of the corollary replaces the base case (\ref{subsec:base1} and~\ref{subsec:base2}) and the induction steps of~\ref{subsec:vertical} and~\ref{subsec:vertical2} are identical.
\end{proof}

\begin{lemma}\label{nonexpansivepairs}
Suppose $\eta\colon \ZZ\to\A$ and there exist $n,k\in\N$ such 
that $P_{\eta}(R_{n,k})\leq\frac{nk}{2}$.  If the oriented rational line $\ell$ is a one-sided nonexpansive
direction for $\eta$, then the direction antiparallel to $\ell$ is also one-sided nonexpansive.  In particular, any $\eta$-generating set has boundary edges parallel and antiparallel to $\ell$.
\end{lemma}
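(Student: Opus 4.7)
The plan is to show that one-sided $\eta$-nonexpansiveness of $\ell$ implies the same for $-\ell$; the second claim then follows by applying Lemma~\ref{possibledirections} to $\ell$ and $-\ell$ separately.  I would begin by invoking Lemma~\ref{ambiguousinorbit} to obtain $f_1,f_2\in X_{\eta}$ agreeing on an $\ell$-half plane $H$ but differing at some point in the $\ell$-extension of $H$.  By Remark~\ref{rem:standard} I may assume $\ell$ points vertically downward, $H=\{(x,y)\in\ZZ:x\geq 0\}$, and $f_1(-1,y_0)\neq f_2(-1,y_0)$ for some $y_0$.  Applying Proposition~\ref{prop:extendedambiguousperiod} to each of $f_1$ and $f_2$ shows that both are vertically $p$-periodic for some common bounded $p$, so each column $c_i(x):=f_i(x,\cdot)$ takes values in the finite alphabet $\A^{\Z/p\Z}$.

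Set $M:=\inf\{x\in\Z:c_1(x)\neq c_2(x)\}$.  If $M$ is finite, then $f_1$ and $f_2$ agree on the $(-\ell)$-half plane $\{x\leq M-1\}$ and differ at some $(M,y^*)$ in its $(-\ell)$-extension; Lemma~\ref{ambiguousinorbit} applied to $-\ell$ then shows that $-\ell$ is one-sided $\eta$-nonexpansive.  If $M=-\infty$, I would argue by compactness:  by finiteness of $\A^{\Z/p\Z}\times\A^{\Z/p\Z}$ and the pigeonhole principle I extract a subsequence $x_n\to-\infty$ of disagreement columns along which $(c_1(x_n),c_2(x_n))$ is a constant off-diagonal pair, then diagonalize further so that the lengths of the left-agreement blocks immediately preceding $x_n$ tend to infinity, and finally take a convergent subsequence of the shifted pairs $(T^{(x_n,0)}f_1,T^{(x_n,0)}f_2)$ in $X_{\eta}\times X_{\eta}$.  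The resulting limit pair $(g_1,g_2)$ would agree on the $(-\ell)$-half plane $\{x\leq -1\}$ and differ at $(0,y^*)$ for some $y^*$, again producing a $(-\ell)$-ambiguous half plane coloring.

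The principal obstacle is the degenerate subcase of $M=-\infty$ in which the disagreement set $\{x:c_1(x)\neq c_2(x)\}$ admits no arbitrarily long gaps to its left, so that the naive diagonalization fails to yield a limit pair agreeing on a full $(-\ell)$-half plane.  To handle this subcase I would combine the stripwise period bound in Proposition~\ref{prop:extendedambiguousperiod}(2) with Corollary~\ref{periodicstripextension}:  on a sufficiently wide $\ell$-strip near $-\infty$ each $f_i$ has a uniformly bounded vertical period, which forces the joint column sequence $(c_1,c_2)$ to be eventually horizontally periodic, and after passing to an appropriate subsequential limit the disagreement set of the limiting pair either acquires a minimum (reducing to the finite-$M$ case) or becomes empty, in which case a small horizontal translation of one of the factors recovers a pair exhibiting the required $(-\ell)$-ambiguity.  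Once $-\ell$ is known to be one-sided $\eta$-nonexpansive, the final assertion follows by applying Lemma~\ref{possibledirections} once to $\ell$ and once to $-\ell$.
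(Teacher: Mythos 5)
Your direct approach diverges from the paper's, which argues by contradiction: it assumes the antiparallel direction $\hat{\ell}$ is one-sided \emph{expansive}, chooses the ambiguous pair so that $f_1$ is not doubly periodic, and then uses the assumed expansiveness (every sufficiently wide vertical strip determines the column immediately to its right) together with the vertical periodicity supplied by Proposition~\ref{prop:extendedambiguousperiod} to conclude that the finitely many strip colorings of $f_1$ propagate deterministically, forcing $f_1$ to be horizontally, hence doubly, periodic --- a contradiction. Your first two subcases (finite $M$, and $M=-\infty$ with arbitrarily long agreement gaps to the left of disagreement columns) are correct and would yield the conclusion directly via Lemma~\ref{ambiguousinorbit}.

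The gap is in your third subcase, and it is genuine. When the disagreement set $D=\{x\colon c_1(x)\neq c_2(x)\}$ is unbounded below with bounded gaps, your repair rests on the assertion that the uniform vertical period bound ``forces the joint column sequence $(c_1,c_2)$ to be eventually horizontally periodic.'' Bounded vertical period only tells you that the column sequence takes values in a finite alphabet; a sequence over a finite alphabet need not be eventually periodic, and neither Proposition~\ref{prop:extendedambiguousperiod}(2) nor Corollary~\ref{periodicstripextension} supplies any horizontal determinism (both concern periodicity \emph{along} $\ell$, i.e.\ vertical periodicity, and how it propagates from one strip to the next). Indeed, were you able to show $(c_1,c_2)$ eventually horizontally periodic in both directions you would be close to making both $f_1$ and $f_2$ doubly periodic, which is impossible since they agree on a half plane but not everywhere. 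The endgame is also unclear: if the subsequential limit pair agrees everywhere, translating one factor horizontally produces a pair of the form $(g,T^{(j,0)}g)$, and there is no reason such a pair agrees on a $\hat{\ell}$-half plane while disagreeing on its extension. The missing ingredient is precisely the one the paper exploits: assuming for contradiction that $\hat{\ell}$ is one-sided expansive is what supplies the rightward determinism needed to convert finiteness of the column alphabet into horizontal periodicity and hence into a contradiction.
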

\begin{proof}
Let $\S$ be an $\eta$-generating set, $w\in E(\S)$ be parallel to $\ell$, and 
without loss of generality, we can assume that $\ell$ points vertically downward.
Let $\hat{\ell}$ be the direction antiparallel to $\ell$  
and suppose for contradiction that $\hat{\ell}$ is not a one-sided nonexpansive direction for $\eta$.   

Set $H:=\{(x,y)\in\ZZ\colon x\geq0\}$ (a half-plane whose boundary is parallel to $\ell$).  Since $\ell$ is a one-sided nonexpansive direction, we can choose $f_1, f_2\in X_{\S}(\eta)$ such that $f_1\rst{H}=f_2\rst{H}$ but $f_1\neq f_2$.  
By Proposition~\ref{prop:extendedambiguousperiod},  $f_1$ and $f_2$ are both vertically periodic.  Since $f_1\rst{H}=f_2\rst{H}$, at most one of $f_1$ and $f_2$ is doubly periodic.  Without loss of generality, assume that $f_1$ is not doubly periodic.

Since $\hat{\ell}$ is not one-sided nonexpansive, by definition there exist $a_1, a_2\in\N$ and $A\in SL_2(\Z)$ such that $\hat{\ell}$ is one-sided $\eta$-expansive with parameters $(a_1,a_2,A)$.  Then every $\eta$-coloring of a vertical strip of width at least $\|a_2\cdot A^{-1}(1,0)\|$ extends uniquely to an $\eta$-coloring of its $\hat{\ell}$-extension.  In particular, the restriction of $f_1$ to any vertical strip of this width extends uniquely to its $\hat{\ell}$-extension (to the ``right'' in the coordinate system we have chosen).  The vertical periodicity of $f_1$ implies that there are only finitely many distinct patterns that arise from restricting $f_1$ to different vertical strips of this width, each of which extends uniquely to its $\hat{\ell}$-extension.  In this way, the restriction of $f_1$ to one such vertical strip uniquely determines the restriction of $f_1$ to the vertical strip to its right.  Since this holds for all such vertical strips (even those that do not occur in $H$), it follows that $f_1$ is also horizontally periodic, a contradiction.
\end{proof}

\begin{proposition}\label{balancedstronggeneratingset}
Suppose $\eta\colon\ZZ\to\A$ is aperiodic and there exist $n,k\in\N$ such that $P_{\eta}(R_{n,k})\leq\frac{nk}{2}$.  There exists a strong $\eta$-generating set $\S$ such that if $w\in E(\S)$ points in a one-sided nonexpansive direction, then $\left\lfloor\frac{|w\cap\S|}{2}\right\rfloor\leq|w_1\cap\S|-2$.
\end{proposition}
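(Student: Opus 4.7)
The plan is to enhance the iterative construction of Lemma~\ref{stronggeneratingset} by interleaving an additional refinement step that enforces the adjacent-row balance condition on every one-sided nonexpansive edge. I begin by applying Lemma~\ref{stronggeneratingset} to produce an initial strong $\eta$-generating set $\S \subseteq R_{n,k}$ satisfying conditions (i)--(iii) of that lemma. If for every edge $w \in E(\S)$ in a one-sided nonexpansive direction the inequality $\lfloor|w\cap\S|/2\rfloor \leq |w_1\cap\S|-2$ already holds, where $w_1$ denotes the row of lattice points in $\S$ on the line parallel to $w$ immediately adjacent to the edge $w$, then we are finished. Otherwise, pick a witness $w$ for which $|w_1\cap\S| \leq \lfloor|w\cap\S|/2\rfloor + 1$, invoke Lemma~\ref{nonexpansivepairs} to obtain the antiparallel edge $\hat w \in E(\S)$, and perform a refinement step that strictly shrinks $\S$ while preserving (or restoring) the strong generating conditions.

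For the refinement, I consider the convex subset $\S^{\ast} := \S\setminus w$. By condition (ii) of Lemma~\ref{stronggeneratingset} together with Lemma~\ref{nonuniqueextension2}, at most $\lfloor|w\cap\S|/2\rfloor$ $\eta$-colorings of $\S^{\ast}$ extend non-uniquely to $\S$, so $P_\eta(\S^{\ast}) \geq P_\eta(\S) - \lfloor|w\cap\S|/2\rfloor$. Because $|w_1\cap\S|$ is small relative to $|w\cap\S|$, the convex polygon $\S$ tapers sharply at $w$, which means the successor and predecessor edges of $w$ meet $w$ at steep slopes and the region adjacent to $w$ contributes little to $|\S|$. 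I plan to exploit this geometric tapering, via a comparison of $|w \cap \S|$ with $|\S|$ and $|\S^{\ast}|$, to upgrade the bound on $D_\eta(\S^{\ast})$ beyond the naive estimate $D_\eta(\S^{\ast}) \leq D_\eta(\S) + |w\cap\S|$, yielding $D_\eta(\S^{\ast}) \leq -|\S^{\ast}|/2$.

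Once this improved discrepancy bound is secured, I restart the machinery of Lemma~\ref{stronggeneratingset} on $\S^{\ast}$: take a convex $\T \subseteq \S^{\ast}$ minimal (under inclusion) among those with $D_\eta(\T) \leq D_\eta(\S^{\ast})$, apply Lemma~\ref{generated2} to show $\T$ is weak $\eta$-generating, and iteratively pass to deeper subsets whenever condition (ii) fails, exactly as in the original proof of Lemma~\ref{stronggeneratingset}. The result is a new strong $\eta$-generating set $\S' \subsetneq \S$. Since $|\S'| < |\S|$ and every strong generating set is contained in $R_{n,k}$, this refinement process terminates in finitely many steps at a strong $\eta$-generating set satisfying the adjacent-row balance for every one-sided nonexpansive edge.

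The main obstacle is the discrepancy estimate in the middle paragraph: the strong generating condition (i) guarantees only $D_\eta(\S) \leq -|\S|/2$, whereas the naive passage to $\S\setminus w$ could lose up to $|w\cap\S|/2$ in the margin, so one must genuinely use the failure hypothesis $|w_1\cap\S| \leq \lfloor|w\cap\S|/2\rfloor+1$ to recover enough slack. I expect this to rest on a careful double-count: on the one hand, Lemma~\ref{nonuniqueextension2} caps the gap $P_\eta(\S)-P_\eta(\S^{\ast})$, and on the other, the shortness of $w_1$ together with convexity constrains how many distinct $\S^{\ast}$-colorings can in fact occur (since the colorings of $\S^{\ast}$ are forced through a narrow neck at $w_1$). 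Quantifying this bottleneck precisely is the crux of the argument.
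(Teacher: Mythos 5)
Your proposal diverges from the paper in two essential ways, and the second is a genuine gap. First, a reading issue: the ``$w_1$'' in the statement is a typo for $w$ itself (this is how the proposition is invoked in Section~5, where $w_1$ \emph{is} the nonexpansive edge), so the conclusion $\lfloor|w\cap\S|/2\rfloor\leq|w\cap\S|-2$ is simply the assertion that every one-sided nonexpansive edge of $\S$ contains at least $3$ integer points. Your reinterpretation of $w_1$ as the ``adjacent row of lattice points'' turns this into a different and substantially stronger claim, and the whole architecture of your argument is aimed at that other claim.

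Second, and more importantly, the step you yourself flag as ``the crux'' --- upgrading to $D_\eta(\S\setminus w)\leq-|\S\setminus w|/2$ --- is not established, and the tools you cite point in the wrong direction. Condition~(ii) of Lemma~\ref{stronggeneratingset} and Lemma~\ref{nonuniqueextension2} give a \emph{lower} bound $D_\eta(\S\setminus w)\geq D_\eta(\S)+\lceil|w\cap\S|/2\rceil$ (few colorings extend non-uniquely, so the complexity cannot drop much); what you need is an \emph{upper} bound, i.e.\ that the complexity drops by at least $|w\cap\S|/2$ when $w$ is deleted. No amount of ``geometric tapering'' supplies this in general. The paper's proof sidesteps the issue entirely: it takes $\S$ minimal among strong generating subsets of $R_{n,k}$ and observes that the conclusion can only fail when $|w\cap\S|=2$. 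In that degenerate case one endpoint of $w$ is a generated extreme point (so deleting it raises the discrepancy by exactly $1$), and --- this is the key use of the hypothesis --- one-sided nonexpansiveness of the direction of $w$ forces the surviving endpoint to be \emph{not} generated by $\S\setminus\{(x_1,y_1)\}$ (otherwise, as in Lemma~\ref{possibledirections}, one could uniquely extend colorings across a $w$-half plane). Hence $P_\eta(\S\setminus w)\leq P_\eta(\S)-1$ and $D_\eta(\S\setminus w)\leq D_\eta(\S)+1\leq-|\S\setminus w|/2$, so $\S\setminus w$ contains a strictly smaller strong generating set, contradicting minimality. Your proposal never uses nonexpansiveness to exhibit a non-generated vertex, which is precisely the ingredient that makes the discrepancy estimate close.
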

\begin{proof}
By Lemma~\ref{stronggeneratingset} there exists a strong $\eta$-generating set.  Let $\S$ be minimal (with respect to inclusion) among strong $\eta$-generating subsets of $R_{n,k}$.
If for all $w\in E(\S)$ that point in a one-sided nonexpansive direction, we have $|w\cap\S|>2$, then $\S$ satisfies the conclusion of the proposition and we are done.  Otherwise there exists $w\in E(\S)$ which points a one-sided nonexpansive direction and is such that $\left|w\cap\S\right|=2$.  Suppose $w\cap\S=\{(x_1,y_1), (x_2, y_2)\}$.  Then by Lemma~\ref{generated2},
since $(x_1,y_1)$ is $\eta$-generated by $\S$, we have that 
$$
D_{\eta}(\S\setminus\{(x_1,y_1)\})=D_{\eta}(\S)+1.
$$
The convex hull of $\S\setminus(x_1,y_1)$ does not have an edge parallel to $w$ and so by one-sided nonexpansiveness, the vertex $(x_2,y_2)$ is not $\eta$-generated by $\S\setminus\{(x_1,y_1)\}$.  Thus $D_{\eta}(\S\setminus w)\leq D_{\eta}(\S)+1$.  On the other hand, $\left|\S\setminus w\right|=\left|\S\right|-2$, and so $D_{\eta}(\S\setminus w)\leq D_{\eta}(\S)+1\leq-\frac{|\S|}{2}+1=-\frac{\left|\S\setminus w\right|}{2}$.  But then, as in the proof of Lemma~\ref{stronggeneratingset}, $\S\setminus w$ contains a strong $\eta$-generating subset.  This contradicts the minimality of $\S$.
\end{proof}

\subsection{Constructions with balanced sets}

We make precise what it means for a coloring to be periodic on a region:
\begin{definition}\label{convexperiod}
Suppose that $\T\subset\ZZ$ is a convex set and there exists $\vec v\in\ZZ\setminus\{\vec0\}$ such that $(\T+\vec v)\subseteq\T$.  If $f\colon \T\to\A$ is an $\eta$-coloring of $\T$, then $f$ is {\em periodic on $\T$} of period $\vec p\in\ZZ\setminus\{\vec0\}$ if $(\T+\vec p)\subseteq\T$ and $f(\vec x)=f(\vec x+\vec p)$ for all $\vec x\in\T$.

If $w\in E(\T)$, $\vec u\in\ZZ\setminus\{\vec0\}$ is the shortest integer vector parallel to $w$, and $(\T+\vec u)\subseteq\T$, then $f\rst{\T}$ is {\em $w$-eventually periodic with period $p\in\N$ and gap $g\in\N$} if $f\rst{\T+g\vec u}$ is periodic with period $p\vec u$.
\end{definition}

\begin{definition}\label{semiinfinitedefinition}
If $\S\subset\ZZ$ is a finite convex set and $w\in E(\S)$, then a {\em semi-infinite $(\S, w)$-strip} is a set of the form
$$
\vec u+\left\{\S+\lambda\vec v\colon\lambda\in\N\cup\{0\}\right\}\hspace{0.2 in}\text{ or }\hspace{0.2 in}\vec u+\left\{\S-\lambda\vec v\colon\lambda\in\N\cup\{0\}\right\},
$$
where $\vec u\in\ZZ$ and $\vec v$ is the shortest integer vector parallel to $w$.
\end{definition}

Pictorially, a semi-infinite $(\S,w)$-strip is a half-strip whose boundary edges are parallel to edges of $\S$ (and not the other natural interpretation in which the boundary has two semi-infinite edges and one more edge connecting them).

\begin{proposition}\label{semiinfiniteperiodicextension}
Suppose $\eta\colon \ZZ\to\A$ and there exist $n,k\in\N$ such that $P_{\eta}(R_{n,k})\leq\frac{nk}{2}$.  Suppose $\ell$ is a rational line, $\S$ is an $\ell$-balanced set, and $w\in E(\S)$ is parallel to $\ell$.  If $\T$ is a semi-infinite $(\S\setminus w,w)$-strip and $f\in X_{\S}(\eta)$ is such that $f\rst{\T}$ does not extend uniquely to an $\eta$-coloring of the $w$-extension of $\T$, then $f\rst{\T}$ is $w$-eventually periodic with gap at most $\left|w\cap\S\right|-1$ and period at most $\left|w\cap\S\right|-1$.

Moreover, if $\tilde{f}\in X_{\S}(\eta)$ and $\tilde{f}\rst{\T}$ is eventually periodic of period at most $2\left|w\cap\S\right|-2$, then any extension of $\tilde{f}\rst{\T}$ to an $\eta$-coloring of the $w$-extension of $\T$ is also $w$-eventually periodic with the same gap and period at most $2\left|w\cap\S\right|-2$.
\end{proposition}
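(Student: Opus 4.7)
The plan is to adapt the Morse-Hedlund argument from Corollary~\ref{ambiguouscorollary} to the semi-infinite setting to establish the first claim, and then, for the second claim, to combine the first claim with a Pigeonhole argument patterned on the two cases (``unique extension'' versus ``non-unique extension'') that appear inside the proof of Proposition~\ref{prop:extendedambiguousperiod}. First I would invoke Remark~\ref{rem:standard} to reduce to the case that $w$ points vertically downward, set $\tilde{\S}:=\S\setminus w$ and $h:=|w\cap\S|-1$, and let $\vec v$ be the shortest integer vector parallel to $w$. After a translation we may take $\T=\{\tilde{\S}+\lambda\vec v:\lambda\geq 0\}$, and I would write $w^*\in E(\T)$ for the edge of $\T$ parallel to $w$, so that the $w$-extension of $\T$ is $\Ext_{w^*}(\T)$ and adds cells immediately to the left of $\T$.

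For the first claim, I would use that the endpoints of $w$ are $\eta$-generated by the $\ell$-balanced set $\S$ (Definition~\ref{balanced}(ii)) to argue that $f\rst{\tilde{\S}+\lambda\vec v}$ must be $(\tilde{\S},\S,\eta)$-ambiguous for every $\lambda\geq 0$: if this failed for some single $\lambda$, the iterative ``fill in the extension'' procedure from the proof of Lemma~\ref{ambiguousperiod} would uniquely extend $f\rst{\T}$ to $\Ext_{w^*}(\T)$, contradicting the hypothesis. The $\ell$-balanced condition (iii) combined with Lemma~\ref{nonuniqueextension1} bounds the number of such ambiguous $\tilde{\S}$-colorings by $h$. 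Encoding each row of $\T$ by a letter of a finite alphabet indexing the bottom-most $h$ integer points on each vertical line through $\tilde{\S}$, as in Lemma~\ref{ambiguousperiod}, then produces a function $g\colon\N\to\tilde{\A}$ with $P_g(h)\leq h$, and part (ii) of Theorem~\ref{MorseHedlundTheorem} forces $g$ to be eventually periodic of period at most $h$ with gap at most $h$.

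For the second claim, I would split on whether $\tilde{f}\rst{\T}$ extends uniquely to $\Ext_{w^*}(\T)$. In the unique-extension case, mimicking the argument already used inside the proof of Proposition~\ref{prop:extendedambiguousperiod}, where uniqueness forces an extended column to inherit the period of the strip it extends, the $p\vec v$-shift of $\tilde{f}\rst{\T}$ agrees with $\tilde{f}\rst{\T}$ beyond the gap, so the uniquely determined extension column inherits the same period, with its gap shifted by at most the vertical extent of $\tilde{\S}$. In the non-unique case, the first claim reduces the setup to a Pigeonhole argument: at most $2h$ colorings of $\S$ have restriction to $\tilde{\S}$ that is ambiguous (using $P_\eta(\S)-P_\eta(\tilde{\S})\leq h$ from Definition~\ref{balanced}(iii), together with Lemma~\ref{nonuniqueextension1}), so among the $2h+1$ translates $f\rst{\S+\lambda\vec v}$ for $\lambda=g,\dots,g+2h$ two must coincide, and the $\ell$-balanced width condition Definition~\ref{balanced}(i) (each vertical line meeting $\tilde{\S}$ carries at least $h$ integer points) allows the coincidence to propagate as a period to all $\lambda\geq g$, yielding an extension of period at most $2h$ from gap $g$.

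The main obstacle I anticipate is the non-unique subcase of the second claim: one must verify that the Pigeonhole coincidence propagates from $\lambda=g$ onward in both directions along the strip without being spoiled by the pre-gap portion of $\tilde{f}\rst{\T}$, and that the resulting periodicity of the extension is consistent with the stated gap. Pinning down how the vertical extent of $\tilde{\S}$ in the $\vec v$-direction interacts with the starting point of eventual periodicity, in analogy with the boundary analysis that closes the proof of Proposition~\ref{prop:extendedambiguousperiod}, is the most delicate step of the whole argument.
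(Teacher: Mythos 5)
Your treatment of the first claim is sound and is essentially what the paper does: the paper simply cites Corollary~\ref{ambiguouscorollary}, whose proof is the Morse--Hedlund encoding you describe, and part (ii) of Theorem~\ref{MorseHedlundTheorem} yields the gap and period bounds of $\left|w\cap\S\right|-1$.

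The second claim is where there is a genuine gap, and it lies in where you draw the case division. You split on whether $\tilde{f}\rst{\T}$ extends uniquely to the whole $w$-extension, and in the unique case you invoke the shift mechanism from the proof of Proposition~\ref{prop:extendedambiguousperiod} (the argument of~\ref{subsec:base2}). That mechanism builds a competitor coloring by translating part of the configuration by the period $p$; it works there because the strips are genuinely periodic and doubly infinite in the period direction, so the glued coloring stays in $X_{\S}(\eta)$. For a semi-infinite strip that is only \emph{eventually} periodic, translating the new line by $p$ collides with the pre-gap portion, no competitor in $X_{\S}(\eta)$ is produced, and the contradiction evaporates. Moreover, global uniqueness of the extension does not supply what the period-transfer argument actually needs, namely a single translate of $\tilde{\S}$ lying \emph{beyond the gap} whose coloring extends uniquely to an $\eta$-coloring of $\S$: uniqueness of the whole extension can be forced entirely by the aperiodic pre-gap segment while every translate in the periodic region remains ambiguous. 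The paper therefore takes the dichotomy at a different level: working one line of the extension at a time (the extension generally has depth greater than one, which your plan also elides), it asks whether \emph{some} translate of $\tilde{\S}$ positioned below the gap extends uniquely to an $\S$-coloring. If yes, periodicity of the strip below the gap repeats that translate, and the $\eta$-generated endpoints of $w$ propagate its unique extension along the new line with period dividing $p$ and the same gap. If no, then \emph{every} translate below the gap is ambiguous, which via Corollary~\ref{ambiguouscorollary} forces the strip below the gap to have period at most $\left|w\cap\S\right|-1$; only with that bound in hand does your Pigeonhole coincidence among at most $2\left|w\cap\S\right|-2$ colorings of $\S$ propagate, since the coincidence of two $\tilde{\S}$-colorings must then be a multiple of the strip's period before the generated endpoints can carry it down the new line. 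Your toolkit is the correct one, but as organized the unique-extension branch cannot be closed, and the Pigeonhole branch is missing the ``all translates below the gap are ambiguous'' hypothesis that your case split does not provide.
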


\begin{proof}
The first statement follows immediately from Corollary~\ref{ambiguouscorollary}.

For the second, let $\tilde{\S}:=\S\setminus w$.  Without loss of generality, 
we can assume that $w$ points vertically downward, the topmost element of $w$ is $(0,0)$, 
and $\tilde{f}\rst{\T}$ is $(0,-1)$-eventually periodic with period $p$ and gap $g$.
Suppose further that the boundary edge of $\T$ parallel to $w$ is $\{(0,y)\in\ZZ\colon y\leq0\}$.  (The case that the boundary edge of $\T$ parallel to $w$ is unbounded from 
above, rather than below, is analogous.)  

Then $\tilde{f}\rst{\T-(0,g)}$ is periodic with period $(0,-p)$.  
Let $B$ be the $(\tilde{\S},w)$-border of $\T-(0,g)$.  There
exist $a\in\Q$ and $b\in\N$ such that the $w$-extension of $\T$ is given by
$$
\T\cup\{(x,y)\in\ZZ\colon -b\leq x\text{, }y\leq ax\}.
$$
We proceed by induction.  Let
$$
B_i:=\{(x,y)\in\ZZ\colon i\leq x<i+\diam_w(\tilde{\S})\text{, }y\leq ax\}.
$$
By assumption $\tilde{f}\rst{B_0-(0,g)}$ is vertically periodic with period at most $2\left|w\cap\S\right|-2$.  We claim that for all $i<0$, if $\tilde{f}\rst{B_i-(0,g)}$ is vertically periodic of period at most $2\left|w\cap\S\right|-2$, then $\tilde{f}\rst{B_{i-1}-(0,g)}$ is also.
To prove the claim, we consider two cases.
\subsubsection{Unique extensions}
First we show that if $\tilde{f}\rst{B_i-(0,g)}$ is periodic of period $p\leq2\left|w\cap\S\right|-2$ and there exists $j<-g$ such that the $\eta$-coloring of $\tilde{\S}$ given by $(T^{(i,j)}\tilde{f})\rst{\tilde{\S}}$ extends uniquely to an $\eta$-coloring of $\S$, then $\tilde{f}\rst{B_{i-1}}$ is periodic of period dividing $p$.

To see this, recall that the vertical period of $f\rst{B_i-(0,g)}$ is $p$.  Then $\tilde{f}\rst{\S-(i,j+p)}=\tilde{f}\rst{\S-(i,j)}$ and $\tilde{f}\rst{\tilde{\S}-(i,j+l+p)}=\tilde{f}\rst{\tilde{\S}-(i,j+l)}$ for all $l\in\N$ such that $l\leq ai-j$.  Since $\S$ is $\ell$-balanced, the top most and bottom most elements of $w$ are $\eta$-generated by $\S$.  Thus 
$\tilde{f}\rst{\S-(i,j+l)}=\tilde{f}\rst{\S-(i,j+l)}$ for all such $l$.  In particular, $\tilde{f}\rst{(B_i\cup B_{i-1})-(i,g)}$ is vertically periodic with period dividing $p$.

\subsubsection{No unique extensions}
Next  we show that if there is no $j<-g$ for which the $\eta$-coloring of $\tilde{\S}$ given by $(T^{(i,j)}\tilde{f})\rst{\tilde{\S}}$ extends uniquely to an $\eta$-coloring of $\S$, then $\tilde{f}\rst{B_i-(0,g)}$ is periodic of period at most $\left|w\cap\S\right|-1$ and $\tilde{f}\rst{B_{i-1}-(0,g)}$ is periodic of period at most $2\left|w\cap\S\right|-2$.

To prove this, if $\tilde{f}\rst{B_i-(0,g)}$ does not extend uniquely to an $\eta$-coloring of the $w$-extension of $B_{i-1}-(0,g)$, then by Corollary~\ref{ambiguouscorollary}, $\tilde{f}\rst{B_i-(0,g)}$ is eventually periodic, and by our assumptions, we have that it is periodic with period at most $\left|w\cap\S\right|-1$.  As in the proof of Proposition~\ref{prop:extendedambiguousperiod}, there are at most $2\left|w\cap\S\right|-2$ distinct $\eta$-colorings of $\S$ occurring as $\tilde{f}\rst{\S-(i,y)}$ for $y\geq g$.  By the Pigeonhole Principle, there exist $j,k\in\N$ with $g\leq j<k<g+2\left|w\cap\S\right|-2$ such that $\tilde{f}\rst{\S-(i,j)}=\tilde{f}\rst{\S-(i,k)}$.  Since $\S$ is $\ell$-balanced, every vertical line that has nonempty intersection with $\S$ intersects in at least $\left|w\cap\S\right|-1$ places.  Since $\tilde{f}\rst{B_i-(0,g)}$ is periodic of period at most $\left|w\cap\S\right|-1$, then $\tilde{f}\rst{\tilde{\S}-(i,y)}=\tilde{f}\rst{\tilde{\S}-(i,y+j-k)}$ for every $y\geq g$.  Arguing as in the previous case, $\tilde{f}\rst{B_i\cup B_{i-1}-(0,g)}$ is periodic of period at most $k-j\leq2\left|w\cap\S\right|-2$.

This proves the claim, and the result follows by induction.
\end{proof}

\begin{corollary}\label{semiinfiniteperiodicextension2}
Under the conditions of Proposition~\ref{semiinfiniteperiodicextension}, if $\S$ is an $\ell$-balanced strong $\eta$-generating set, then $f\rst{\T}$ is $w$-eventually periodic with gap at most $\left|w\cap\S\right|-1$ and period at most $\left\lfloor\frac{\left|w\cap\S\right|}{2}\right\rfloor$.
\end{corollary}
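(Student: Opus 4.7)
The plan is to reprove Proposition~\ref{semiinfiniteperiodicextension} almost verbatim, modifying only the single step where one counts the number of $\eta$-colorings of $\S\setminus w$ that extend non-uniquely to $\eta$-colorings of $\S$. Under the $\ell$-balanced hypothesis alone, that count is controlled by Lemma~\ref{nonuniqueextension1}, yielding at most $|w\cap\S|-1$ such colorings. Since our $\S$ is additionally a strong $\eta$-generating set, we may instead invoke Lemma~\ref{nonuniqueextension2}, which sharpens the bound to $\left\lfloor\frac{|w\cap\S|}{2}\right\rfloor$; this is the sole source of the improved period estimate claimed by the corollary.

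More concretely, the first conclusion of Proposition~\ref{semiinfiniteperiodicextension} rests on Corollary~\ref{ambiguouscorollary}, whose proof mirrors that of Lemma~\ref{ambiguousperiod}: one constructs a one-dimensional coloring $g\colon U\to\tilde{\A}$, where $U$ is the semi-infinite subinterval of $\Z$ indexing the slices of $\T$ parallel to $w$ and $\tilde{\A}$ is the finite set of $\eta$-colorings of a canonical fiber inside $\tilde{\S}:=\S\setminus w$ that actually occur. The key estimate $P_g(h)\leq h$ with $h=|w\cap\S|-1$ is derived from Lemma~\ref{nonuniqueextension1} together with the $\ell$-balanced property, which via Lemma~\ref{heightlemma} ensures that every relevant vertical fiber in $\S$ contains enough integer points for the argument to apply. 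The Morse-Hedlund Theorem for semi-infinite intervals then concludes the proof.

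Setting $h^{\prime}:=\left\lfloor\frac{|w\cap\S|}{2}\right\rfloor$ and running the same construction, Lemma~\ref{nonuniqueextension2} now gives $P_g(h^{\prime})\leq h^{\prime}$. Part~(ii) of the Morse-Hedlund Theorem (Theorem~\ref{MorseHedlundTheorem}) applied to $g$ produces eventual periodicity with period at most $h^{\prime}$, and unpacking the definition of $g$ translates this into the statement that $f\rst{\T}$ is $w$-eventually periodic of period at most $\left\lfloor\frac{|w\cap\S|}{2}\right\rfloor$. The gap bound $|w\cap\S|-1$ is simply inherited from Proposition~\ref{semiinfiniteperiodicextension}, whose hypotheses are a fortiori satisfied under those of the corollary; in fact the Morse-Hedlund Theorem yields the tighter gap $h^{\prime}$ as well, but the weaker gap statement is all that is claimed.

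I anticipate no essential obstacle, as the entire modification is a mechanical strengthening in which a single counting inequality is tightened. The only verification required is that $\S$ satisfies the hypotheses of Lemma~\ref{nonuniqueextension2}, and this is immediate: by Definition~\ref{stronggenerator}, a strong $\eta$-generating set is in particular an $\eta$-generating set, so every nonempty proper convex subset has strictly larger $\eta$-discrepancy, and condition~\eqref{cond:2} of Lemma~\ref{stronggeneratingset} furnishes precisely the bound on $P_{\eta}(\S)-P_{\eta}(\S\setminus w)$ that Lemma~\ref{nonuniqueextension2} extracts.
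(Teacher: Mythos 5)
Your proposal is correct and is essentially the paper's own argument: the paper proves this corollary by rerunning the proof of Proposition~\ref{semiinfiniteperiodicextension} verbatim, with the single change that the bound on $P_{\eta}(\S)-P_{\eta}(\S\setminus w)$ coming from Lemma~\ref{nonuniqueextension1} is replaced by the sharper bound $\left\lfloor\frac{\left|w\cap\S\right|}{2}\right\rfloor$ furnished by condition~\eqref{cond:2} of Definition~\ref{stronggenerator} via Lemma~\ref{nonuniqueextension2}. Your tracing of where that count enters (through Corollary~\ref{ambiguouscorollary} and the Morse--Hedlund step) and your verification of the hypotheses of Lemma~\ref{nonuniqueextension2} match the intended proof.
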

\begin{proof}
This is identical to the proof of Proposition~\ref{semiinfiniteperiodicextension}, with the stronger bound on $P_{\eta}(\S)-P_{\eta}(\S\setminus w)$ implied by the fact that $\S$ is a strong $\eta$-generating set.
\end{proof}

\begin{corollary}\label{ambiguoussuperset}
Suppose $\eta\colon \ZZ\to\A$ and there exist $n,k\in\N$ such that $P_{\eta}(R_{n,k})\leq\frac{nk}{2}$.  Let $\ell$ be a  rational line, $\S$ an $\eta$-generating set, and $\vec u\in\ZZ$ be the shortest integer vector parallel to $\ell$.  Fix a finite set $F\subset\ZZ$ and an $\ell$-strip $B$ of width at least $\diam_{\vec u}(\S)-1$ that contains $F$.  If there is some $f\in X_{\eta}$ such that for all $\lambda\in\Z$ the coloring $(T^{\lambda\vec u}f)\rst{F}$ extends uniquely to an $\eta$-coloring of $B$, then $\eta$ is periodic.
\end{corollary}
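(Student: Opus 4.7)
The plan is to use a pigeonhole argument on the finite set $F$ combined with the unique extension hypothesis to force $f\rst B$ to be periodic, then to transfer this periodicity to $\eta$ via the density of the orbit $\mathcal{O}(\eta)$ in $X_\eta$, and finally to invoke Corollary~\ref{periodicstripextension} to extend the periodicity from a strip to all of $\ZZ$. For the first step, since $F$ and $\A$ are finite, the family $\{(T^{\lambda\vec u}f)\rst F : \lambda \in \Z\}$ has only finitely many distinct elements, so by pigeonhole there exist integers $\lambda_1 < \lambda_2$ with $(T^{\lambda_1\vec u}f)\rst F = (T^{\lambda_2\vec u}f)\rst F$. The two restrictions $(T^{\lambda_1\vec u}f)\rst B$ and $(T^{\lambda_2\vec u}f)\rst B$ are both $\eta$-colorings of $B$ extending this common $F$-coloring, so the unique extension hypothesis forces them to coincide. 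Setting $\vec p := (\lambda_2 - \lambda_1)\vec u$, a nonzero vector parallel to $\ell$, and using that $B$ is invariant under $T^{\vec u}$, I conclude that $f\rst B$ is periodic with period $\vec p$.

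To transfer periodicity to $\eta$ itself, fix a sequence $\vec v_n \in \ZZ$ with $T^{\vec v_n}\eta \to f$ in the product topology, which exists because $f \in \overline{\mathcal{O}(\eta)}$. For all $n$ sufficiently large, $T^{\vec v_n}\eta$ agrees with $f$ on the finite set $F \cup (F + \vec p)$; combined with the $\vec p$-periodicity of $f\rst B$ and the inclusion $F, F + \vec p \subseteq B$, this gives $(T^{\vec v_n + \vec p}\eta)\rst F = (T^{\vec v_n}\eta)\rst F = f\rst F$. Applying the unique extension hypothesis with $\lambda = 0$ to this common $F$-coloring, the two $\eta$-colorings $(T^{\vec v_n}\eta)\rst B$ and $(T^{\vec v_n + \vec p}\eta)\rst B$ both equal the unique $\eta$-extension of $f\rst F$, and so coincide. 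Translating back, $\eta(\vec z) = \eta(\vec z + \vec p)$ for all $\vec z \in B + \vec v_n$; that is, $\eta\rst{B + \vec v_n}$ is $\vec p$-periodic.

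Finally, $B + \vec v_n$ is an $\ell$-strip of the same width as $B$, and hence at least $\diam_{\vec u}(\S) - 1$. By Lemma~\ref{balancedlemma} there exists an $\ell$-balanced set with an edge parallel to $\ell$; choosing one whose diameter in the $\vec u$-direction is compatible with the width of $B + \vec v_n$, Corollary~\ref{periodicstripextension} applies to $\eta$ restricted to a suitable sub-strip of $B + \vec v_n$, yielding that $\eta$ is periodic on all of $\ZZ$ with period vector parallel to $\ell$. The most delicate step is the transfer from $f$ to $\eta$, where the convergence $T^{\vec v_n}\eta \to f$ must be carefully combined with the uniqueness hypothesis to promote periodicity of $f\rst B$ to periodicity of $\eta$ on a translated strip; coordinating the diameter of the $\ell$-balanced set with that of the original $\S$ in the final step is a secondary technical matter addressed by the size control implicit in Lemma~\ref{balancedlemma} (both sets may be arranged to lie within $R_{n,k}$).
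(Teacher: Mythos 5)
Your argument is correct and follows essentially the same route as the paper's (much terser) proof: pigeonhole on the finitely many $F$-colorings plus the unique-extension hypothesis to make $f\rst{B}$ periodic, transfer to a translate of $\eta$ by matching an $F$-coloring and invoking uniqueness of its extension to $B$, and then apply Corollary~\ref{periodicstripextension}. The only cosmetic difference is that the paper transfers periodicity via a single translate $\vec v$ with $(T^{\vec v}\eta)\rst{F}=f\rst{F}$ rather than a convergent sequence, and it is equally brief about reconciling the width hypothesis on $B$ with the $\ell$-balanced set required by Corollary~\ref{periodicstripextension}.
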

\begin{proof}
The condition on $f$ guarantees that $f\rst{B}$ is periodic with period vector parallel to $\vec u$.  Since $f\in X_{\eta}$, there is a translation $\vec v\in\ZZ$ such that $(T^{\vec v}\eta)\rst{F}=f\rst{F}$ and by uniqueness $(T^{\vec v}\eta)\rst{B}=f\rst{B}$.  By Corollary~\ref{periodicstripextension}, $T^{\vec v}\eta$ is periodic with period vector parallel to $\vec u$.  Therefore $\eta$ is also.
\end{proof}

\section{Proof of Theorem~\ref{twoormore}}\label{maintheorem}

\subsection{Starting the proof of Theorem~\ref{twoormore}}
The proof of Theorem~\ref{twoormore} is completed in this section via multiple steps; we include a short summary of what is covered at the beginning of  each section.  We proceed by contradiction, and the rough overall structure of the proof is as follows: assuming the existence of a counterexample, we produce other counterexamples with more structure (specifically with large regions on which they are periodic).  With a sufficiently well structured counterexample, we fix a generating set and count colorings of it that occur on the boundary of the region of periodicity.  We reach a contradiction by showing that a larger than possible number of distinct colorings occur.  The difficulty arises in controlling the periods sufficiently well that we can count enough $\eta$-colorings to reach this contradiction.

Throughout this section, we assume that $\eta$ is a counterexample to Theorem~\ref{twoormore}, 
meaning that $\eta:\ZZ\to\A$ has at least two linearly independent one-sided nonexpansive directions and there exist $n,k\in\N$ such that $P_{\eta}(R_{n,k})\leq\frac{nk}{2}$.  We remark that if $\eta$ were periodic, it could have at most one one-sided nonexpansive direction.  Therefore we can assume that $\eta$ is aperiodic.

\subsection{An aperiodic counterexample with doubly periodic regions}
We use the existence of $\eta$ to construct $\alpha\in X_{\eta}$ which is aperiodic, but the restriction of $\alpha$ to a large convex subset of $\ZZ$ is doubly periodic.  This is carried out in three steps, first showing the existence of $f\in X_{\eta}$
which is singly, but not doubly, periodic (Section~\ref{sec:periodic-half}) and 
then using $f$ to show that there exists an aperiodic $\alpha\in X_{\eta}$ that 
is doubly periodic on a large convex region (Sections~\ref{sec:construction-alpha} and~\ref{sec:second-ambiguous}).

\subsubsection{ A periodic half plane}  
\label{sec:periodic-half}
By Lemma~\ref{stronggeneratingset},  there exists a strong $\eta$-generating set $\S$.  
Let $\ell_1$ be a one-sided nonexpansive direction for $\eta$.  By Lemma~\ref{possibledirections}, there is some $w_1\in E(\S)$ parallel to $\ell_1$.  By Lemma~\ref{nonexpansivepairs}, the direction antiparallel to $\ell_1$ is also one-sided nonexpansive and there is some $w_2\in E(\S)$ antiparallel to $\ell_1$.  By convexity, $\S$ is either $w_1$-balanced or $w_2$-balanced.  Without loss of generality, 
\begin{equation}
\label{balanced-assumption}
\text{we assume that $\S$ is }w_1\text{-balanced}
\end{equation}
and  that (see Remark~\ref{rem:standard}) $w_1$ points vertically downward.  

Set $\tilde{\S}:=\S\setminus w_1$ and set
\begin{eqnarray*}
H_i&:=&\{(x,y)\in\ZZ\colon x\geq i\}; \\
A_i&:=&\{(x,y)\in\ZZ\colon i\leq x<i+\diam_{w_1}(\tilde{\S})\}.
\end{eqnarray*}
By Lemma~\ref{ambiguousinorbit}, there exist $f,g\in X_{\eta}$ such that $f\rst{H_1}=g\rst{H_1}$ but $f\rst{H_0}\neq g\rst{H_0}$.  At most one of $f\rst{H_0}$ and $g\rst{H_0}$ has a horizontal period vector (in the sense of Definition~\ref{convexperiod}).  Thus we can assume that $f\rst{H_0}$ is not horizontally periodic.  By Proposition~\ref{prop:extendedambiguousperiod}, $f$ is vertically periodic,   and for every $i\in\Z$, $f\rst{A_i}$ has period at most $2\left|w_1\cap\S\right|-2$.  Moreover, by Lemma~\ref{ambiguousperiod2}, the vertical period of $f\rst{A_1}$ is at most $\left\lfloor\frac{\left|w_1\cap\S\right|}{2}\right\rfloor$.  By Proposition~\ref{balancedstronggeneratingset}, we can assume that $\left\lfloor\frac{\left|w_1\cap\S\right|}{2}\right\rfloor\leq\left|w_1\cap\S\right|-2$.

We also remark that, by the bound on the vertical period of $f\rst{A_i}$ for $i\geq0$, if $G\subset H_0$ is a convex set such that 
\begin{enumerate}[(H-I)]
\item $(G+(1,0))\subset G$; \label{condition1}
\item $G$ contains at least $2\left|w_1\cap\S\right|-2$ points on the $y$-axis, \label{condition2}
\end{enumerate}
then $f\rst{G}$ does not have a horizontal period vector in the sense of Definition~\ref{convexperiod}.

We summarize the main features of this construction:
\begin{enumerate}
\item $f\in X_{\eta}$; \label{property1}
\item $f$ is vertically periodic; \label{property2}
\item $f\rst{A_1}$ is vertically periodic of period at most $\left|w_1\cap\S\right|-2$; \label{property3}
\item The restriction of $f$ to an infinite convex set $G\subset H_0$ that satisfies conditions (H-\ref{condition1}) and (H-\ref{condition2}) cannot be extended to a horizontally periodic $\eta$-coloring of $\ZZ$. \label{property4}
\end{enumerate}

\subsubsection{Construction of aperiodic $\alpha\in X_{\eta}$ which agrees with $f$ on a large region}
\label{sec:construction-alpha}
Translating $\S$ if necessary, we may assume that 
$(0,0)\in w_1\subset\{(0,y)\colon y\in\Z\}$.  Using an inductive procedure, we define a function $\alpha\in X_{\eta}$ which is aperiodic but agrees with $f$ on an infinite, convex subset of $\ZZ$ (and is, therefore, vertically periodic on this subset).

%\m

\subsubsection*{Base case}  Let $F_1:=\S$ and $G_0=(0,0)$.  By Corollary~\ref{ambiguoussuperset} and aperiodicity of $\eta$, there exists $y_1\in\Z$ such that $(T^{(0,y_1)}f)\rst{F_1}$ does not extend uniquely to an $\eta$-coloring of the region 
$$
B_1:=\{(x,y)\in\ZZ\colon 0\leq x<\diam_{w_1}(\S)\}.
$$
Let $\alpha_1\in X_{\eta}$ be such that $\alpha_1\rst{F_1}=(T^{(0,y_1)}f)\rst{F_1}$, but $\alpha_1\rst{B_1}\neq (T^{(0,y_1)}f)\rst{B_1}$.  Let $G_1$ be a maximal, strongly $E(\S)$-enveloped subset of $B_1$ that contains $F_1$ and is such that $\alpha_1\rst{G_1}=(T^{(0,y_1)}f)\rst{G_1}$.

%\m

\subsubsection*{Inductive step}  Suppose that we have constructed sequences of convex, strongly $E(\S)$-enveloped, finite sets
$$
G_0\subseteq F_1\subseteq G_1\subseteq F_2\subseteq G_2\subseteq\cdots\subseteq F_i\subseteq G_i,
$$
configurations $\alpha_1,\dots,\alpha_i\in X_{\eta}$, and integers $y_1,\dots,y_i$ such that for $1\leq j\leq i$:
\begin{enumerate}
\item ({\em $F_j$ hypothesis}) $F_j$ contains both $G_{j-1}$ and $[0,j-1]\times[-j+1,j-1]$;
\item ({\em $\alpha_j$ hypothesis}) Defining the strip $B_j$ by 
$$
B_j:=\{(x,y)\in\ZZ\colon 0\leq x<\diam_{w_1}(F_j)\},
$$
then 
	\begin{enumerate}
	\item $\alpha_j\rst{F_j}=(T^{(0,y_j)}f)\rst{F_j}$;
	\item $\alpha_j\rst{B_j}\neq(T^{(0,y_j)}f)\rst{B_j}$;
	\end{enumerate}
\item ({\em $G_j$ hypothesis}) $G_j\subset B_j$ is a maximal set among all convex, strongly $E(\S)$-enveloped subsets of $B_j$ such that
	\begin{enumerate}
	\item $F_j\subseteq G_j$;
	\item $\alpha_j\rst{G_j}=(T^{(0,y_j)}f)\rst{G_j}$.
	\end{enumerate}
\end{enumerate}

\begin{figure}[ht]
     \centering
  \def\svgwidth{\columnwidth}
        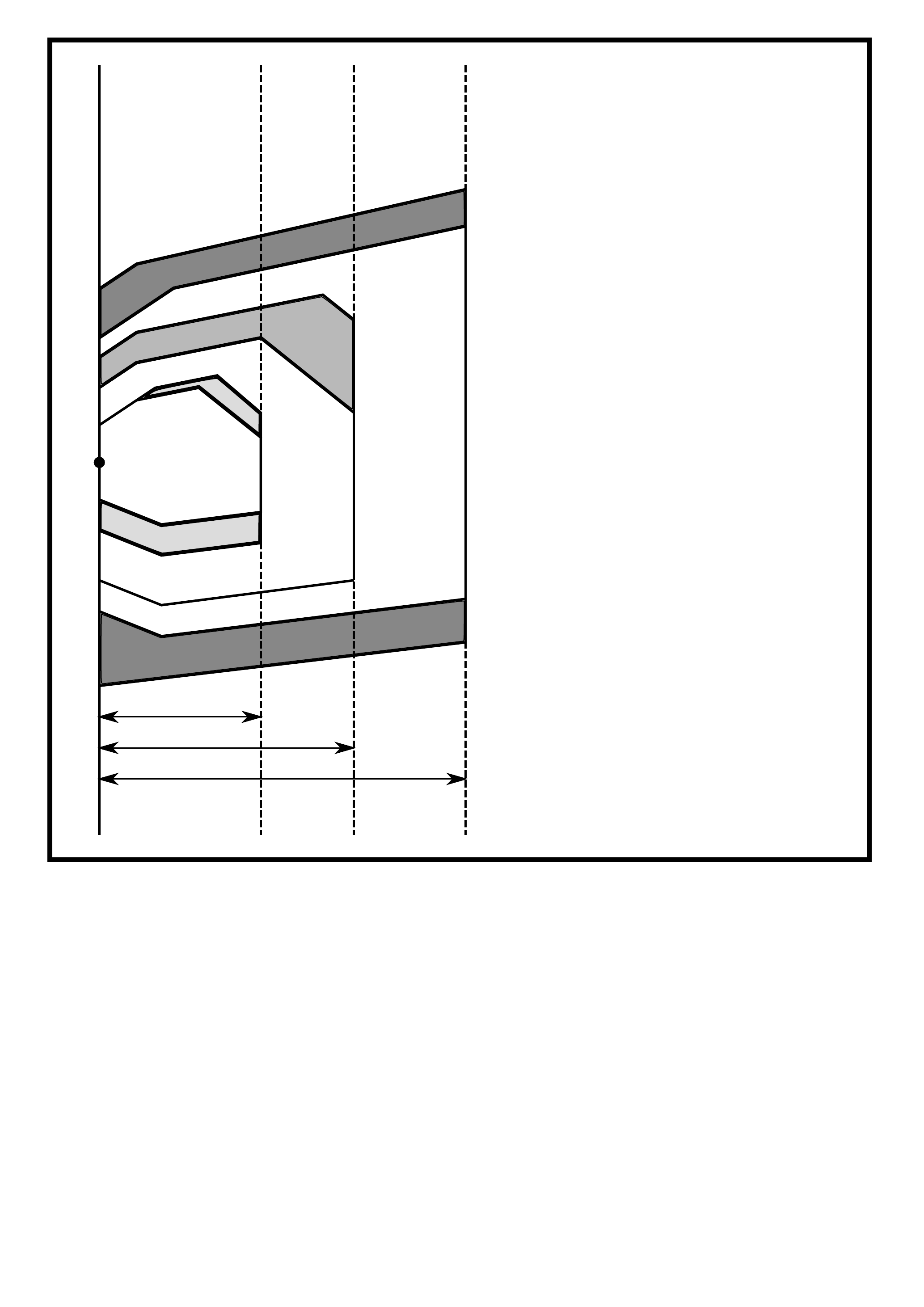
                \setlength{\abovecaptionskip}{-60mm}
	\caption{The sets $F_1\subseteq G_1\subseteq F_2\subseteq G_2\subseteq\cdots$.}
	\label{figured}
\end{figure}

Let $F_{j+1}\subset H_0$ be a finite, convex, strongly $E(\S)$-enveloped set containing both $G_j$ and $[0,j]\times[-j,j]$.  By Corollary~\ref{ambiguoussuperset} and aperiodicity of $\eta$, there exists  $y_{j+1}\in\Z$ such that $(T^{(0,y_{j+1})}f)\rst{F_{j+1}}$ does not extend uniquely to an $\eta$-coloring of the strip
$$
B_{j+1}:=\{(x,y)\in\ZZ\colon 0\leq x<\diam_{w_1}(F_{j+1})\}.
$$
Choose $\alpha_{j+1}\in X_{\eta}$ such that $\alpha_{j+1}\rst{F_{j+1}}=(T^{(0,y_{j+1})})\rst{F_{j+1}}$, but $\alpha_{j+1}\rst{B_{j+1}}\neq(T^{(0,y_{j+1})}f)\rst{B_{j+1}}$.  Let $G_{j+1}\subset B_{j+1}$ be a maximal strongly $E(\S)$-enveloped set containing $F_{j+1}$ such that $\alpha_{j+1}\rst{G_{j+1}}=(T^{(0,y_{j+1})}f)\rst{G_{j+1}}$.  By induction these functions, sets, and integers are defined for all $j$.

By vertical periodicity of $f$, we can assume that $y_j\in[0,(2\left|w_1\cap\S\right|-2)!)$ for all $j\in\Z$.  By passing to a subsequence, we can assume that the sequence $\{y_j\}_{j\in\N}$ is constant and, by replacing $f$ with $T^{(0,y_1)}f$ if necessary, we can assume that this constant is zero.

By construction, for each $j\in\N$, $E(G_j)$ has a downward oriented edge contained in the $y$-axis.  Let $(0,z_j)\in\ZZ$ be the topmost element of this edge and let
$$
\tilde{G}_j:=\{(x,y-z_j)\colon (x,y)\in G_j\}.
$$
Then $(T^{(0,z_j)}\alpha_j)\rst{\tilde{G}_j}=(T^{(0,z_j)}f)\rst{\tilde{G}_j}$ and there is no strongly $E(\S)$-enveloped convex subset of $B_j$ that strictly contains $\tilde{G}_j$ for which this is true (by maximality of $G_j$).  By vertical periodicity of $f$,
$\left\{T^{(0,z_j)}f\colon j\in\N\right\}\subseteq\{T^{(0,m)}f\colon0\leq m<(2\left|w_1\cap\S\right|-2)!\}$.  Let $z\in[0,(2\left|w_1\cap\S\right|-2)!)$ be such that $T^{(0,z_j)}f=T^{(0,z)}f$ for infinitely many $j$.  By passing to a subsequence, we can assume this holds for all $j$.  Define $\tilde{\alpha}_j:=T^{(0,z_j)}\alpha_j$ and $\tilde{f}:=T^{(0,z)}f$.  Then with this notation, $\tilde{\alpha}_j\rst{\tilde{G}_j}=\tilde{f}\rst{\tilde{G}_j}$ and there is no strongly $E(\S)$-enveloped subset of $B_j$ that strictly contains $\tilde{G}_j$ for which this holds.

Enumerate the vectors in $E(\S)$ as $u_1, u_2,\dots, u_m$ where $u_1=w_1$ and $u_{k+1}=\pred(u_k)$ for $k=1,\dots,m-1$ (recall that $\partial\S$ is positively oriented and $\pred(\cdot)$ is the predecessor edge with this orientation).  Let $K\in\N$ be the index for which $u_K=w_2$ (the edge of $\partial\S$ antiparallel to $w_1$).  
Since $\tilde{G}_j$ is $E(\S)$-enveloped for all $j$, define
$$
h(j,k)=\left\{\begin{tabular}{cl}
$\|a_{j,k}\|$ & if there is some $a_{j,k}\in E(\tilde{G}_j)$ parallel to $u_k$; \\
$0$ & otherwise.
\end{tabular}\right.
$$
Passing to a subsequence if necessary, we can assume that for each fixed $k=1,2,\dots,m$, the function $h(\cdot,k)$ is either constant or strictly increasing as a function of $j$.  By construction, $[0,j]\times[-j,j]\subseteq G_{j+1}\subset H_0$ for all $j$.  So $\bigcup_jG_j=H_0$ and there is at least one index $1<k<K$ for which $h(\cdot,k)$ is unbounded.  
\begin{equation}
\label{eq-growingedge}
\text{Let }1<k_{\min}<K\text{ be the least integer for which this holds.}
\end{equation}
Define integers $1<k_1<\dots<k_s<k_{\min}$ to be the indices in this interval for which $h(\cdot,k)$ is eventually positive.  Let $v_1,\dots,v_s\in E(G_1)$ be the edges for which $v_i$ is parallel to $u_{k_i}$.  We emphasize that by construction, $v_1,\dots,v_s\in E(G_i)$ for all $i\geq1$, meaning that not only does $G_i$ have an edge parallel to $v_1, v_2, \ldots$, but these fixed line segments are edges of $G_i$.  Set
$$
G_{\omega}:=\bigcup_{j=1}^{\infty}\tilde{G}_j.
$$
Then $G_{\omega}$ is convex and $E(\S)$-enveloped (see Figure~\ref{figurep} -- also observe that it is not strongly $E(\S)$-enveloped since it doesn't have edges parallel to each of the edges of $\S$).  Moreover $E(G_{\omega})$ is comprised of $v_1,\dots,v_s$, as well as $\{(0,y)\in\ZZ\colon y\leq0\}$, and a semi-infinite edge parallel to $u_{k_{\min}}$.

\begin{figure}[ht]
     \centering
  \def\svgwidth{\columnwidth}
        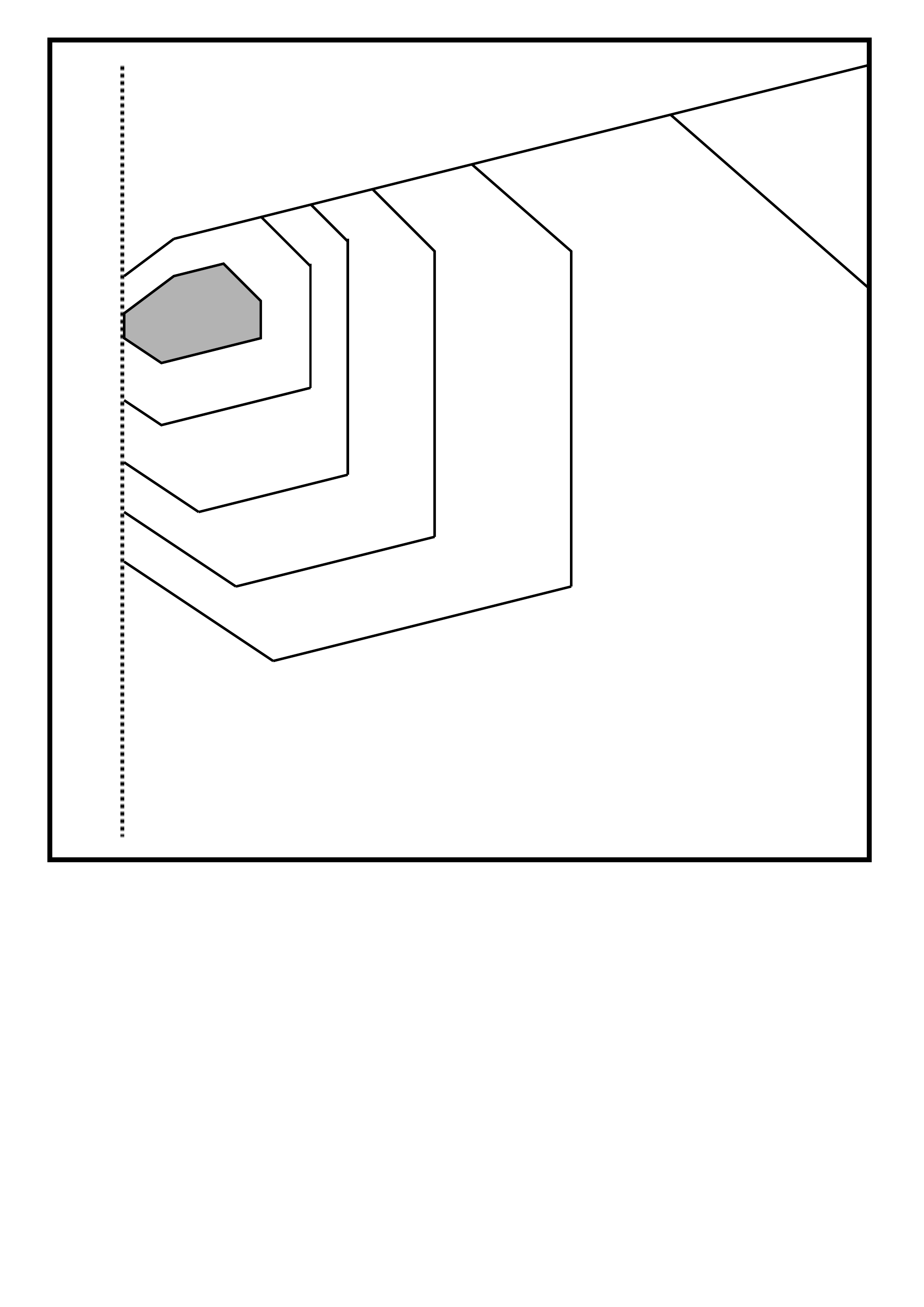
                \setlength{\abovecaptionskip}{-60mm}
	\caption{The sets $\S\subseteq\tilde{G}_1\subseteq\tilde{G}_2\subseteq\cdots\subset G_{\omega}$.}
	\label{figurep}
\end{figure}

By compactness, the sequence $\{\tilde{\alpha}_j\}_{j\in\N}$ has an accumulation point.  Let $\alpha\in X_{\eta}$ be such a point.  By passing to a subsequence, we can assume that for all $1\leq j_1<j_2$ we have $\tilde{\alpha}_{j_2}\rst{\tilde{G}_{j_1}}=\tilde{\alpha}_{j_1}\rst{\tilde{G}_{j_1}}$.  By construction, $\alpha\rst{G_{\omega}}=\tilde{f}\rst{G_{\omega}}$.  In particular, $\alpha\rst{G_{\omega}}$ is vertically periodic (in the sense of Definition~\ref{convexperiod}) and the restriction of $\alpha$ to any semi-infinite $(\tilde{\S},w_1)$-strip in $G_{\omega}$ has period at most $2\left|w_1\cap\S\right|-2$.  Moreover, the restriction of $\alpha$ to the $(\tilde{\S},w_1)$-border of $G_{\omega}$ has period at most $\left|w_1\cap\S\right|-2$ (because $\tilde{f}=T^{(0,z)}f$, the $(\tilde{\S},w_1)$-border of $G_{\omega}$ is a subset of $A_1$, and this bound on the period was shown for $f\rst{A_1}$ in Section~\ref{sec:periodic-half}).

\subsubsection{Another one-sided nonexpansive direction for $\alpha$}
\label{sec:second-ambiguous}  
Next, we show that both semi-infinite edges in $E(G_{\omega})$ are one-sided $\eta$-nonexpansive.
	
Let $\Ext_{u_{k_{\min}}}(G_{\omega})$ denote the $u_{k_{\min}}$-extension of $G_{\omega}$ (recall Definition~\ref{envelopeddef2}).  Since the boundary edge of $\Ext_{u_{k_{\min}}}(G_{\omega})$ parallel to $u_{k_{\min}}$ is semi-infinite, we have that $\Ext_{u_{k_{\min}}}(G_{\omega})\setminus G_{\omega}$ is equal to the intersection of $\ZZ$ 
with the disjoint union of finitely many semi-infinite lines $l_1,\dots,l_{r_1}$ parallel to $u_{k_{\min}}$.  We denote the subextensions by
\begin{equation*}
%\label{eq:Gomega+}
G_{\omega}^{(i)}:=G_{\omega}\cup(l_1\cap\ZZ)\cup\cdots\cup(l_i\cap\ZZ)
\end{equation*}
for $i=1,\dots,r_1$.

We now inductively define an increasing sequence of sets $\{G_{\omega}^{(i)}\}_{i\in\N}$.  Suppose we have constructed integers $r_1,\dots,r_m\in\N$ and an increasing sequence of convex sets $\{G_{\omega}^{(i)}\}_{i=1}^{r_1+\cdots+r_m}$ such that for all $j=1,\dots,m$, the sets $G_{\omega}^{(r_1+\cdots+r_j)}$ are $E(\S)$-enveloped and each has a semi-infinite edge parallel to $u_{k_{\min}}$ (the edge is not required to be the same for all of the sets).  Then $\Ext_{u_{k_{\min}}}(G_{\omega}^{(r_1+\cdots+r_m)})\setminus G_{\omega}^{(r_1+\cdots+r_m)}$ is nonempty and can be written as the intersection of $\ZZ$ with the disjoint union of $r_{m+1}$ semi-infinite lines $l_{r_1+\cdots+r_m+1},\dots,l_{r_1+\cdots+r_{m+1}}$ (for some $r_{m+1}\in\N$).  For $r_1+\cdots+r_m<i\leq r_1+\cdots+r_{m+1}$ define
$$
G_{\omega}^{(i)}:=G_{\omega}\cup(l_1\cap\ZZ)\cup\cdots\cup(l_i\cap\ZZ).
$$
This defines a sequence of integers $\{r_m\}_{m\in\N}$ and sets $\{G_{\omega}^{(i)}\}_{i\in\N}$.

Recall that for all $j$, $\tilde{G}_j$ is strongly $E(\S)$-enveloped and the length of the boundary edge parallel to $u_{k_{\min}}$ increases monotonically in $j$, by~\eqref{eq-growingedge}.  Thus for $j$ sufficiently large, $\Ext_{u_{k_{\min}}}(\tilde{G}_j)\neq\tilde{G}_j$.  Moreover the depth of the extension $\Ext_{u_{k_{\min}}}(\tilde{G}_j)$ depends only on the slopes of the lines determined by the boundary edges of $\tilde{G}_j$ (recall Definition~\ref{envelopeddef2}).  Therefore if $d_j$ is the depth of the extension $\Ext_{u_{k_{\min}}}(\tilde{G}_j)$, then $d_j$ is bounded (in $j$) and there is some $d\in\N$ such that $d_j=d$ for infinitely many $j$.  We pass to a subsequence such that this holds for all $j$.

Let $\tilde{l}(j,k)$ be the intersection of $\Ext_{u_{k_{\min}}}(\tilde{G}_j)$ and $l_k$.  Then $\Ext_{u_{k_{\min}}}(G_j)$ can be written as the disjoint union
$$
\tilde{G}_j\sqcup\bigsqcup_{k=1}^d\tilde{l}(j,k).
$$
By construction (recall the inductive hypotheses for $G_j$ at the beginning of this subsection) $f\rst{\Ext_{u_{k_{\min}}}(\tilde{G}_j)}\neq\alpha\rst{\Ext_{u_{k_{\min}}}(\tilde{G}_j)}$.  Let $1\leq a\leq d$ be the smallest integer for which $\tilde{\alpha}_j\rst{\tilde{l}(j,a)}\neq\tilde{f}\rst{\tilde{l}(j,a)}$ for infinitely many $j$.  Passing to a subsequence, we can assume that for all $j\in\N$, $\tilde{\alpha}_j\rst{\tilde{l}(j,k)}=\tilde{f}\rst{\tilde{l}(j,k)}$ for all $1\leq k<a$, but $\tilde{\alpha}_j\rst{\tilde{l}(j,a)}\neq\tilde{f}\rst{\tilde{l}(j,a)}$.  

Let  $w_3\in E(\S)$ denote the edge parallel to $u_{k_{\min}}$.
By Corollary~\ref{uniqueextension}, there are never $\left|w_3\cap\S\right|-1$ consecutive integer points on $\tilde{l}(j,a)$ where $\tilde{\alpha}_j$ and $\tilde{f}$ coincide (otherwise they would coincide everywhere on $\tilde{l}(j,a)$ since $\S$ is $\eta$-generating).  In particular, there are never $\left|w_3\cap\S\right|-1$ consecutive integer points on $l_a$ where $\alpha$ and $\tilde{f}$ coincide.  As a result, the restriction of $\alpha\rst{G_{\omega}^{(a-1)}}$ does not extend uniquely to an $\eta$-coloring of $G_{\omega}^{(a)}$.

Moreover, since $f$, and hence $\tilde{f}$, is vertically periodic and $\alpha\rst{G_{\omega}^{(a-1)}}=\tilde{f}\rst{G_{\omega}^{(a-1)}}$ but $\alpha\rst{G_{\omega}^{(a)}}\neq\tilde{f}\rst{G_{\omega}^{(a)}}$, $\alpha$ is not vertically periodic.  Moreover, because the boundary edge of $G_{\omega}^{(a-1)}$ parallel to $w_3$ is semi-infinite there is an ambiguous coloring of a $w_3$-half plane, obtained by passing to appropriate accumulation points of $\{T^{-m\cdot w_3}\alpha\}_{m=1}^{\infty}$ and $\{T^{-m\cdot w_3}\tilde{f}\}_{m=1}^{\infty}$ (viewing $w_3$ as a vector).  Thus by Lemma~\ref{ambiguousinorbit}, $w_3$ is a one-sided nonexpansive direction for $\eta$ and by Lemma~\ref{nonexpansivepairs}, there is an edge $w_4\in E(\S)$ antiparallel to $w_3$.

\subsubsection{Construction of $K$}
\label{constructionofK}
We show that there is an infinite, convex subset $K$ of $G_{\omega}$ such that $\alpha\rst{K}$ is doubly periodic.  The construction has four steps which are illustrated in Figure~\ref{construction-of-K}.

\subsubsection*{Step 1}  By Lemma~\ref{balancedlemma}, there exists a $w_3$-balanced set $\S_1$.  Let $\hat{w}_3\in E(\S_1)$ be the edge parallel to $w_3$ and let $\tilde{S}_1:=\S_1\setminus \hat{w}_3$.  Recall the integer $a\in\N$ defined in Section~\ref{sec:second-ambiguous} is such that $\tilde{f}\rst{G_{\omega}^{(a-1)}}=\alpha\rst{G_{\omega}^{(a-1)}}$ but $\tilde{f}\rst{G_{\omega}^{(a)}}\neq\alpha\rst{G_{\omega}^{(a)}}$.  By Lemma~\ref{semiinfiniteperiodicextension}, the $(\tilde{\S}_1,\hat{w}_3)$-border of $G_{\omega}^{(a-1)}$ is $w_3$-eventually periodic.  This is illustrated in Figure~\ref{test1}.

\subsubsection*{Step 2} Let $\mathcal{B}$ denote the $(\tilde{\S}_1,\hat{w}_3)$-border of $G_{\omega}^{(a-1)}$.  Since  $\tilde{f}$ is vertically periodic and $\tilde{f}\rst{G_{\omega}^{(a-1)}}=\alpha\rst{G_{\omega}^{(a-1)}}$, $\alpha\rst{G_{\omega}^{(a-1)}}$ is vertically periodic (in the sense of Definition~\ref{convexperiod}).  Let $p\in\N$ be the minimal vertical period of $\alpha\rst{G_{\omega}^{(a-1)}}$ such that $(T^{(0,p)}\alpha)\rst{(T^{(0,-p)}G_{\omega}^{(a-1)})}=\alpha\rst{(T^{(0,-p)}G_{\omega}^{(a-1)})}$.  Then the restriction of $\alpha$ to any set of the form $T^{(0,-mp)}\mathcal{B}$ is eventually $w_3$-periodic, with the same eventual period and the same gap.  This is illustrated in Figure~\ref{test2}.

\subsubsection*{Step 3} The set $T^{(0,-p)}\mathcal{B}$ is a semi-infinite $(\tilde{\S}_1,\hat{w_3})$-strip (recall Definition~\ref{semiinfinitedefinition}).  Let $\mathcal{B}_1:=\Ext_{w_3}(T^{(0,-p)}\mathcal{B})$ be the $w_3$-extension of $\mathcal{B}$.  Now inductively let $\mathcal{B}_{i+1}:=\Ext_{w_3}(\mathcal{B}_i)$ for $i\in\N$.  Then there is some $j\in\N$ such that $\mathcal{B}_j$ contains all but finitely many elements of $G_{\omega}^{(a-1)}\setminus T^{(0,-p)}G_{\omega}^{(a-1)}$.  Since $\alpha\rst{T^{(0,-p)}\mathcal{B}}$ is eventually $w_3$-periodic, Proposition~\ref{semiinfiniteperiodicextension} guarantees that $\alpha\rst{\mathcal{B}_j}$ is also eventually $w_3$-periodic with the same gap but possibly larger eventual period.  This is illustrated in Figure~\ref{test3}.

\subsubsection*{Step 4}  Since $\alpha\rst{G_{\omega}^{(a-1)}}$ is vertically periodic with period $p$,  the restriction of $\alpha$ to $\bigcup_{m=1}^{\infty}T^{(0,-mp)}(\mathcal{B}_{j+1}\cap G_{\omega}^{(a-1)})$ is eventually $w_3$-periodic.  Hence there is some $q\in\N$ such that the restriction of $\alpha$ to $T^{(q,0)}\bigcup_{m=1}^{\infty}T^{(0,-mp)}(\mathcal{B}_{j+1}\cap G_{\omega}^{(a-1)})$ is $w_3$-periodic. Since $\mathcal{B}_{j+1}$ is a semi-infinite $w_3$-strip and $\mathcal{B}_{j+1}\cap T^{(0,-p)}\mathcal{B}_{j+1}\neq\emptyset$, there is an $E(\S)$-enveloped, convex set $\tilde{K}\subseteq T^{(q,0)}\bigcup_{m=1}^{\infty}T^{(0,-mp)}(\mathcal{B}_{j+1}\cap G_{\omega}^{(a-1)})$ whose boundary has semi-infinite edges parallel to $w_3$ and $w_1$.  The restriction of $\alpha$ to $\tilde{K}$ is doubly periodic.  Let $K$ be the largest (with respect to inclusion) convex set containing $\tilde{K}$ for which $\alpha\rst{K}$ is doubly periodic.  By construction $\alpha\rst{G_{\omega}^{(a)}}$ is not doubly periodic since it differs from the vertically periodic coloring $\tilde{f}$.  Therefore $K$ has a semi-infinite edge parallel to $w_3$.  Since $\alpha\rst{G_{\omega}^{(a-1)}}=\tilde{f}\rst{G_{\omega}^{(a-1)}}$ was constructed such that it is not horizontally periodic (in the sense of Definition~\ref{convexperiod}), the set $K$ has a semi-infinite edge parallel to $w_1$.  This is illustrated in Figure~\ref{test4}.

\begin{figure}
	\begin{subfigure}[t]{0.425\textwidth}
		\centering
		  \def\svgwidth{\columnwidth}
        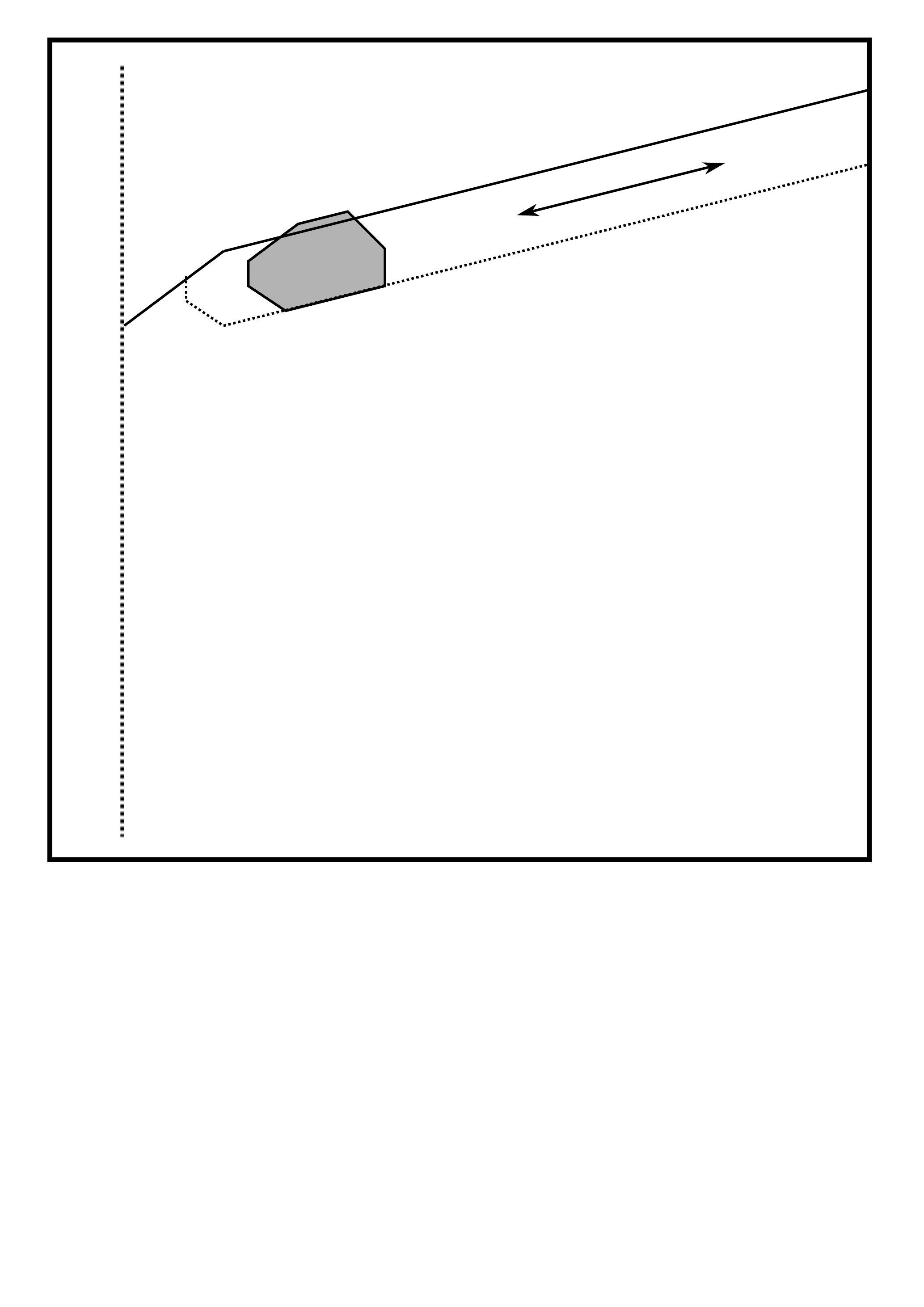
                \setlength{\abovecaptionskip}{-25mm}
		\caption{The shaded set $\S_1$ and the set $G_{\omega}^{(a-1)}$ with its $(\tilde{S}_1,w_3)$-border.  The arrow indicates the region that is eventually $w_3$-periodic.}
		\label{test1}
	\end{subfigure}
	\hspace{0.5 in}
	\begin{subfigure}[t]{0.425\textwidth}
		\centering
		  \def\svgwidth{\columnwidth}
        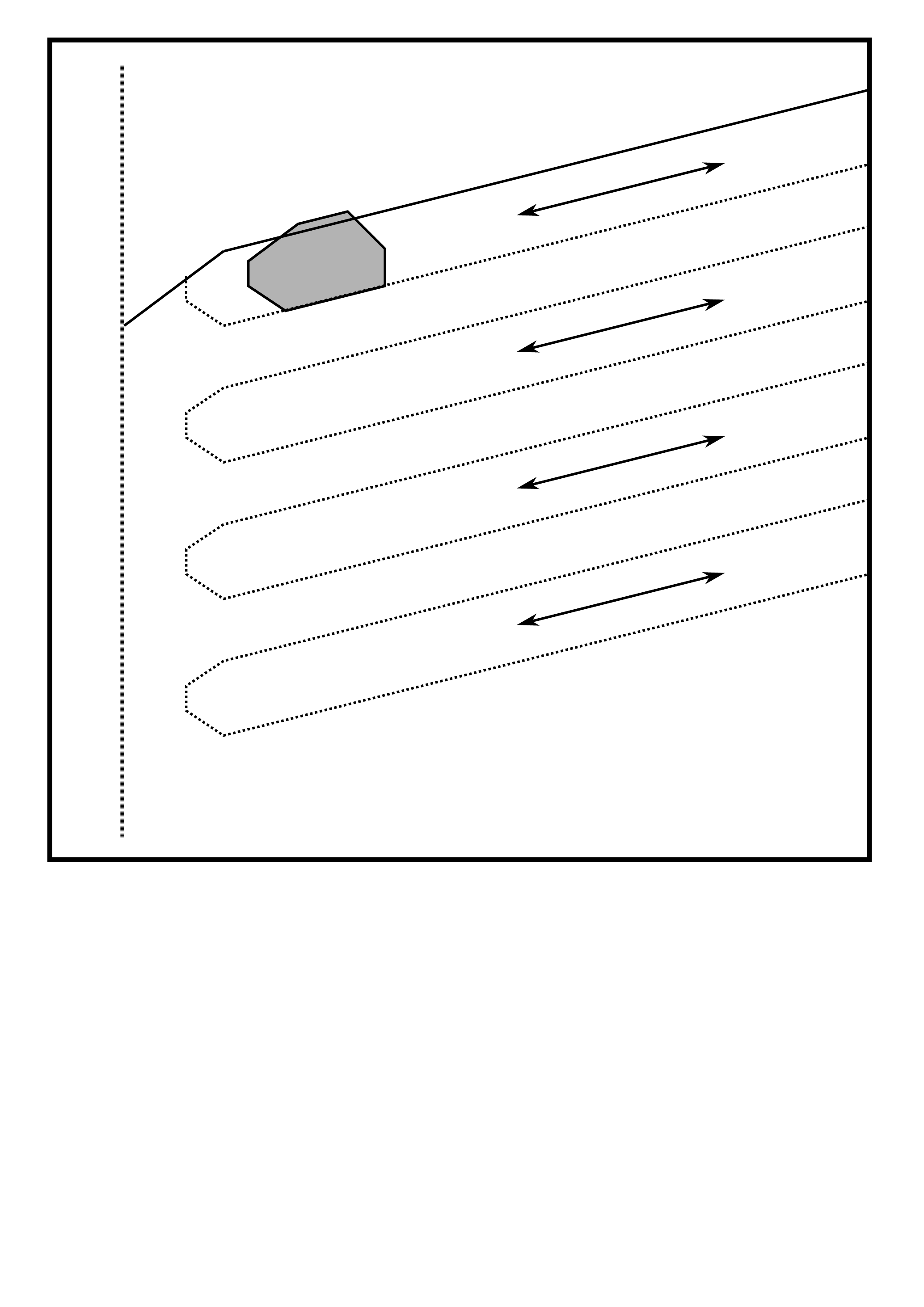
                \setlength{\abovecaptionskip}{-25mm}
		\caption{$\alpha\rst{G_{\omega}^{(a-1)}}$ is vertically periodic.  Translations of $\mathcal{B}$ are shown and  $\alpha$ is eventually periodic on each translated set.}
		\label{test2}
	\end{subfigure}
	\hspace{0.5 in}
	\begin{subfigure}[t]{0.425\textwidth}
		\centering
  \def\svgwidth{\columnwidth}
        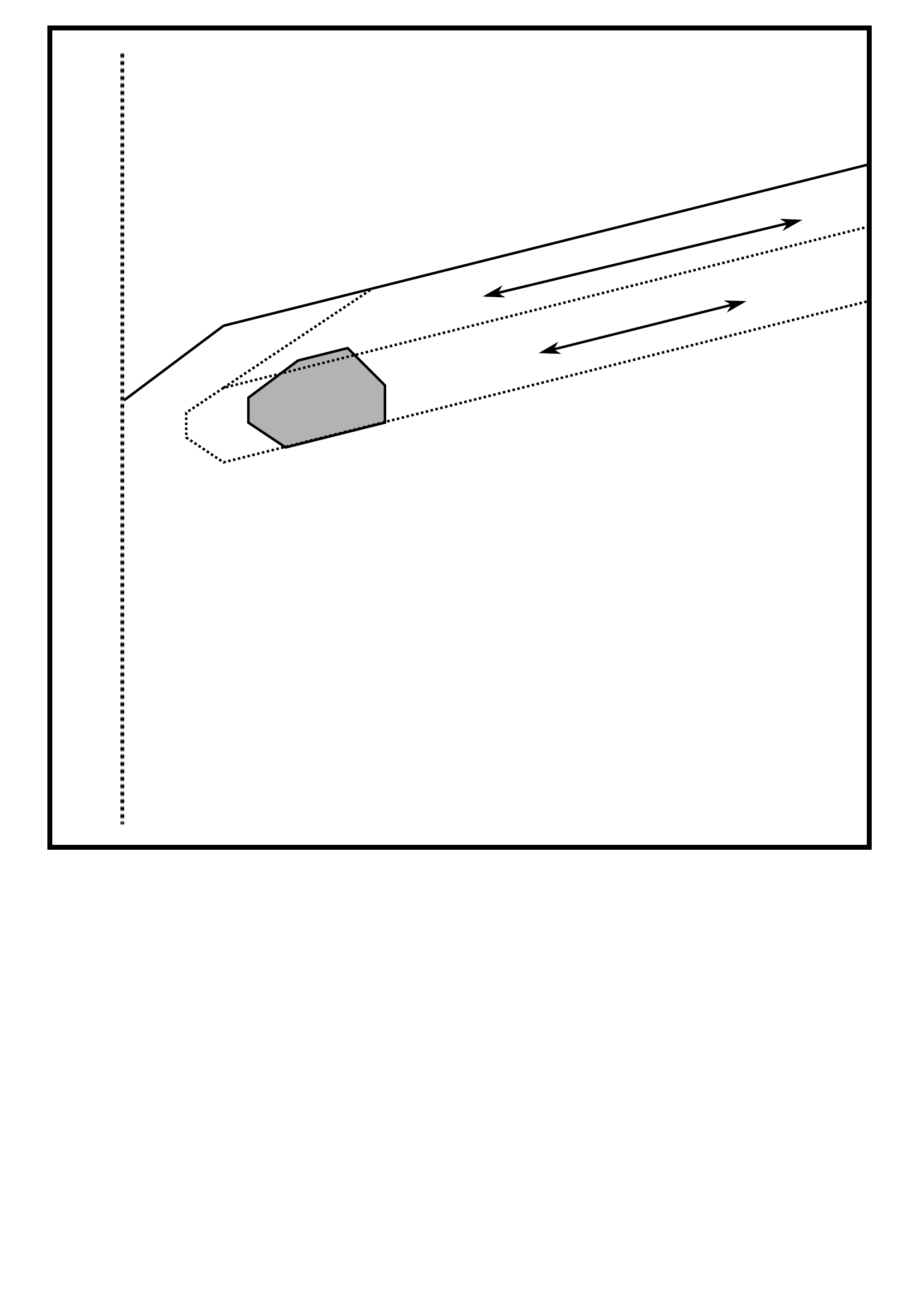
                \setlength{\abovecaptionskip}{-25mm}
                		\caption{The semi-infinite $(\tilde{\S}_1,w_3)$-strip is
		eventually $w_3$-periodic and so any $\eta$-coloring of its
		$w_3$-extension is also $w_3$-periodic
		(possibly of larger period), by
		Proposition~\ref{semiinfiniteperiodicextension}.  The arrows indicate the direction of 
		eventual periodicity and the possibly different periods.}
		\label{test3}
	\end{subfigure}
	\hspace{0.5 in}
	\begin{subfigure}[t]{0.425\textwidth}
		\centering
  \def\svgwidth{\columnwidth}
        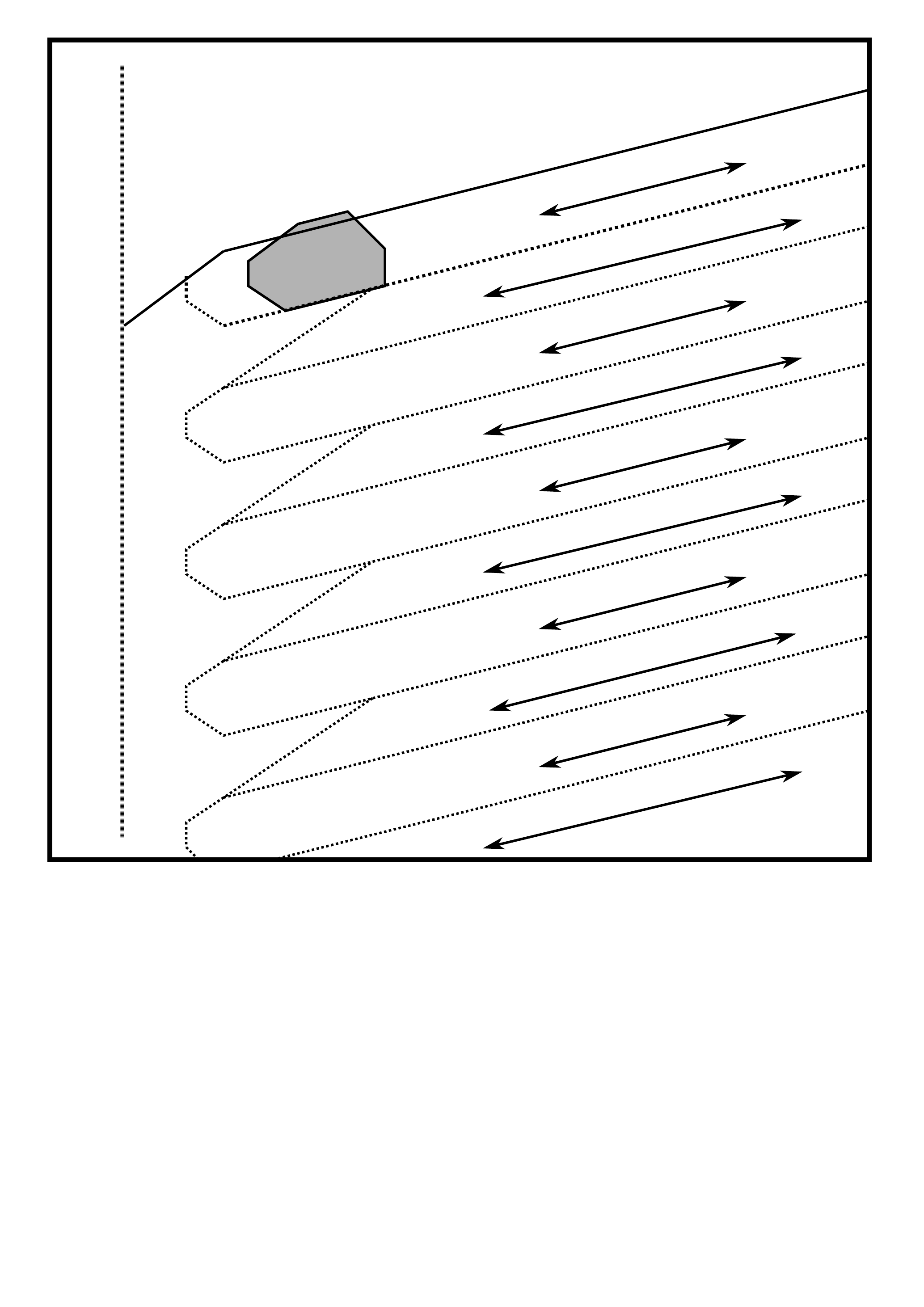
                \setlength{\abovecaptionskip}{-25mm}
		\caption{$\alpha\rst{G_{\omega}^{(a-1)}}$ is vertically periodic and so there is an infinite convex region where $\alpha$ is doubly periodic.  $K$ is the largest convex set for which this holds.}
		\label{test4}
	\end{subfigure}
	\caption{Construction of $K$ }
	\label{construction-of-K}
\end{figure}

\subsection{Bounds on the period of $\alpha$}
In this section, we show that we have strong bounds on the $w_1$- and $w_3$-periods of $\alpha\rst{K}$, 
first extending the region $K$ to a larger region where $\alpha$ is singly (but not doubly) periodic (Section~\ref{sec:periodic-extensions})
and then producing a generating set with particular properties that imply the bounds (Sections~\ref{sec:thin-generating} and~\ref{sec:bounding-periods}).  The existence of this type of generating set strongly relies on the bound of $P_\eta(n,k)\leq \frac{nk}{2}$.

\subsubsection{Periodic extensions} 
\label{sec:periodic-extensions} 
We show that the region $K$ on which $\alpha$ is doubly periodic can be extended 
to a larger region on which $\alpha$ is singly, but not doubly, periodic.

Let $K_0:=K$.  Since $K$ has a semi-infinite edge parallel to $w_3$, $\Ext_{w_3}(K)\neq K$ and there exist semi-infinite lines $\ell_1,\dots,\ell_{f_1}$ parallel to $w_3$ such that 
$$
\Ext_{w_3}(K)\setminus K=\bigsqcup_{i=1}^{f_1}(\ZZ\cap\ell_i).
$$
For $1\leq i\leq f_1$, set $K_i:=K\cup\ell_1\cup\cdots\cup\ell_i$.  We continue inductively:  having defined integers $f_1,\dots,f_j$ and sets $K_1,\dots,K_{f_1+\cdots+f_j}$ such that $K_{f_1+\cdots+f_j}$ contains a semi-infinite boundary edge parallel to $w_3$, there exist an integer $f_{j+1}$ and semi-infinite lines $\ell_{f_1+\cdots+f_j+1},\dots,\ell_{f_1+\cdots+f_{j+1}}$ such that
$$
\Ext_{w_3}(K_{f_1+\cdots+f_j})\setminus K_{f_1+\cdots+f_j}=\bigsqcup_{i=1}^{f_{j+1}}(\ZZ\cap\ell_{f_1+\cdots+f_j+i})
$$
By the second claim of Proposition~\ref{semiinfiniteperiodicextension}, the restriction of $\alpha$ to $\bigcup_{i=1}^{\infty}K_i$ is $w_3$-periodic with period at most $2\left|w_3\cap\S\right|-2$.

\subsubsection{A thin generating set}
\label{sec:thin-generating} 
We use the assumption on complexity $P_\eta(n,k)\leq\frac{nk}{2}$ to show 
that we have a generating set with a small diameter (recall Definition~\ref{diameterdefinition}).

Let $x_{\min}$ and $x_{\max}$ denote the minimal and maximal $x$-coordinates of elements of $\S$.  Let $d:=\left\lfloor\frac{x_{\max}-x_{\min}+1}{2}\right\rfloor$ and let the left subset of $\S$ be defined by
$$
\S_L:=\{(x,y)\in\S\colon x_{\min}\leq x\leq x_{\min}+d\}.
$$
If $\left|\S_L\right|\geq\frac{1}{2}\left|\S\right|$, then by Lemma~\ref{generated3}, $D_{\eta}(\S_L)\leq0$.  Let $\S_2\subset\S_L$ be an $\eta$-generating set.  Otherwise $D_{\eta}(\S\setminus\S_L)\leq0$.  In this case, let $\S_2\subset(\S\setminus\S_L)$ be an $\eta$-generating set.  In both cases, let $u\in E(\S_2)$ be the edge parallel to $w_3$ (which exists by  one-sided nonexpansiveness of $w_3$ and Lemma~\ref{possibledirections}) and let $v\in E(\S_2)$ be the edge parallel to $w_1$.  By construction
\begin{equation}\label{calculation3}
\diam_v(\S_2)\leq\left\lceil\frac{\diam_v(\S)}{2}\right\rceil.
\end{equation}
We call the set $\S_2$ a {\em thin generating set} for $\eta$.

\subsubsection{Bounding the periods of $\alpha\rst{K}$}
\label{sec:bounding-periods}  
Using the generating set $\S_2$ and the construction of $\alpha$, 
we obtain strong bounds on periods of $\alpha\rst{K}$.  A key tool we use is the classic Fine-Wilf Theorem:
\begin{theorem*}[Fine-Wilf Theorem~\cite{FW}]
Suppose $\{f_n\}_{n=0}^{\infty}$ and $\{g_n\}_{n=0}^{\infty}$ are two periodic sequences of periods $p$ and $q$, respectively.  If $f_n=g_n$ for $p+q-\gcd(p,q)$ consecutive entries, then $f_n=g_n$ for all $n$.  Moreover, $p+q-\gcd(p,q)$ is the minimum number of consecutive entries that make this property hold.
\end{theorem*}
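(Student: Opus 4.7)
Set $d := \gcd(p,q)$ and $N := p+q-d$. My plan is to first reduce to a word-theoretic statement: any finite sequence $h$ of length $N$ admitting both $p$ and $q$ as local periods (meaning $h_i = h_{i+p}$ and $h_i = h_{i+q}$ whenever both indices are in range) must also admit $d$ as a local period. Once this is proved, the first claim of the theorem follows: since $f$ and $g$ agree on $[0,N-1]$, the common restriction has local periods $p$ and $q$, hence $d$; extending by $d$-periodicity produces a sequence that is simultaneously $p$-periodic (so it equals $f$) and $q$-periodic (so it equals $g$), forcing $f \equiv g$ throughout.

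I would prove the word-theoretic reduction by induction on $p+q$. The base case $p=q$ is immediate since then $d = p$. For the inductive step, assume $p > q$; note that $p$ and $q$ are both multiples of $d$, so $p \geq q + d$. Combining $h_i = h_{i+p}$ with the $q$-period gives $h_i = h_{i+(p-q)}$ on the appropriate range, so the prefix of length $N - q = p - d$ admits both $q$ and $p-q$ as local periods. Since $\gcd(p-q,q) = d$ and the length $p - d$ equals $(p-q) + q - \gcd(p-q, q)$, the inductive hypothesis applies at the sharp threshold and yields that this prefix has period $d$. The original $q$-period then transports this conclusion: for $i$ in the prefix, $h_{i+q} = h_i$, so the suffix $h[q\ldots N-1]$ is a $q$-translate of the prefix and also has period $d$. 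The prefix and suffix together cover $[0, N-1]$, and the $q$-shift relation bridges between them, so all of $h$ has period $d$.

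For the minimality assertion, I would construct explicit sequences of periods $p$ and $q$ that agree on exactly $N-1$ consecutive entries but diverge at the next position. These arise naturally from the observation that on $[0, N-2]$, the graph with edges $\{i, i+p\}$ and $\{i, i+q\}$ has at least one residue class modulo $d$ splitting into two connected components; assigning distinct values on these components yields two periodic continuations with the desired properties.

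The main obstacle I anticipate is the inductive bookkeeping: verifying that the derived $(p-q)$-period on the prefix holds exactly on the range that the inductive hypothesis requires, and that the $q$-period propagates $d$-periodicity from the prefix to the entire word. Each of these checks saturates the hypothesis $|h| \geq p + q - d$, which is precisely the mechanism that makes the threshold $p+q-\gcd(p,q)$ sharp.
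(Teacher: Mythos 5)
The first thing to note is that the paper offers no proof of this statement: it is quoted as the classical Fine--Wilf theorem with a citation to \cite{FW}, and the only argument supplied in the text is for the derived Corollary~\ref{FineWilfCor}. So there is nothing in the paper to compare your argument against; I can only assess it on its own terms.

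Your proposal is the standard induction-on-$p+q$ proof of the periodicity lemma, and the skeleton is correct: with $N=p+q-d$ and $p>q$, the derived relation $h_i=h_{i+(p-q)}$ holds precisely for $0\leq i\leq q-d-1$, which is exactly the local $(p-q)$-period condition on the prefix of length $p-d=(p-q)+q-\gcd(p-q,q)$, so the inductive hypothesis does apply at the sharp threshold; and the deduction of the theorem from the finite lemma (the $d$-periodic extension is $p$-periodic and agrees with $f$ on a window of length at least $p$, hence equals $f$, and likewise for $g$) is fine. Two places need to be filled in. First, the bridging step is not purely a matter of covering: when $p=q+d$ the prefix $[0,p-d-1]$ and the suffix $[q,N-1]$ abut without overlapping, and for $i\in[q-d,q-1]$ you must write $h_{i+d}=h_{i+d-q}$ via the $q$-translate and then iterate the prefix's local $d$-period $(q-d)/d$ times to reach $h_i$; this works, but it is exactly the boundary case that the phrase ``bridges between them'' glosses over. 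Second, and more substantively, the minimality half rests entirely on the unproved assertion that the period-forcing graph on $\{0,\dots,N-2\}$ has a residue class mod $d$ splitting into two components; this is true (the graph has exactly $d+1$ components), but it is the whole content of sharpness and requires the counting argument, together with the observation that the two periodic continuations of the resulting word must actually differ somewhere --- which follows because otherwise the common infinite sequence would have both periods and hence period $d$, forcing $d$-periodicity of the word. Neither issue is a wrong turn, but both are the load-bearing parts of a complete write-up.
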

\begin{corollary}\label{FineWilfCor}
If $\{f_n\}_{n=0}^{\infty}$ is a periodic sequence of period at most $p$, then the sequence can be reconstructed uniquely from any $2p-2$ consecutive entries.  Moreover, if the exact value of the period is unknown (other that it is no greater than $p$), then $2p-2$ is the smallest number of consecutive entries that suffices.
\end{corollary}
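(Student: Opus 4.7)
The plan is to derive Corollary~\ref{FineWilfCor} as a direct consequence of the Fine-Wilf Theorem. The key arithmetic input is the inequality $p'+p''-\gcd(p',p'')\leq 2p-2$ for all integers $1\leq p',p''\leq p$ (with $p\geq 2$), together with the fact that equality is attained at the coprime pair $(p,p-1)$; this identifies both the sufficient window size in the first part of the corollary and the witnesses for sharpness in the second.

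To prove the first statement, I would suppose that $\{f_n\}$ and $\{g_n\}$ are periodic sequences of minimal periods $p',p''\leq p$ that coincide on some window of $2p-2$ consecutive entries. Writing $p'=da$ and $p''=db$ with $\gcd(a,b)=1$ and $a\geq b\geq 1$, the Fine-Wilf threshold becomes $d(a+b-1)$. If $b=1$, then $d(a+b-1)=da=p'\leq p\leq 2p-2$. If $b\geq 2$, then coprimality forces $a\geq b+1$, so $a+b-1\leq 2a-2$, and therefore $d(a+b-1)\leq 2d(a-1)\leq 2(p-1)=2p-2$. In either case the assumed $2p-2$ agreements meet or exceed the Fine-Wilf bound, so $f_n=g_n$ for all $n$, giving the claimed uniqueness of the reconstruction.

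For sharpness, I would invoke the optimality clause of the Fine-Wilf Theorem applied to the coprime pair of periods $(p,p-1)$, whose Fine-Wilf threshold is exactly $p+(p-1)-1=2p-2$. That clause produces two distinct sequences of periods $p$ and $p-1$ which agree on $2p-3$ consecutive entries but differ somewhere. These serve as two distinct periodic sequences of period at most $p$ coinciding on $2p-3$ consecutive positions, showing that $2p-3$ consecutive entries do not suffice. The main (and only) obstacle is the case analysis establishing the inequality above; once that is in place, both halves of the corollary follow directly from Fine-Wilf.
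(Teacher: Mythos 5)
Your proof is correct. For the uniqueness half you argue exactly as the paper does: both reduce the problem to the inequality $q_1+q_2-\gcd(q_1,q_2)\leq 2p-2$ for $1\leq q_1,q_2\leq p$ and then invoke the Fine--Wilf Theorem; you verify that inequality by a careful case analysis on $\gcd$, whereas the paper simply asserts it (a one-line check also suffices: if $q_1=q_2$ the left side is $q_1\leq p$, and if $q_1\neq q_2$ then $q_1+q_2\leq 2p-1$ and $\gcd\geq 1$). Where you genuinely diverge is the sharpness half. The paper exhibits an explicit witness, the word $0^n10^n$ of length $2n+1$ with $p=n+2$, and checks by hand that it extends to periodic sequences of period $n+1$ and of period $n+2$; you instead appeal abstractly to the optimality clause of Fine--Wilf for the coprime pair $(p,p-1)$, whose threshold is exactly $2p-2$, to produce two distinct sequences of periods $p$ and $p-1$ agreeing on $2p-3$ consecutive entries. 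Both routes are valid: the paper's is self-contained and immediately checkable, while yours is shorter but leans on the optimality statement being available in precisely the form that supplies a pair of witnesses for two prescribed periods (which the paper's statement of Fine--Wilf does provide). Either way, the two witnesses each have period at most $p$ and disagree somewhere, so $2p-3$ entries cannot determine the sequence, which is all the corollary requires.
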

\noindent {\em Proof}.
If for some $n_0\in\N$ we are given the value of $f_{n_0}, f_{n_0+1},\dots,f_{n_0+2p-3}$, we call a number $q\leq p$ a {\em possible} period for $f$ if $f_k=f_{k+q}$ for all $n\leq k\leq n_0+2p-q-3$.  Let $\Phi\subseteq\{1,2,\dots,p\}$ be the set of all possible periods for the sequence $\{f_n\}_{n=0}^{\infty}$.  For each $q\in\Phi$, let $\{f^q_n\}_{n=0}^{\infty}$ be the unique $q$-periodic sequence such that $f^q_{n_0+i}=f_{n_0+i}$ for all $i=0,1,\dots,2p-3$.  By the Fine-Wilf Theorem, if $q_1, q_2\in\Phi$, then $f^{q_1}_k=f^{q_2}_k$ for all $k$ (since $q_1+q_2-\gcd(q_1,q_2)\leq2p-2$) so there is a unique sequence of period at most $p$ that agrees with the information given about $\{f_n\}_{n=0}^{\infty}$.

This bound is optimal: given $n\in\N$ let $w_n$ be the word
$$
w_n:=\underbrace{00\cdots0}_{n}1\underbrace{00\cdots0}_{n}
$$
and let $p=n+2$.  Then $2p-2>2n+1=\left|w_n\right|$ and $w_n$ can indeed be extended in two different ways to a periodic sequence of period at most $p$ (one of period $n+1$ and one of period $n+2$).  So if we were told that $\{f_n\}_{n=0}^{\infty}$ is a periodic sequence of period at most $p$ and that the first $2n+1$ entries were the word $w_n$, then the sequence could not be uniquely reconstructed.$\hfill\diamond$
\begin{definition}\label{perFineWilf}
If $w=(w_0,w_1,\dots,w_{m-1})\in\A^m$ is a word of length $m$ and $p\in\{1,\dots,m-1\}$, we say that $w$ is {\em periodic of period $p$} if $w_i=w_{i+p}$ for all $0\leq i<m-p$.  We also make the convention that every word of length $m$ is periodic of period $m$.
\end{definition}
\begin{notation}
\label{notation:T1}
There are two distinguished semi-infinite strips in $K$ and we label them:
\begin{itemize}
\item Let $\T_1$ denote the $(\S\setminus w_3,w_3)$-border of $K$.
\item Let $\T_2$ denote the $(\S\setminus w_1,w_1)$-border of $K$.
\end{itemize}
\end{notation}
In the remainder of this section, we show:
\begin{claim}
\label{claim:bounds-on-periods}
Maintaining notation as above,
\begin{enumerate}[(i)]
\item The $w_1$-period of $\alpha\rst{K}$ is at most $\left\lfloor\frac{\left|w_1\cap\S\right|}{2}\right\rfloor$;
\item The $w_3$-period of $\alpha\rst{K}$ is at most $\left|w_3\cap\S\right|-1$.
\end{enumerate}
\end{claim}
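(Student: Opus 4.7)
The construction in Section~\ref{constructionofK} made $K$ a maximal convex set on which $\alpha$ is doubly periodic, with semi-infinite edges parallel to both $w_1$ and $w_3$. Extending $\alpha\rst{K}$ periodically by its two periods produces a globally doubly periodic $\beta\colon\ZZ\to\A$; every $\S$-pattern of $\beta$ already appears in $K$ (hence in $\alpha$), so $\beta\in X_\S(\eta)$. By maximality of $K$, $\alpha$ must disagree with $\beta$ somewhere in the $w_1$-extension of $K$, and likewise in the $w_3$-extension. Passing (in the spirit of Lemma~\ref{ambiguousinorbit}) to accumulation points of appropriate translates of these differing pairs produces, for each of the two directions, elements of $X_\S(\eta)$ that agree on a half plane bordering $K$ but disagree on its extension, witnessing $(\S,\eta)$-ambiguity of the half plane.

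\textbf{Applying the periodicity lemmas.} For (i), $\S$ is a $w_1$-balanced strong $\eta$-generating set (by~\eqref{balanced-assumption} and the construction of Section~\ref{sec:periodic-half}), so with $w_1$ and $w_2$ ordered so that $|w_1|\leq|w_2|$, Lemma~\ref{ambiguousperiod2} applies to the $w_1$-half plane ambiguity and gives that the $(\S\setminus w_1,w_1)$-border of that half plane is periodic parallel to $w_1$ with period at most $\lfloor|w_1\cap\S|/2\rfloor$. This border coincides, up to translation, with $\T_2\subseteq K$. Since $\alpha\rst{K}$ is doubly periodic and the $w_3$-period propagates any $w_1$-period of $\alpha\rst{\T_2}$ across the full width of $K$, the minimal $w_1$-period of $\alpha\rst{K}$ equals that of $\alpha\rst{\T_2}$, proving (i). For (ii), $\S$ is $\eta$-generating with antiparallel $w_3,w_4\in E(\S)$ but we have no $w_3$-balancedness for $\S$, so we apply the weaker Lemma~\ref{ambiguousperiod} in place of Lemma~\ref{ambiguousperiod2}. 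By Lemma~\ref{nonexpansivepairs} both orientations along the $w_3$-direction are one-sided nonexpansive, so ambiguity holds on whichever side has the shorter boundary edge. Applying the lemma with this shorter edge in the role of the lemma's $w_1$ gives period at most $\min\{|w_3\cap\S|,|w_4\cap\S|\}-1\leq|w_3\cap\S|-1$, and the same transfer via $\T_1$ yields (ii).

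\textbf{Main obstacle.} The most delicate step is the rigorous production of the half-plane ambiguous coloring. Directly comparing $\alpha$ to $\beta$ does not suffice, as they agree only on $K$, not on a full half plane containing $K$, so the lemmas of Sections~\ref{sec:background} and~\ref{nk/2} cannot be invoked with $\alpha,\beta$ as the ambiguous pair. The remedy is to pass to accumulation points of $\{T^{\vec u_n}\alpha\}$ and $\{T^{\vec u_n}\beta\}$ for a sequence $\{\vec u_n\}$ that translates the non-semi-infinite portion of $\partial K$ off to infinity; the limits then agree on an entire half plane and differ in its extension. A secondary technical point is the transfer from periodicity of $\alpha\rst{\T_i}$ to periodicity of all of $\alpha\rst{K}$: this requires that the width of the $(\S\setminus w_i,w_i)$-border exceed the period in the complementary direction, which must be established by coupling the bounds on the two periods rather than proving them in isolation.
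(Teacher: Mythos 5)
Your outline for part (i) is essentially the paper's argument: maximality of $K$ gives $(\S,w_1,\eta)$-ambiguity, and the $w_1$-balancedness of $\S$ together with the strong-generating property yields the bound $\left\lfloor\frac{\left|w_1\cap\S\right|}{2}\right\rfloor$ on the period of $\alpha\rst{\T_2}$ (the paper applies Corollary~\ref{semiinfiniteperiodicextension2} to the semi-infinite strip $\T_2$ directly rather than passing to a half-plane limit, but this is a cosmetic difference). You are also right that the transfer from $\alpha\rst{\T_2}$ to $\alpha\rst{K}$ must be coupled with the bound in the complementary direction.

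The genuine gap is in part (ii). Lemma~\ref{ambiguousperiod} (and its strong version) concludes periodicity of the border of $H$ only when $H$ is a half plane bordered by the \emph{shorter} of the two antiparallel edges, and — more importantly — only when the specific coloring of \emph{that} half plane is ambiguous. Maximality of $K$ hands you ambiguity across the $w_3$-edge of $K$, i.e.\ ambiguity of a set whose relevant boundary is parallel to $w_3$. If $w_4$ happens to be the shorter edge (equivalently, if $\S$ is $w_4$-balanced but not $w_3$-balanced), your appeal to Lemma~\ref{nonexpansivepairs} only tells you that \emph{some} coloring of \emph{some} $w_4$-half plane in $X_{\S}(\eta)$ is ambiguous; it does not tell you that the restriction of $\alpha$ to a $w_4$-half plane (or a $w_4$-bordered strip) adjacent to $K$ is ambiguous, which is what you need to conclude anything about $\alpha\rst{\T_1}$. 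This is exactly the situation the paper isolates: it splits into the case where $\S$ is $w_3$-balanced (your argument then works and in fact gives the stronger bound $\left\lfloor\frac{\left|w_3\cap\S\right|}{2}\right\rfloor$), the case where $\S$ is $w_4$-balanced and infinitely many translates $\alpha\rst{\T_1-(0,m_1p)}$ are ambiguous over their $w_4$-extensions, and the residual case where all but finitely many such translates extend \emph{uniquely} over $w_4$. In that last case there is no usable ambiguity on the $w_4$ side near $K$ at all; the paper instead locates an $(\S,w_4,\eta)$-ambiguous semi-infinite strip inside the union $\bigcup_i K_i$ of $w_3$-extensions of $K$ (it must exist, else $\alpha\rst{\bigcup_i K_i}$ would be vertically periodic) and propagates the resulting $w_4$-periodicity back into $K$ by an induction over successive $w_4$-extensions. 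It is precisely this case that forces the weaker bound $\left|w_3\cap\S\right|-1$ in (ii), and your proposal as written does not cover it.
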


By convexity, $\S$ is either $w_3$- or $w_4$-balanced.  We prove 
the claim by considering three cases separately.

\subsubsection*{Case 1} Suppose $\S$ is $w_3$-balanced.  It is immediate that $\left|w_3\cap\ZZ\right|\leq\left|w_4\cap\ZZ\right|$.  By assumption~\eqref{balanced-assumption}, $\S$ is also $w_1$-balanced.  In this case we show the claimed bound on the $w_1$-period of $\alpha\rst{K}$ but  prove (the stronger bound) that the $w_3$-period of $\alpha\rst{K}$ is at most $\left\lfloor\frac{\left|w_3\cap\S\right|}{2}\right\rfloor$.
   
By maximality of $K$, $\alpha\rst{K}$ is $(\S,w_3,\eta)$-ambiguous.
    Since $\alpha\rst{K}$ is doubly periodic, $\alpha\rst{\T_1}$ is periodic.  Thus by
    Corollary~\ref{semiinfiniteperiodicextension2},
    \begin{equation}\label{T1assumption}
    \alpha\rst{\T_1}\text{ is periodic with period vector parallel to $w_3$}
    \end{equation}
    and period at most $\left\lfloor\frac{\left|w_3\cap\S\right|}{2}\right\rfloor$.
By vertical periodicity of $\alpha\rst{K}$, there is some $p\in\N$ such that the colorings $(T^{(0,mp)}\alpha)\rst{\T_1}$ coincide for all $m=0,1,2\dots$

Again by maximality of $K$, $\alpha\rst{K}$ is $(\S,w_1,\eta)$-ambiguous and  so $\alpha\rst{\T_2}$ is vertically periodic with period at most $\left\lfloor\frac{\left|w_1\cap\S\right|}{2}\right\rfloor$.  Then there is some $q\in\N$ such that the colorings $(T^{-mq\cdot w_3}\alpha)\rst{\T_1}$ coincide for all $m=0,1,2\dots$ (here $w_3$ is understood as a vector rather than a line segment).

Since $\T_1$ is a semi-infinite $(\S\setminus w_3,w_3)$-strip and $\T_2$ is a semi-infinite $(\S\setminus w_1,w_1)$-strip, there exist $m_1, m_2\in\N$ such that 
$$
P:=(\T_1-(0,m_1p))\cap(\T_2-m_2q\cdot w_3)\cap\ZZ
$$ is the intersection of $\ZZ$ with a parallelogram, with sides parallel to $w_1$ and $w_3$ and integer vertices.  This is illustrated in Figure~\ref{figuree}.

\begin{figure}[ht]
	\centering
  \def\svgwidth{\columnwidth}
        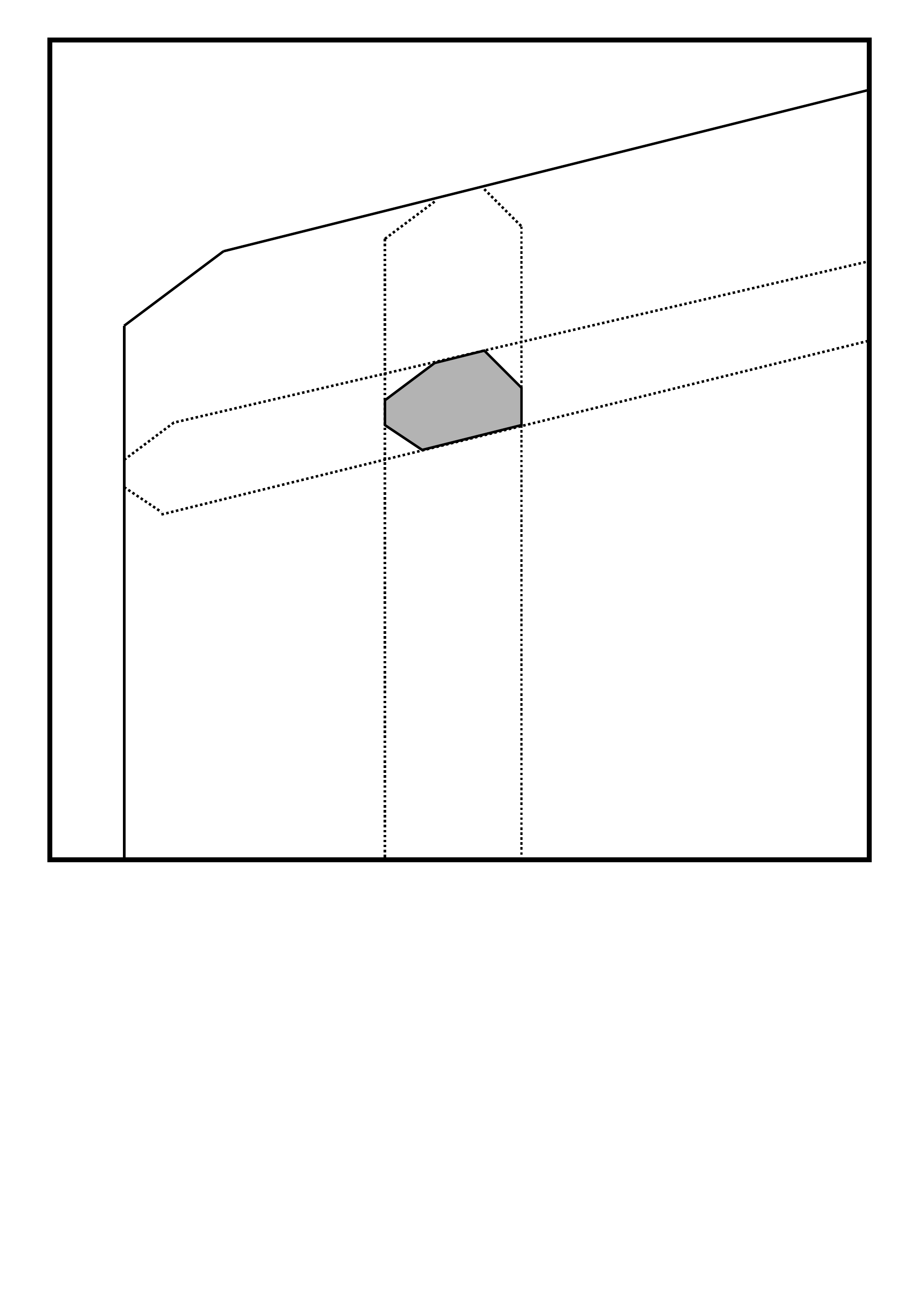
                \setlength{\abovecaptionskip}{-60mm}
                	\caption{The restriction of $\alpha$ to each of the strips is periodic in the direction determined by the strip and the period is at most half of the side length of the parallelogram.}
	\label{figuree}
\end{figure}

Since $\S$ is $w_3$-balanced, if $L$ is any line parallel to $w_3$ that has nonempty intersection with $P$, then
\begin{equation}\label{Pwidth}
\left|L\cap P\right|\geq\left|w_3\cap\S\right|-1.
\end{equation}
Therefore, we claim that if $L$ is any line parallel to $w_3$ that has nonempty intersection with $P$, there is a unique $\A$-coloring of $L\cap\ZZ$ such that
\begin{enumerate}
\item the coloring coincides with $\alpha$ on $L\cap P$;
\item the coloring is periodic of period at most $\left\lfloor\frac{\left|w_3\cap\S\right|}{2}\right\rfloor$.
\end{enumerate}
To see this, by~\eqref{T1assumption} and the definition of $p$, we have that 
$\alpha\rst{(\T_1-m_1p)}$ is $w_3$-periodic of period at most
$\left\lfloor\frac{\left|w_3\cap\S\right|}{2}\right\rfloor$, and by~\eqref{Pwidth},
the coloring of $L$ agrees with $\alpha$ on at least $\left|w_3\cap\S\right|-1$ consecutive integer points: therefore we can apply Corollary~\ref{FineWilfCor}, showing that the coloring agrees with $\alpha$ on $L\cap(\T_1-m_1p)$ (which is semi-infinite).  There is only one way to extend a semi-infinite periodic coloring to a doubly infinite periodic coloring of $L$.

Since $\S$ is also $w_1$-balanced, a similar result holds for lines parallel to $w_1$: if $L$ is any line parallel to $w_1$ that has nonempty intersection with $P$ then
$$
\left|L\cap P\right|\geq\left|w_1\cap\S\right|-1.
$$
Therefore, since $\alpha\rst{(\T_2-m_2q\cdot w_3)}$ is $w_1$-periodic of period at most $\left\lfloor\frac{\left|w_1\cap\S\right|}{2}\right\rfloor$, for such an $L$ there is a unique $\A$-coloring of $L\cap\ZZ$ that coincides with $\alpha$ on $L\cap P$ and has period at most $\left\lfloor\frac{\left|w_1\cap\S\right|}{2}\right\rfloor$.  Let $\mathcal{C}_1$ be the union of all lines parallel to $w_3$ that have nonempty intersection with $P$ and let $\mathcal{C}_2$ be the union of all lines parallel to $w_1$ that have nonempty intersection with $P$.  Let $\tilde{\beta}:\mathcal{C}_1\cup\mathcal{C}_2\to\A$ be the coloring just described.  We claim that $\tilde{\beta}$ extends uniquely to an $\A$-coloring of $\ZZ$ that is $w_1$-periodic of period at most $\left\lfloor\frac{\left|w_1\cap\S\right|}{2}\right\rfloor$ and $w_3$-periodic of period at most $\left\lfloor\frac{\left|w_3\cap\S\right|}{2}\right\rfloor$.  Indeed, if $L$ is any line parallel to $w_1$ that has nonempty intersection with $\ZZ$, then $\left|L\cap(\mathcal{C}_1\cup\mathcal{C}_2)\right|\geq\left|w_1\cap\S\right|-1$ (since this is true for $P$ and $\mathcal{C}_1$ is produced by translating $P$ along the vector $w_3$).  The coloring $\tilde{\beta}\rst{L\cap\mathcal{C}_1}$ is $w_1$-periodic of period at most $\left\lfloor\frac{\left|w_1\cap\S\right|}{2}\right\rfloor$.  As above, the coloring extends uniquely.  We set $\beta\colon\ZZ\to\A$ to be the coloring obtained from $\tilde{\beta}$ by this procedure, and the $w_3$-bound follows from $w_3$-periodicity of $\beta\rst{\T_1-(0,m_1p)}$ and vertical periodicity.

We claim that 
\begin{equation}
\label{eq:equal-on-K}
\alpha\rst{K}=\beta\rst{K},
\end{equation} which establishes the claim for the first case.
Let $\T_3$ denote the $(\S_2\setminus v,v)$-border of $(\T_2+m_2q\cdot w_3)$,
where $\S_2$ is the thin generating set of Section~\ref{sec:thin-generating} and
  $v\in E(\S)$ is the edge parallel to $w_1$.

The bounds established (by ambiguity) on the periods of $\alpha\rst{(\T_1-(0,m_1p))}$ and $\alpha\rst{(\T_2-m_2q\cdot w_3)}$ imply that the restrictions $\alpha\rst{\mathcal{C}_1\cup\mathcal{C}_2}=\beta\rst{\mathcal{C}_1\cup\mathcal{C}_2}$ (since $\beta\rst{\mathcal{C}_1}$ is the unique $\A$-coloring that coincides with $\alpha$ on $P$ and has $w_3$ period at most $\left\lfloor\frac{\left|w_3\cap\S\right|}{2}\right\rfloor$, and similarly with $\beta\rst{\mathcal{C}_2}$).  Let $\vec s\in\ZZ$ be the shortest $w_3$-period of $\beta$.  Then $(T^{\vec s}\beta)\rst{\T_3}=\beta\rst{\T_3}$.  Since $\diam_v(\S_2)\leq\left\lceil\frac{\diam_v(\S)}{2}\right\rceil$ by~\eqref{calculation3}, $\diam_v(\T_3)\leq\left\lfloor\frac{\diam_v(\S)}{2}\right\rfloor$.  Therefore $(\T_3+\vec s)\subseteq(\T_2+m_2q\cdot w_3)$, and it follows that $(T^{\vec s}\alpha)\rst{\T_3}=\alpha\rst{\T_3}$.  Recall that by construction, 
$\S_3\subseteq\S_L\subseteq\S$.  Since we know that $\alpha\rst{\mathcal{C}_2}=\beta\rst{\mathcal{C}_2}$; $\diam_{w_3}(\S_2)\leq\diam_{w_3}(\mathcal{C}_2)$; $\alpha\rst{(\T_2-m_2q\cdot w_3)}$ is $w_3$-periodic (in the sense of Definition~\ref{perFineWilf}); $\beta\rst{\mathcal{C}_2}$ is $w_3$-periodic with period $\vec s$; and $\mathcal{S}_2$ is an $\eta$-generating set, we can conclude that the coloring $\alpha\rst{K}$ can be determined from $\alpha\rst{((\T_1-(0,m_1p))\cap(\T_2-m_2q\cdot w_3))}$ and it follows by induction that $\alpha\rst{K}=\beta\rst{K}$.

\subsubsection*{Case 2} Suppose $\S$ is $w_4$-balanced and for infinitely many $m_1$, $\alpha\rst{\T_1-(0,m_1p)}$ extends non-uniquely to its $w_4$-extension.  For any such $m_1$, 
by Corollary~\ref{semiinfiniteperiodicextension2} $\alpha\rst{\T_1-(0,m_1p)}$ is periodic with period vector parallel to $w_4$ and period at most $\left\lfloor\frac{\left|w_4\cap\S\right|}{2}\right\rfloor$.  The proof now proceeds as in Case 1, with $w_4$ taking the role of $w_3$.

\subsubsection*{Case 3} Suppose $\S$ is $w_4$-balanced and for all but finitely many $m_1$, $\alpha\rst{\T_1-(0,m_1p)}$ extends uniquely to its $w_4$-extension.  We have that 
 $\alpha\rst{\bigcup_{i=1}^{\infty}K_i}$ is periodic with period vector parallel to $w_3$ and period at most $2\left|w_3\cap\S\right|-2$, but is {\em not} vertically periodic.  Thus there is some semi-infinite $(\S,w_4)$-strip in $\bigcup_{i=1}^{\infty}K_i$ to which the restriction of $\alpha$ is $(\S,w_4,\eta)$-ambiguous, as otherwise each of the finitely many $\eta$-colorings arising as the restriction of $\alpha$ to such strips extend uniquely to their $w_4$-extension, forcing vertical periodicity.  Let $\T_4$ be a semi-infinite $(\S\setminus w_4)$-strip in $\bigcup_{i=1}^{\infty}K_i$ to which the restriction of $\alpha$ is $(\S,w_4,\eta)$-ambiguous.  Without loss of generality, we can assume that for any $p>0$, $\alpha\rst{\T_4-(0,p)}$ extends uniquely to its $w_4$-extension.  By Corollary~\ref{semiinfiniteperiodicextension2}, $\alpha\rst{\T_4}$ is eventually periodic with period vector parallel to $w_4$ and period at most $\left\lfloor\frac{\left|w_4\cap\S\right|}{2}\right\rfloor$ and gap at most $\left|w_4\cap\S\right|-1$.  Again by Corollary~\ref{semiinfiniteperiodicextension2}, 
 the restriction of $\alpha$ to the $w_4$-extension of $\T_4$ is eventually periodic with the same gap and period at most $2\left\lfloor\frac{\left|w_4\cap\S\right|}{2}\right\rfloor\leq\left|w_4\cap\S\right|\leq\left|w_3\cap\S\right|-1$.  Inductively, we produce a sequence of sets
$$
\T_4=\T_4^1\subset\T_4^2\subset\cdots
$$
where $\T_4^{i+1}$ is the $w_4$-extension of $\T_4^i$.  Since $\alpha\rst{\T_4^i}$ extends uniquely to its $w_4$-extension and since $\alpha\rst{\T_4}$ is periodic with period at most $\left|w_3\cap\S\right|-1$, the restriction of $\T_4^i$ is also $w_3$-periodic with period dividing that of $\alpha\rst{\T_4}$, for all $i=1,2,\dots$  Since $\alpha\rst{K}$ is doubly periodic and
$$
K\cap\bigcup_{i=1}^{\infty}\T_4^i
$$
is an infinite, convex set whose two semi-infinite edges are non-parallel, the restriction of $\alpha$ to any $(\S\setminus w_4,w_4)$-strip is periodic with period vector parallel to $w_4$ and period dividing the period of $\alpha\rst{\T_4}$.  Since $\alpha\rst{\T_2+m_2\cdot qw_3}$ is vertically periodic with period at most $\left\lfloor\frac{\left|w_1\cap\S\right|}{2}\right\rfloor$, has $w_3$-diameter at least $\diam_{w_3}(P)$, and the $w_3$-period of $\alpha\rst{K}$ is at most $\left|w_3\cap\S\right|-1$, $\alpha\rst{K}$ is also vertically periodic of period at most $\left\lfloor\frac{\left|w_1\cap\S\right|}{2}\right\rfloor$.

This completes the proof of Claim~\ref{claim:bounds-on-periods}.

\subsection{Completing the proof of Theorem~\ref{twoormore}}
We make use of the properties of $\alpha$ to obtain a contradiction. 
Specifically, we show that for a given $\eta$-generating set $\S$, there exists a convex subset $\S^*\subset\S$ for which there are more than 	
$$
P_{\eta}(\S)-P_{\eta}(\S^*)
$$
$\eta$-colorings of $\S^*$ that extend non-uniquely to $\eta$-colorings of $\S$.  This leads to a contradiction, as if
\begin{eqnarray*}
P&:=&\{(T^{\vec u}\eta)\rst{\S}\colon\vec u\in\ZZ\}; \\
Q&:=&\{(T^{\vec u}\eta)\rst{\S^*}\colon\vec u\in\ZZ\}, 
\end{eqnarray*}
then there is a natural surjective map $R:P\to Q$ by restriction.  The number of elements of $Q$ that have more than one preimage (equivalently, the number of colorings of $\S^*$ that extend nonuniquely to colorings of $\S$) is at most $\left|P\right|-\left|Q\right|=P_{\eta}(\S)-P_{\eta}(\S^*)$.  

\subsubsection{Construction of the set $\S^*$}
\label{sec:setS}  

Given $x\in\Z$, let $\ell_x=\{(x,y)\in\ZZ\colon y\in\Z\}$ denote the vertical line passing through $x$.  For $x\in\Z$ such that $\ell_x\cap\S\neq\emptyset$, let $A_x$ denote the bottom-most $\left|w_1\cap\S\right|-2$ elements of $\ell_x\cap\S$ (recall that $\S$ is $w_1$-balanced and so each such intersection contains at least $\left|w_1\cap\S\right|-1$ integer points).  Given $d\geq1$, define
\begin{equation}
\label{eq:Bd}
B(d):=\bigcup_{i=0}^{d-1}A_{(x_{\max}-i)}, 
\end{equation}
where, as in Section~\ref{sec:thin-generating}, $x_{\max}$ denotes that maximal $x$-coordinate of any element of $\S$.  Let $\mathcal{T}(K):=\{\vec u\in\ZZ\colon \S+\vec u\subset K\}$ be the set of translations taking $\S$ to a subset of $K$.   Choose minimal $d\geq1$ such that 
\begin{equation}\label{defofd}
\text{for any $\vec u, \vec v\in\mathcal{T}(K)$, whenever $\alpha\rst{B(d)+\vec u}=\alpha\rst{B(d)+\vec v}$,}
\end{equation}
we have that $\alpha\rst{\S+\vec u}=\alpha\rst{\S+\vec v}$.  Since $\alpha\rst{K}=\beta\rst{K}$ and $\beta$ is doubly periodic, we can rephrase this condition as saying that $d$ is the minimal integer such that
\begin{equation}
\label{conditionond}
\text{every }\beta\text{-coloring of }B(d)\text{ extends uniquely to a }\beta\text{-coloring of }\S.
\end{equation}
(Note that such an integer $d$ exists because $\alpha\rst{K}$ is vertically periodic with period at most $\left\lfloor\frac{\left|w_1\cap\S\right|}{2}\right\rfloor\leq\left|w_1\cap\S\right|-2$.)  Let $\S^*\subset\S$ be the set obtained by removing the topmost element of $\ell_x\cap\S$ for all $x$.  Note that $\S^*$ is a convex, proper subset of $\S$.

Therefore $B(d)\subseteq\S^*$ and $D_{\eta}(\S^*)>D_{\eta}(\S)$, by Property~\eqref{cond:3} of Lemma~\ref{stronggeneratingset}.  As a result, there are at most $\left|\S\setminus\S^*\right|-1$ distinct $\eta$-colorings of $\S^*$ that extend non-uniquely to $\eta$-colorings of $\S$.  

In the next two sections, we obtain a contradiction, thus completing the proof of the theorem.   
We show that there are at
    least $\left|\S\setminus\S^*\right|=\diam_{w_1}(\S)$ distinct $\eta$-colorings of $\S^*$ that extend non-uniquely
    to $\eta$-colorings of $\S$.
The colorings arise from two sources: we find $d$ such $\eta$-colorings that are of the form $(T^{\vec x}\beta)\rst{\S^*}$ (Section~\ref{sec:count-eta-2}) and we find $\diam_{w_1}(\S)-d$ additional $\eta$-colorings that we show are not of the form $(T^{\vec x}\beta)\rst{\S^*}$ (Section~\ref{sec:count-eta-1}).  All of these colorings  turn out to be $\alpha$-colorings of $\S^*$ that extend non-uniquely to $\alpha$-colorings of $\S$ (recall that by~\eqref{eq:equal-on-K}, $\alpha\rst{K}=\beta\rst{K}$).  
    This causes no problem since $\alpha\in X_{\eta}$, and so every $\alpha$-coloring of $\S^*$ that extends non-uniquely to an $\alpha$-coloring of $\S$ is also an $\eta$-coloring that extends non-uniquely.

\subsubsection{Counting colorings along the $w_1$-boundary}
\label{sec:count-eta-1} 
In this section, we find
\begin{equation*}
%\label{definitionofdbar}
\overline{d}:=\diam_{w_1}(\S)-d.
\end{equation*}
distinct $\alpha$-colorings of $\S^*$ that extend non-uniquely to $\alpha$-colorings of $\S$.  We show that none of the these colorings are of the form $(T^{\vec x}\beta)\rst{\S^*}$ for $\vec x\in\ZZ$, meaning that they are not also $\beta$-colorings of $\S^*$.

%\m
%
%\noindent {\bf Setup}.  
\subsubsection*{Setup}
Translating the coordinate system if necessary, we can assume that the edge of $\conv(K)$ parallel to $w_1$ is $\{(0,y)\in\ZZ\colon y\leq0\}$ and that the intersection of the $w_1$-extension of $K$ with the line $\{(-1,y)\colon y\in\Z\}$ is the semi-infinite line $\{(-1,y)\colon y\leq y_0\}$ for some $y_0\in\Z$.  Without loss of generality, assume that
$$
w_1=\{(-1,y)\in\ZZ\colon y_0-\left|w_1\cap\S\right|+1\leq y\leq y_0\}.
$$
Let $\S^*$ and $B(d)$ be as in Section~\ref{sec:setS}.

Let 
\begin{equation}\label{Defofc}
c_1,\dots,c_t\colon B(d)\to\A \text{ denote the set of all $\beta$-colorings of $B(d)$,}
    \end{equation}
    and note that this set coincides with $\eta$-colorings of $B(d)$ occurring as the restriction of
    $\alpha$ to the set $K$. 
    For  $i=1,\dots,t$, 
let $C_i\colon \S^*\to\A$ denote the unique $\beta$-coloring of $\S^*$ 
whose restriction to $B(d)$ is $c_i$, and note that the uniqueness follows from~\eqref{conditionond}.  Equivalently,  
this is the coloring $(T^{\vec u}\alpha)\rst{\S^*}$, where $\vec u\in\ZZ$ is chosen such that $\S+\vec u\subset K$ and $\alpha\rst{B(d)+\vec u}=c_i$.

As in Section~\ref{sec:periodic-half}, let $\tilde{\S}:=\S\setminus w_1$.  Let
\begin{equation*}
%\label{definitionofb}
\vec b\in\ZZ\text{ be the shortest }w_3\text{-period vector for }\alpha\rst{K}.
\end{equation*}
Let $\T_2$ be the $(\tilde{\S},w_1)$-border of $K$, as in Notation~\ref{notation:T1}.  Then the colorings of $\T_2$ given by $\alpha\rst{\T_2}$ and $(T^{\vec b}\alpha)\rst{\T_2}$ coincide.  
By maximality of $K$, 
the colorings of $\T_2\cup\{(-1,y)\colon y\leq y_0\}$ given by $\alpha$ and $T^{\vec b}\alpha$ do not coincide.  We begin by comparing the colorings $\alpha\rst{\{(-1,y)\colon y\leq y_0\}}$ and $(T^{\vec b}\alpha)\rst{\{(-1,y)\colon y\leq y_0\}}$.

%\m

\subsubsection*{The line $\{(-1,y)\colon y\leq y_0\}$ and behavior of $\alpha$}.  By the first part of Claim~\ref{claim:bounds-on-periods}, $\alpha\rst{K}$ is vertically periodic of period at most $\left\lfloor\frac{\left|w_1\cap\S\right|}{2}\right\rfloor$.

Let $(0,-p)$
denote the shortest vertical period for $(T^{\vec b}\alpha)\rst{\{(-1,y)\colon y\leq y_0\}}$.
Then $p$ is a divisor of the smallest vertical period of $\alpha\rst{K}$.  In particular,
\begin{equation}
\label{boundonp}
p\leq\left\lfloor\frac{\left|w_1\cap\S\right|}{2}\right\rfloor.
\end{equation}

\begin{claim}
\label{verticalperiodclaim}
$\alpha\rst{\{(-1,y)\colon y\leq y_0\}}$ is vertically periodic with period $q\leq \left\lfloor\frac{\left|w_1\cap\S\right|}{2}\right\rfloor$.
\end{claim}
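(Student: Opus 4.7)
The plan is to adapt the proof of Lemma~\ref{ambiguousperiod2} to the semi-infinite strip $\T_2$ in place of a half-plane, thereby deriving vertical periodicity of $\alpha$ on the column $\{(-1,y):y\leq y_0\}$ with the sharp strong-generating-set bound $\lfloor|w_1\cap\S|/2\rfloor$.

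First I would exhibit $\alpha\rst{\T_2}$ as $(\T_2,w^*,\eta)$-ambiguous, where $w^*\in E(\T_2)$ is the semi-infinite edge along the $y$-axis. Taking $g_1=\alpha$ and $g_2=T^{\vec b}\alpha$, both of which lie in $X_\eta\subseteq X_\S(\eta)$, the preceding discussion already establishes that these colorings coincide on $\T_2$ (because $\vec b$ is a $w_3$-period of $\alpha\rst{K}$ and the semi-infinite $w_3$-edge of $K$ forces $\T_2+\vec b\subset K$) and differ on $\{(-1,y):y\leq y_0\}\subset\Ext_{w^*}(\T_2)$ (by maximality of $K$, since otherwise $\vec b$ would remain a period on an enlargement of $K$, contradicting maximality). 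This witnesses the ambiguity and moreover guarantees that $T^{\vec b}\alpha$ restricted to the column is vertically periodic with period at most $\lfloor|w_1\cap\S|/2\rfloor$.

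Next I would mimic the proof of Lemma~\ref{ambiguousperiod} using a strong-generating-set variant: consider the family of $\tilde{\S}$-translates whose (missing) $w_1$-edge lies on the new column $\{(-1,y)\}$. Since $\S$ is $w_1$-balanced, each such translate sits entirely inside $\T_2$ for $y$ sufficiently negative, and the coloring it inherits from $\alpha$ is drawn from $\alpha\rst{\T_2}$, which is itself $w_1$-periodic of period at most $\lfloor|w_1\cap\S|/2\rfloor$ by Claim~\ref{claim:bounds-on-periods}(i). By Lemma~\ref{nonuniqueextension2}, at most $\lfloor|w_1\cap\S|/2\rfloor$ distinct $\tilde{\S}$-colorings extend non-uniquely to $\S$-colorings, and the ambiguity established in the previous paragraph forces every such translate to extend non-uniquely. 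The Morse--Hedlund tracking function $g(y):=\alpha\rst{\tilde{\W}+\vec u+(0,y)}$, defined exactly as in the proof of Lemma~\ref{ambiguousperiod} (where $\tilde{\W}$ is the collection of bottom-most integer points of $\tilde{\S}$), therefore satisfies $P_g(|w_1\cap\S|-1)\leq\lfloor|w_1\cap\S|/2\rfloor$. Invoking Theorem~\ref{MorseHedlundTheorem}(ii) (the Morse--Hedlund Theorem in its semi-infinite form), $g$ is eventually periodic with period at most $\lfloor|w_1\cap\S|/2\rfloor$, which translates immediately into the desired vertical periodicity of $\alpha\rst{\{(-1,y):y\leq y_0\}}$.

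The main obstacle is the geometric bookkeeping — verifying that $\T_2+\vec b\subset K$ so that $\alpha$ and $T^{\vec b}\alpha$ truly agree on $\T_2$, and verifying that the $\tilde{\S}$-translates aligned with the new column lie inside $\T_2$ for all sufficiently small $y$ (so that the Morse--Hedlund function $g$ takes values governed by the periodic structure of $\alpha\rst{\T_2}$). Both facts follow from the fact that $K$ has semi-infinite boundary edges parallel to $w_1$ and $w_3$ and from the definition of $\T_2$ as the $(\tilde{\S},w_1)$-border of $K$.
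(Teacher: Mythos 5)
Your argument has a genuine gap at the final step. The Morse--Hedlund tracking function $g(y)=\alpha\rst{\tilde{\W}+\vec u+(0,y)}$ records only colorings of translates of $\tilde{\S}=\S\setminus w_1$, and all of these translates lie inside $\T_2\subset K$. Consequently, periodicity of $g$ yields periodicity of $\alpha$ on a sub-border of $K$ --- which is already known from Claim~\ref{claim:bounds-on-periods}(i) --- and says nothing about the colors on the column $\{(-1,y)\colon y\leq y_0\}$, which is disjoint from $K$. This is exactly where determinism fails: since every coloring $\alpha\rst{\tilde{\S}+(0,\lambda)}$ extends \emph{non-uniquely} to an $\eta$-coloring of $\S$ (as you correctly argue), the colors at the points $(-1,y)$ are not functions of $\alpha\rst{\T_2}$, so periodicity cannot be transferred from $\T_2$ to the column. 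Note also that Lemmas~\ref{ambiguousperiod} and~\ref{ambiguousperiod2} conclude periodicity of the border \emph{inside} the half plane, not of the newly adjoined column, so the adaptation you propose cannot deliver the statement of the claim even in principle.

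The paper closes this gap by counting colorings of the full translates $\S+(0,-j)$, each of which contains $\left|w_1\cap\S\right|$ consecutive points of the column. The key ingredient is the disjointness
$$
\{(T^{(0,-j)}\alpha)\rst{\S}\colon j\in\N\}\cap\{(T^{\vec b+(0,-j)}\alpha)\rst{\S}\colon j\in\N\}=\emptyset,
$$
proved by ruling out coincidences $(T^{(0,-j_1)}\alpha)\rst{\S}=(T^{(0,-j_2)+\vec b}\alpha)\rst{\S}$ in two cases: $j_1\neq j_2$ (using minimality of the vertical period of $\alpha\rst{K}$ together with $w_1$-balancedness) and $j_1=j_2$ (using the generating property and maximality of $K$). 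Since both families restrict to the same family of $\tilde{\S}$-colorings, each of which extends non-uniquely, this disjointness forces
$\left|\{(T^{(0,-j)}\alpha)\rst{\S}\colon j\in\N\}\right|\leq P_{\eta}(\S)-P_{\eta}(\tilde{\S})\leq\left\lfloor\frac{\left|w_1\cap\S\right|}{2}\right\rfloor$,
and it is this bound on colorings of sets that straddle the column that produces the stated period bound for $\alpha\rst{\{(-1,y)\colon y\leq y_0\}}$. Your proposal never engages with this step, and without it the asserted conclusion does not follow.
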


To prove the claim,  we first show that there are no integers $0\leq j_1, j_2<p$ such that
$$
(T^{(0,-j_1)}\alpha)\rst{\S}=(T^{(0,-j_2)+\vec b}\alpha)\rst{\S}.
$$
For contradiction, suppose not.    We consider the case that $j_1\neq j_2$ first and then address the case  $j_1=j_2$.

Suppose $j_1<j_2$ and observe that 
$$
(T^{(0,-j_1)}\alpha)\rst{\tilde{\S}}=(T^{(0,-j_2)+\vec b}\alpha)\rst{\tilde{\S}}.
$$
Since $\vec b$ is a period vector for $\alpha\rst{K}$,
$$
(T^{(0,-j_1)}\alpha)\rst{\tilde{\S}}=(T^{(0,-j_2)}\alpha)\rst{\tilde{\S}}.
$$
Since $\S$ is $w_1$-balanced, every line parallel to $w_1$ that has nonempty intersection with $\tilde{\S}$ intersects in at least $\left|w_1\cap\S\right|-1$ places.  Since $\alpha\rst{K}$ is vertically periodic of period at most $\left\lfloor\frac{\left|w_1\cap\S\right|}{2}\right\rfloor\leq\left|w_1\cap\S\right|-2$, this implies that $j_2-j_1$ is a vertical period for $\T_2$ (the $(\tilde{\S},w_1)$-border of $K$).  By Claim~\ref{claim:bounds-on-periods}, the minimal $w_3$-period of $\alpha\rst{K}$ is smaller than the $w_3$-width of $\T_2$, so $j_2-j_1$ is a vertical period for $\alpha\rst{K}$.  This contradicts minimality of $p$, and 
we conclude that $j_1$ cannot be smaller than $j_2$.  A similar argument shows that $j_1$ cannot be larger     than $j_2$.

Suppose $j_1=j_2$.  Then since $\S$ is $\eta$-generating and
\begin{eqnarray*}
(T^{(0,-j_1)}\alpha)\rst{\S}&=&(T^{(0,-j_1)+\vec b}\alpha)\rst{\S}; \\
\alpha\rst{\T_2}&=&(T^{\vec b}\alpha)\rst{\T_2},
\end{eqnarray*}
we have that 
$$
\alpha\rst{\{(-1,y)\colon y\leq y_0\}}=(T^{\vec b}\alpha)\rst{\{(-1,y)\colon y\leq y_0\}}.
$$
This contradicts maximality of $K$.  We conclude that no such integers $j_1, j_2$ exist.
 
Now, there are at most $P_{\eta}(\S)-P_{\eta}(\tilde{\S})$ distinct $\eta$-colorings of $\tilde{\S}$ that extend non-uniquely to $\eta$-colorings of $\S$.  Each of the colorings
$$
\{(T^{(0,-j)}\alpha)\rst{\tilde{\S}}\colon j\in\N\}
$$
is such a coloring, by maximality of $K$ and the fact that $\S$ is $\eta$-generating.  However,
\begin{equation}
\label{eq:nul-inter}
\{(T^{(0,-j)}\alpha)\rst{\S}\colon j\in\N\}\cap\{(T^{\vec b+(0,-j)}\alpha)\rst{\S}\colon j\in\N\}=\emptyset.
\end{equation}
On the other hand,
\begin{eqnarray*}
\left|\{(T^{(0,-j)}\alpha)\rst{\S}\colon j\in\N\}\cup\{(T^{\vec b+(0,-j)}\alpha)\rst{\S}\colon j\in\N\}\right|&& \\
&&\hspace{-1 in}\leq P_{\eta}(\S)-P_{\eta}(\tilde{\S})+\left|\{(T^{(0,-j)}\alpha)\rst{\tilde{\S}}\colon j\in\N\}\right|.
\end{eqnarray*}
Since
$$
\left|\{(T^{\vec b+(0,-j)}\alpha)\rst{\S}\colon j\in\N\}\right|\geq\left|\{(T^{(0,-j)}\alpha)\rst{\tilde{\S}}\colon j\in\N\}\right|,
$$
there are at most $P_{\eta}(\S)-P_{\eta}(\tilde{\S})\leq\left\lfloor\frac{\left|w_1\cap\S\right|}{2}\right\rfloor$ elements of the set
$$
\{(T^{(0,-j)}\alpha)\rst{\S}\colon j\in\N\}.
$$
By Proposition~\ref{prop:extendedambiguousperiod} $\alpha\rst{\{(-1,y)\colon y\leq y_0\}}$ is vertically periodic and by the above bound, 
\begin{equation}
\label{eq:vert-period}
\text{the minimal vertical period of }\alpha\rst{\{(-1,y)\colon y\leq y_0\}}\text{ is } q\leq  \left\lfloor\frac{\left|w_1\cap\S\right|}{2}\right\rfloor.
\end{equation}
This establishes the claim.  

Using the bounds on $p$ and $q$ given by~\eqref{boundonp} and~\eqref{eq:vert-period}, we establish the following claim.
\begin{claim}
\label{distinguishing}
There do not exist integers $0\leq i<\overline{d}$ and $y\leq0$ such that
$$
(T^{(-i,-y)}\alpha)\rst{\S^*}\text{ is a }\beta\text{-coloring of }\S^*.
$$
\end{claim}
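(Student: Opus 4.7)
I plan to argue by contradiction. Suppose there exist $0\leq i<\overline d$ and $y\leq 0$ with $(T^{(-i,-y)}\alpha)\rst{\S^*}=(T^{\vec v}\beta)\rst{\S^*}$ for some $\vec v\in\ZZ$. The first step will be to upgrade this equality on $\S^*$ to a pointwise equality $\alpha=\beta$ on the full translate $\S^*+(-i,-y)$. Because $i<\overline d=\diam_{w_1}(\S)-d$, every column of $B(d)+(-i,-y)$ has $x$-coordinate in $[0,x_{\max}-i]$; by replacing $y$ by a sufficiently negative integer (any single counterexample suffices for the contradiction) I can also arrange $B(d)+(-i,-y)\subset K$, where $\alpha=\beta$. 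Then $(T^{(-i,-y)}\alpha)\rst{B(d)}=(T^{(-i,-y)}\beta)\rst{B(d)}$, while the assumption restricted to $B(d)$ gives $(T^{\vec v}\beta)\rst{B(d)}=(T^{(-i,-y)}\beta)\rst{B(d)}$. The defining property~\eqref{conditionond} of $d$ then forces these two $\beta$-colorings of $B(d)$ to extend to the same $\beta$-coloring of $\S$, so $(T^{\vec v}\beta)\rst{\S^*}=(T^{(-i,-y)}\beta)\rst{\S^*}$, i.e.\ $\alpha=\beta$ on $\S^*+(-i,-y)$.

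Next I will extract a long vertical agreement segment on $L:=\{(-1,y')\colon y'\leq y_0\}$. The integer column of $\S^*$ at $x$-coordinate $i-1$ is nonempty (since $0\leq i\leq x_{\max}+1$), convex, and consists of consecutive integer $y$-values; it has exactly $|w_1\cap\S|-1$ points when $i=0$ (it is $w_1$ minus its topmost point) and at least $|w_1\cap\S|-2$ points when $i\geq 1$ (because $\S$ is $w_1$-balanced). Shifting by $(-i,-y)$ sends this column into $\{-1\}\times\Z$; decreasing $y$ further if needed places the image inside $L$. The previous step then provides at least $|w_1\cap\S|-2$ consecutive integer points of $L$ on which $\alpha=\beta$.

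Then I will invoke the Fine--Wilf Theorem. By Claim~\ref{verticalperiodclaim}, $\alpha\rst L$ is vertically periodic of period $q\leq\lfloor|w_1\cap\S|/2\rfloor$. Since $\beta\rst K=\alpha\rst K$, any vertical period of $\alpha\rst K$ is also a period of $\beta$ on the large set $K$, and because $\beta$ is doubly periodic this forces the minimal vertical period $p_\beta$ of $\beta$ to divide that of $\alpha\rst K$; in particular $p_\beta\leq\lfloor|w_1\cap\S|/2\rfloor$ by Claim~\ref{claim:bounds-on-periods}. A direct check, using that $|w_1\cap\S|\geq 3$ is ensured by Proposition~\ref{balancedstronggeneratingset}, shows that for $q,p_\beta\leq\lfloor|w_1\cap\S|/2\rfloor$ one always has $q+p_\beta-\gcd(q,p_\beta)\leq|w_1\cap\S|-2$. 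Fine--Wilf then forces $\alpha\rst L=\beta\rst L$.

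Finally, combining $\alpha\rst K=\beta\rst K$ with $\alpha\rst L=\beta\rst L$ yields $\alpha=\beta$ on $\Ext_{w_1}(K)=K\cup L$, a convex set strictly containing $K$ (the point $(-1,y_0)$ lies in the former but not in $K$). Since $\beta$ is doubly periodic, I can choose two linearly independent vectors in $\beta$'s period lattice that leave $\Ext_{w_1}(K)$ invariant under translation (a vertical downward period, together with an appropriate $w_3$-direction period, tilted by a large vertical multiple if needed to guarantee that $L$ is also carried into $\Ext_{w_1}(K)$). This makes $\alpha\rst{\Ext_{w_1}(K)}$ doubly periodic in the sense of Definition~\ref{convexperiod}, contradicting the maximality of $K$. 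The hardest step is the Fine--Wilf estimate: the $|w_1\cap\S|-2$ consecutive agreement points barely meet the required threshold, and this is precisely where the strengthened complexity hypothesis $P_\eta(n,k)\leq nk/2$ is essential, entering through Lemma~\ref{ambiguousperiod2} and Claim~\ref{claim:bounds-on-periods}, which together give the bounds on $q$ and $p_\beta$.
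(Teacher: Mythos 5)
Your argument follows the same route as the paper's: use the defining property~\eqref{conditionond} of $d$ to force the hypothesized $\beta$-coloring of $\S^*$ to agree with the one read off from $K$, transfer this to a run of $\left|w_1\cap\S\right|-2$ consecutive agreements on the line $\{(-1,y)\colon y\leq y_0\}$, and close with Fine--Wilf using the period bounds from Claims~\ref{verticalperiodclaim} and~\ref{claim:bounds-on-periods}. Your comparison of $\alpha$ with $\beta$ on that line is the same as the paper's comparison of $\alpha$ with $T^{\vec b}\alpha$ (these coincide on the line, since $\vec b$ is a period of $\beta$ and the translate of the line by $\vec b$ lies in $K$, where $\alpha=\beta$), and your final contradiction (a convex set strictly larger than $K$ on which $\alpha$ is doubly periodic) is an unpacking of the fact, already recorded in the paper from maximality of $K$, that $\alpha$ and $T^{\vec b}\alpha$ cannot coincide on $\T_2\cup\{(-1,y)\colon y\leq y_0\}$. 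Your Fine--Wilf bookkeeping --- $q+p_\beta-\gcd(q,p_\beta)\leq\left|w_1\cap\S\right|-2$, met exactly by the $\left|w_1\cap\S\right|-2$ points available in a column of $\S^*$ --- is correct, and in fact slightly more careful than the paper's own count.

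The one step that does not work as written is ``by replacing $y$ by a sufficiently negative integer \dots\ I can arrange $B(d)+(-i,-y)\subset K$'' (and the later ``decreasing $y$ further if needed''). The statement you are contradicting supplies a \emph{specific} pair $(i,y)$ for which $(T^{(-i,-y)}\alpha)\rst{\S^*}$ is a $\beta$-coloring; nothing guarantees that this property persists when $y$ is replaced by something more negative, because $\alpha$ is not known to be vertically periodic on the columns of $\S^*+(-i,-y)$ that lie outside $K$ --- indeed, the whole point of the surrounding argument is that it eventually fails to be. So you may not move the counterexample. The paper instead observes that for \emph{every} $(i,y)$ in the stated range the containment of the translate of $B(d)$ in $K$, and the placement of the leftmost column of the translate of $\S^*$ on $\{(-1,y)\colon y\leq y_0\}$, hold automatically from the definition of $\overline d=\diam_{w_1}(\S)-d$ and the normalization $w_1=\{(-1,y)\colon y_0-\left|w_1\cap\S\right|+1\leq y\leq y_0\}$; once you delete the ``replace $y$'' clauses and verify these containments for the given $(i,y)$, your argument goes through. (A smaller point: you should not assert $\Ext_{w_1}(K)=K\cup\{(-1,y)\colon y\leq y_0\}$ or its convexity without checking the depth of the extension; it is cleaner to contradict, as the paper does, the non-coincidence of $\alpha$ and $T^{\vec b}\alpha$ on $\T_2\cup\{(-1,y)\colon y\leq y_0\}$ directly.)
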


We establish the claim by contradiction, and so choose $i$ and $y$ for which the claim fails.  Then 
by definition of $\overline{d}=\diam_{w_1}(\S)-d$, 
we have that $B(d)+(i,y)$ is a subset of $K$, and since $\alpha\rst{K}=\beta\rst{K}$, we have 
that $(T^{(-i,-y)}\alpha)\rst{B(d)}$ is a $\beta$-coloring of $B(d)$.  By definition of $d$, this extends uniquely to a $\beta$-coloring of $\S^*$.  
By choice of $(-i,-y)$, $T^{(-i,-y)}\alpha)\rst{\S^*}$ is a $\beta$-coloring, so
    $$
    (T^{(-i,-y)}\alpha)\rst{\S^*}=(T^{(-i,-y)+\vec b}\alpha)\rst{\S^*}.
    $$ 
Therefore,
\begin{equation}
\label{contradiction1}
\begin{tabular}{l}
there is a set of $\left|w_1\cap\S\right|-1$ consecutive integer points \\
on the line $\{(-1,y)\colon y\leq y_0\}$ where $\alpha$ and $T^{\vec b}\alpha$ coincide.
\end{tabular}
\end{equation}
By~\eqref{boundonp}, the vertical period of the coloring $\alpha\rst{\{(-1,y)\colon y\leq y_0\}}$ is $p\leq\left\lfloor\frac{\left|w_1\cap\S\right|}{2}\right\rfloor$ and by~\eqref{eq:vert-period} the vertical period of the coloring $(T^{\vec b}\alpha)\rst{\{(-1,y)\colon y\leq y_0\}}$ is $q\leq\left\lfloor\frac{\left|w_1\cap\S\right|}{2}\right\rfloor$.

If $p=q$, then $\alpha\rst{\{(-1,y)\colon y\leq y_0\}}=(T^{\vec b}\alpha)\rst{\{(-1,y)\colon y\leq y_0\}}$, contradicting maximality of $K$.  Otherwise $p\neq q$, and since both are integers, $p+q-\gcd(p,q)\leq\left|w_1\cap\S\right|-2$.  
By the Fine-Wilf Theorem and~\eqref{contradiction1}, we again have that
$$
\alpha\rst{\{(-1,y)\colon y\leq y_0\}}=(T^{\vec b}\alpha)\rst{\{(-1,y)\colon y\leq y_0\}},
$$
again a contradiction.  We conclude that no such $0\leq i<\overline{d}$ and $y\leq0$ exist, establishing
the claim.

If there were some $j=0,\dots,\overline{d}-1$ such that the restriction of $\alpha$ to the strip given by
$$
\bigcup_{s\in\Z}(\tilde{\S}+(-j,s))
$$
is vertically periodic, then $\eta$ would be vertically periodic by Corollary~\ref{periodicstripextension}, a contradiction.  On the other hand, by Corollary~\ref{periodicstripextension}, the restriction of $\alpha$ to each such strip is eventually vertically periodic, since $\alpha\rst{K}$ is.  Therefore for all $j=0,\dots,\overline{d}-1$, 
there exists maximal $s_j\in\Z$ such that 
\begin{equation}
\label{eq:semi-inf-strip}
\text{the restriction of $\alpha$ to }
\bigcup_{s=-\infty}^{s_j-1}(\S+(-j,s))
\text{ is vertically periodic.}
\end{equation}

%\m

\subsubsection*{Counting $\alpha$-colorings of $\S^*$ that extend non-uniquely}

\begin{claim}
With the integers $\{s_j\}_{j=0}^{\overline{d}-1}$ as defined above,
\label{lastclaimofsection}
\begin{enumerate}
\item the $\eta$-colorings of $\S^*$ given by $\alpha\rst{(\S^*+(-j,s_j))}$ are distinct for $j=1,\dots,\overline{d}$;
\item for each such $j$, the coloring of $\S^*$ given by $\alpha\rst{(\S^*+(-j,s_j))}$ extends non-uniquely to an $\alpha$-coloring of $\S$.
\end{enumerate}
\end{claim}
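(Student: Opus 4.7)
The plan is to handle the easier nonunique-extension statement (ii) first, and then argue (i) by contradiction using Claim~\ref{distinguishing}.

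For part (ii), fix $j\in\{0,\ldots,\overline{d}-1\}$ and let $p_j$ be the minimal vertical period of $\alpha$ restricted to the semi-infinite strip
\[
\Sigma_j := \bigcup_{s\leq s_j-1}(\S+(-j,s)),
\]
whose existence is guaranteed by~\eqref{eq:semi-inf-strip}. I would first verify the geometric inclusion $\S^*+(-j,s_j)\subseteq\S+(-j,s_j-1)\subseteq\Sigma_j$: translating $\S+(-j,s_j)$ down by one removes exactly the bottom-most element of each vertical column but retains every point of $\S^*+(-j,s_j)$, since $\S^*$ was formed from $\S$ by removing the top-most element of each column. Consequently $T^{(0,p_j)}\alpha$ and $\alpha$ agree on $\S^*+(-j,s_j)$. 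By maximality of $s_j$, the enlarged strip $\Sigma_j\cup(\S+(-j,s_j))$ is not vertically periodic of period $p_j$, so $T^{(0,p_j)}\alpha$ and $\alpha$ must differ at some point of $(\S+(-j,s_j))\setminus\Sigma_j$, a set which a short computation identifies with $(\S\setminus\S^*)+(-j,s_j)$. Since both $\alpha$ and $T^{(0,p_j)}\alpha$ lie in $X_\eta$, this produces two distinct $\eta$-extensions of $\alpha\rst{\S^*+(-j,s_j)}$ to an $\eta$-coloring of $\S+(-j,s_j)$, proving (ii).

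For part (i) I would argue by contradiction: suppose
\[
\alpha\rst{\S^*+(-j_1,s_{j_1})}=\alpha\rst{\S^*+(-j_2,s_{j_2})}
\]
for some $0\leq j_1<j_2\leq\overline{d}-1$. Rephrased, the shift $\vec v:=(j_1-j_2,s_{j_2}-s_{j_1})$ preserves $\alpha$ on $\S^*+(-j_1,s_{j_1})$. Restricting this invariance to $B(d)\subseteq\S^*$ and combining it with the vertical period $p_{j_1}$ of $\alpha\rst{\Sigma_{j_1}}$, I plan to propagate the invariance to the larger strip $\Sigma_{j_1}\cup(\S+(-j_1,s_{j_1}))$, by exploiting the generating property of $\S$ and the minimality condition~\eqref{conditionond} defining $d$. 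Two cases then arise: either (a) $\alpha$ acquires a new vertical periodicity on $\Sigma_{j_1}\cup(\S+(-j_1,s_{j_1}))$, contradicting the maximality of $s_{j_1}$, or (b) the restriction $\alpha\rst{B(d)+(-j_1,s_{j_1})}$ coincides with one of the $\beta$-colorings $c_i$ of $B(d)$ from~\eqref{Defofc}, whereupon~\eqref{conditionond} forces $\alpha\rst{\S^*+(-j_1,s_{j_1})}=C_i$, a $\beta$-coloring of $\S^*$, contradicting Claim~\ref{distinguishing}.

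The main obstacle is part (i): converting a single coloring identity on $\S^*$ into a contradiction demands careful bookkeeping of how the nonvertical shift $\vec v$ interacts with the vertical periodicity $p_{j_1}$. The Fine-Wilf--type bounds on $p_j$ established in Claim~\ref{claim:bounds-on-periods} together with the minimality of $d$ in~\eqref{conditionond} should close the loop. Once Claim~\ref{lastclaimofsection} is proved, combining its $\overline{d}$ non-$\beta$ witnesses with the $d$ $\beta$-coloring witnesses produced in Section~\ref{sec:count-eta-2} yields $\diam_{w_1}(\S)=\lvert\S\setminus\S^*\rvert$ distinct $\eta$-colorings of $\S^*$ that extend non-uniquely to $\S$, contradicting the upper bound $\lvert\S\setminus\S^*\rvert-1$ implied by $D_\eta(\S^*)>D_\eta(\S)$, and thus completing the proof of Theorem~\ref{twoormore}.
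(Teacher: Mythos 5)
Your argument for part (ii) is correct and is essentially the paper's: the inclusion $\S^*+(-j,s_j)\subseteq\S+(-j,s_j-1)\subseteq\Sigma_j$ gives agreement of $\alpha$ with its vertical shift on $\S^*+(-j,s_j)$, while maximality of $s_j$ forces a disagreement somewhere on $(\S\setminus\S^*)+(-j,s_j)$, producing two distinct $\eta$-extensions to $\S$.

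Part (i), however, has a genuine gap. Your case (b) rests on the inference that if $\alpha\rst{B(d)+(-j_1,s_{j_1})}$ equals some $\beta$-coloring $c_i$ of $B(d)$, then~\eqref{conditionond} forces $\alpha\rst{\S^*+(-j_1,s_{j_1})}=C_i$. That implication is false: \eqref{conditionond} asserts uniqueness of the extension \emph{among $\beta$-colorings} (equivalently, for translates of $\S$ lying inside $K$); it says nothing about an arbitrary $\eta$-coloring of $\S^*$ that happens to restrict to $c_i$ on $B(d)$. Indeed, $\alpha\rst{B(d)+(-j,s_j)}$ is \emph{always} a $\beta$-coloring of $B(d)$ (by maximality of $s_j$, the columns of $\S+(-j,s_j)$ with absolute $x$-coordinate $\geq0$ lie in the vertically periodic region), whereas Claim~\ref{distinguishing} shows $\alpha\rst{\S^*+(-j,s_j)}$ is \emph{never} a $\beta$-coloring of $\S^*$; so if your case (b) logic were sound it would yield a contradiction unconditionally, without ever invoking the assumed equality of the two colorings --- a sign the step cannot be right. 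Case (a) is also unsubstantiated: the contradiction hypothesis gives only a single local coincidence $\alpha(\vec y)=\alpha(\vec y+\vec v)$ for $\vec y$ in one translate of $\S^*$, and one such coincidence with a non-vertical $\vec v$ does not propagate to vertical periodicity of the strip $\Sigma_{j_1}\cup(\S+(-j_1,s_{j_1}))$; nor do you justify that (a) and (b) exhaust the possibilities. The mechanism the paper actually uses is missing from your proposal: one shows that $\alpha\rst{\S^*+(-j,s_j)}$ agrees with the $\beta$-coloring $C_i$ determined by its $B(d)$-restriction on every column with absolute $x$-coordinate $\geq0$, but disagrees on the column at $x=-1$ (this uses the Fine--Wilf theorem together with the period bounds $p,q\leq\lfloor\left|w_1\cap\S\right|/2\rfloor$ from~\eqref{boundonp} and~\eqref{eq:vert-period}, and the fact that each column of $\S^*$ contains at least $\left|w_1\cap\S\right|-2$ points). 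Hence the rightmost column of $\S^*$ at which the coloring deviates from its associated $\beta$-coloring is an injective function of $j$, and that fingerprint is what separates the $\overline{d}$ colorings. Without it (or a correct substitute), distinctness is not established.
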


 We begin by establishing the first statement.  Observe that $(B(d)-(j,0))\subset K$.  By maximality of $s_j$, the coloring $(T^{(j,-s_j)}\alpha)\rst{B(d)}$ is a $\beta$-coloring of $B(d)$.  
 Using the colorings of~\eqref{Defofc},
 there is some $i=1,\dots,t$ such that $(T^{(j,-s_j)}\alpha)\rst{B(d)}=c_i$ and
    $C_i$ is the unique $\beta$-coloring of $\S^*$ whose restriction to $B(d)$ is $c_i$.  We claim that
    $(T^{(j,-s_j)}\alpha)\rst{\S^*}\neq C_i$.
    
For each $j\geq0$, the coloring $(T^{(0,-j)+\vec b}\alpha)\rst{\S}$ is a $\beta$-coloring of $\S$ since $\alpha\rst{K}=\beta\rst{K}$.  By~\eqref{eq:nul-inter}, none of the colorings $\{(T^{(0,-j)}\alpha)\rst{\S}\colon j\in\N\}$ are $\beta$-colorings.  By~\eqref{eq:vert-period} and maximality of $s_j$, $\alpha\rst{\{(-1,s_j-y)\colon y\in\N\cup\{0\}\}}$ is vertically periodic with period at most $\left\lfloor\frac{\left|w_1\cap\S\right|}{2}\right\rfloor$.  Since every vertical line that has nonempty intersection with $\S^*$ intersects in at least $\left|w_1\cap\S\right|-2$ integer points, the restrictions of $\alpha$ and $\beta$ to the set $\{(-1,y)\colon y\in\ZZ\}\cap(\S^*+(-j,s_j))$ cannot coincide (otherwise by the Fine-Wilf Theorem they would coincide everywhere on the semi-infinite line).  On the other hand, the restrictions of $\alpha$ and $\beta$ to $\{(x,y)\in\ZZ\colon x\geq0\}\cap(\S^*+(-j,s_j))$ do coincide, since they agree on $K$ and $s_j$ was chosen such that the semi-infinite $\S$-strip below it was vertically periodic.  
Consequently, the rightmost vertical line where $\alpha\rst{\S^*+(-j,s_j)}$ differs from $\beta\rst{\S^*+(-j,s_j)}$ has $x$-coordinate $x_{\min}+j-1$.  Therefore, for distinct $1\leq j_1<j_2\leq\overline{d}$, 
the $\eta$-colorings of $\S^*$ given by $\alpha\rst{(\S^*+(-j_1,s_{j_1}))}$ and $\alpha\rst{(S^*+(-j_2,s_{j_2}))}$ are distinct.

For the second statement, by~\eqref{eq:semi-inf-strip} the restriction of $\alpha$ to the semi-infinite $\S$-strip given by
$$
\bigcup_{s=-\infty}^{s_j-1}(\S+(-j,s))
$$
is vertically periodic and $s_j$ is the largest integer with this property.  If the vertical period is $p\in\N$, then by periodicity, the $\eta$-colorings of $\S^*$ given by $\alpha\rst{\S^*+(-j,s_j)}$ and $\alpha\rst{\S^*+(-j,s_j-p)}$ coincide.  But by maximality of $s_j$, the $\eta$-colorings of $\S^*$ given by the functions $\alpha\rst{\S+(-j,s_j)}$ and $\alpha\rst{\S+(-j,s_j-p)}$ are distinct, establishing the claim. 

In total, we have counted $\diam_{w_1}(\S^*)-d$ distinct $\eta$-colorings of $\S^*$ that extend non-uniquely to $\eta$-colorings of $\S$.  Moreover, for each such coloring, the coloring of $\S^*$ was 
not of the form $(T^{\vec x}\beta)\rst{\S^*}$ for any $\vec x\in\ZZ$,
since there was a vertical line in $\S^*$ where the coloring can be distinguished from the $\beta$-coloring induced from its restriction to $B(d)$.

\subsubsection{Counting colorings along the $w_3$-boundary}
\label{sec:count-eta-2}
In this section we find $d$ distinct $\alpha$-colorings of $\S^*$ that extend non-uniquely to $\alpha$-colorings of $\S$.  Each of these colorings is of the form $(T^{\vec x}\beta)\rst{\S^*}$ for some $\vec x\in\ZZ$, and hence they are all distinct from those found in Section~\ref{sec:count-eta-1}.

Recall that $\T_1$, as defined in Notation~\ref{notation:T1} is the $(\S\setminus w_3,w_3)$-border of $K$ and that the restriction $\alpha\rst{\T_1}$ is periodic with period vector parallel to $w_3$.  Fix $\vec d\in\ZZ$ such that the sets $\{(\S\setminus w_3)+\vec d+iw_3\colon i=-1,0,1\}$ are subsets of $\T_1$, but none of the sets $\{\S+\vec d+iw_3\colon i=-1,0,1\}$ are.

Let $A, B\in\Z$ denote the minimal and maximal $x$-coordinates of elements of $w_3$, respectively.  Enumerate the elements of $\S\setminus\S^*$ whose $x$-coordinates are between $A$ and $B$ as $z_1,\dots,z_{\diam_{w_1}(B-A+1)}$, 
where the $x$-coordinate of $z_{i+1}$ is always larger than that of $z_i$.  By Claim~\ref{claim:bounds-on-periods}, $\beta$ is $w_3$-periodic with period at most $\left|w_3\cap\S\right|-1$.    It follows that $d\leq B-A+1$ (recall that $d$ is the integer defined by~\eqref{defofd}).  Let $\vec u_1,\dots,\vec u_d\in\ZZ$ denote the vectors $\vec u_i=z_1-z_i$.  Observe that $(\S^*+\vec d+\vec u_i)\subset \T_1\subset K$ for $i=1,\dots,d$.  For $i=1,\dots,d$, 
we claim that the $\eta$-colorings of $\S^*$ given by $\alpha\rst{\S^*+\vec d+\vec u_i}$ are distinct.  If not, suppose that the colorings given by $\alpha\rst{\S^*+\vec d+\vec u_{\diam_{w_1}(\S^*)-j_1}}$ and $\alpha\rst{\S^*+\vec d+\vec u_{\diam_{w_1}(\S^*)-j_2}}$ coincide for some $1\leq j_1<j_2\leq d$.  Then $0<j_2-j_1<d-1$.  
By~\eqref{eq:Bd}, $B(d)$ is the intersection of $\ZZ$ with the disjoint union of vertical line segments, each of which contains at least $\left|w_1\cap\S\right|-2$ integer points.  Since the vertical period of $\beta$ is at most $\left|w_1\cap\S\right|-2$, we have that the vector $u_{j_2}-u_{j_1}$ must be a period vector for $\beta$, and the $x$-component of this vector is $j_2-j_1\leq d-1$.  Thus any $\beta$-coloring of $\S$ can be deduced from the $\beta$-coloring of $B(j_2-j_1)$, contradicting the minimality of $d$.

Finally, since $\alpha\rst{K}$ is vertically periodic, $(\S^*+\vec d+u_i)\subset K$, and $(\S+\vec d+u_i)\not\subset K$, 
the $\eta$-colorings of $\S^*$ given by $\alpha\rst{(\S^*+\vec d+u_i)}$ and $\alpha\rst{(\S^*+\vec d+u_{i-(0,P)})}$ coincide, where $P$ denotes the minimal vertical period of $\alpha\rst{K}$.  
But by the maximality of $K$ and Corollary~\ref{uniqueextension}, the $\eta$-colorings of $\S$ given by $\alpha\rst{(\S+\vec d+u_i)}$ and $\alpha\rst{(\S+\vec d+u_{(i-(0,p))})}$ cannot coincide.  Therefore we obtain at least $d$ distinct $\eta$-colorings of $\S^*$ that extend non-uniquely to $\eta$-colorings of $\S$ that are of the form $(T^{\vec u}\beta)\rst{\S^*}$ for some $\vec u\in\ZZ$.

\subsubsection{Total number of colorings}
In Sections~\ref{sec:count-eta-1} and~\ref{sec:count-eta-2}, we have described at least 
$\diam_{w_1}(\S^*)$ distinct $\eta$-colorings of $\S^*$ that extend non-uniquely 
to $\eta$-colorings of $\S$.  However, since the discrepancy of $\S^*$ is larger than that of $\S$, 
this produces more than $P_{\eta}(\S)-P_{\eta}(\S^*)\leq\diam_{w_1}(\S^*)-1$ colorings of $\S^*$ that extend non-uniquely to colorings of $\S$, the desired contradiction.  
This completes the proof of Theorem~\ref{twoormore}.
\hfill\qedsymbol

\end{document}